\definecolor{darkgreen}{rgb}{0,0.45,0}
\crefname{equation}{}{}
\crefname{item}{}{}
\tikzset{tick/.style={postaction={decorate,decoration={markings,
mark=at position 0.5 with {\draw[-] (0,.4ex) -- (0,-.4ex);}}}}}
\tikzset{tickx/.style={postaction={decorate,decoration={markings,mark=at position 0.5 with
{\fill circle [radius=.28ex];}}}}}
\newtheorem*{thm*}{Theorem}
\theoremstyle{remark}
\newtheorem*{rmk*}{Remark}
\newtheorem*{lem*}{Lemma}
\theoremstyle{definition}
\newtheorem*{defi*}{Definition}
\newtheorem*{cor*}{Corollary}
\theoremstyle{definition}
\newtheorem*{examples*}{Examples}
\newtheorem{prop*}{Proposition}
\theoremstyle{plain}
\newtheorem{thm}{Theorem}[section]
\theoremstyle{plain}
\newtheorem{prop}[thm]{Proposition}
\theoremstyle{remark}
\newtheorem{rmk}[thm]{Remark}
\theoremstyle{plain}
\newtheorem{lem}[thm]{Lemma}
\theoremstyle{plain}
\newtheorem{cor}[thm]{Corollary}
\theoremstyle{definition}
\newtheorem{defi}[thm]{Definition}
\theoremstyle{definition}
\DeclareFontFamily{U}{mathx}{\hyphenchar\font45}
\DeclareFontShape{U}{mathx}{m}{n}{
      <5> <6> <7> <8> <9> <10>
      <10.95> <12> <14.4> <17.28> <20.74> <24.88>
      mathx10
      }{}
\DeclareSymbolFont{mathx}{U}{mathx}{m}{n}
\DeclareMathAccent{\widecheck}{0}{mathx}{"71}
\newcommand{\Mnd}{\B{Mnd}}
\newcommand{\Cmd}{\B{Cmd}}
\newcommand{\wc}{\widecheck}
\newcommand{\wh}{\widehat}
\newcommand{\VMat}{\ca{V}\textrm{-}\B{Mat}}
\newcommand{\VMMat}{\ca{V}\textrm{-}\MMat}
\newcommand{\ca}{\mathcal}
\newcommand{\caa}{\mathbb} 
\newcommand{\Hom}{\ensuremath{\mathrm{Hom}}}
\newcommand{\HOM}{\textrm{\scshape{Hom}}}
\newcommand{\Comod}{\ensuremath{\mathbf{Comod}}}
\newcommand{\Mod}{\ensuremath{\mathbf{Mod}}}
\newcommand{\Cat}{\B{Cat}}
\newcommand{\Alg}{\ensuremath{\mathbf{Alg}}}
\newcommand{\Coalg}{\ensuremath{\mathbf{Coalg}}}
\newcommand{\Mon}{\ensuremath{\mathbf{Mon}}}
\newcommand{\Comon}{\ensuremath{\mathbf{Comon}}}
\newcommand{\ob}{\ensuremath{\mathrm{ob}}}
\newcommand{\cod}{\mathrm{cod}}
\newcommand{\opl}{\mathrm{opl}}
\newcommand{\lax}{\mathrm{lax}}
\newcommand{\Cart}{\ensuremath{\mathrm{Cart}}}
\newcommand{\Cocart}{\ensuremath{\mathrm{Cocart}}}
\newcommand{\ps}{\mathscr}
\newcommand{\B}{\mathbf}
\newcommand{\Gr}{\mathfrak}
\newcommand{\Mat}{\mathbf{Mat}}
\DeclareMathOperator*\colim{colim}
\newcommand{\op}{\mathrm{op}}
\newcommand{\MMat}{\mathbf{\caa{M}at}}
\newcommand{\id}{\mathrm{id}}
\newcommand{\HD}{\ca{H}(\caa{D})}
\newcommand{\sbul}{\scriptstyle\bullet}
\newcommand{\tick}{\object@{|}}
\newcommand{\feta}{\dot{\eta}}
\newcommand{\fepsilon}{\dot{\varepsilon}}
\newcommand{\Dendo}{\caa{D}_1^\bullet}
\newcommand{\Eendo}{\caa{E}_1^\bullet}
\newcommand{\VCocat}{\ca{V}\textrm{-}\B{Cocat}}
\newcommand{\VCat}{\ca{V}\textrm{-}\B{Cat}}
\newcommand{\VMod}{\ca{V}\textrm{-}\B{Mod}}
\newcommand{\VComod}{\ca{V}\textrm{-}\B{Comod}}
\newcommand{\VGrph}{\ca{V}\textrm{-}\B{Grph}}
\newcommand{\Fendo}{F_1^\bullet}
\newcommand{\Dar}{\caa{D}_1}
\newcommand{\Set}{\B{Set}}
\newcommand{\matr}[3]{\SelectTips{eu}{10}\xymatrix@C=.2in{#1\colon #2\ar[r]|-{\object@{|}} & #3}}
\newcommand{\proar}[3]{\SelectTips{eu}{10}\xymatrix@C=.2in{#1\colon #2\ar[r]|-{\sbul} & #3}}
\newcommand{\simrightarrow}{\xrightarrow{\raisebox{-4pt}[0pt][0pt]{\ensuremath{\sim}}}}
\begin{document}
\title[Enriched duality in double categories]{Enriched duality in double categories: $\ca{V}$-categories and $\ca{V}$-cocategories*}
\keywords{fibrant double category, framed bicategory, comonads in double categories, enriched matrices, enriched cocategories,
enriched fibration}

\author{Christina Vasilakopoulou} 
\thanks{*This is a provisory draft of a paper whose definitive version is due to be published in the
``Journal of Pure and Applied Algebra".}
\address{Departement of Mathematics, University of California, Riverside, 900 University Avenue, 92521, USA}
\email{vasilak@ucr.edu}

\begin{abstract}
In this work, we explore a double categorical framework for categories of enriched graphs, categories
and the newly introduced notion of cocategories. A fundamental goal is to establish an enrichment of $\ca{V}$-categories
in $\ca{V}$-cocategories, which generalizes the so-called Sweedler theory relatively to an enrichment of algebras
in coalgebras. The language employed is that of $\ca{V}$-matrices, and an interplay between the double categorical
and bicategorical perspective provides a high-level flexibility for demonstrating essential features of these dual structures.
Furthermore, we investigate an enriched fibration structure involving categories of monads and comonads
in fibrant double categories a.k.a. proarrow equipments,
which leads to natural applications for their materialization as categories and cocategories in the enriched matrices setting.
\end{abstract}

\maketitle

\setcounter{tocdepth}{1}
\tableofcontents

\section{Introduction}

In \cite{Measuringcomonoid}, an enrichment of the category of monoids in comonoids is established in a braided monoidal closed category
$\ca{V}$ which is moreover locally presentable, induced by a generalization of Sweedler's \emph{universal measuring coalgebra}
$P(A,B)$ for algebras $A$, $B$ \cite{Sweedler}. This constitutes an abstract framework for the so-called \emph{Sweedler theory} of algebras
and coalgebras \cite{AnelJoyal} in differential graded vector spaces, leading to an efficient formalism for the bar-cobar
adjunction in a broader effort to conceptually clarify the Koszul duality for (co)algebras \cite{AlgebraicOperads}.

The present work generalizes this result to its many-object setting; introducing the notion of a $\ca{V}$-\emph{enriched cocategory}
which reduces to a comonoid in $\ca{V}$ when its set of objects is singleton, we establish an enrichment of the category of $\ca{V}$-categories
in $\ca{V}$-cocategories. This enrichment can be realized under the same assumptions on $\ca{V}$, and shares all fundamental
characteristics with Sweedler theory for (co)monoids. In particular, the braided monoidal closed $\VCocat$ acts on
the monoidal $\VCat$ via a convolution-like functor which exhibits its adjoint,
the \emph{generalized Sweedler hom} $T\colon\VCat^\op\times\VCat\to\VCocat$ as the enriched hom-functor. Moreover,
the enrichment in question is tensored and cotensored, via the \emph{generalized Sweedler product}
$\triangleright\colon\VCocat\times\VCat\to\VCat$ and the action respectively.

The framework that brings all these structures together is that of a double category of $\ca{V}$-matrices, $\VMMat$. Its horizontal bicategory
$\VMat$ is well-studied even in the more abstract case of matrices enriched in bicategories \cite{VarThrEnr}, and our approach follows
its one-object case as well as \cite{KellyLack} in viewing $\ca{V}$-categories as monads therein and establishing important
categorical properties. It turns out that working on the double categorical context offers a much clearer perspective
for the categories of interest and their interrelations; for example, $\ca{V}$-functors are precisely \emph{monad morphisms}
(vertical monad maps in \cite{Monadsindoublecats}) in $\VMMat$ whereas only a special case of monad maps in $\VMat$.

For that purpose, we present a detailed framework for monads and comonads in arbitrary double categories, explore their (op)fibrational
structure in the fibrant case \cite{Framedbicats} and push the enrichment objective as far as possible. This way,
demanding calculations involving enriched graphs, categories and cocategories (some of them found in \cite[\S 7]{PhDChristina}) are deduced
from natural properties of monads and comonads in monoidal fibrant double categories, which are furthermore
\emph{locally closed monoidal}. By introducing this concept which endows the vertical and horizontal categories with a
monoidal closed structure, we obtain an action of the category of monads on comonads which under certain assumptions induces
the desired enrichment.

Finally, we are also interested in combining such an enrichment with the natural (op)fibred structure of monads and comonads. As a result,
we first recall some general fibred adjunction results as well as some basic \emph{enriched fibration} machinery from \cite{Enrichedfibration}
and subsequently apply it to the double categorical setting for the monoidal (op)fibrations of (co)monads, leading to respective results
for $\ca{V}$-categories and $\ca{V}$-cocategories in $\VMMat$. It is expected that this abstract picture
shall also allow for applications for other (co)monads in double categories of similar flavor,
indicatively for colored operads and cooperads.

A next step to this development will be to consider categories of \emph{modules} and \emph{comodules} for monads
and comonads in double categories and look for similar enrichments, this time for the fibration $\VMod\to\VCat$
over an appropriately defined $\VComod\to\VCocat$ in $\VMMat$. This would again provide a many-object generalization of the enrichment
of $\Mod$ in $\Comod$, i.e. global categories of (co)modules for (co)monoids, established in \cite{Measuringcomodule}.

A brief outline of the paper is as follows. \cref{sec:Background} assembles all the necessary background material in order to make
this paper as self-contained as possible. This includes selected facts about bicategories, (co)monoids in monoidal categories
and local presentability aspects, the theory of actions inducing enrichments, the universal measuring comonoid
and the theory of fibrations and enriched fibrations. In \cref{doublecats}, after giving some basic double
categorical definitions, we explore the framework for monads and comonads.
Considering monoidal and fibrant double categories also with a locally closed monoidal structure, we furthermore combine it
with the enriched fibration theory. Finally, \cref{sec:Enrichedmatrices} applies all the previous results to the double category
$\VMMat$ of $\ca{V}$-matrices. By establishing necessary properties of monads ($\ca{V}$-categories) and comonads ($\ca{V}$-cocategories)
therein, an enrichment between them is exhibited also on the (op)fibration level as the ultimum result. In the process, a detailed
exposition of the structures involved and their dual relations gathers known along with newly established facts about these fundamental categories,
also generalizing classical properties for (co)monoids in monoidal categories.

\section{Preliminaries}\label{sec:Background}

In this section, we gather all the necessary material for what follows. This includes some basic
bicategorical notions, elements of the theory of monoidal categories with focus on the categories of monoids and comonoids
and local presentability aspects, as well as parts of the theory of actions of monoidal categories inducing enrichment relations.
Finally, we recall some results from related work concerning an enrichment of monoids in comonoids, as well as the recently introduced
enriched fibration structure; the goal is to later fit those in a double categorical context which serves as the common framework
for our objects of interest.

In order to restrain the length of the paper, we provide appropriate references
whenever definitions and constructions are only sketched. 
The choice of how detailed certain material review is, solely relies on what is specifically used later in the paper,
with the purpose of making this work as self-sufficient as possible. This is why, for example, bicategories and functors between
them are spelled out, as opposed to monoidal categories and functors.

\subsection{Bicategories}\label{sec:bicategories}

The original definition of a bicategory and a lax functor between bicategories can be found in B{\'e}nabou's 
\cite{Benabou}. Other references, including the definitions of transformations and modifications are 
\cite{Categoricalstructures,Handbook1}. 
Categories of (co)monads in bicategories are carefully recalled;
regarding 2-category theory, indicative references are \cite{Review,2-catcompanion},
whereas \cite{FormalTheoryMonadsI} presents the formal theory of monads in 2-categories.
Due to coherence for bicategories, we are often able to use 2-categori\-cal machinery like pasting and mates 
correspondence directly in the weaker context. 

\begin{defi}\label{def:bicategory}
A \emph{bicategory} $\ca{K}$ is specified by objects $A,B,...$ called \emph{0-cells},
and for each pair of objects a category $\ca{K}(A,B)$, whose objects are called \emph{1-cells} and whose arrows
are called \emph{2-cells}; vertical composition of $2$-cells is denoted
\begin{displaymath}
\xymatrix @C=.8in
{A\ar @/^4ex/[r]^-f \ar[r]|-g \ar @/_4ex/[r]_-h \rtwocell<\omit>{<-2>\alpha} \rtwocell<\omit>{<2>\;\alpha'} &
B}=\xymatrix @!=.5in {A\rtwocell^f_h{\;\;\;\alpha'\cdot\alpha} & B.}
\end{displaymath}
and the identity 2-cell is $1_f\colon f\Rightarrow f\colon A\to B$.
Moreover, for each triple of objects there is the \emph{horizontal composition} functor $\circ:\ca{K}(B,C)\times\ca{K}(A,B)\to\ca{K}(A,C)$
which maps a pair of 1-cells $(g,f)$ to $g\circ f=gf$ and a pair of 2-cells
\begin{displaymath}
\xymatrix @!=.5in {A\rtwocell^f_u{\alpha} & 
B\rtwocell^g_v{\beta} & C}=
\xymatrix @!=.5in {A\rtwocell<\omit>{\;\;\;\beta*\alpha}
\ar @/^3ex/[r]^-{gf}
\ar @/_3ex/[r]_{vu} & C.}
\end{displaymath}
Finally, for each object we have the \emph{identity 1-cell} $1$-cell $1_A:A\to A$.

The associativity and identity constraints are expressed via the \emph{associator} with components invertible 2-cells
$\alpha_{h,g,f}:(h\circ g)\circ f\simrightarrow h\circ(g\circ f)$ and the \emph{unitors} by $\lambda_f:1_B\circ f\simrightarrow f,$
$\rho_f:f\circ 1_A\simrightarrow f.$
The above are subject to coherence conditions: for
$\SelectTips{10}{eu}\xymatrix@C=.3in{A\ar[r]|-{\;f\;} & B\ar[r]|-{\;g\;} & C\ar[r]|-{\;h\;} & D\ar[r]|-{\;k\;} & E}$, the following commute
\begin{equation}\label{bicataxiom1}
\xymatrix @R=.2in@C=.01in
{((k{\circ}  h){\circ} g){\circ} f\ar[d]_-{\alpha_{kh,g,f}}\ar[rr]^-{\alpha_{k,h,g}*1_f} && (k{\circ}(h{\circ} g)){\circ} f,\ar[d]^-{\alpha_{k,hg,f}} \\
(k{\circ} h){\circ}(g{\circ} f)\ar[dr]_-{\alpha_{k,h,gf}} && k{\circ}((h{\circ} g){\circ} f) \ar[dl]^-{1_k*\alpha_{h,g,f}}\\
& k{\circ}(h{\circ}(g{\circ} f)) &}\quad
\xymatrix@R=.6in@C=.2in
{(g{\circ} 1_B){\circ} f\ar[rr]^-{\alpha_{g,1_B,f}}\ar[dr]_-{\rho_g*1_f} && g{\circ}(1_B{\circ} f)\ar[dl]^-{1_g*\lambda_f} \\
& g{\circ} f &}
\end{equation}
\end{defi}

By functoriality of the horizontal composition we have $1_g\circ 1_f=1_{g\circ f}$ and
$(\beta'\cdot\beta)*(\alpha'\cdot\alpha)=(\beta'*\alpha')\cdot(\beta*\alpha)$, the latter known as the \emph{interchange law}. 

Given a bicategory $\ca{K}$, we may reverse only the $1$-cells and form the bicategory $\ca{K}^\mathrm{op}$,
with $\ca{K}^\mathrm{op}(A,B)=\ca{K}(B,A)$. We may also reverse only the $2$-cells and form the bicategory $\ca{K}^\mathrm{co}$
with $\ca{K}^\mathrm{co}(A,B)=\ca{K}(A,B)^\mathrm{op}$. Reversing both $1$-cells and $2$-cells yields 
a bicategory $(\ca{K}^\mathrm{co})^\mathrm{op}=(\ca{K}^\mathrm{op})^\mathrm{co}$.

There are numerous examples of well-known bicategories. Indicatively,
\begin{itemize}
 \item $\B{Span}(\ca{C})$ for any category $\ca{C}$ with pullbacks has objects the ones in $\ca{C}$, 1-cells spans $A\leftarrow M\rightarrow B$
 and 2-cells span morphisms; 
 \item $\B{Rel}(\ca{C})$ for any regular category $\ca{C}$ is defined as $\B{Span}(\ca{C})$ but with 1-cells relations
 $R\rightarrowtail X\times Y$, and composition is given by first taking the pullback and then performing epi-mono factorization;
 \item $\B{BMod}$ has objects rings, 1-cells bimodules and 2-cells bimodule maps;
 \item $\ca{V}$-$\B{Prof}$ has objects $\ca{V}$-categories, 1-cells $\ca{V}$-profunctors ($\ca{V}$-bimodules)
 $F\colon\ca{B}^\op\times\ca{A}\to\ca{V}$ and 2-cells appropriate $\ca{V}$-natural transformations. 
\end{itemize}

\begin{defi}\label{laxfunctor}
Given bicategories $\ca{K}$ and $\ca{L}$, a  \emph{lax functor} $\ps{F}:\ca{K}\to\ca{L}$ consists of a mapping
on objects $A\mapsto\ps{F}A$, a functor $\ps{F}_{A,B}:\ca{K}(A,B)\to\ca{L}(\ps{F}A,\ps{F}B)$
for every $A,B\in\ca{K}$, a natural transformation with components $\delta_{g,f}:(\ps{F}g)\circ(\ps{F}f)\to
\ps{F}(g\circ f)$ for any composable 1-cells, and a natural transformation with components
$\gamma_A:1_{\ps{F}A}\to\ps{F}(1_A)$ for every $A\in\ca{K}$.

The natural transformations $\gamma$ and $\delta$ have to satisfy the following coherence axioms: for 1-cells
$\SelectTips{10}{eu}\xymatrix@C=.3in{A\ar[r]|-{\;f\;} & B\ar[r]|-{\;g\;} & C\ar[r]|-{\;h\;} & D}$, the following diagrams commute:
\begin{equation}\label{laxcond1}
\xymatrix @C=.6in @R=.25in
{(\ps{F}h\circ\ps{F}g)\circ\ps{F}f\ar[r]^-{\delta_{h,g}*1}\ar[d]_-{\alpha} & \ps{F}(h\circ g)\circ\ps{F}f\ar[d]^-{\delta_{hg,f}} \\
\ps{F}h\circ(\ps{F}g\circ\ps{F}f) \ar[d]_-{1*\delta_{g,f}} & \ps{F}((h\circ g)\circ f)\ar[d]^-{\ps{F}\alpha} \\
\ps{F}h\circ\ps{F}(g\circ f)\ar[r]_-{\delta_{h,gf}} & \ps{F}(h\circ(g\circ f))}
\end{equation}

\begin{equation}\label{laxcond2}
\xymatrix @C=.5in @R=.25in
{1_{\ps{F}B}\circ\ps{F}f\ar[r]^-{\gamma_B*1} \ar[d]_-\lambda & \ps{F}(1_B)\circ\ps{F}f \ar[d]^-{\delta_{1_B,f}} \\
\ps{F}f & \ps{F}(1_B\circ f)\ar[l]^-{\ps{F}\lambda}}\quad
\xymatrix @C=.5in @R=.25in
{\ps{F}f\circ 1_{\ps{F}A}\ar[r]^-{1*\gamma_A}\ar[d]_-\rho & \ps{F}f\circ\ps{F}(1_A)\ar[d]^-{\delta_{f,1_A}} \\
\ps{F}f & \ps{F}(f\circ 1_A)\ar[l]^-{\ps{F}\rho}}
\end{equation}
\end{defi}

If $\gamma$ and $\delta$ are natural isomorphisms (respectively identities), then $\ps{F}$ is called a \emph{pseudofunctor}
or \emph{homomorphism} (respectively \emph{strict functor}) of bicategories. Similarly, we can define a \emph{colax 
functor} of bicategories by reversing the direction of $\gamma$ and $\delta$, sometimes also called \emph{oplax}.
We obtain categories $\B{Bicat}_l$, $\B{Bicat}_c$, $\B{Bicat}_{ps}$, $\B{Bicat}_s$ with objects bicategories
and arrows lax, colax, pseudo and strict functors respectively.

A \emph{monoidal bicategory} $\ca{K}$ is a bicategory equipped with a pseudofunctor $\otimes\colon\ca{K}\times\ca{K}\to\ca{K}$
which is coherently associative and has an identity object $I$. The explicit definition with all
appropriate diagrams can be found in any of the standard references, e.g. \cite[\S 2.1]{Carmody}. In our examples,
we will choose to establish monoidal structure of bicategories via the more general structure of a monoidal double category,
as explained in \cref{doublecats}; arguably, that technique is easier to apply given certain assumptions.

\begin{defi}\label{laxnattrans}
If $\ps{F},\ps{G}:\ca{K}\to\ca{L}$ are two lax functors, a \emph{lax natural transformation} $\tau:\ps{F}\Rightarrow\ps{G}$ consists of
morphisms $\tau_A:\ps{F}A\to\ps{G}A$ in $\ca{L}$, along with natural transformations
\begin{equation}\label{nattranslax}
\xymatrix @C=.4in @R=.4in
{\ca{K}(A,B)\ar[r]^-{\ps{F}_{A,B}}\ar[d]_-{\ps{G}_{A,B}} &
\ca{L}(\ps{F}A,\ps{F}B)\ar[d]^-{\ca{L}(1,\tau_B)} \\
\ca{L}(\ps{G}A,\ps{G}B)\ar[r]_-{\ca{L}(\tau_A,1)} &
\ca{L}(\ps{F}A,\ps{G}B)\ultwocell<\omit>{\tau}}
\end{equation}
with components 2-cells $\tau_f\colon\tau_B\circ\ps{F}f\Rightarrow \ps{G}f\circ\tau_A$.
This data is subject to standard axioms expressing the compatibility of $\tau$ with composition and units, using
$\delta$ and $\gamma$ of the lax functors.
\end{defi}

A transformation $\tau$ is \emph{pseudonatural} (respectively
\emph{strict}) when all the components $\tau_f$ of \cref{nattranslax} are isomorphisms (respectively identities).
Also, an \emph{oplax} natural transformation is equipped with a natural transformation in the opposite direction
of \cref{nattranslax}. Note that between either lax or oplax functors of bicategories,
we can consider both lax and oplax natural transformations.

Along with \emph{modifications} between transformations (see \cite{Handbook1}) we can form four different functor bicategories of
combinations of (op)lax functors, (op)lax natural transformations and modifications, e.g. $\B{Bicat}_{l,l}(\ca{K},\ca{L})$, which contain
$\B{Bicat}_{ps,l}(\ca{K},\ca{L})$, $\B{Bicat}_{ps,opl}(\ca{K},\ca{L})$ 
and $\B{Hom}(\ca{K},\ca{L})$ of pseudofunctors and lax/oplax/pseudo natural transformations as sub-bicategories.

Now a (strict) \emph{2-category} is a bicategory in which all constraints are identities, \emph{i.e.} $\alpha,\rho,\lambda=1$.
In this case, the horizontal composition is strictly associative and unitary and the axioms \cref{bicataxiom1} hold automatically. Consequently, 
the collection of 0-cells and 1-cells form a category on its own. Note that
when $\ca{L}$ is a 2-category, all the above functor bicategories are also 2-categories.

\begin{examples*}\hfill
\begin{enumerate}
 \item $\B{Cat}$ of (small) categories, functors and natural transformations;
\item $\Mon\B{Cat}$ of monoidal categories, (strong) monoidal functors and mo\-noi\-dal natural transformations;
\item $\ca{V}$-$\B{Cat}$ of $\ca{V}$-enriched categories, $\ca{V}$-functors and $\ca{V}$-na\-tu\-ral tra\-nsfo\-rma\-tions
for a monoidal category $\ca{V}$;
\item $\B{Fib}(\caa{X})$ and $\B{OpFib}(\caa{X})$ of fibrations and opfibrations over $\caa{X}$, 
(op)fibred functors and (op)fibred natural transformations (see \cref{fibrations});
\item $\B{Cat}(\caa{E})$ of categories internal to $\caa{E}$, for a finitely complete category. Instances of this are ordinary categories
($\caa{E}=\B{Set}$), double categories ($\caa{E}=\B{Cat}$) and crossed modules ($\caa{E}=\B{Grp}$).
\end{enumerate}
\end{examples*}



We now turn to notions of monads and comonads in bicategories.
\begin{defi}\label{monadbicat}
A \emph{monad} in a bicategory $\ca{K}$ consists of an object $B$ together with an endomorphism
$t:B\to B$ and 2-cells $\eta:1_B\Rightarrow t$, $m:t\circ t\Rightarrow t$ called the \emph{unit} and \emph{multiplication},
such that the following diagrams commute:
\begin{displaymath}
\xymatrix @R=.25in
{(t\circ t)\circ t\ar[rr]^-{\alpha_{t,t,t}}
\ar[d]_-{m\circ1} && t\circ(t\circ t)\ar[d]^-{1\circ m} \\
t\circ t\ar[dr]_-m && t\circ t\ar[ld]^-m \\
& t &}\qquad\mathrm{and}\qquad
\xymatrix @R=.65in @C=.5in
{1_B\circ t\ar[r]^-{\eta\circ 1}
\ar[dr]_-{\lambda_t} & t\circ t
\ar[d]^-{m} & t\circ1_B \ar[l]_-{1\circ\eta}
\ar[ld]^-{\rho_t} \\
& t &}
\end{displaymath}
\end{defi}

Equivalently, a monad in a bicategory $\ca{K}$ is a lax functor $\ps{F}:\B{1}\to\ca{K}$, where $\B{1}$ is the terminal bicategory with 
a unique 0-cell $\star$. This amounts to an object $\ps{F}(\star)=B\in\ca{K}$ and a functor $\ps{F}_{\star,\star}:\B{1}(\star,\star)\to\ca{K}(B,B)$
which picks up an endoarrow $t:B\to B$. The natural transformations $\delta$ and $\gamma$ of the lax functor give
the multiplication and the unit of $t$
\begin{displaymath}
m\equiv\delta_{1_\star,1_\star}:t\circ t\to t
\quad\textrm{and}\quad
\eta\equiv\gamma_{\star}:1_B\to t
\end{displaymath}
and the axioms for $\ps{F}$ give the monad axioms for $(t,m,\eta)$.

\begin{rmk}\label{laxfunctorspreservemonads}
If $\ps{G}:\ca{K}\to\ca{L}$ is a lax functor between bicategories, the composite
\begin{displaymath}
\B{1}\xrightarrow{\;\ps{F}\;}\ca{K}\xrightarrow{\;\ps{G}\;}\ca{L}
\end{displaymath}
is itself a lax functor from $\B{1}$ to $\ca{L}$, hence defines a monad. In other words, if $t:B\to B$ is a monad in the bicategory
$\ca{K}$, then $\ps{G}t:\ps{G}B\to\ps{G}B$ is a monad in the bicategory $\ca{L}$, \emph{i.e.} lax functors preserve monads.
\end{rmk}

\begin{defi}\label{monadfunctor}
A \emph{(lax) monad functor} between two monads $t:B\to B$ and $s:C\to C$ in a bicategory
consists of an 1-cell $f:B\to C$ between the 0-cells of the monads together with a 2-cell
\begin{displaymath}
 \xymatrix
{B\ar[r]^-f \ar[d]_-t \drtwocell<\omit>{\psi} & C\ar[d]^-s \\
B\ar[r]_-f & C}
\end{displaymath}
satisfying compatibility conditions with multiplications and units.
\end{defi}
If the 2-cell $\psi$ is in the opposite direction, and the diagrams are accordingly modified, we have a \emph{colax} monad functor
(or monad \emph{opfunctor}) between two monads. Along with appropriate notions of monad natural transformations (see \cite{FormalTheoryMonadsI}),
we obtain a bicategory $\B{Mnd}(\ca{K})\equiv[\B{1},\ca{K}]_l$.

Dually to the above, and for future reference, we have the following.
\begin{defi}\label{comonadbicat}
A \emph{comonad} in a bicategory $\ca{K}$ consists of an object $A$ together with an endoarrow $u:A\to A$ and 2-cells
$\Delta:u\Rightarrow u\circ u$, $\varepsilon:u\Rightarrow 1_A$ called the \emph{comultiplication} and \emph{counit} respectively,
such that the following commute 
\begin{displaymath}
\xymatrix @R=.25in
{& u\ar[dr]^-\Delta \ar[dl]_-\Delta & \\
u\circ u \ar[d]_-{\Delta\circ1} && u\circ u\ar[d]^-{1\circ\Delta} \\
(u\circ u)\circ u\ar[rr]_-{\alpha_{u,u,u}} && u\circ(u\circ u)}\qquad
\xymatrix @R=.65in @C=.5in
{1_A\circ u \ar[dr]_-{\lambda_u} & u\circ u \ar[l]_-{\varepsilon\circ 1}\ar[r]^-{1\circ\varepsilon} & u\circ1_A\ar[ld]^-{\rho_u} \\
& u\ar[u]_-{\Delta} &}
\end{displaymath}
\end{defi}
Notice that a comonad in the bicategory $\ca{K}$ is precisely a monad in the bicategory $\ca{K}^\mathrm{co}$;
along with colax comonad functors and comonad natural transformation, we have the bicategory $\Cmd(\ca{K})=[\B{1},\ca{K}]_c$.

\subsection{Monoids and comonoids in monoidal categories}\label{Monoidalcats}

Standard references for the theory of monoidal categories, as well as monoids and comonoids, are for example \cite{BraidedTensorCats,Quantum};
here we recall only a few things necessary for later constructions. In any case, monoidal categories and 
(co)lax/strong/strict functors between them are just the one-object cases of Definitions \ref{def:bicategory} and \ref{laxfunctor}.

Suppose $(\ca{V},\otimes,I)$ is a monoidal category. A \emph{monoid} is an object $A$ equipped with
a multiplication and unit $m:A\otimes A\rightarrow A\leftarrow I:\eta$ that satisfy usual associativity
and unit laws; along with monoid morphisms that preserve the structure in that $f\circ(m\otimes m)=m'\circ f$
and $f\circ\eta=\eta'$, they form a category $\Mon(\ca{V})$.
Dually, we have \emph{comonoids} $(C,\Delta\colon C\to C\otimes C,\epsilon\colon C\to I)$ whose category
is denoted by $\Comon(\ca{V})$. Both these categories are monoidal
only if $\ca{V}$ is braided, and they also inherit the braiding or symmetry from $\ca{V}$.

\begin{rmk}\label{monadsaremonoids}
For any object $B$ in a bicategory $\ca{K}$, the hom-category $\ca{K}(B,B)$ is equipped with a monoidal structure
induced by the horizontal composition of the bicategory, namely $f\otimes g=g\circ f$ and $I=1_B$.
Then, a monoid in $(\ca{K}(B,B),\circ,1_B)$ is precisely a monad in $\ca{K}$ (\cref{monadbicat}) and dually, a comonad
$u:A\to A$ in a bicategory $\ca{K}$ is a comonoid in the monoidal $\ca{K}(A,A)$. 
\end{rmk}

It is well-known that lax monoidal functors between monoidal categories induce functors between their category of monoids,
as below.

\begin{prop}\label{monf}
If $F\colon\ca{V}\to\ca{W}$ is a lax monoidal functor, with structure maps $\phi_{A,B}\colon FA\otimes FB\to F(A\otimes B)$
and $\phi_0\colon I\to F(I)$, it induces a map between their categories of monoids
$$\Mon F\colon\Mon(\ca{V})\to\Mon(\ca{W})$$
by $(A,m,\eta)\mapsto(FA,Fm\circ\phi_{A,A},F\eta\circ\phi_0)$. Dually, colax functors induce maps between the categories of comonoids.
\end{prop}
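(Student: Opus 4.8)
The plan is to verify that the assignment $(A,m,\eta)\mapsto(FA,Fm\circ\phi_{A,A},F\eta\circ\phi_0)$ produces a genuine monoid in $\ca{W}$, and then to check functoriality. First I would establish that the proposed multiplication $FA\otimes FA\xrightarrow{\phi_{A,A}}F(A\otimes A)\xrightarrow{Fm}FA$ is associative. This amounts to comparing the two composites from $FA\otimes FA\otimes FA$ to $FA$; each involves applying $\phi$ twice to fold the triple tensor into $F(A\otimes A\otimes A)$ and then applying $Fm$ twice. The key input is the associativity coherence axiom for the lax monoidal functor $F$ (the one-object case of \cref{laxcond1}), which equates the two ways of merging $FA\otimes FB\otimes FC$ into $F(A\otimes B\otimes C)$ up to the associators; combined with functoriality of $F$ applied to the associativity square for the monoid $(A,m,\eta)$ itself, the two composites agree.

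Next I would check the two unit laws. The left unit composite $I\otimes FA\xrightarrow{\phi_0\otimes 1}F(I)\otimes FA\xrightarrow{\phi_{I,A}}F(I\otimes A)\xrightarrow{F(\eta\otimes 1)}\cdots$ collapses via the unit coherence axiom for $F$ (the one-object case of \cref{laxcond2}) together with $F$ applied to the unit law $m\circ(\eta\otimes 1)=\lambda_A$ for the monoid $A$; the right unit law is symmetric, using the mirror coherence diagram. These diagram chases are the substance of showing $(FA,Fm\circ\phi_{A,A},F\eta\circ\phi_0)$ is an object of $\Mon(\ca{W})$.

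For functoriality, I would take a monoid morphism $f\colon(A,m,\eta)\to(A',m',\eta')$ and show that $Ff\colon FA\to FA'$ is a morphism of the image monoids. Compatibility with multiplication follows from naturality of $\phi$ in both arguments (applied to the square relating $\phi_{A,A}$ and $\phi_{A',A'}$ along $f$) together with $F$ applied to $f\circ m=m'\circ(f\otimes f)$; compatibility with units follows from naturality of $\phi_0$-region and $F$ applied to $f\circ\eta=\eta'$. That $\Mon F$ preserves identities and composition is immediate from functoriality of $F$. The dual statement for colax functors and comonoids is obtained by reversing all arrows, i.e.\ by reading the argument in $\ca{V}^{\op}$ and $\ca{W}^{\op}$, so no separate computation is needed.

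The main obstacle is purely bookkeeping rather than conceptual: correctly threading the associators and unitors through the coherence diagrams so that the naturality of $\phi$ and the functoriality of $F$ can be invoked at the right spots. In a strict monoidal setting these constraints vanish and the verifications are transparent; the only real care is tracking them in the general monoidal case, which is why I would present the associativity check in full and leave the unit laws and functoriality as analogous routine diagram chases.
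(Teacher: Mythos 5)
Your proof is correct and is exactly the standard direct verification: the paper states \cref{monf} as well-known without proof, and its proof of the double-categorical generalization (\cref{MonFdouble}) follows the same pattern you use — define the induced multiplication and unit via the structure maps, derive the monoid axioms from the lax coherence axioms \cref{laxcond1,laxcond2} together with the original monoid axioms, and get morphism compatibility and functoriality from naturality of $\phi$, with the comonoid case by duality. Nothing to add.
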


It is also well-known that (due to the \emph{doctrinal adjunction}) colax monoidal structures on left adjoints correspond bijectively to lax
monoidal structures on right adjoints between monoidal categories; this generalizes to parametrized adjunctions, e.g.
\cite[3.2.3]{PhDChristina} or for higher dimension in \cite[Prop.~2]{Monoidalbicatshopfalgebroids}.

When $\ca{V}$ is braided monoidal closed, the tensor product functor has
a strong monoidal structure via $A\otimes B\otimes A'\otimes B'\simrightarrow A\otimes A'\otimes B\otimes B'$,
$I\simrightarrow I\otimes I$. Therefore the internal hom functor $[-,-]\colon\ca{V}^\op\times\ca{V}\to\ca{V}$
obtains a lax monoidal structure as its parametrized adjoint. The induced functor between the monoids is denoted
\begin{equation}\label{defMon[]}
\Mon[-,-]\colon\Comon(\ca{V})^\op\times\Mon(\ca{V})\to\Mon(\ca{V}); 
\end{equation}
for $C$ a comonoid and $A$ a monoid, $[C,A]$ has the \emph{convolution} monoid structure.

Turning to other properties of the categories of monoids and comonoids, for any $\ca{V}$
there exist forgetful $S\colon\Mon(\ca{V})\to\ca{V}$, $U\colon\Comon(\ca{V})\to\ca{V}$;
when these have a left or right adjoint respectively, they are called \emph{free monoid} and \emph{cofree comonoid}
functors. Evidently, the free monoid one is quite frequent.

\begin{prop}\label{freemonoidprop}
Suppose that $\ca{V}$ is a monoidal category with countable coproducts which are preserved by $\otimes$ on either side.
The forgetful $\Mon(\ca{V})\to\ca{V}$ has a left adjoint $L$, and the free monoid on an object $X$ is given by
\begin{displaymath}
LX=\coprod_{n\in\mathbb{N}}{X^{\otimes n}}.
\end{displaymath}
\end{prop}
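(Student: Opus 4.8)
The plan is to construct the free monoid explicitly as $LX=\coprod_{n\in\mathbb{N}}X^{\otimes n}$ (with $X^{\otimes 0}=I$) and verify the universal property directly, since the hypotheses are tailored precisely to make this formula work. First I would equip $LX$ with a monoid structure. The unit $\eta\colon I\to LX$ is the coproduct injection $\iota_0$ of the $n=0$ summand. For the multiplication, I would use that $\otimes$ preserves countable coproducts on each side, so that $LX\otimes LX\cong\coprod_{p,q}X^{\otimes p}\otimes X^{\otimes q}$; the canonical isomorphisms $X^{\otimes p}\otimes X^{\otimes q}\cong X^{\otimes(p+q)}$ (built from the associators of $\ca{V}$) then assemble, via the universal property of the coproduct, into a map $m\colon LX\otimes LX\to LX$ sending the $(p,q)$-summand into the $(p+q)$-summand.

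Next I would check the monoid axioms. Associativity amounts to the observation that both composites $LX^{\otimes 3}\to LX$ send the $(p,q,r)$-summand into the $(p+q+r)$-summand via coherent reassociations, so associativity of $m$ reduces to Mac Lane coherence for the associator in $\ca{V}$ together with associativity of addition in $\mathbb{N}$; the unit axioms reduce similarly to the unitor coherences and to $0+n=n=n+0$. These are routine diagram chases once the preservation isomorphisms are named, so I would not belabour them.

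The heart of the argument is the universal property: given any monoid $(A,m_A,\eta_A)$ and any map $f\colon X\to A$ in $\ca{V}$, I must produce a unique monoid morphism $\bar{f}\colon LX\to A$ with $\bar{f}\circ\iota_1=f$ (where $\iota_1\colon X=X^{\otimes 1}\hookrightarrow LX$ is the inclusion of generators). I would define $\bar{f}$ summandwise: on $X^{\otimes n}$ it is the composite $X^{\otimes n}\xrightarrow{f^{\otimes n}}A^{\otimes n}\xrightarrow{m_A^{(n)}}A$, where $m_A^{(n)}$ is the $n$-fold iterated multiplication ($m_A^{(0)}=\eta_A$, $m_A^{(1)}=\id_A$), and then use the universal property of the coproduct to glue these into a single arrow out of $LX$. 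One checks $\bar{f}$ respects $\eta$ and $m$: compatibility with the unit is immediate from the $n=0$ summand, while compatibility with $m$ reduces on each $(p,q)$-summand to the identity $m_A^{(p+q)}=m_A\circ(m_A^{(p)}\otimes m_A^{(q)})$, which is exactly the (generalized) associativity of the monoid $A$.

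The main obstacle — or rather the only point demanding genuine care — is uniqueness, since everything preceding is forced by the construction. Here I would argue that any monoid morphism $g\colon LX\to A$ extending $f$ is determined on each summand: the restriction of $g$ to $X^{\otimes n}$ must equal $m_A^{(n)}\circ f^{\otimes n}$ because $X^{\otimes n}$ is the image under $m$ of the $n$-fold tensor of the generating summand $\iota_1$, and $g$ preserves the multiplication and sends $\iota_1$ to $f$. Formally this is an induction on $n$: the base cases $n=0,1$ use preservation of $\eta$ and the extension condition, and the inductive step uses that the summand inclusion $X^{\otimes(n+1)}\hookrightarrow LX$ factors through $m\circ(\iota_1\otimes(\text{incl of }X^{\otimes n}))$. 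Since $g$ agrees with $\bar{f}$ on every summand, the coproduct universal property forces $g=\bar{f}$. The subtlety to watch throughout is bookkeeping the coherence isomorphisms so that the iterated multiplications $m_A^{(n)}$ and iterated tensor powers are unambiguously defined; invoking Mac Lane coherence once at the outset lets me treat $X^{\otimes n}$ and $A^{\otimes n}$ as strictly associative, which removes the remaining clutter.
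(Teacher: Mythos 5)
Your proof is correct. The paper gives no proof of this proposition at all --- it is quoted as the classical free-monoid (tensor-algebra) construction --- and your argument, equipping $\coprod_{n\in\mathbb{N}}X^{\otimes n}$ with the concatenation multiplication via preservation of coproducts by $\otimes$, verifying the universal property summandwise against $m_A^{(n)}\circ f^{\otimes n}$, and proving uniqueness by induction on the tensor power using preservation of the unit and multiplication, is precisely the standard verification that the paper leaves implicit; since a universal arrow $\iota_1\colon X\to S(LX)$ for every object $X$ assembles into a left adjoint, nothing further is needed.
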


On the other hand, the existence of the cofree comonoid is more problematic, and has been studied from various authors
mainly in the context of vector spaces or modules over a commutative ring. We are interested in Porst's approach
\cite{FundConstrCoalgCorComod,MonComonBimon} which in particular focuses on local presentability properties
inherited from $\ca{V}$.

Recall that an \emph{accessible} category $\ca{C}$ is one with a small set 
of $\kappa$-presentable objects $C$ (i.e. $\ca{C}(C,-)$ preserves $\kappa$-filtered colimits) such that every object
in $\ca{C}$ is the $\kappa$-filtered colimit of presentable objects, for some regular cardinal $\kappa$. A functor
between accessible functors is \emph{accessible} if it preserves $\kappa$-filtered colimits.
A \emph{locally presentable} category is an accessible category which is cocomplete. 
More on the theory of locally presentable categories can be found in the standard \cite{LocallyPresentable}.
An important fact is that any cocontinuous functor from a locally presentable category has a right adjoint;
this can be seen as a corollary to the following adjoint functor theorem, since presentable objects
form a small dense subcategory of $\ca{C}$.

\begin{thm}\cite[5.33]{Kelly}\label{Kellyadj}
If the cocomplete $\ca{C}$ has a small dense subcategory, every
cocontinuous $S:\ca{C}\to\ca{B}$ has a right adjoint.
\end{thm}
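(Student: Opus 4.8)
The final statement is Theorem~\ref{Kellyadj}, cited as \cite[5.33]{Kelly}. Let me sketch how I would prove it.

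The plan is to apply the General Adjoint Functor Theorem, exploiting the density of the small subcategory to bypass the usual solution-set condition. Let me denote the small dense subcategory by $\ca{A} \hookrightarrow \ca{C}$, with inclusion $J\colon \ca{A}\to\ca{C}$. Density means that every object $C\in\ca{C}$ is canonically the colimit of the diagram $J/C \to \ca{A}\xrightarrow{J}\ca{C}$ of objects of $\ca{A}$ mapping into $C$; equivalently, the nerve functor $\ca{C}(J-,-)\colon\ca{C}\to[\ca{A}^\op,\Set]$ is fully faithful.

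First I would observe that since $S\colon\ca{C}\to\ca{B}$ is cocontinuous and $\ca{C}$ is cocomplete, constructing a right adjoint amounts to showing that for each $B\in\ca{B}$ the functor $\ca{B}(S-,B)\colon\ca{C}^\op\to\Set$ is representable. The key move is to reduce representability of this functor to data indexed by the small category $\ca{A}$. Because $C\cong\colim_{(a,x)\in J/C} Ja$ and $S$ preserves this colimit, we have $SC\cong\colim SJa$, so that
\begin{displaymath}
\ca{B}(SC,B)\cong\lim_{(a,x)\in (J/C)^\op}\ca{B}(SJa,B).
\end{displaymath}
This expresses the representing object we seek through the restriction of $S$ to $\ca{A}$. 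The candidate right adjoint $G$ should therefore be built so that $\ca{C}(C,GB)$ matches the right-hand side naturally in $C$.

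Next I would construct $GB$ explicitly using the density presentation. Since $\ca{C}$ is cocomplete and $\ca{A}$ is small, one can form, for each $B\in\ca{B}$, an object of $\ca{C}$ as a suitable weighted limit or via the comma-category construction that represents $a\mapsto\ca{B}(SJa,B)$ as a presheaf on $\ca{A}$; density then lets this presheaf be realized inside $\ca{C}$. Concretely, the small dense subcategory furnishes a canonical solution set for $B$ (the arrows $SJa\to B$ with $a\in\ca{A}$), and one forms $GB$ as the limit over this data, guaranteeing the universal property $\ca{C}(C,GB)\cong\ca{B}(SC,B)$ by the colimit computation above. Naturality in both variables and functoriality of $G$ would then be routine to verify from the construction.

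The main obstacle is showing that the small dense subcategory genuinely supplies a \emph{solution set} in the sense required by Freyd's adjoint functor theorem, i.e.\ that every morphism $SC\to B$ factors through some $SJa\to B$ up to the comparison maps, so that the limit over the small indexing data actually computes the representing object rather than merely approximating it. This hinges precisely on density: the factorization of $SC\to B$ through the colimit cocone $SJa\to SC$ follows because $SC$ is a colimit of the $SJa$, and it is here that cocontinuity of $S$ and density of $\ca{A}$ must be combined carefully to ensure the candidate $GB$ has the correct universal property and not a weaker one. Once the solution-set condition is secured from density, the remaining hypotheses of the adjoint functor theorem---$\ca{C}$ complete (in fact we only need the relevant limits, supplied by cocompleteness together with local smallness) and $S$ preserving limits on the nose as needed---close out the argument.
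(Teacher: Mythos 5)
The paper gives no proof of this theorem (it is quoted from Kelly's book), so your argument must stand against the standard proof, and it does not: the central step is wrong. You claim density ensures that ``every morphism $SC\to B$ factors through some $SJa\to B$'', but density provides canonical morphisms $Ja\to C$ (the legs of the cocone exhibiting $C\cong\colim_{J/C}Ja$), not morphisms $C\to Ja$. Hence a map $g\colon SC\to B$ merely \emph{restricts} along the legs $SJa\to SC$ to a compatible family $g\circ Sx$; it does not factor through any $SJa\to B$, which is what weak terminality of your proposed set in the comma category $(S\downarrow B)$ --- i.e.\ the solution-set condition for a \emph{right} adjoint --- would require. Concretely, take $\ca{C}=\Set$, $\ca{A}$ the full subcategory on a one-point set $1$ (which is dense), $S=\mathrm{id}$, and $B$ a two-element set: a map $C\to B$ factors through $1$ only if it is constant, so your set $\{f\colon S1\to B\}$ is not a solution set, although $\mathrm{id}$ obviously has a right adjoint. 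Your limit formula $\ca{B}(SC,B)\cong\lim_{(J/C)^\op}\ca{B}(SJa,B)$ is correct, but it only says $g$ is \emph{determined} by its restrictions, which is strictly weaker than the factorization Freyd's theorem needs. The same directional confusion recurs at the end, where you invoke the form of the adjoint functor theorem with ``$\ca{C}$ complete and $S$ preserving limits'': that is the statement for left adjoints, not the dual one relevant here; and your fallback that the needed limits are ``supplied by cocompleteness together with local smallness'' is not a valid principle (the ordinals form a locally small cocomplete poset with no terminal object). Completeness of $\ca{C}$ is a \emph{consequence} of this theorem, not an available hypothesis.

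The repair goes in the colimit direction, which also shows your construction of $GB$ as a \emph{limit} of the data $SJa\to B$ cannot be right: in the example above the correct value is $GB=B\cong\colim_{b\in B}1$, a colimit of objects of $\ca{A}$. The standard argument runs as follows. Density makes the nerve $N=\ca{C}(J-,-)\colon\ca{C}\to[\ca{A}^\op,\Set]$ fully faithful, and cocompleteness gives it a left adjoint $L$ (realization), so $\ca{C}$ is reflective in presheaves. For $B\in\ca{B}$ put $F_B=\ca{B}(SJ-,B)$ and $GB:=LF_B=\colim_{(a,f)\in\mathrm{el}(F_B)}Ja$; cocontinuity of $S$ turns the family of maps $f\colon SJa\to B$ into a canonical $\epsilon\colon S(GB)\to B$. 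Density then shows $(GB,\epsilon)$ is weakly terminal in $(S\downarrow B)$: given $(C,g)$, the assignment $(a,x)\mapsto(a,g\circ Sx)$ defines a functor $J/C\to\mathrm{el}(F_B)$ over $\ca{A}$, inducing $h\colon C\cong\colim_{J/C}Ja\to GB$ with $\epsilon\circ Sh=g$, because the legs $Sx$ are jointly epimorphic. Since $(S\downarrow B)$ is cocomplete (using cocontinuity of $S$) and locally small, the dual General Adjoint Functor Theorem now produces a terminal object, which is exactly the value of the right adjoint at $B$. Kelly's own treatment packages this via density presentations (the canonical colimits form a density presentation by small colimits, which $S$ preserves), but either way the two ingredients your proposal misses are: $GB$ is a \emph{colimit} of objects of $\ca{A}$ obtained through the realization $L$, and what density actually delivers is weak terminality of that object, not the backwards factorization you assert.
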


Going back to monoids and comonoids, the following result establishes their local presentability under certain assumptions.
We then briefly sketch parts of the proof because it will be later generalized, \cref{VCocatlocpresent}.

\begin{prop}~\cite[2.6-2.7]{MonComonBimon}\label{moncomonadm}
Suppose $\ca{V}$ is a locally presentable mo\-noi\-dal category, such that $\otimes$ preserves filtered colimits
in both variables.

$(1)$ $\Mon(\ca{V})$ is finitary monadic over $\ca{V}$ and locally presentable.

$(2)$ $\Comon(\ca{V})$ is a locally presentable category and comonadic over $\ca{V}$.
\end{prop}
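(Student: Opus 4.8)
The plan is to treat the two statements in parallel, exploiting the fact that the structure maps of a monoid point \emph{into} the object while those of a comonoid point \emph{out of} it; this dichotomy makes limits well behaved for $\Mon(\ca{V})$ and colimits well behaved for $\Comon(\ca{V})$. In both cases the strategy is the same: first establish local presentability via a sketch-theoretic (inserter/equifier) description, and only then deduce the existence of the free/cofree functor and the (co)monadicity, rather than trying to construct those functors by hand.

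For $(1)$, I would first show that the forgetful $S\colon\Mon(\ca{V})\to\ca{V}$ \emph{creates limits} and \emph{creates filtered colimits}: a limit (resp.\ filtered colimit) of the underlying objects carries a unique monoid structure, the latter because $\otimes$ preserves filtered colimits, so that $(\colim_i A_i)\otimes(\colim_i A_i)\cong\colim_i(A_i\otimes A_i)$ by cofinality of the diagonal and the multiplications assemble. Next I would exhibit $\Mon(\ca{V})$ as accessible: it is obtained from $\ca{V}$ by inserters (adjoining $m\colon A\otimes A\to A$ and $\eta\colon I\to A$) followed by equifiers (imposing associativity and unitality), and since $\otimes$ is finitary all functors involved are accessible, so the resulting category is accessible \cite{LocallyPresentable}. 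An accessible category that is complete is locally presentable, so completeness from the first step yields local presentability of $\Mon(\ca{V})$. The free monoid functor $L\dashv S$ then exists because a continuous accessible functor between locally presentable categories is a right adjoint (recovering the explicit description of \cref{freemonoidprop} when $\otimes$ additionally preserves the relevant coproducts); finally $S$ reflects isomorphisms and creates coequalizers of $S$-split pairs, so Beck's theorem gives monadicity, and the monad $SL$ is finitary because $S$ preserves filtered colimits while $L$ preserves all colimits.

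For $(2)$ the dual strategy applies, but now colimits do the work: $U\colon\Comon(\ca{V})\to\ca{V}$ \emph{creates all colimits}, since a colimit of the underlying objects receives a unique comultiplication and counit from the universal property (the comonoid structure emanates \emph{from} the colimit, so no preservation hypothesis beyond cocompleteness of $\ca{V}$ is needed). As before, $\Comon(\ca{V})$ is accessible through inserters and equifiers built from the finitary functor $C\mapsto C\otimes C$ together with the constant functor at $I$, and an accessible \emph{cocomplete} category is locally presentable. Now $U$ is cocontinuous out of a locally presentable category, hence by \cref{Kellyadj} it admits a right adjoint $R$, the cofree comonoid functor, whose existence is otherwise delicate. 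Comonadicity then follows from the dual Beck theorem, by checking that $U$ reflects isomorphisms and creates equalizers of those coreflexive pairs that are split by the counit of $U\dashv R$.

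I expect the genuine obstacle to lie in part $(2)$: unlike the free monoid, the cofree comonoid resists an explicit formula, and $U$ does \emph{not} create arbitrary equalizers (subobjects of comonoids need not be comonoids), so the cocontinuity-plus-local-presentability route through \cref{Kellyadj} is essential precisely to sidestep a direct construction. The remaining care then goes into verifying the dual Beck condition for the restricted class of $U$-split coreflexive pairs, where the cosplitting supplies the equalizer that $U$ fails to create in general.
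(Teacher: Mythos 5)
Your proposal is correct and follows essentially the same route as the paper's (sketched) proof: accessibility of $\Mon(\ca{V})$ and $\Comon(\ca{V})$ via inserters and equifiers of accessible functors (the paper packages the inserters as the functor (co)algebra categories $\Alg T_+$ and $\Coalg T_\times$, but explicitly notes these are inserters), completeness/cocompleteness created by the forgetful functors to conclude local presentability, \cref{Kellyadj} applied to the cocontinuous $U$ to obtain the cofree comonoid functor, and the Precise (co)Monadicity Theorem via creation of (co)equalizers of split pairs. The only cosmetic difference is that the paper verifies the split-equalizer condition for $U$ by factoring through the comonadic $\Coalg T_\times\to\ca{V}$ and the limit-creating inclusion, whereas you verify it directly from the absoluteness of the given cosplitting; both verifications are standard and equivalent in content.
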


The proof uses categories of \emph{functor algebras} and \emph{coalgebras} $\Alg F$ and $\Coalg F$ for any endofunctor
$F\colon\ca{C}\to\ca{C}$, namely objects $A$ equipped with plain arrows $FA\to A$ or $A\to FA$ in $\ca{C}$
and morphisms that commute with them. Their most important properties are the following.

\begin{lem}\label{functoralgebrasprops}
For any endofunctor $F\colon\ca{C}\to\ca{C}$,
\begin{enumerate}
 \item\label{one} $\Alg F\to\ca{C}$ creates all limits and those colimits preserved by $F$;
 \item\label{two} $\Coalg F\to\ca{C}$ creates all colimits and those limits preserved by $F$;
 \item\label{three} if $\ca{C}$ is locally presentable and $F$ preserves filtered colimits, $\Alg F$ and $\Coalg F$ are locally presentable.
\end{enumerate}
\end{lem}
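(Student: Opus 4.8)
The plan is to treat \eqref{one} and \eqref{two} by the familiar creation-of-(co)limits argument, and then to derive the local presentability in \eqref{three} from these two facts together with the accessibility of inserters. I expect the genuinely substantive input to lie entirely in \eqref{three}.

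For \eqref{one}, I would take a diagram $D\colon J\to\Alg F$, write $D(j)=(A_j,a_j\colon FA_j\to A_j)$ with underlying morphisms $d_\phi$ for $\phi\colon j\to j'$ in $J$, and suppose the underlying diagram has a limit $L=\lim_j A_j$ in $\ca{C}$ with projections $\pi_j$. The composites $a_j\circ F\pi_j\colon FL\to A_j$ form a cone over the underlying diagram: compatibility $d_\phi\circ a_j\circ F\pi_j=a_{j'}\circ F\pi_{j'}$ is exactly the condition that each $D(\phi)$ be an algebra map. Hence the universal property of $L$ produces a unique $\ell\colon FL\to L$ with $\pi_j\circ\ell=a_j\circ F\pi_j$, which is precisely the statement that every $\pi_j$ is an $F$-algebra morphism. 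A short verification then shows $(L,\ell)$ is a limit of $D$ in $\Alg F$ and that $\ell$ is the only algebra structure lifting the limiting cone, so $U\colon\Alg F\to\ca{C}$ creates the limit. For a colimit $C=\colim_j A_j$ that is \emph{preserved} by $F$, the object $FC$ is the colimit of $j\mapsto FA_j$, so the cocone $\iota_j\circ a_j$ induces a unique $c\colon FC\to C$ with $c\circ F\iota_j=\iota_j\circ a_j$, and the same bookkeeping shows $U$ creates this colimit; note the $F$-preservation is exactly what makes the universal property of $FC$ available.

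Statement \eqref{two} I would obtain from \eqref{one} by duality, using the identification $\Coalg_{\ca{C}}F\cong(\Alg_{\ca{C}^{\op}}F^{\op})^{\op}$ for the induced endofunctor $F^{\op}\colon\ca{C}^{\op}\to\ca{C}^{\op}$. Under this passage limits and colimits are interchanged, and ``colimits preserved by $F^{\op}$'' corresponds to ``limits preserved by $F$'', so creation of all limits and $F^{\op}$-preserved colimits in the algebra picture becomes creation of all colimits and $F$-preserved limits in the coalgebra picture. (Concretely, the arrow-reversed construction endows any colimit in $\ca{C}$ with a canonical coalgebra structure using no hypothesis on $F$, and an $F$-preserved limit with a coalgebra structure via the universal property of $F$ applied to the limit.)

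For \eqref{three}, preservation of filtered colimits makes $F$ an accessible (indeed finitary) functor, while $\ca{C}$ is accessible since it is locally presentable. I would then observe that $\Alg F$ is the inserter of the pair $F,\id_{\ca{C}}\colon\ca{C}\rightrightarrows\ca{C}$ and $\Coalg F$ the inserter of $\id_{\ca{C}},F$; because the $2$-category of accessible categories and accessible functors is closed under inserters (one of the PIE-limits, by the Makkai--Par\'e/Ad\'amek--Rosick\'y theory), both $\Alg F$ and $\Coalg F$ are accessible. Finally I invoke the equivalence ``accessible and complete $\Leftrightarrow$ locally presentable $\Leftrightarrow$ accessible and cocomplete'': by \eqref{one} the functor $\Alg F\to\ca{C}$ creates all limits, so $\Alg F$ is complete and therefore locally presentable; dually, by \eqref{two} the functor $\Coalg F\to\ca{C}$ creates all colimits, so $\Coalg F$ is cocomplete and therefore locally presentable. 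The main obstacle is precisely this accessibility input: the creation arguments in \eqref{one} and \eqref{two} are routine, but recognizing $\Alg F$ and $\Coalg F$ as inserters and importing closure of accessible categories under weighted limits is the load-bearing step, after which completeness (resp.\ cocompleteness) upgrades accessibility to local presentability.
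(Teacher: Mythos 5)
Your proposal is correct and follows essentially the same route as the paper: the creation statements in \cref{one} and \cref{two} are the standard arguments (which the paper cites rather than reproves), and for \cref{three} you use exactly the paper's stated mechanism, namely expressing $\Alg F=\B{Ins}(F,\id_\ca{C})$ and $\Coalg F=\B{Ins}(\id_\ca{C},F)$ as inserters and invoking the Makkai--Par\'e Limit Theorem to get accessibility, then upgrading to local presentability via the (co)completeness supplied by \cref{one} and \cref{two}.
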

Notably, these categories can be expressed as specific \emph{inserters} $\Alg F=\B{Ins}(F,\id_\ca{C})$ and $\Coalg F=\B{Ins}(\id_\ca{C},F)$
and \cref{three} then follows from the more general `Limit Theorem' \cite[5.1.6]{MakkaiPare}.

For the endofunctors with mappings $T_+(C)=(C\otimes C)+I$ and $T_\times(C)=(C\otimes C)\times I$,
$\Mon(\ca{V})$ is a complete full subcategory of the locally presentable and finitary monadic over $\ca{V}$ category $\Alg T_+$,
and $\Comon(\ca{V})$ is a cocomplete full subcategory of the locally presentable and comonadic over $\ca{V}$ $\Coalg T_\times$.

Specifically for comonoids, local presentability is deduced by expressing it as an \emph{equifier} of a triple of natural transformations
between accessible functors. Then comonadicity follows: in the commutative triangle
\begin{equation}\label{diagforComoncomonadicity}
\xymatrix @C=.5in @R=.2in
{\Comon(\ca{V})\ar@{-->}[dr]_U\ar@{^(->}[r] & \Coalg T_\times \ar[d] \\ & \ca{V}}
\end{equation}
where all categories are locally presentable, both forgetful functors to $\ca{V}$ have a right adjoint 
by Theorem \ref{Kellyadj}, since they are cocontinuous. Moreover, the right leg is comonadic by \cref{two},
and the inclusion preserves and reflects all limits. Therefore it creates equalizers of split pairs and so does $U$,
which then satisfies the conditions of Precise Monadicity Theorem.
In particular, the existence of the \emph{cofree comonoid functor} $R:\ca{V}\to\Comon(\ca{V})$ is established.

Another piece of structure inherited from $\ca{V}$ to $\Comon(\ca{V})$ in the locally presentable context is monoidal closedness,
again obtained from \cref{Kellyadj} for an adjoint of $-\otimes C\colon\Comon(\ca{V})\to\Comon(\ca{V})$.

\begin{prop}\cite[3.2]{MonComonBimon}\label{Comonmonclosed}
If $\ca{V}$ is a locally presentable braided monoidal closed category, $\Comon(\ca{V})$ is also monoidal closed.
\end{prop}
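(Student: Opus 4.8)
The plan is to produce the monoidal closed structure on $\Comon(\ca{V})$ by exhibiting, for each comonoid $C$, a right adjoint to the endofunctor $-\otimes C\colon\Comon(\ca{V})\to\Comon(\ca{V})$, and then assembling these into an internal hom bifunctor. Since $\ca{V}$ is braided, $\Comon(\ca{V})$ is already a (braided) monoidal category with tensor inherited from $\ca{V}$, and by \cref{moncomonadm}(2) it is locally presentable; in particular it is cocomplete and its presentable objects form a small dense subcategory. Thus the hypotheses of \cref{Kellyadj} are met, and the entire statement reduces to checking that each functor $-\otimes C$ is \emph{cocontinuous}.

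To verify cocontinuity, the key point is that the forgetful functor $U\colon\Comon(\ca{V})\to\ca{V}$ is comonadic (\cref{moncomonadm}(2)), so that---dually to the monadic case, and consistently with \cref{functoralgebrasprops}(\ref{two})---it creates all colimits; in particular colimits of comonoids are computed on underlying objects in $\ca{V}$. Moreover $U$ is strong monoidal: the comonoid $D\otimes C$ has underlying $\ca{V}$-object $UD\otimes UC$, with comultiplication and counit built from those of $D$ and $C$ by means of the braiding. Consequently there is an equality of composites $U\circ(-\otimes C)=(-\otimes UC)\circ U$ of functors $\Comon(\ca{V})\to\ca{V}$.

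Now let $D=\colim_i D_i$ be any colimit in $\Comon(\ca{V})$. Because $U$ creates this colimit we have $UD=\colim_i UD_i$ in $\ca{V}$, and because $\ca{V}$ is monoidal closed the functor $-\otimes UC$ preserves colimits, so
\[
U(D\otimes C)=UD\otimes UC=\colim_i\,(UD_i\otimes UC)=\colim_i\,U(D_i\otimes C).
\]
Since $U$ creates colimits, this computation lifts to show that the canonical cocone exhibits $D\otimes C=\colim_i(D_i\otimes C)$ in $\Comon(\ca{V})$; that is, $-\otimes C$ is cocontinuous. Applying \cref{Kellyadj} yields for each $C$ a right adjoint $[C,-]$, and by the parametrized version of the adjoint correspondence these are natural in $C$, so they organize into an internal hom $[-,-]\colon\Comon(\ca{V})^\op\times\Comon(\ca{V})\to\Comon(\ca{V})$ making $\Comon(\ca{V})$ monoidal closed.

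The main obstacle I anticipate is not the adjoint-functor-theorem step, which is essentially formal once cocontinuity is in hand, but the bookkeeping surrounding comonadicity and the monoidal structure of $U$. One must confirm that the forgetful functor genuinely creates \emph{arbitrary} colimits (the dual of ``monadic creates limits''), and that the inclusion $\Comon(\ca{V})\hookrightarrow\Coalg T_\times$ transports this creation property correctly; equally, one should check explicitly that $U$ is strong monoidal, since the identity $U\circ(-\otimes C)=(-\otimes UC)\circ U$---which drives the whole cocontinuity argument---depends on the tensor of comonoids having precisely the expected underlying object.
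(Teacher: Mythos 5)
Your proposal is correct and follows essentially the same route as the paper: it reduces the statement to cocontinuity of $-\otimes C$ via the comonadic forgetful functor $U\colon\Comon(\ca{V})\to\ca{V}$ (which creates all colimits and is strict monoidal), and then invokes \cref{Kellyadj} on the locally presentable $\Comon(\ca{V})$, exactly as the paper indicates for this proposition and spells out in detail for its many-object generalization in \cref{VCocatclosed}.
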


\subsection{Universal measuring comonoid}\label{sec:actionenrich}

One of the basic goals of \cite{Measuringcomonoid} was to estabilsh an enrichment
of the category of monoids in the category of comonoids, under certain assumptions on $\ca{V}$.
Below we summarize certain results; details can be found in Sections 4 and 5 therein.

Recall \cite{AnoteonActions} that an \emph{action} of a monoidal category on an ordinary one is given by a functor
$*\colon\ca{V}\times\ca{D}\to\ca{D}$ expressing that $\ca{D}$ is a pseudomodule for the pseudomonoid $\ca{V}$ in
the monoidal 2-category $(\B{Cat},\times,\B{1})$. In more detail, we have natural isomorphisms with components
\begin{equation}\label{actionmaps}
\chi_{X,Y,D}\colon(X\otimes Y)*D\simrightarrow X*(Y*D)\;\textrm{ and }\;\nu_{D}\colon I*D\simrightarrow D
\end{equation}
satisfying compatibility conditions. If $*$ is an action, then $*^\op$ is an action too.

As a central example for our purposes, we have the action of the opposite monoidal category on itself via the internal hom,
see \cite[3.7\&5.1]{Measuringcomonoid}.

\begin{lem}\label{inthomaction}
Suppose $\ca{V}$ is a braided monoidal closed category. The internal hom $[-,-]:\ca{V}^\mathrm{op}\times\ca{V}\to\ca{V}$
constitutes an action of $\ca{V}^\mathrm{op}$ on $\ca{V}$. Moreover, the induced $\Mon[-,-]:\Comon(\ca{V})^\mathrm{op}\times
\Mon(\ca{V})\to\Mon(\ca{V})$ \cref{defMon[]} is an action of the monoidal $\Comon(\ca{V})^\mathrm{op}$
on $\Mon(\ca{V})$. Similarly for their opposite functors.
\end{lem}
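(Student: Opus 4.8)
The plan is to produce the action structure first at the level of $\ca{V}$ and then transport it to the monoid/comonoid level by functoriality of the $\Mon(-)$ construction of \cref{monf}. For the first assertion I would take as the structure isomorphisms of \cref{actionmaps} precisely the canonical maps supplied by the closed structure, namely $\chi_{X,Y,D}\colon[X\otimes Y,D]\simrightarrow[X,[Y,D]]$ and $\nu_D\colon[I,D]\simrightarrow D$. Their existence and invertibility is the standard currying isomorphism, obtained by the Yoneda lemma from the chain of natural bijections $\ca{V}(W,[X\otimes Y,D])\cong\ca{V}(W\otimes(X\otimes Y),D)\cong\ca{V}((W\otimes X)\otimes Y,D)\cong\ca{V}(W\otimes X,[Y,D])\cong\ca{V}(W,[X,[Y,D]])$ coming from the adjunctions $-\otimes Z\dashv[Z,-]$; the monoidal category that acts is $\ca{V}^\op$, which accounts for the direction of $\chi$. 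The two coherence axioms for an action (the pentagon for a triple tensor and the triangle involving $\nu$) then follow from monoidal coherence in $\ca{V}$ together with naturality of these adjunction bijections, being the images under Yoneda of the associativity pentagon and the unit triangle of $\ca{V}$.

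For the second assertion the key observation is that $\Mon[-,-]$ of \cref{defMon[]} is obtained by applying $\Mon(-)$ to the lax monoidal functor $[-,-]\colon\ca{V}^\op\times\ca{V}\to\ca{V}$, under the identification $\Mon(\ca{V}^\op\times\ca{V})\cong\Mon(\ca{V}^\op)\times\Mon(\ca{V})\cong\Comon(\ca{V})^\op\times\Mon(\ca{V})$. Since $\ca{V}$ is braided, $\Comon(\ca{V})$ and hence $\Comon(\ca{V})^\op$ are monoidal, and I want the action structure of $\Mon[-,-]$ to be carried by the \emph{same} underlying maps $\chi$ and $\nu$, now regarded as morphisms of monoids. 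The crucial step is therefore to verify that $\chi_{C,C',A}$ and $\nu_A$ are homomorphisms for the convolution monoid structures; equivalently, that as $C,C'$ range over comonoids and $A$ over monoids, $\chi$ and $\nu$ are \emph{monoidal} natural transformations between the relevant lax monoidal functors $\ca{V}^\op\times\ca{V}^\op\times\ca{V}\to\ca{V}$ (for $\chi$, between $(C,C',A)\mapsto[C\otimes C',A]$ and $(C,C',A)\mapsto[C,[C',A]]$, each a composite of $[-,-]$ with a strong monoidal tensoring functor; for $\nu$, between $[I,-]$ and $\id$). Granting this, applying $\Mon(-)$, which is moreover $2$-functorial in that it sends monoidal natural transformations to natural transformations with monoid-morphism components, produces the required structure maps $\Mon\chi$ and $\Mon\nu$.

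The action axioms at the enriched level are then immediate: the forgetful $\Mon(\ca{V})\to\ca{V}$ is faithful, so an equation between parallel monoid morphisms holds as soon as it holds between their underlying $\ca{V}$-maps, where the pentagon and triangle are exactly the identities already established in the first part. Finally, \textbf{``}similarly for their opposite functors\textbf{''} is the remark recorded after \cref{actionmaps} that $*^\op$ is an action whenever $*$ is, applied to $[-,-]$ and to $\Mon[-,-]$.

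I expect the main obstacle to be the middle step, namely checking that $\chi$ and $\nu$ are monoidal natural transformations, i.e.\ that they respect the convolution multiplications. This is where the lax structure map of the internal hom, which is built from the strong monoidal structure of $\otimes$ and hence from the braiding, must be shown to interact correctly with the closed-structure isomorphism $\chi$: one compares the convolution product on $[C\otimes C',A]$ induced by the tensor comonoid $C\otimes C'$ with the iterated convolution on $[C,[C',A]]$, and it is precisely the braiding coherence that makes these agree. The unit case for $\nu$ is by comparison routine.
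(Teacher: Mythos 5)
There is nothing in the paper to compare against here: \cref{inthomaction} is stated without proof, being recalled from \cite[3.7\&5.1]{Measuringcomonoid}, so your proposal can only be measured against that standard argument — and it follows essentially the same route, correctly. The first half is exactly right: the action constraints are the currying isomorphisms $[X\otimes Y,D]\cong[X,[Y,D]]$ and $[I,D]\cong D$ obtained by Yoneda from the hom-bijections, and the module axioms are the Yoneda images of the pentagon and triangle of $\ca{V}$; note that this half needs only closedness, not the braiding (the braiding enters only later, to make $\Comon(\ca{V})^\op$ monoidal and $[-,-]$ lax monoidal). The second half is also the right reduction: $\Mon[-,-]$ is $\Mon(-)$ applied to the lax monoidal $[-,-]$ under $\Mon(\ca{V}^\op\times\ca{V})\cong\Comon(\ca{V})^\op\times\Mon(\ca{V})$, monoidal natural transformations between lax monoidal functors have monoid-morphism components, and the action axioms then transfer along the faithful forgetful $\Mon(\ca{V})\to\ca{V}$, exactly as you say.

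Two caveats. A minor one: $(C,C',A)\mapsto[C,[C',A]]$ is the composite of $[-,-]$ with $\id\times[-,-]$, hence a composite of two \emph{lax} monoidal functors rather than of $[-,-]$ with a strong monoidal functor; this is harmless, since lax monoidal functors compose. The substantive one: the single genuinely computational step — that $\chi$ and $\nu$ are monoidal natural transformations, i.e.\ that currying carries the convolution monoid $[C\otimes C',A]$ (whose domain comonoid $C\otimes C'$ involves the braiding) to the iterated convolution monoid $[C,[C',A]]$ — is correctly identified and located in your write-up, but not actually carried out. The claim is true (it is the diagram chase underlying all of Sweedler theory), so your outline does complete to a full proof; as written, though, that verification remains an IOU rather than a finished step.
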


A very important fact is that given a category $\ca{D}$ with an action from a monoidal category $\ca{V}$ with a parametrized
adjoint, we obtain a $\ca{V}$-enriched category. This follows from a much stronger result of \cite{enrthrvar}
for categories enriched in bicategories; details can be found in \cite{AnoteonActions} and \cite[\S 4.3]{PhDChristina}.
For the explicit definitions of (co)tensored enriched categories, see \cite[3.7]{Kelly}.

\begin{thm}\label{actionenrich}
Suppose that $\ca{V}$ is a monoidal category which acts on a category $\ca{D}$ via a functor 
$*:\ca{V}\times\ca{D}\to\ca{D}$, such that $-*D$ has a right adjoint $F(D,-)$ for every $D\in\ca{D}$ with a natural isomorphism
\begin{displaymath}
\ca{D}(X*D,E)\cong\ca{V}(X,F(D,E)).
\end{displaymath}
Then we can enrich $\ca{D}$ in $\ca{V}$, in the sense that there is a $\ca{V}$-category $\underline{\ca{D}}$
with hom-objects $\underline{\ca{D}}(A,B)=F(A,B)$ and underlying category $\ca{D}$.

Moreover, if $\ca{V}$ is monoidal closed, the enrichment is \emph{tensored}, with $X*D$ the tensor of $X{\in}\ca{V}$ and $D{\in}\ca{D}$.
If $\ca{V}$ is moreover braided, the enrichment is \emph{cotensored} if $X*-$ has a right adjoint;
finally, we can also enrich $\ca{D}^\op$ in $\ca{V}$.
\end{thm}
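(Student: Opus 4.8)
The plan is to manufacture the $\ca{V}$-category $\underline{\ca{D}}$ directly from the parametrized adjunction $(-*D)\dashv F(D,-)$ and the coherence data $\chi,\nu$ of the action, transposing every structure map across the isomorphism $\ca{D}(X*D,E)\cong\ca{V}(X,F(D,E))$. Write $\mathrm{ev}_{A,B}\colon F(A,B)*A\to B$ for the counit of this adjunction. First I would take the hom-objects to be $\underline{\ca{D}}(A,B)=F(A,B)$, define the unit $j_A\colon I\to F(A,A)$ as the transpose of $\nu_A\colon I*A\simrightarrow A$, and define composition $M_{A,B,C}\colon F(B,C)\otimes F(A,B)\to F(A,C)$ as the transpose of the composite
\begin{displaymath}
(F(B,C)\otimes F(A,B))*A\xrightarrow{\chi}F(B,C)*\bigl(F(A,B)*A\bigr)\xrightarrow{1*\mathrm{ev}_{A,B}}F(B,C)*B\xrightarrow{\mathrm{ev}_{B,C}}C,
\end{displaymath}
built from the associativity isomorphism $\chi_{F(B,C),F(A,B),A}$ and the two counits.

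Next I would verify the $\ca{V}$-category axioms by transposing them back across the adjunction. The two unit axioms reduce to the triangle identities for $(-*D)\dashv F(D,-)$ together with the unit coherence $\nu$ of the action; the associativity axiom reduces to the pentagon-type coherence for $\chi$ combined with naturality of the counits $\mathrm{ev}$ and the interchange of their two nested applications. To identify the underlying category, I would compute, via $\nu_A$,
\begin{displaymath}
\ca{V}(I,\underline{\ca{D}}(A,B))=\ca{V}(I,F(A,B))\cong\ca{D}(I*A,B)\cong\ca{D}(A,B),
\end{displaymath}
and check that this bijection transports the induced identities and composition to those of $\ca{D}$, once more by transposition.

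For the tensoring claim, assuming $\ca{V}$ monoidal closed, I would show $F(X*D,E)\cong[X,F(D,E)]$ by Yoneda: for every $Y\in\ca{V}$,
\begin{displaymath}
\ca{V}(Y,F(X*D,E))\cong\ca{D}(Y*(X*D),E)\cong\ca{D}((Y\otimes X)*D,E)\cong\ca{V}(Y\otimes X,F(D,E))\cong\ca{V}(Y,[X,F(D,E)]),
\end{displaymath}
where the middle isomorphism is $\chi$ and the last is closedness; naturality in $Y$ together with Yoneda yield the required enriched representation, so $X*D$ is the tensor.

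For the cotensor I would assume in addition that $X*-\colon\ca{D}\to\ca{D}$ has a right adjoint $H(X,-)$, and claim $\langle X,E\rangle=H(X,E)$ works: a computation parallel to the previous one, now inserting $\ca{D}(X*(Y*D),E)\cong\ca{D}((X\otimes Y)*D,E)$ before applying closedness, gives $F(D,H(X,E))\cong[X,F(D,E)]$ naturally in $D$, the braiding being exactly what is needed to reconcile $X\otimes Y$ with $Y\otimes X$. Enriching $\ca{D}^\op$ then proceeds by running the identical construction on the opposite action $*^\op$, whose existence is recorded after \cref{actionmaps}. The main obstacle is the pair of coherence verifications of the second paragraph: forcing the associativity axiom to hold requires carefully tracking how the pentagon for $\chi$ interacts with the two nested counits, and it is precisely here that one either grinds through the diagram chase or, more cleanly, invokes the enrichment-through-variation theorem of \cite{enrthrvar} to obtain the axioms formally.
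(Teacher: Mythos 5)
Your construction of the enrichment itself is correct, and it is essentially the intended argument: the paper gives no inline proof of \cref{actionenrich}, deferring to \cite{enrthrvar}, \cite{AnoteonActions} and \cite[\S 4.3]{PhDChristina}, and what you write --- hom-objects $F(A,B)$, unit transposing $\nu_A$, composition transposing the $\chi$-plus-evaluations composite, identification of the underlying category via $\ca{V}(I,F(A,B))\cong\ca{D}(I*A,B)\cong\ca{D}(A,B)$, and the Yoneda computations for tensors and cotensors with the braiding entering exactly where you say --- is precisely the construction carried out in those references, so for the first three clauses your proposal matches the cited proof rather than offering a different route.

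There is, however, a genuine gap in your last step. The ``opposite action'' recorded after \cref{actionmaps} is the functor $*^\op\colon\ca{V}^\op\times\ca{D}^\op\to\ca{D}^\op$, which is an action of $\ca{V}^\op$ on $\ca{D}^\op$, not of $\ca{V}$. Running ``the identical construction'' on it would require, for each $E$, a right adjoint to $(-*^\op E)\colon\ca{V}^\op\to\ca{D}^\op$, which is the same as a \emph{left} adjoint to $(-*E)\colon\ca{V}\to\ca{D}$; this is not among the hypotheses, and even if it existed the output would be an enrichment of $\ca{D}^\op$ in $\ca{V}^\op$, not in $\ca{V}$. The correct route --- and the reason this clause is stated after, and under the hypotheses of, the cotensor clause --- is to use the cotensors: when $\ca{V}$ is braided, the assignment $(X,E)\mapsto H(X,E)$, viewed as a functor $\ca{V}\times\ca{D}^\op\to\ca{D}^\op$, is itself an action of $\ca{V}$ on $\ca{D}^\op$ (transposition only gives $H(X\otimes Y,E)\cong H(Y,H(X,E))$, and the braiding is needed to repair the order of $X$ and $Y$), and its parametrized adjoint is $F$ with its arguments exchanged:
\begin{displaymath}
\ca{D}^\op\bigl(H(X,E),D\bigr)=\ca{D}\bigl(D,H(X,E)\bigr)\cong\ca{D}(X*D,E)\cong\ca{V}\bigl(X,F(D,E)\bigr),
\end{displaymath}
so applying your main construction to \emph{this} action yields $\underline{\ca{D}^\op}(E,D)=F(D,E)$ as required. (Alternatively, braidedness alone lets one form the opposite of any $\ca{V}$-category by twisting composition with the braiding.) This is also how the theorem is used later in the paper: in \cref{monoidenrichment}, the tensor of $\Mon(\ca{V})$, which plays the role of $\ca{D}^\op$ there, is the adjoint $C\triangleright B$ of the cotensor, i.e.\ it comes from the cotensor action, not from $*^\op$.
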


By \cref{inthomaction}, the internal hom induces specific actions; their parametri\-zed adjoints
will induce the desired enrichment of monoids in comonoids. The following follows from \cref{Kellyadj}
applied to the locally presentable $\Comon(\ca{V})$ by \cref{moncomonadm}.

\begin{thm}\cite[Thm~4.1]{Measuringcomonoid}\label{measuringcomonoidprop}
If $\ca{V}$ is a locally presentable braided monoidal closed category, the functor
$\Mon[-,B]^\op\colon\Comon(\ca{V})\to\Mon(\ca{V})^\op$ has a right adjoint $P(-,B)$, i.e. there is a natural isomorphism
\begin{displaymath}
\Mon(\ca{V})(A,[C,B])\cong\Comon(\ca{V})(C,P(A,B)).
\end{displaymath}
\end{thm}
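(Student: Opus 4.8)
The plan is to obtain the right adjoint $P(-,B)$ as a formal consequence of Kelly's adjoint functor theorem (\cref{Kellyadj}), exactly as the preceding remark indicates. The functor whose right adjoint I seek is $S:=\Mon[-,B]^\op\colon\Comon(\ca{V})\to\Mon(\ca{V})^\op$. By \cref{moncomonadm}(2) the domain $\Comon(\ca{V})$ is locally presentable, hence cocomplete and equipped with a small dense subcategory (its $\kappa$-presentable objects), so the hypotheses on the source category in \cref{Kellyadj} hold automatically. The entire content of the argument therefore reduces to verifying that $S$ is cocontinuous; once this is done, \cref{Kellyadj} produces a right adjoint $R\colon\Mon(\ca{V})^\op\to\Comon(\ca{V})$, and setting $P(A,B):=R(A)$ the defining adjunction isomorphism $\Mon(\ca{V})^\op(SC,A)\cong\Comon(\ca{V})(C,RA)$ unwinds, using $\Mon(\ca{V})^\op(\Mon[C,B],A)=\Mon(\ca{V})(A,[C,B])$, precisely into the claimed bijection.

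To prove cocontinuity of $S$ I would factor the computation through the forgetful functors and reduce everything to underlying objects in $\ca{V}$. First I record the three ingredients: the forgetful $U\colon\Comon(\ca{V})\to\ca{V}$ creates colimits (\cref{two}, since $\Comon(\ca{V})$ sits inside $\Coalg T_\times$ as a colimit-closed full subcategory); the internal hom $[-,B]\colon\ca{V}^\op\to\ca{V}$ carries colimits of $\ca{V}$ to limits, as $[\colim_i X_i,B]\cong\lim_i[X_i,B]$ follows from closedness of $\ca{V}$ (that is, from $-\otimes W$ being a left adjoint); and the forgetful $\Mon(\ca{V})\to\ca{V}$ creates limits, being monadic by \cref{moncomonadm}(1). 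Given a colimit $\colim_i C_i$ in $\Comon(\ca{V})$, the underlying object of $S(\colim_i C_i)=\Mon[\colim_i C_i,B]$ is $[\,U\colim_i C_i,\,B\,]\cong[\,\colim_i UC_i,\,B\,]\cong\lim_i[UC_i,B]$, whereas the underlying object of $\colim_i S(C_i)=\lim_i\Mon[C_i,B]$, the latter limit formed in $\Mon(\ca{V})$, is again $\lim_i[UC_i,B]$. Because both forgetful functors are faithful and create the relevant structure, the convolution monoid structures on the two sides are forced to coincide, the comparison morphism is an isomorphism, and $S$ preserves colimits.

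The step I expect to demand the most care is the bookkeeping of variances: one must confirm that cocontinuity of $\Mon[-,B]^\op$ is indeed the right input for \cref{Kellyadj}, namely that a colimit of comonoids is sent to a limit of monoids, rather than conflating this with preservation of colimits of monoids. The genuinely substantive fact is that all three (co)limits involved are created by the forgetful functors to $\ca{V}$; this is what lets the whole verification collapse onto the single internal-hom identity $[\colim,B]\cong\lim[-,B]$, after which the compatibility of the convolution structures is automatic and requires no separate hands-on calculation. Everything else is a formal consequence of \cref{Kellyadj} applied to the locally presentable $\Comon(\ca{V})$.
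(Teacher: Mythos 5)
Your proposal is correct and takes essentially the same route as the paper: the theorem is obtained precisely by applying \cref{Kellyadj} to the locally presentable $\Comon(\ca{V})$ of \cref{moncomonadm}, the only substantive input being cocontinuity of $\Mon[-,B]^\op$, which you verify in the standard way (colimits created by the comonadic forgetful functor from $\Comon(\ca{V})$, the identity $[\colim_i C_i,B]\cong\lim_i[C_i,B]$ from closedness of $\ca{V}$, and limits created by the monadic forgetful functor from $\Mon(\ca{V})$, whose conservativity makes the comparison map an isomorphism). No gaps.
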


The parametrized adjoint of the functor $\Mon[-,-]^\op$, namely
\begin{equation}\label{Sweedlerhom}
P\colon\Mon(\ca{V})^\op\times\Mon(\ca{V})\to\Comon(\ca{V})
\end{equation}
is called the \emph{Sweedler hom}, and $P(A,B)$ is called the \emph{universal measuring comonoid},
generalizing Sweedler's measuring coalgebras of \cite[\S VII]{Sweedler} as well as the \emph{finite
dual} $P(A,I)=A^o$ of an algebra $A$.

Moreover, each $\Mon[C,-]^\op$ turns out to also have a right adjoint $(C\triangleright-)^\op$
\cite[\S 6.2]{PhDChristina}, and the induced functor of two variables
\begin{equation}\label{Sweedlerprod}
\triangleright\colon\Comon(\ca{V})\times\Mon(\ca{V})\to\Mon(\ca{V})
\end{equation}
is called the \emph{Sweedler product} in \cite{AnelJoyal}.

Applying \cref{actionenrich} to the braided monoidal closed
$\Comon(\ca{V})$ (\cref{Comonmonclosed})
and $\ca{D}=\Mon(\ca{V})^\op$, we obtain the following \cite[Thm.~5.2]{Measuringcomonoid}.

\begin{thm}\label{monoidenrichment}
Suppose $\ca{V}$ is locally presentable and braided monoidal closed.
\begin{enumerate}
 \item The category $\Mon(\ca{V})^\op$ is a monoidal $\Comon(\ca{V})$-category, tensored and cotensored, with hom-objects
 $\underline{\Mon(\ca{V})^\op}(A,B)=P(B,A)$.
 \item The category $\Mon(\ca{V})$ is a monoidal $\Comon(\ca{V})$-category, tensored and cotensored,
 with $\underline{\Mon(\ca{V})}(A,B)=P(A,B)$, cotensor $[C,B]$ and tensor $C\triangleright B$ for any comonoid $C$ and monoid $B$.
\end{enumerate}
\end{thm}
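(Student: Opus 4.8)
The plan is to apply \cref{actionenrich} directly, using the monoidal closed structure on $\Comon(\ca{V})$ and the actions furnished by the Sweedler hom and Sweedler product. The theorem to be proved has two parts, but part (1) is essentially the special case $\ca{D}=\Mon(\ca{V})^\op$ that \cref{actionenrich} is literally stated for, while part (2) follows by a careful bookkeeping of opposites. So first I would establish part (1), and then derive part (2) from it.

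For part (1), the setup is as follows. By \cref{Comonmonclosed}, since $\ca{V}$ is locally presentable and braided monoidal closed, so is $\Comon(\ca{V})$; this gives us a braided monoidal closed category to enrich in. By \cref{inthomaction}, the functor $\Mon[-,-]^\op\colon\Comon(\ca{V})\times\Mon(\ca{V})^\op\to\Mon(\ca{V})^\op$ is an action of $\Comon(\ca{V})$ on the category $\ca{D}=\Mon(\ca{V})^\op$ (the opposite of the action of $\Comon(\ca{V})^\op$ on $\Mon(\ca{V})$). The hypothesis of \cref{actionenrich} requires that $\Mon[C,-]^\op$, i.e. the action of a fixed comonoid $C$, has a right adjoint; this is precisely \cref{measuringcomonoidprop}, whose natural isomorphism
\begin{displaymath}
\Mon(\ca{V})(A,[C,B])\cong\Comon(\ca{V})(C,P(A,B))
\end{displaymath}
exhibits $P(-,B)$ as the required parametrized adjoint $F(B,-)$ in the notation of \cref{actionenrich}. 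Applying that theorem then produces a $\Comon(\ca{V})$-category $\underline{\Mon(\ca{V})^\op}$ with hom-objects $F(A,B)=P(B,A)$ (the argument order flips because the enriched hom-object on $\ca{D}=\Mon(\ca{V})^\op$ records morphisms $B\to A$ in $\Mon(\ca{V})$). Since $\Comon(\ca{V})$ is monoidal closed the enrichment is tensored, and since it is moreover braided the enrichment is cotensored once $C\triangleright^\op-$ has a right adjoint, which is guaranteed by the existence of $\triangleright$ in \cref{Sweedlerprod}. This yields the monoidal, tensored and cotensored $\Comon(\ca{V})$-category structure claimed in (1).

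For part (2), I would invoke the last clause of \cref{actionenrich}, which states that under the same hypotheses one can also enrich $\ca{D}^\op$. Taking $\ca{D}=\Mon(\ca{V})^\op$ gives $\ca{D}^\op=\Mon(\ca{V})$, with hom-objects $\underline{\Mon(\ca{V})}(A,B)=\underline{\Mon(\ca{V})^\op}(B,A)=P(A,B)$, matching the stated formula. The tensor and cotensor then swap roles: the cotensor of $C\in\Comon(\ca{V})$ and $B\in\Mon(\ca{V})$ becomes the convolution monoid $[C,B]$ of \cref{defMon[]} (the original action applied on the nose), while the tensor becomes the Sweedler product $C\triangleright B$ of \cref{Sweedlerprod}, which is exactly the right adjoint appearing in part (1)'s cotensor condition. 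I would verify that these identifications are consistent with the adjunction
\begin{displaymath}
\Mon(\ca{V})(C\triangleright B,A)\cong\Mon(\ca{V})(B,[C,A])
\end{displaymath}
induced by the right adjoint of each $\Mon[C,-]^\op$.

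The routine obstacle is keeping the variance bookkeeping straight: \cref{actionenrich} is phrased for a covariant action $*\colon\ca{V}\times\ca{D}\to\ca{D}$, whereas here the natural object is the opposite action, and the interchange of tensor and cotensor under passage to $\ca{D}^\op$ must be tracked carefully so that the hom-object, tensor, and cotensor in each part land on the correct arguments. The genuinely substantive content — the existence of the adjoints $P$ and $\triangleright$ — is already supplied by \cref{measuringcomonoidprop} and the construction of \cref{Sweedlerprod}, so the proof is essentially an assembly of these pieces rather than a fresh argument; the main thing to confirm is that the monoidal structure on the enrichment (making it a \emph{monoidal} $\Comon(\ca{V})$-category, not merely an enriched one) is compatible, which follows from $\Mon[-,-]^\op$ being monoidal via \cref{monf} applied to the strong monoidal tensor on $\ca{V}$.
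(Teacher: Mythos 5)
Your proposal is correct and follows essentially the same route as the paper: the theorem is obtained precisely by applying \cref{actionenrich} to the braided monoidal closed $\Comon(\ca{V})$ (via \cref{Comonmonclosed}) acting on $\ca{D}=\Mon(\ca{V})^\op$ through $\Mon[-,-]^\op$ (\cref{inthomaction}), with the parametrized adjoints supplied by \cref{measuringcomonoidprop} and \cref{Sweedlerprod}, and part (2) extracted from the ``enrich $\ca{D}^\op$'' clause with the tensor/cotensor roles swapped. Your variance bookkeeping ($F(A,B)=P(B,A)$ on $\Mon(\ca{V})^\op$, hence $P(A,B)$ on $\Mon(\ca{V})$, with cotensor $[C,B]$ and tensor $C\triangleright B$) matches the paper's statement exactly.
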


\subsection{Fibrations}\label{fibrations}

We recall some basic facts and constructions from the theory of fibrations and opfibrations. 
A few indicative references for the general theory are \cite{FibredAdjunctions,Handbook2,Jacobs},
and for our specific context \cite[\S 5]{PhDChristina}.

A functor $P\colon\ca{A}\to\caa{X}$ is a \emph{fibration} when for every arrow $f\colon X\to Y$
in the \emph{base} category $\caa{X}$ and every object $B$ in the \emph{total}
category $\ca{A}$ above $Y$, i.e. $P(B)=Y$, there exists a \emph{cartesian
morphism} with codomain $B$ above $f$. If we denote it $\phi\colon A\to B$, this means that for any
$g\colon X\to X'$ and $A'\to B$ as in the diagram below, there exists a unique factorization through the domain of the cartesian morphism
over $g$:
\begin{displaymath}
\xymatrix @R=.1in @C=.6in
{A'\ar [drr]^-{\theta}\ar @{-->}[dr]_-{\exists!\psi} 
\ar @{.>}@/_/[dd] &&& \\
& A\ar[r]_-{\phi} \ar @{.>}@/_/[dd] & 
B \ar @{.>}@/_/[dd] & \textrm{in }\ca{A}\\
X'\ar [drr]^-{f\circ g=P\theta}\ar[dr]_-g &&&\\
& X\ar[r]_-{f=P\phi} & Y & \textrm{in }\caa{X}}
\end{displaymath}
For each $X$ in the base, its \emph{fibre} category $\ca{A}_X\subset\ca{A}$ consists of objects above $X$
and morphisms above the identity $\id_X$, called $P$-\emph{vertical}. We call $\phi$ a \emph{cartesian lifting} of $B$ along $f$.
Assuming the axiom of choice, cartesian liftings can be selected (up to vertical isomorphism) for each morphism
$f$ in the base and object $B\in\ca{A}_{\cod f}$, henceforth denoted $\Cart(f,B)\colon f^*B\to B$.

Dually, a functor $U\colon\ca{C}\to\caa{X}$ is an \emph{opfibration} when its opposite functor $U^\op$ is a fibration:
for every $g\colon X\to Y$ in the base and $C\in\ca{C}_X$ above the domain, there is a cocartesian morphism
from $C$ above $g$, the \emph{cocartesian lifting} of $C$ along $g$ denoted $\Cocart(g,C)\colon C\to g_!C$.

Any arrow in the total category of an (op)fibration factorizes uniquely into
a vertical morphism followed by a (co)cartesian one:
\begin{equation}\label{factor}
\xymatrix @C=.4in @R=.2in
{A\ar[rr]^\theta\ar @{-->}[d]_-{\psi} && B \ar @{.>}[dd] &\\
f^*B\ar[urr]_-{\;\Cart(f,B)} \ar @{.>}[d] &&& \textrm{in }\ca{A} \\
X\ar[rr]_-{f} && Y & \textrm{in }\caa{X},}\qquad
\xymatrix @C=.4in @R=.2in
{C \ar @{.>}[dd]\ar[rr]^\gamma \ar[drr]_-{\Cocart(g,C)} && D &\\
&& f_!C \ar @{-->}[u]_-{\delta} \ar @{.>}[d] & \textrm{in }\ca{C} \\
X\ar[rr]_-{g} && Y & \textrm{in }\caa{X}.}
\end{equation}
The choice of (co)cartesian liftings in an (op)fibration induces
a so-called \emph{reindexing functor} between the fibre categories
\begin{displaymath}
f^*:\ca{A}_Y\to\ca{A}_X\quad\textrm{ and }\quad g_!\colon\ca{C}_X\to\ca{C}_Y
\end{displaymath}
respectively, for each morphism $f$ or $g\colon X\to Y$ in the base category, mapping each object to the (co)domain of its lifting.

\begin{rmk}\label{rmkforadjointintexingbifr}
Due to the unique factorization of arrows in a (op)fibration through (co)cartesian liftings, we can deduce that a fibration
$P:\ca{A}\to\caa{X}$ is also an opfibration (consequently a \emph{bifibration}) if and only if, for every $f:X\to Y$ the reindexing
$f^*:\ca{A}_Y\to\ca{A}_X$ has a left adjoint, namely $f_!:\ca{A}_X\to\ca{A}_Y$ (e.g. \cite[Proposition 1.2.7]{hermidaphd}). 
\end{rmk}

An \emph{oplax fibred 1-cell} $(S,F)$ between $P:\ca{A}\to\caa{X}$ and $Q:\ca{B}\to\caa{Y}$ is given by a commutative square of
categories and functors
\begin{displaymath}
\xymatrix @C=.4in @R=.4in
{\ca{A}\ar[r]^-S \ar[d]_-P & \ca{B}\ar[d]^-Q \\
\caa{X}\ar[r]_-F & \caa{Y}}
\end{displaymath}
called an \emph{oplax morphism of fibrations} in \cite[Def.~3.5]{Framedbicats}; this name is justified by the correspondence
of \cref{Grothendieckcorrespondence}.
By \cref{factor}, we always have a comparison vertical morphism
\begin{displaymath}
\xymatrix @C=.6in @R=.2in
{Sf^*B\ar[rr]^-{S\Cart(f,B)}\ar @{-->}[d]_-{\psi} && SB \ar @{.>}[dd] &\\
(Ff)^*SB\ar[urr]_-{\;\Cart(Ff,SB)} \ar @{.>}[d] &&& \textrm{in }\ca{B} \\
FX\ar[rr]_-{Ff} && FY & \textrm{in }\caa{Y},}
\end{displaymath}
to the chosen $Q$-cartesian lifting of $SB$ along $Ff$.
If moreover $S$ preserves cartesian arrows, meaning that if $\phi$ is $P$-cartesian then $S\phi$ is $Q$-cartesian
or equivalently the above comparison map is an isomorphism, the pair $(S,F)$ is called a \emph{fibred 1-cell} or
\emph{strong morphism of fibrations}.

In particular, when $P$ and $Q$ are fibrations over the same base $\caa{X}$, we may consider
oplax morphisms or fibred 1-cells of the form $(S,1_{\caa{X}})$ displayed as
\begin{displaymath}
\xymatrix @C=.2in
{\ca{A}\ar[rr]^-S \ar[dr]_-P
 && \ca{B}\ar[dl]^-Q\\
 & \caa{X} &}
\end{displaymath}
when $S$ is called \emph{(oplax) fibred functor}.
Dually, we have the notion of an \emph{lax opfibred 1-cell} $(K,F)$, \emph{opfibred 1-cell} when $K$ is cocartesian,
and \emph{(lax) opfibred functor} $(K,1_\caa{X})$.
Notice that any oplax fibred 1-cell $(S,F)$ determines a collection of functors
\begin{displaymath}
S_X\colon\ca{A}_X\longrightarrow\ca{B}_{FX}
\end{displaymath}
between the fibres, as the restriction of $S$ to the corresponding subcategories.

A \emph{fibred 2-cell} between oplax fibred 1-cells $(S,F)$ and $(T,G)$ is a pair of natural transformations 
($\alpha:S\Rightarrow T,\beta:F\Rightarrow G$) with $\alpha$ above $\beta$, \emph{i.e.} $Q(\alpha_A)
=\beta_{PA}$ for all $A\in\ca{A}$, displayed
\begin{displaymath}
\xymatrix @C=.8in @R=.5in
{\ca{A}\rtwocell^S_T{\alpha}\ar[d]_-P
& \ca{B}\ar[d]^-Q \\
\caa{X}\rtwocell^F_G{\beta} & \caa{Y}.}
\end{displaymath}
A \emph{fibred natural transformation} is of the form $(\alpha,1_{1_{\caa{X}}}):(S,1_{\caa{X}})\Rightarrow(T,1_\caa{X})$
which ends up having vertical components, $Q(\alpha_A)=1_{PA}$.
Notice that if the 1-cells are strong, the definition of a 2-cell between them remains the same.
Dually, we have the notion of an \emph{opfibred 2-cell} and \emph{opfibred natural transformation} between
lax opfibred 1-cells and functors respectively.

We obtain 2-categories $\B{Fib}_\opl$ and $\B{Fib}$ of fibrations over arbitrary base categories, (oplax) fibred 1-cells and
fibred 2-cells. In particular, there are 2-categories $\B{Fib}_\opl(\caa{X})$ and $\B{Fib}(\caa{X})$
of fibrations over a fixed base category $\caa{X}$, (oplax) fibred functors and fibred natural transformations.
Dually, we have the 2-categories $\B{OpFib}_{(\lax)}$ and $\B{OpFib}_{(\lax)}(\caa{X})$.

The fundamental \emph{Grothendieck construction} \cite{Grothendieckcategoriesfibrees} establishes a standard
equivalence between fibrations and pseudofunctors, \cref{laxfunctor}.
Starting with a pseudofunctor $\ps{M}\colon\caa{X}^\op\to\Cat$, we can form the Grothendieck category
$\int\ps{M}$ with objects pairs $(A,X)\in\ps{M}X\times\caa{X}$ and morphisms
$(A,X)\to(B,Y)$ pairs $(\phi\colon A\to(\ps{M}f)B,f\colon X\to Y)\in\ps{M}X\times\caa{X}$. This is fibred
over $\caa{X}$, with fibres $(\int\ps{M})_X=\ps{M}X$, reindexing functors $\ps{M}f$ and chosen cartesian liftings
\begin{equation}\label{canonicalcartesianlift}
\xymatrix@C=.3in @R=.3in
{((\ps{M}f)B,X)\ar[rr]^-{(1_{(\ps{M}f)B},f)}\ar@{.>}[d] && (B,Y)\ar@{.>}[d] & \textrm{in }\int\ps{M} \\
X\ar[rr]^-f && Y & \textrm{in }\caa{X}.}
\end{equation}
Using similar machinery \cite[Prop. 3.6]{Framedbicats}, we obtain respective correspondences for oplax fibred 1-cells and oplax natural
transformations.

\begin{thm}\label{Grothendieckcorrespondence}
There are equivalences of 2-categories
\begin{gather*}
\B{Fib}_\opl(\caa{X})\simeq[\caa{X}^\op,\B{Cat}]_\mathrm{opl} \\
\B{Fib}(\caa{X})\simeq[\caa{X}^\mathrm{op},\B{Cat}]
\end{gather*}
between the 2-categories of fibrations with fixed base and pseudofunctors with oplax or pseudonatural transformations and modifications.
\end{thm}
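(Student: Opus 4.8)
The plan is to exhibit a pair of 2-functors in each direction that are mutually inverse up to 2-natural isomorphism, upgrading the object-level Grothendieck construction already recalled in \cref{canonicalcartesianlift} to a full 2-equivalence; both displayed equivalences arise from a single pair of constructions, with the oplax-versus-pseudo distinction governed entirely by whether certain comparison 2-cells are invertible. First I would define $\int(-)\colon[\caa{X}^\op,\B{Cat}]_\opl\to\B{Fib}_\opl(\caa{X})$. On objects it is $\ps{M}\mapsto\int\ps{M}$, which is fibred over $\caa{X}$ with fibres $\ps{M}X$, reindexing $\ps{M}f$ and the chosen liftings of \cref{canonicalcartesianlift}. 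On a 1-cell, that is an oplax natural transformation $\sigma\colon\ps{M}\Rightarrow\ps{N}$ with components $\sigma_X\colon\ps{M}X\to\ps{N}X$ and comparison 2-cells $\sigma_f\colon\sigma_X\circ\ps{M}f\Rightarrow\ps{N}f\circ\sigma_Y$, I would set $(\int\sigma)(A,X)=(\sigma_XA,X)$ and use $\sigma_f$ to define the action on morphisms; the oplax naturality axioms are exactly what functoriality requires, and the resulting $(\int\sigma,1_\caa{X})$ is an oplax fibred 1-cell whose failure to preserve the liftings of \cref{canonicalcartesianlift} is measured precisely by $\sigma_f$. Finally, a modification $\Gamma\colon\sigma\Rrightarrow\tau$ has components $\Gamma_X\colon\sigma_X\Rightarrow\tau_X$ which assemble into a natural transformation $\int\Gamma$ with $P$-vertical components, hence a fibred natural transformation.

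Conversely I would define a 2-functor sending a fibration $P\colon\ca{A}\to\caa{X}$, equipped with a chosen cleavage $\Cart(f,B)$, to the pseudofunctor $X\mapsto\ca{A}_X$ with reindexing functors $f^*$; the coherence isomorphisms $(g\circ f)^*\cong f^*\circ g^*$ and $(\id_X)^*\cong\id$ arise because a composite of cartesian arrows is cartesian, so by the essential uniqueness of cartesian liftings it is isomorphic to the chosen lifting over the composite, and the same uniqueness yields naturality and the pseudofunctor coherence axioms. An oplax fibred 1-cell $(S,1_\caa{X})$ is sent to the oplax natural transformation with components the restrictions $S_X\colon\ca{A}_X\to\ca{B}_X$ and comparison 2-cells built from the canonical vertical comparison morphism $Sf^*B\to(Ff)^*SB$ displayed above, while a fibred natural transformation restricts on each fibre to a modification. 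The pseudonatural versions are cut out by the condition that $\sigma_f$, equivalently the vertical comparison morphism, be invertible, which is exactly the condition that $\int\sigma$ preserve cartesian arrows; this is what matches $[\caa{X}^\op,\B{Cat}]$ with $\B{Fib}(\caa{X})$.

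It then remains to verify that these 2-functors are inverse equivalences. One composite is essentially the identity, since the fibre of $\int\ps{M}$ over $X$ is literally $\ps{M}X$ with reindexing $\ps{M}f$, so the pseudofunctor of fibres returns $\ps{M}$ up to the canonical coherence isomorphisms. For the other composite I would produce, for each fibration $P$, the comparison functor $\ca{A}\to\int(\ca{A}_{(-)})$ sending $A\mapsto(A,PA)$ and a morphism to its vertical-then-cartesian factorization from \cref{factor}; this is an equivalence over $\caa{X}$ preserving cartesian arrows. The hard part will be the bookkeeping at the level of 2-cells: checking that these comparison functors are 2-natural (pseudonatural) and compatible with fibred 2-cells, so that an equivalence of underlying categories is promoted to an equivalence of 2-categories, and that the passage between the oplax coherence data $\sigma_f$ and the canonical vertical comparison morphisms respects both vertical and horizontal composition. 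All of this is the standard 2-categorical Grothendieck correspondence, and I would only sketch the coherence diagrams rather than write them out in full.
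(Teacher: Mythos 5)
Your proposal is correct and follows exactly the route the paper takes: the paper recalls the Grothendieck construction $\int\ps{M}$ with its canonical cartesian liftings \cref{canonicalcartesianlift} and cites the standard literature for the correspondence of oplax fibred 1-cells with oplax natural transformations, which is precisely the 2-functor pair you construct, including the key observation that invertibility of the comparison 2-cells $\sigma_f$ matches preservation of cartesian arrows. Your fleshed-out verification (fibre pseudofunctor as inverse, comparison functor $A\mapsto(A,PA)$ via the vertical-then-cartesian factorization of \cref{factor}) is the standard argument the paper's citations point to, so there is nothing to correct.
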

There is also a 2-equivalence $\B{ICat}\simeq\B{Fib}$ between fibrations over arbitrary bases and an appropriately defined 2-category of
pseudofunctors with arbitrary domain; for more details, see \cite{hermidaphd}. Along with the dual versions for opfibrations,
namely $\B{OpFib}(\caa{X})\simeq[\caa{X},\B{Cat}]$,
these equivalences allow us to freely change our perspective between (op)fibrations and pseudofunctors.

Moving on to notions of adjunctions between fibrations, we obtain the following definitions as adjunctions
in the respective 2-categories of (op)fibrations.

\begin{defi}\label{generalfibredadjunction}
Given fibrations $P:\ca{A}\to\caa{X}$ and $Q:\ca{B}\to\caa{Y}$,
a \emph{general (oplax) fibred adjunction} is given by a pair of (oplax) fibred 1-cells $(L,F):P\to Q$ and 
$(R,G):Q\to P$ together with fibred 2-cells $(\zeta,\eta):(1_\ca{A},1_\caa{X})\Rightarrow
(RL,GF)$ and  $(\xi,\varepsilon):(LR,FG)\Rightarrow(1_\ca{B},1_\caa{Y})$
such that $L\dashv R$ via $\zeta,\xi$ and $F\dashv G$ via $\eta,\varepsilon$. This
is displayed as
\begin{displaymath}
\xymatrix @C=.7in @R=.4in
{\ca{A} \ar[d]_-P 
\ar @<+.8ex>[r]^-L\ar@{}[r]|-\bot
& \ca{B} \ar @<+.8ex>[l]^-{R} \ar[d]^-Q \\
\caa{X} \ar @<+.8ex>[r]^-F\ar@{}[r]|-\bot
& \caa{Y} \ar @<+.8ex>[l]^-G}
\end{displaymath} 
and we write $(L,F)\dashv(R,G):Q\to P$.
\end{defi}

Notice that by definition, $\zeta$ is above $\eta$ and $\xi$ is above $\varepsilon$,
hence $(P,Q)$ is in particular an ordinary map between adjunctions. Dually, we have the notions of \emph{general (lax) opfibred 
adjunction} and \emph{opfibred adjunction} in $\B{OpFib}_\lax$. 

In \cite[\S 3.2]{Measuringcomodule}, conditions under which a fibred 1-cell has an adjoint are investigated in detail.
Below we only recall the case of general lax opfibred adjunction, due to the applications that follow.

\begin{thm}\label{totaladjointthm}
Suppose $(K,F):U\to V$ is an opfibred 1-cell and $F\dashv G$ is an adjunction with counit $\varepsilon$ between 
the bases of the opfibrations, as in
\begin{displaymath}
\xymatrix @C=.6in
{\ca{C}\ar[r]^-K\ar[d]_-U & \ca{D}\ar[d]^-V \\
\caa{X}\ar @<+.8ex>[r]^-F
\ar@{}[r]|-\bot
& \caa{Y}. \ar @<+.8ex>[l]^-G}
\end{displaymath}
If, for each $Y\in\caa{Y}$, the composite functor between the fibres
\begin{equation}\label{specialfunctor}
\ca{C}_{GY}\xrightarrow{K_{GY}}\ca{D}_{FGY}
\xrightarrow{(\varepsilon_Y)_!}\ca{D}_Y
\end{equation}
has a right adjoint for each $Y\in\caa{Y}$, then $K$ has a right adjoint $R$ between the total categories
and $(K,F)\dashv(R,G)$ is a general lax opfibred adjunction. 
\end{thm}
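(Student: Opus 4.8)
The plan is to construct the right adjoint $R$ fibrewise and then assemble the global adjunction out of the cocartesian factorizations. First I would fix, for each $Y\in\caa{Y}$, a right adjoint $W_Y$ to the composite functor \cref{specialfunctor}, which exists by hypothesis, and define $R$ on objects by $RD=W_Y(D)$ where $Y=V(D)$. Since $W_Y$ takes values in $\ca{C}_{GY}$, this forces $U(RD)=GY=GV(D)$, so $R$ lies above $G$ from the outset, as required for an (op)fibred $1$-cell.

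To obtain a bijection $\ca{D}(KC,D)\cong\ca{C}(C,RD)$ natural in $C\in\ca{C}$ and $D\in\ca{D}$, I would set $X=U(C)$, $Y=V(D)$ and chain together four bijections. The unique factorization of arrows in the opfibration $V$ through cocartesian liftings yields
\begin{displaymath}
\ca{D}(KC,D)\cong\coprod_{h\colon FX\to Y}\ca{D}_Y(h_!KC,D),
\end{displaymath}
and dually for $U$ a bijection $\ca{C}(C,RD)\cong\coprod_{\hat h\colon X\to GY}\ca{C}_{GY}(\hat h_!C,RD)$. The adjunction $F\dashv G$ supplies a bijection of indexing sets $h\leftrightarrow\hat h$ with $h=\varepsilon_Y\circ F\hat h$, under which pseudofunctoriality of $V$ gives $h_!\cong(\varepsilon_Y)_!\circ(F\hat h)_!$. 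Because $(K,F)$ is an opfibred $1$-cell, $K$ carries cocartesian liftings to cocartesian liftings, producing a natural isomorphism $(F\hat h)_!\circ K_X\cong K_{GY}\circ\hat h_!$ of functors $\ca{C}_X\to\ca{D}_{FGY}$; combining these identifies $h_!KC$ with $(\varepsilon_Y)_!K_{GY}\hat h_!C$, i.e.\ with the composite \cref{specialfunctor} applied to $\hat h_!C$. Finally the fibrewise adjunction gives $\ca{D}_Y\big((\varepsilon_Y)_!K_{GY}(\hat h_!C),D\big)\cong\ca{C}_{GY}(\hat h_!C,W_YD)=\ca{C}_{GY}(\hat h_!C,RD)$. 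Matching terms under $h\leftrightarrow\hat h$ and summing over the indexing sets yields the desired natural bijection.

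Once this bijection is seen to be natural in both variables, the standard machinery promotes $R$ to a functor $\ca{D}\to\ca{C}$ with $K\dashv R$, and one reads off the unit $\zeta\colon 1_\ca{C}\Rightarrow RK$ and counit $\xi\colon KR\Rightarrow 1_\ca{D}$. It then remains to verify that $(\zeta,\eta)$ and $(\xi,\varepsilon)$ are opfibred $2$-cells, that is, that $\zeta$ lies above the unit $\eta$ and $\xi$ above the counit $\varepsilon$ of $F\dashv G$; this is checked by tracking identity morphisms through the chain of bijections, which was built out of $\eta$, $\varepsilon$ and the (co)cartesian factorizations precisely so as to make this hold. The lax opfibred structure on $(R,G)$ is then obtained as the mate, under the two adjunctions, of the strong (cocartesian-preserving) structure of $(K,F)$, and the triangle identities guarantee the comparison axioms; this exhibits $(K,F)\dashv(R,G)$ as a general lax opfibred adjunction, the dual notion from \cref{generalfibredadjunction}.

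The main obstacle I expect is not any single step but the \emph{coherence and naturality} of the composite bijection: one must check that the pseudofunctoriality isomorphism $h_!\cong(\varepsilon_Y)_!(F\hat h)_!$, the cocartesian-preservation isomorphism $(F\hat h)_!K_X\cong K_{GY}\hat h_!$, and the triangle identities of $F\dashv G$ all interact compatibly, so that reindexing along the varying $h$ matches reindexing along the varying $\hat h$ coherently and the resulting correspondence is genuinely natural in $C$ and $D$ rather than merely a pointwise bijection. This bookkeeping is in effect a Grothendieck-construction computation dualizing the fibred results of \cite{Measuringcomodule}, and it is where the real care is required.
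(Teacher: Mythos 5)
Your proof is correct: defining $R$ fibrewise via the right adjoints $W_Y$ and chaining the bijections — cocartesian factorization of hom-sets over $U$ and $V$, the index-set bijection $h\leftrightarrow\hat h$ from $F\dashv G$, pseudofunctoriality $h_!\cong(\varepsilon_Y)_!(F\hat h)_!$, cocartesianness of $K$ giving $(F\hat h)_!K_X\cong K_{GY}\hat h_!$, and the fibrewise adjunction — is exactly the standard argument, and your closing remarks correctly identify where the care is needed (naturality of the composite bijection, and that the unit and counit lie over $\eta$ and $\varepsilon$, noting that $UR=GV$ on morphisms also follows from tracking underlying arrows through the bijections). The paper itself does not prove this theorem but recalls it from \cite[\S 3.2]{Measuringcomodule}, and your reconstruction matches the approach taken in that reference.
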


Finally, in \cite{Enrichedfibration} the notion of an enriched fibration is discussed in length;
we gather the basic definitions in order to employ them in the setting
of double categories later.
The following generalizes the notion of a (right) parametrized adjunction from $\Cat$ to $\B{Fib}_{\mathrm{opl}}$
or $\B{OpFib}_\lax$.

\begin{defi}\label{generaloplaxparametrized}
For fibrations $H,K$, a \emph{fibred parametrized adjunction} consists of
\begin{displaymath}
\xymatrix @C=.6in @R=.3in
{\ca{A}\times\ca{B}\ar[r]^-F\ar[d]_-{H\times J} & \ca{C}\ar[d]^-K \\ \caa{X}\times\caa{Y}\ar[r]_-G & \caa{Z},}\qquad
\xymatrix @C=.6in @R=.3in
{\ca{B}^\op\times\ca{C}\ar[r]^-R\ar[d]_-{J^\op\times K} & \ca{A}\ar[d]^-H \\
\caa{Y}^\op\times\caa{Z}\ar[r]_-S & \caa{X}}
\end{displaymath}
such the following is a general oplax fibred adjunction
\begin{displaymath}
 \xymatrix @C=.9in @R=.5in
{\ca{A}\ar[d]_-H\ar@<+.8ex>[r]^-{F(-,B)}\ar@{}[r]|-{\bot} & \ca{C}\ar[d]^-K\ar@<+.8ex>[l]^-{R(B,-)} \\
\caa{X}\ar@<+.8ex>[r]^-{G(-,JB)}\ar@{}[r]|-{\bot} & \caa{Z}.\ar@<+.8ex>[l]^-{S(JB,-)}}
\end{displaymath}
Dually, an \emph{opfibred parametrized adjunction} consists of 1-cells as above, inducing a general lax opfibred
adjunction. 
\end{defi}

Note that by general arguments, $R(B,-)$ automatically preserves cartesian morphisms and dually,
$F(-,B)$ preserves cocartesian morphisms.

Identifying the pseudomonoids in the cartesian monoidal 2-category $\B{Fib}$, we obtain the following definition,
also \cite[12.1]{Framedbicats}.

\begin{defi}\label{monoidalfibration}
A fibration $T\colon\ca{V}\to\caa{W}$ is \emph{monoidal} when $\ca{V}$, $\caa{W}$ are monoidal categories,
$T$ is a strict monoidal functor and the tensor product $\otimes_\ca{V}$ preserves cartesian arrows.
\end{defi}

A fibration is \emph{symmetric monoidal} when it is a strict braided monoidal functor
between symmetric monoidal categories. Next, expressing a pseudomodule in $(\B{Fib},\times,1_\ca{I})$ gives the following,
\cite[Def.~3.3]{Enrichedfibration}.

\begin{defi}\label{Trepresentation}
A monoidal fibration $T:\ca{V}\to\caa{W}$ \emph{acts} on the fibration
$P:\ca{A}\to\caa{X}$ when there exists a fibred 1-cell
\begin{equation}\label{eq:fibred1cellaction}
 \xymatrix @C=.6in@R=.3in
{\ca{V}\times\ca{A}\ar[r]^-{*}\ar[d]_-{T\times P} & 
\ca{A}\ar[d]^-P \\
\caa{W}\times\caa{X}\ar[r]_-{\diamond} & \caa{X}.}
\end{equation}
where the functors $*\colon\ca{V}\times\ca{A}\to\ca{A}$ and $\diamond\colon\caa{W}\times\caa{X}\to\caa{X}$
are ordinary actions of the monoidal $\ca{V}$, $\caa{W}$ on $\ca{A}$ and $\caa{X}$ respectively,
such that the action constraints are compatible
in the sense that
\begin{equation}\label{actionsabove}
P\chi^\ca{A}_{XYA}=\chi^\caa{X}_{(TX)(TY)(PA)},\quad  P\nu^\ca{A}_A=\nu^\caa{X}_{PA}
\end{equation}
for all $X,Y\in\ca{V}$ and $A\in\ca{A}$, following the notation from \cref{actionmaps}.
\end{defi}

With the purpose of generalizing the action-induced enrichment from $\Cat$ in \cref{actionenrich}
to $\B{Fib}_\mathrm{opl}$, we conclude to \cite[Def.~3.8]{Enrichedfibration}.

\begin{defi}\label{enrichedfibration}
If $T\colon\ca{V}\to\caa{W}$ is a monoidal fibration, we say an ordinary fibration $P\colon\ca{A}\to\caa{X}$ is \emph{enriched} in $T$ when
\begin{itemize}
 \item $\ca{A}$ is enriched in $\ca{V}$, $\caa{X}$ is enriched in $\caa{W}$ and the following commutes:
 \begin{displaymath}
\xymatrix @C=.8in @R=.3in
{\ca{A}^\op\times\ca{A}\ar[r]^-{\ca{A}(-,-)}
\ar[d]_-{P^\op\times P} & \ca{V}\ar[d]^-T \\
\caa{X}^\op\times\caa{X}\ar[r]_-{\caa{X}(-,-)} &
\caa{W}}
\end{displaymath}
\item composition and identities of the enrichments are compatible, in that
\begin{displaymath}
TM^{\ca{A}}_{A,B,C}=M^{\caa{X}}_{PA,PB,PC}\;\textrm{ and }\; Tj^\ca{A}_A=j^{\caa{X}}_{PA}.
\end{displaymath}
\end{itemize}
\end{defi}

Dually, we have the notion of an opfibration enriched in a monoidal opfibration. Moreover, we say
that a fibration $P$ is enriched in a monoidal opfibration $T$ if and only if the opfibration $P^\op$ is $T$-enriched.
Finally, \cite[Thm.~3.11]{Enrichedfibration} gives a direct way of obtaining an enriched fibration.

\begin{thm}\label{thmactionenrichedfibration}
Suppose that $T:\ca{V}\to\caa{W}$ is a monoidal fibration, which acts on an (ordinary) fibration $P:\ca{A}\to\caa{X}$
via the fibred 1-cell \cref{eq:fibred1cellaction}.
If this action has an oplax fibred parametrized adjoint $(R,S):P^\op\times P\to T$, then we can enrich the fibration 
$P$ in the monoidal fibration $T$.
\end{thm}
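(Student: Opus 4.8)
The plan is to produce the two separate enrichments---of $\ca{A}$ in $\ca{V}$ and of $\caa{X}$ in $\caa{W}$---by invoking \cref{actionenrich} on each level, and then to verify that they are compatible in the sense of \cref{enrichedfibration} by exploiting that \emph{all} the data (actions, constraints, adjunctions) lives over the corresponding data downstairs. First I would apply \cref{actionenrich} to the action $*\colon\ca{V}\times\ca{A}\to\ca{A}$: by \cref{generaloplaxparametrized}, the hypothesised oplax fibred parametrized adjoint $(R,S)$ supplies for every $A\in\ca{A}$ a right adjoint $R(A,-)$ to $-*A$, with $\ca{A}(X*A,B)\cong\ca{V}(X,R(A,B))$; thus $\ca{A}$ becomes a $\ca{V}$-category with $\underline{\ca{A}}(A,B)=R(A,B)$, its enriched composition and identities being furnished by that theorem. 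Applying the same theorem to $\diamond\colon\caa{W}\times\caa{X}\to\caa{X}$, whose parametrized adjoint is $S$, enriches $\caa{X}$ in $\caa{W}$ with $\underline{\caa{X}}(X,Y)=S(X,Y)$.

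It then remains to check the two compatibilities of \cref{enrichedfibration}. The commuting hom-functor square is immediate: since $(R,S)$ is a fibred $1$-cell $P^\op\times P\to T$, its underlying functors satisfy $T\circ R=S\circ(P^\op\times P)$, which reads $T(\underline{\ca{A}}(A,B))=\underline{\caa{X}}(PA,PB)$. For the compatibility of composition and identities I would unwind the explicit formulas coming from the proof of \cref{actionenrich}. There, the enriched composition $M^{\ca{A}}_{A,B,C}$ is the adjoint transpose, under $-*A\dashv R(A,-)$, of the canonical map
\[
(R(B,C)\otimes R(A,B))*A \xrightarrow{\chi^{\ca{A}}} R(B,C)*(R(A,B)*A) \xrightarrow{1*\varepsilon^{\ca{A}}} R(B,C)*B \xrightarrow{\varepsilon^{\ca{A}}} C,
\]
assembled from the counits $\varepsilon^{\ca{A}}$ of these adjunctions and the associativity constraint $\chi^{\ca{A}}$, while the identity $j^{\ca{A}}_A$ is the transpose of the unit constraint $\nu^{\ca{A}}_A\colon I*A\to A$; the analogous formulas hold for $M^{\caa{X}}$ and $j^{\caa{X}}$ downstairs.

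The crux is to show that applying $T$ to these upstairs formulas returns the downstairs ones, yielding $TM^{\ca{A}}=M^{\caa{X}}$ and $Tj^{\ca{A}}=j^{\caa{X}}$. Three inputs make this work: $T$ is strict monoidal, so it preserves $\otimes$ and $I$ on the nose; the action constraints satisfy $P\chi^{\ca{A}}=\chi^{\caa{X}}$ and $P\nu^{\ca{A}}=\nu^{\caa{X}}$ by \cref{actionsabove}; and, because for each $A$ the adjunctions $-*A\dashv R(A,-)$ and $-\diamond PA\dashv S(PA,-)$ form a general oplax fibred adjunction, their units and counits are fibred, i.e.\ $T\eta^{\ca{A}}=\eta^{\caa{X}}$ and $P\varepsilon^{\ca{A}}=\varepsilon^{\caa{X}}$. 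Granting these, the displayed composite lies over its $\caa{X}$-analogue; and since the transpose is computed by applying $R(A,-)$ to that composite and precomposing with $\eta^{\ca{A}}$, applying $T$ and using both $T\circ R(A,-)=S(PA,-)\circ P$ and $T\eta^{\ca{A}}=\eta^{\caa{X}}$ delivers exactly $M^{\caa{X}}$; the identity case is the same argument run on $\nu$. I expect the only real obstacle to be the bookkeeping that transposition commutes with $T$, but this is forced entirely by the fibredness of the (co)units, so no genuinely new construction is required.
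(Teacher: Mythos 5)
Your proposal is correct and is essentially the intended proof: the paper itself states this theorem as a quotation of \cite[Thm.~3.11]{Enrichedfibration} without reproducing the argument, and the proof there follows exactly your route — apply \cref{actionenrich} to the actions $*$ and $\diamond$ separately, read off the commuting hom-functor square from $T\circ R=S\circ(P^\op\times P)$, and deduce $TM^{\ca{A}}=M^{\caa{X}}$ and $Tj^{\ca{A}}=j^{\caa{X}}$ by applying $T$ (resp.\ $P$) to the transposition formulas. The three inputs you isolate — strict monoidality of $T$, the compatibility \cref{actionsabove} of the action constraints, and the fact that in a general oplax fibred adjunction the unit and counit lie over the base unit and counit (\cref{generalfibredadjunction}) — together with $P(\alpha*\phi)=T\alpha\diamond P\phi$ from the fibred 1-cell \cref{eq:fibred1cellaction}, are precisely what makes the bookkeeping close up, so no step is missing relative to the source.
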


We will later use the dual version, for which an action of a monoidal opfibration (a pseudomonoid in $\B{OpFib}$)
induces an enrichment via an opfibred parametrized adjunction.

\section{Double categories}\label{doublecats}

The setting of double categories, and more specifically fibrant monoidal double categories, is crucial
for this work's development but also for further applications.
A few references for the theory of double categories and results relevant here are
\cite{Limitsindoublecats,Adjointfordoublecats,Framedbicats};
the original concept of a double category as a category internal in $\Cat$
goes back to \cite{Ehresmanndouble}.
In order to provide a
passage from double categories to bicategories, we largely follow the notation and approach of
\cite{ConstrSymMonBicats} where a method for constructing monoidal bicategories
from monoidal double categories is described.

We then proceed to the study of monads (see also \cite{Monadsindoublecats}) and comonads in double categories,
as well a natural (op)fibrational picture they form over the vertical category
induced by fibrancy conditions. In the monoidal case, these are in fact monoidal (op)fibrations in the sense of the previous section.

Finally, by introducing the notion of a \emph{locally closed monoidal} double category, capturing a closed structure
for both vertical and horizontal categories, we are able to explore enrichment relations between the category of monads
and comonads, sometimes forming an enriched fibration.

\subsection{Background on fibrant double categories}

We recall the central definitions and fix our notation.

\begin{defi}\label{def:doublecats}
 A \emph{(pseudo) double category} $\caa{D}$
consists of a category of objects $\caa{D}_0$ and
a category of arrows $\caa{D}_1$, with (identity, source and target, composition) structure
functors 
\begin{displaymath}
 \B{1}:\caa{D}_0\to\caa{D}_1,\quad 
\Gr{s},\Gr{t}:\caa{D}_1\rightrightarrows\caa{D}_0,\quad
\odot:\caa{D}_1{\times_{\caa{D}_0}}\caa{D}_1\to\caa{D}_1
\end{displaymath}
such that
$\Gr{s}(1_A)$=$\Gr{t}(1_A)$=$A,\;\Gr{s}(M\odot N)$=$\Gr{s}(N),\;
\Gr{t}(M\odot N)$=$\Gr{t}(M)$
for all $A\in\ob\caa{D}_0$ and $M,N\in\ob\caa{D}_1$,
equipped with natural isomorphisms
\begin{gather*}
\alpha:(M\odot N)\odot P\xrightarrow{\sim}M\odot(N\odot P) \\
\lambda:1_{\Gr{s}(M)}\odot M\xrightarrow{\;\sim\;}M \quad
\rho:M\odot1_{\Gr{t}(M)}\xrightarrow{\;\sim\;}M
\end{gather*}
in $\caa{D}_1$ such that 
$\Gr{t}(\alpha),\Gr{s}(\alpha),\Gr{t}(\lambda),\Gr{s}(\lambda),
\Gr{t}(\rho),\Gr{s}(\rho)$ are all
identities, and satisfying the usual coherence conditions
(as for a bicategory \cref{bicataxiom1}).
\end{defi}
The objects of $\caa{D}_0$ are called \emph{0-cells} and the morphisms $f:A\to B$ of $\caa{D}_0$ are called 
\emph{vertical 1-cells}. The objects of $\caa{D}_1$ are the \emph{horizontal 1-cells}
or \emph{proarrows},$\SelectTips{eu}{10}
\xymatrix@C=.2in{M:A\ar[r]
\ar@{}[r]|-{\scriptstyle{\bullet}} & B}$.
The morphisms of $\caa{D}_1$ are the 
\emph{2-morphisms}
\begin{equation}\label{2morphism}
\xymatrix @C=.3in @R=.3in
{A\ar[r]^-M\ar@{}[r]|-{\scriptstyle{\bullet}} 
\rtwocell<\omit>{<4>\alpha} \ar[d]_-f & B\ar[d]^-g \\
C\ar[r]_-N\ar@{}[r]|-{\scriptstyle{\bullet}} & D}
\end{equation}
or $^f\alpha^g:M\Rightarrow N$, 
where $\Gr{s}(\alpha)=f$ and $\Gr{t}(\alpha)=g$.
The composition of vertical 1-cells  
and the vertical composition of 2-morphisms
are strictly associative, whereas 
horizontal composition of horizontal
1-cells and 2-morphisms is associative up to isomorphism, written
\begin{equation}\label{2cellscomp}
 \xymatrix @C=.25in @R=.25in
{A\ar[r]^-M\ar@{}[r]|-{\scriptstyle{\bullet}} \ar[d]_-f
\rtwocell<\omit>{<3>\alpha} & B\ar[d]^-g \\
C\ar[r]^-N\ar@{}[r]|-{\scriptstyle{\bullet}} \ar[d]_-h
\rtwocell<\omit>{<3>\beta} & D\ar[d]^-k \\
E\ar[r]_-P\ar@{}[r]|-{\scriptstyle{\bullet}} & F}
\xymatrix{\hole \\ = \\ \hole}
\xymatrix @C=.25in @R=.25in
{A\ar[r]^-M\ar@{}[r]|-{\scriptstyle{\bullet}} \ar[dd]_-{hf} &
B\ar[dd]^-{kg} \\
\qquad\color{white}{C} \rtwocell<\omit>{\;\beta\alpha} & \\
E\ar[r]_-P\ar@{}[r]|-{\scriptstyle{\bullet}} & F,}\quad
 \xymatrix @C=.08in @R=.08in
{&& \\
A\ar[rr]^-M\ar@{}[rr]|-{\scriptstyle{\bullet}} \ar[dd]_-f
\rrtwocell<\omit>{<3.5>\alpha} && B\ar[rr]^-N
\ar@{}[rr]|-{\scriptstyle{\bullet}} \ar[dd]_-g \rrtwocell
<\omit>{<3.5>\beta} && C\ar[dd]^-h \\
&&& \\
D\ar[rr]_-P\ar@{}[rr]|-{\scriptstyle{\bullet}} && E\ar[rr]_-K
\ar@{}[rr]|-{\scriptstyle{\bullet}}  && F \\
&&}
\xymatrix @C=.08in @R=.08in
{ \\
\hole \\
= \\ }
\xymatrix @C=.08in @R=.08in
{&& \\
A\ar[rrr]^-{N\odot M}\ar@{}[rrr]|-{\scriptstyle{\bullet}}
\ar[dd]_-f & \rtwocell<\omit>{<3.5>{\quad\beta\odot\alpha}} && 
C\ar[dd]^-h \\
&&& \\
D\ar[rrr]_-{K\odot P}\ar@{}[rrr]|-{\scriptstyle{\bullet}} &&& F. \\
&&} 
\end{equation}
Strict (vertical) identities are $\mathrm{id}_A:A\to A$ and $\id_M:M\Rightarrow M$,
and horizontal units are $\SelectTips{eu}{10}\xymatrix@C=.2in
{1_A:A\ar[r]\ar@{}[r]|-{\scriptstyle{\bullet}} & A}$and
$^f1_f^f:1_A\Rightarrow 1_B$.
Functoriality of the horizontal composition results in the relation $1_N\odot1_M=1_{N\odot M}$ and 
the interchange law $(\beta'\beta)\odot(\alpha'\alpha)=(\beta'\odot\alpha')(\beta\odot\alpha).$
A 2-morphism with identity source and target vertical 1-cell, like $a,l,r$ above, is called  \emph{globular}.

The \emph{opposite double category} $\caa{D}^\op$
is the double category with vertical category 
$\caa{D}_0^\op$ and horizontal category $\caa{D}_1^\op$.
There also exist the \emph{horizontally opposite}
double category $\caa{D}^\mathrm{hop}$ and 
\emph{vertically opposite} double category 
$\caa{D}^\mathrm{vop}$ with opposite horizontal and 
vertical categories respectively.

For every double category $\caa{D}$ there is a corresponding bicategory denoted by $\ca{H}(\caa{D})$, called its \emph{horizontal bicategory};
in a sense, it comes from discarding the vertical structure of the double category.
It consists of the objects, horizontal 1-cells and globular 2-morphisms, and the required axioms are satisfied by default.
Many well-known bicategories arise as the horizontal bicategories of
specific double categories: in fact, all the examples of \cref{sec:bicategories} can be seen as such for the following double
categories, see e.g. \cite{Limitsindoublecats,Framedbicats}.
\begin{itemize}
 \item $\caa{S}\B{pan}(\ca{C})$ and $\caa{R}\B{el}(\ca{C})$ with vertical category $\ca{C}$;
 \item $\caa{B}\B{Mod}$ with vertical category $\B{Rng}$ of rings and ring homomorphisms;
 \item ($\ca{V}$-)$\caa{P}\B{rof}$ with vertical category ($\ca{V}$-)$\B{Cat}$.
\end{itemize}
What is evident from the above examples is that the double categorical perspective includes not only the morphisms
which the bicategorical structure is usually named after and describes best, but also the more fundamental, strict morphisms
between the objects: functions, ring maps and functors above.


\begin{defi}\label{defi:doublefunctor}
For $\caa{D}$ and $\caa{E}$ (pseudo) double categories, a \emph{pseudo double functor} $F:\caa{D}\to\caa{E}$
consists of functors $F_0:\caa{D}_0\to\caa{E}_0$ and $F_1:\caa{D}_1\to\caa{E}_1$ between the categories
of objects and arrows, such that $\Gr{s}\circ F_1=F_0\circ\Gr{s}$ 
and $\Gr{t}\circ F_1=F_0\circ\Gr{t}$, and natural transformations $F_{\odot}$, $F_U$ 
with components globular isomorphisms
$F_1M\odot F_1N\simrightarrow F_1(M\odot N)\textrm{ and }1_{F_0A}\simrightarrow F_1(1_A)$
which satisfy the usual coherence axioms \cref{laxcond1,laxcond2} for a pseudofunctor.
\end{defi}

Due to the compatibility of $F_0$, $F_1$ with sources and targets, we can write the mapping
of $F_1$ on 1-cells and 2-morphisms as
\begin{equation}\label{F1mapping}
\xymatrix@R=.35in
{A\ar[r]^-M\ar@{}[r]|-{\scriptstyle{\bullet}} \ar[d]_-{f}
\rtwocell<\omit>{<4>\alpha} & B\ar[d]^-{g} \\
C\ar[r]_-N\ar@{}[r]|-{\scriptstyle{\bullet}} & D}\;
\xymatrix @R=.1in
{\hole \\ \mapsto }\;
\xymatrix @C=.5in @R=.35in
{F_0A\ar[r]^-{F_1M}\ar@{}[r]|-{\scriptstyle{\bullet}} \ar[d]_-{F_0f}
\rtwocell<\omit>{<4>\quad F_1\alpha} & F_0B\ar[d]^-{F_0g} \\
F_0C\ar[r]_-{F_1N}\ar@{}[r]|-{\scriptstyle{\bullet}} & F_0D}
\end{equation}
We also have notions of \emph{lax} and \emph{colax double functors} between pseudo double categories, where the natural
transformations $F_{\odot}$ and $F_U$ have components globular 2-morphisms in one of the two possible directions respectively.
The full definitions can be found in the appendix of \cite{Limitsindoublecats} or \cite{Adjointfordoublecats}.

Any lax/colax/pseudo double functor $F:\caa{D}\to\caa{E}$ naturally induces a lax/colax/pseudo functor, \cref{laxfunctor},
between the respective horizontal bicategories.
It is denoted by $\ca{H}F:\ca{H}(\caa{D})\to\ca{H}(\caa{E})$, where each $A\in\caa{D}_0$ is mapped to $F_0A\in\caa{E}_0$, and
there are ordinary functors $\ca{H}F_{A,B}:\ca{H}(\caa{D})(A,B)\to\ca{H}(\caa{E})(F_0A,F_0B)$ mapping globular 2-cells
to globular 2-cells via \cref{F1mapping}.

With an appropriate notion for transformations between double functors, there is a 2-category $\ca{D}bl$ of double categories.
A monoidal double category then is a pseudomonoid therein, see \cite[2.9]{ConstrSymMonBicats}
for the full definition. Notably, in \cite{Adjointfordoublecats} the tensor product $\otimes$ is required to be a colax double functor
rather than pseudo double. 

\begin{defi}\label{monoidaldoublecategory}
A \emph{monoidal double category} is a double category $\caa{D}$ equipped with pseudo double functors
$\otimes=(\otimes_1,\otimes_1):\caa{D}\times\caa{D}\to\caa{D}$ and $\B{I}:\B{1}\to\caa{D}$
and invertible transformations expressing associativity and unity constraints, subject to axioms.

These amount to $(\caa{D}_0,\otimes_0,I)$ and $(\caa{D}_1,\otimes_1,1_I)$ being monoidal categories with units $I=\B{I}(*)$
and$\SelectTips{eu}{10}\xymatrix@C=.2in{1_I:I\ar[r]|-{\sbul} & I,}$ $\Gr{s},\Gr{t}$ being strict monoidal and preserving
associativity and unit constraints, and the existence of globular isomorphisms
\begin{align}\label{monoidaldoubleiso}
(M\otimes_1 N)\odot(M'\otimes_1 N')&\cong
(M\odot M')\otimes_1(N\odot N') \\
1_{(A\otimes_0 B)}&\cong
1_A\otimes_1 1_B\nonumber
\end{align}
subject to coherence conditions.
\end{defi}

A \emph{braided} or \emph{symmetric} monoidal double category $\caa{D}$ is one for which $\caa{D}_0$, $\caa{D}_1$ are
braided or symmetric, and the source and target functors $\Gr{s},\Gr{t}$ are strict braided monoidal, subject subject to two more axioms.
By definition \cref{F1mapping}, $\otimes_1$ is the following mapping:
\begin{equation}\label{D1monoidal}
\otimes_1:\xymatrix @C=1.5in
{\caa{D}_1\times\caa{D}_1\ar[r] & \caa{D}_1\phantom{ABC}}
\end{equation}\vspace{-0.2in}
\begin{displaymath}
\xymatrix @C=.045in @R=.25in
{(A\ar[rrr]|-\sbul^M\ar[d]_-f &\rtwocell<\omit>{<4>{\alpha}}&& B,\ar[d]^-g & C\ar[rrr]^-N|-\sbul\ar[d]_-h
&\rtwocell<\omit>{<4>\beta}&& D)\ar[d]^-k
\ar@{|.>}[rrrr] &&&& A\otimes_0 C\ar[rrr]^-{M\otimes_1 N}|-\sbul
\ar[d]_-{f\otimes_0 h} &\rtwocell<\omit>{<4>\quad\alpha\otimes_1\beta}
&& B\otimes_0 D\ar[d]^-{g\otimes_0 k} \\
(A'\ar[rrr]_-{M'}|-\sbul &&& B', & C'\ar[rrr]_-{N'}|-\sbul &&& D')
\ar@{|.>}[rrrr] &&&& A'\otimes_0 B'\ar[rrr]_-{M'\otimes_1 N'}|-\sbul &&& B'\otimes_1 D'} 
\end{displaymath}

Passing on to the theory of fibrant double categories, it is the case that in many examples of double categories,
there exists a canonical way of turning vertical 1-cells into horizontal 1-cells.
Such links have been studied in various works, and the terminology used below can be found in
\cite{Adjointfordoublecats,Framedbicats,Thespanconstruction}.

\begin{defi}\label{deficompconj}
 Let $\caa{D}$ be a double category and $f:A\to B$
a vertical 1-cell. A \emph{companion} of $f$
is a horizontal 1-cell$\SelectTips{eu}{10}\xymatrix@C=.2in
{\hat{f}:A\ar[r]\ar@{}[r]|-{\scriptstyle{\bullet}} & B}$together with
2-morphisms
\begin{displaymath}
 \xymatrix
{A\ar[r]^-{\hat{f}}\ar@{}[r]|-{\scriptstyle{\bullet}} 
\rtwocell<\omit>{<4>\;p_1} \ar[d]_-f & B\ar[d]^-{\mathrm{id}_B} \\
B\ar[r]_-{1_B}\ar@{}[r]|-{\scriptstyle{\bullet}} & B} 
\quad\xymatrix@R=.05in{\hole \\
\textrm{and}}\quad
 \xymatrix
{A\ar[r]^-{1_A}\ar@{}[r]|-{\scriptstyle{\bullet}} 
\rtwocell<\omit>{<4>\;p_2} \ar[d]_-{\mathrm{id}_A} & 
A\ar[d]^-{f} \\
A\ar[r]_-{\hat{f}}\ar@{}[r]|-{\scriptstyle{\bullet}} & B}
\end{displaymath}
such that $p_1p_2=1_f$ and $p_1\odot p_2\cong1_{\hat{f}}$.
Dually, a \emph{conjoint} of 
$f$ is a horizontal 1-cell$\SelectTips{eu}{10}\xymatrix@C=.2in
{\check{f}:B\ar[r]\ar@{}[r]|-{\scriptstyle{\bullet}} & A}$together with 2-morphisms
\begin{displaymath}
 \xymatrix
{B\ar[r]^-{\check{f}}\ar@{}[r]|-{\scriptstyle{\bullet}} 
\rtwocell<\omit>{<4>\;q_1} \ar[d]_-{\mathrm{id}_B} & A\ar[d]^-f \\
B\ar[r]_-{1_B}\ar@{}[r]|-{\scriptstyle{\bullet}} & B} \quad\xymatrix@R=.05in{\hole \\
\textrm{and}}\quad
 \xymatrix
{A\ar[r]^-{1_A}\ar@{}[r]|-{\scriptstyle{\bullet}} 
\rtwocell<\omit>{<4>\;q_2} \ar[d]_-{f} & 
A\ar[d]^-{\mathrm{id}_A} \\
B\ar[r]_-{\check{f}}\ar@{}[r]|-{\scriptstyle{\bullet}} & A}
\end{displaymath}
such that $q_1q_2=1_f$ and $q_2\odot q_1\cong 1_{\check{f}}$.
\end{defi}
The ideas which led to the above definitions go
back to \cite{Doublegroupoidsandcrossedmodules},
where a \emph{connection} on a double category
corresponds to a strictly functorial choice
of a companion for each vertical arrow.

\begin{defi}\cite[Definition 3.4]{ConstrSymMonBicats}\label{fibrantdoublecat}
A \emph{fibrant double category} is a double category
for which every vertical 1-morphism has a companion and a conjoint.
\end{defi}

Fibrant double categories are also called \emph{framed bicategories} or equivalently \emph{proarrow equipments} \cite{Wood:1982a,Verityequip}.
The above definition's equivalence with the following can be found for example at \cite[Thm.~4.1]{Framedbicats}.

\begin{defi}\label{Grothfibrant}
A fibrant double category is one where the functor
\begin{displaymath}
(\Gr{s},\Gr{t})\colon\caa{D}_1\longrightarrow\caa{D}_0\times\caa{D}_0
\end{displaymath}
mapping each horizontal 1-cell and 2-morphism to the pair of source and target is a fibration, or equivalently an opfibration.
\end{defi}

In this view, the canonical cartesian lifting of some$\proar{N}{C}{D}$ along
a pair of vertical morphisms $f\colon A\to C$, $g\colon B\to D$
\begin{equation}\label{cartliftframdebicat}
\xymatrix @C=.17in
{\check{g}\odot N\odot\hat{f}\ar[rrr]^-{\Cart((f,g),N)}\ar @{.>}[d] &&& N \ar @{.>}[d] &\caa{D}_1\ar[d]^-{(\Gr{s},\Gr{t})} \\
(A,B)\ar[rrr]_-{(f,g)} &&& (C,D) & \caa{D}_0\times\caa{D}_0}\quad\textrm{ is }\quad
\xymatrix @R=.43in
{A\ar[d]_-f \ar[r]^-{\hat{f}}\ar@{}[r]|-{\scriptstyle{\bullet}}\rtwocell<\omit>{<5>\;p^f_1} &
C\ar[r]^-N\ar@{}[r]|-{\scriptstyle{\bullet}} \ar@{=}[d]\rtwocell<\omit>{<5>\;1_N} & D\ar@{=}[d]\ar[r]^-{\check{g}}
\ar@{}[r]|-{\scriptstyle{\bullet}} \rtwocell<\omit>{<5>\;q^g_1} & C\ar[d]^-g \\
C\ar[r]_-{1_C}\ar@{}[r]|-{\scriptstyle{\bullet}} & C\ar[r]_-N\ar@{}[r]|-{\scriptstyle{\bullet}} & 
D\ar[r]_-{1_D}\ar@{}[r]|-{\scriptstyle{\bullet}} & F}
\end{equation}
Many properties for fibrant double categories can be deduced from the companion and conjoint definitions.
The following lemma gathers the most useful for us; the explicit proofs can be found in
\cite{ConstrSymMonBicats}, or can be easily deduced e.g. via mates and factorization
through lifting \cref{factor} for \cref{five}.

\begin{lem}\label{compconjprops}
Suppose $\caa{D}$ is a fibrant double category.
\begin{enumerate}
 \item Companions and conjoints of a vertical 1-cell are essentially unique (up to unique globular isomorphism).
 \item\label{five} For any vertical 1-cells $f\colon A\to C$, $g\colon B\to D$ and horizontal 1-cells
 $\proar{M}{A}{B,}\proar{N}{C}{D,}$ we have bijections between
 \begin{gather}\label{matesforcompconj}
 \ca{H}(\caa{D})(M,\check{g}\odot N\odot\hat{f})\cong \ca{H}(\caa{D})(\hat{g}\odot M,N\odot\hat{f}) \cong \\
 \ca{H}(\caa{D})(M\odot\check{f},\check{g}\odot N)\cong\ca{H}(\caa{D})(\hat{g}\odot M\odot\check{f},N) \nonumber
 \end{gather}
 and the 2-morphisms $M\Rightarrow N$ with source and target $f$ and $g$ \cref{2morphism}.
 \item\label{compositecompconj} The horizontal composites $\hat{g}\odot\hat{f}$ and $\check{g}\odot\check{f}$ are the
 companion and the conjoint of the vertical composite $gf$, for any composable vertical 1-cells.
 \item The companion and conjoint of the vertical identities are the horizontal identites, $\wh{\id_A}=\wc{\id_A}=1_A$.
 \item For any vertical 1-cell $f\colon A\to B$, we have an adjunction $\hat{f}\dashv\check{f}$ in the horizontal
 bicategory $\ca{H}(\caa{D})$.
 \item If $\caa{D}$ is also monoidal, $\hat{f}\otimes_1\hat{g}$ and $\check{f}\otimes_1\check{g}$
 are the companion and conjoint of $f\otimes_0 g$ for any vertical 1-cells $f$, $g$.
 \end{enumerate}
\end{lem}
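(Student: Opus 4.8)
The plan is to reduce every item either to a universal property of the structure $2$-morphisms of \cref{deficompconj} or to the construction of a candidate companion/conjoint followed by an appeal to essential uniqueness (item~(1)). The two levers are the defining equations $p_1p_2=1_f$, $p_1\odot p_2\cong 1_{\hat f}$ with their conjoint duals $q_1q_2=1_f$, $q_2\odot q_1\cong 1_{\check f}$, and the fibrational characterization of \cref{Grothfibrant}, which permits factoring an arbitrary $2$-morphism through a chosen cartesian lifting as in \cref{factor}. For uniqueness itself I would note that $p_2\colon 1_A\Rightarrow\hat f$ exhibits $\hat f$ as the cocartesian lifting of $1_A$ along $(\id_A,f)$, the requisite factorizations being produced directly from the companion equations; since cocartesian liftings are unique up to a unique globular isomorphism, so is $\hat f$, and dually $\check f$ via $q_2$. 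I would deliberately avoid invoking \cref{cartliftframdebicat} at this stage so as not to create a circularity with the identity case.

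Next I would establish the adjunction $\hat f\dashv\check f$ (item~(5)), since the remaining transposition bijections rest on it. The unit $\eta\colon 1_A\Rightarrow\check f\odot\hat f$ is the composite of the unitor $1_A\cong 1_A\odot 1_A$ with the horizontal composite $q_2\odot p_2$, and the counit $\varepsilon\colon\hat f\odot\check f\Rightarrow 1_B$ is built dually from $p_1\odot q_1$ and a unitor; in each case the shared vertical leg is $f$, so both pastings are globular. The two triangle identities then unwind, by the interchange law and the functoriality relation $1_N\odot 1_M=1_{N\odot M}$, to precisely the companion and conjoint defining equations. This is the step I expect to require the most care: the triangle laws hold only up to the associator and unitor of \cref{def:doublecats}, and reconciling the non-strict isomorphisms $\cong$ with these coherence cells is where the genuine bookkeeping lies.

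With the adjunction available, the mates bijection (item~\ref{five}) is immediate. The correspondence between $2$-morphisms $M\Rightarrow N$ with source $f$ and target $g$ and globular maps $M\Rightarrow\check g\odot N\odot\hat f$ is exactly the factorization \cref{factor} through the canonical cartesian lifting $\Cart((f,g),N)=\check g\odot N\odot\hat f$ recorded in \cref{cartliftframdebicat}, which is available by \cref{Grothfibrant}. The other three bijections are the mates of this one: transposing the outer $\hat f$ onto the source as $\check f$ using $\hat f\dashv\check f$, and $\check g$ onto the source as $\hat g$ using $\hat g\dashv\check g$, yields the four displayed hom-sets in succession, with naturality inherited from that of the adjunction transposes.

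The composite formula (item~\ref{compositecompconj}), the identity case (item~(4)) and the monoidal case (item~(6)) then follow the single template of building a candidate and quoting uniqueness. For item~\ref{compositecompconj} the horizontal composite $\hat g\odot\hat f$ carries the pasted cells assembled from $p_1^f,p_2^f$ and $p_1^g,p_2^g$, and its companion equations for $gf$ collapse onto those for $f$ and $g$ by interchange, so that $\widehat{gf}\cong\hat g\odot\hat f$ (dually for conjoints). For the identity case, $1_A$ equipped with $p_1=p_2=\id_{1_A}$ satisfies the companion and conjoint equations for $\id_A$, whence $\wh{\id_A}=\wc{\id_A}=1_A$. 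For the monoidal case, applying the pseudo double functor $\otimes$ to the companion data of $f$ and $g$ and inserting the globular comparisons \cref{monoidaldoubleiso} to reconcile $1_A\otimes_1 1_B\cong 1_{A\otimes_0 B}$ with the interchange of $\otimes_1$ and $\odot$ exhibits $\hat f\otimes_1\hat g$ as a companion of $f\otimes_0 g$; uniqueness concludes, and I expect no new difficulty once these coherence isomorphisms are tracked.
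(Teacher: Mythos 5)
This is correct and is essentially the argument the paper itself endorses: the paper gives no in-text proof of \cref{compconjprops}, deferring to \cite{ConstrSymMonBicats} and noting that item \cref{five} follows ``via mates and factorization through lifting \cref{factor}'', which is exactly your treatment (factorization through the cartesian lifting of \cref{cartliftframdebicat} plus transposition under the adjunctions $\hat{f}\dashv\check{f}$ and $\hat{g}\dashv\check{g}$), while your remaining constructions---unit and counit assembled from $q_2\odot p_2$ and $p_1\odot q_1$ with the triangle identities reduced to the defining equations by interchange, and candidate-plus-essential-uniqueness for composites, identities and tensors---are the standard ones from that reference. Your packaging of essential uniqueness via the cocartesianness of $p_2$ over $(\id_A,f)$ (rather than writing the comparison isomorphism between two companions as an explicit pasting of $p_1$ with $p_2'$) is sound and equivalent, since the needed factorizations do follow from $p_1p_2=1_f$, $p_1\odot p_2\cong 1_{\hat{f}}$ together with interchange and unitor naturality.
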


For example, $\ca{V}$-$\caa{P}\B{rof}$ is a fibrant double category: the companion and conjoint for a $\ca{V}$-functor
$F\colon\ca{A}\to\ca{B}$ are given by the \emph{representable profunctors}
\begin{gather*}
\matr{\hat{F}=F_*}{\ca{A}}{\ca{B}}\textrm{ by }F_*(B,A)=\ca{B}(B,FA) \\
\matr{\check{F}=F^*}{\ca{B}}{\ca{A}}\textrm{ by }F^*(A,B)=\ca{B}(FA,B)
\end{gather*}
For $\caa{S}\B{pan}(\ca{C})$, the companion and conjoint of a function $f\colon A\to B$ are $\check{f}=(\id_A,f)$ and
$\hat{f}=(f,\id_B)$, whereas for $\B{BMod}$ a ring morphism $f\colon A\to B$ gives rise to
$B$ as a left-$A$ right-$B$ bimodule but also as a left-$B$ right-$A$ bimodule via restriction of scalars.

A fundamental property of a fibrant double category with a monoidal structure is that its horizontal bicategory inherits it.
This process, studied in detail in \cite{ConstrSymMonBicats}, allows us to reduce a lengthy and demanding task of verifying
the coherence conditions of monoidal structure on a bicategory into a much more concise one, essentially involving a pair 
of ordinary monoidal categories.

\begin{thm}\cite[Theorem 5.1]{ConstrSymMonBicats}\label{monoidalhorizontalbicategory}
If $\caa{D}$ is a fibrant monoidal double category,
then $\ca{H}(\caa{D})$ is a monoidal bicategory. If $\caa{D}$
is braided or symmetric, then so is $\ca{H}(\caa{D})$.
\end{thm}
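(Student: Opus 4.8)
The plan is to transport the two monoidal structures packaged in a monoidal double category—those on $\caa{D}_0$ and on $\caa{D}_1$—onto $\ca{H}(\caa{D})$, using fibrancy to bridge the gap between the \emph{vertical} coherence data that $\caa{D}$ carries and the \emph{horizontal equivalences} that a monoidal bicategory demands. Conceptually, a monoidal double category is a pseudomonoid in $(\ca{D}bl,\times,\B{1})$, and since $\caa{D}\mapsto\ca{H}(\caa{D})$ preserves finite products one expects it to send pseudomonoids to pseudomonoids, i.e. to monoidal bicategories. The one point that genuinely needs fibrancy is that the associativity and unit constraints of the pseudomonoid are \emph{invertible vertical} morphisms, whereas the associator and unitors of a monoidal bicategory must have \emph{horizontal equivalence} components; companions and conjoints are exactly the device converting the former into the latter.

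Concretely, I would first build the tensor pseudofunctor $\otimes\colon\ca{H}(\caa{D})\times\ca{H}(\caa{D})\to\ca{H}(\caa{D})$. On $0$-cells it is $\otimes_0$, and on horizontal $1$-cells and globular $2$-morphisms it is the restriction of $\otimes_1$ from \cref{D1monoidal}; this stays within globular cells precisely because $\Gr{s},\Gr{t}$ are strict monoidal. The two structure constraints of a pseudofunctor, $(M\otimes_1 N)\odot(M'\otimes_1 N')\cong(M\odot M')\otimes_1(N\odot N')$ and $1_{A\otimes_0 B}\cong 1_A\otimes_1 1_B$, are exactly the globular isomorphisms \cref{monoidaldoubleiso}, and their pseudofunctor coherence axioms \cref{laxcond1,laxcond2} are repackagings of the coherence conditions these isomorphisms already satisfy. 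The unit object is $I=\B{I}(\ast)$.

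The heart of the construction is the associator (the unitors being identical in form). The associativity constraint of $\caa{D}_0$ is a vertical isomorphism $\alpha^0\colon(A\otimes_0 B)\otimes_0 C\to A\otimes_0(B\otimes_0 C)$; its companion $\wh{\alpha^0}$ is, by \cref{compconjprops}, an equivalence in $\ca{H}(\caa{D})$ (a companion of a vertical iso is an equivalence, with the conjoint as pseudo-inverse), and these supply the $1$-cell components of the associator pseudonatural equivalence. The associativity constraint of $\caa{D}_1$ is a $2$-morphism $\alpha^1\colon(M\otimes_1 N)\otimes_1 P\Rightarrow M\otimes_1(N\otimes_1 P)$ whose source and target vertical $1$-cells are the two copies of $\alpha^0$; feeding it through the mates bijection \cref{compconjprops}\eqref{five} turns it into the globular $2$-cell filling the pseudonaturality square. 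The unit equivalences arise the same way from the unitors of $\caa{D}_0$ and $\caa{D}_1$, while the pentagonator and the left/middle/right unit modifications are read off from the pentagon and triangle isomorphisms of $\caa{D}_0$ and $\caa{D}_1$, again transported along companions and conjoints, where \cref{compconjprops}\eqref{compositecompconj} guarantees companions respect horizontal composition and item $(6)$ that they respect $\otimes_1$.

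The main obstacle, and the bulk of the labor, is verifying the full list of monoidal-bicategory coherence axioms (the pentagonator equation, the three unit equations, and the rest). The hard part will be doing this without drawing every pasting diagram: the strategy is to apply the mates correspondence \cref{compconjprops}\eqref{five} systematically, so that each axiom phrased in terms of globular $2$-cells and companion/conjoint equivalences in $\ca{H}(\caa{D})$ is reflected back to an identity between ordinary $2$-morphisms in $\caa{D}_1$, where it follows from $\caa{D}_1$ being a monoidal category together with the defining axioms of a monoidal double category. The technical crux is the bookkeeping ensuring that passage through companions is compatible with composition, so that composites of associators correspond correctly; once this is set up, each coherence axiom becomes formal. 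For the braided/symmetric case I would adjoin a braiding equivalence given by the companion of the braiding vertical isomorphism on $\caa{D}_0$, derive the hexagonators from the braiding $2$-morphisms on $\caa{D}_1$ through the same bijection, and obtain the remaining syllepsis/symmetry axioms from the two extra axioms imposed on a braided (resp. symmetric) monoidal double category.
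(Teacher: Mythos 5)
Your proposal is correct and takes essentially the same route as the paper's proof: the paper offers no argument of its own but cites Shulman's \cite{ConstrSymMonBicats}, and his construction is precisely what you outline --- restrict $\otimes_1$ to globular cells with pseudofunctor constraints given by \cref{monoidaldoubleiso}, take companions of the vertical associativity/unit (and braiding) isomorphisms as the required equivalences, fill the pseudonaturality squares via the mates bijection of \cref{compconjprops}, and assemble the pentagonator, unit, and hexagon modifications from the canonical comparison cells $\hat{g}\odot\hat{f}\cong\wh{gf}$, checking the coherence axioms by reflecting them back to equations in $\caa{D}_0$ and $\caa{D}_1$. The one phrasing slip is that the pentagon and triangle in $\caa{D}_0$ and $\caa{D}_1$ hold as strict equations rather than isomorphisms, so the pentagonator arises entirely from the companion-comparison isomorphisms; this is in fact what your appeal to \cref{compconjprops} accomplishes, so it does not affect correctness.
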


Evidently, the monoidal structure of the bicategory consists of
the induced pseudofunctor of bicategories $\ca{H}(\otimes):\ca{H}(\caa{D})\times\ca{H}(\caa{D})\to
\ca{H}(\caa{D})$ and the monoidal unit $1_I$ of $\caa{D}_1$, for a $(\caa{D},\otimes, I)$ as in
\cref{monoidaldoublecategory}.

\subsection{Monads and comonads in double categories}\label{moncomondouble}


Suppose $\caa{D}$ is a double category. Define the category of \emph{endomorphisms} $\caa{D}_1^\bullet$ to be
the (non-full) subcategory of $\caa{D}_1$ of all horizontal endo-1-cells and 2-morphisms
with the same source and target. Explicitly, objects are of the form$\SelectTips{eu}{10}\xymatrix@C=.2in
{M:A\ar[r]\ar@{}[r]|-{\scriptstyle{\bullet}} & A}$and arrows
\begin{equation}\label{endo2morphism}
\xymatrix
{A\ar[r]^-M\ar@{}[r]|-{\scriptstyle{\bullet}} 
\rtwocell<\omit>{<4>\alpha} \ar[d]_-f & A\ar[d]^-f \\
B\ar[r]_-N\ar@{}[r]|-{\scriptstyle{\bullet}} & B}
\end{equation}
denoted by $\alpha_f:M_A\to N_B$.
In \cite{Monadsindoublecats}, this category
coincides with the vertical 1-category of the 
double category $\caa{E}\B{nd}(\caa{D})$
of (horizontal) endomorphisms,
horizontal endomorphism maps, vertical endomorphism
maps and endomorphism squares
in $\caa{D}$.

\begin{defi}\label{Monadindoublecat}
A \emph{monad} in a double category $\caa{D}$ is a horizontal endo-1-cell$\SelectTips{eu}{10}\xymatrix@C=.2in
{M:A\ar[r]\ar@{}[r]|-{\scriptstyle{\bullet}} & A}$ \emph{i.e.}
an object in $\caa{D}_1^\bullet$, equipped with 
globular 2-morphisms
\begin{displaymath}
\xymatrix @C=.4in @R=.4in
{A\ar[r]^-M\ar@{}[r]|-{\scriptstyle{\bullet}}  
\rrtwocell<\omit>{<4.5>m} \ar[d]_-{\mathrm{id_A}} 
& A\ar[r]^-M\ar@{}[r]|-{\scriptstyle{\bullet}} & A\ar[d]^-{\mathrm{id}_A} \\
A\ar[rr]_-M\ar@{}[rr]|-{\scriptstyle{\bullet}}  && A,}\qquad
\xymatrix @C=.4in @R=.4in
{A\ar[r]^-{1_A}\ar@{}[r]|-{\scriptstyle{\bullet}}  
\rtwocell<\omit>{<4.5>\eta} \ar[d]_-{\mathrm{id_A}} 
& A\ar[d]^-{\mathrm{id}_A} \\
A\ar[r]_-M\ar@{}[r]|-{\scriptstyle{\bullet}}  & A}
\end{displaymath}
satisfying the usual associativity and unit laws; this is the same as a monad in its horizontal bicategory $\ca{H}(\caa{D})$
(\cref{monadbicat}).
A \emph{monad morphism} consists of an arrow $\alpha_f:M_A\to N_B$ in $\caa{D}_1^\bullet$ which respects multiplication and unit:
\begin{equation}\label{monadhom}
\xymatrix@C=.3in @R=.2in
{A\ar[r]^-M\ar@{}[r]|-{\scriptstyle{\bullet}}\ar[d]_-f
\rtwocell<\omit>{<3>\alpha} & 
A\ar[r]^-M\ar@{}[r]|-{\scriptstyle{\bullet}}\ar[d]^-f
\rtwocell<\omit>{<3>\alpha} &
A\ar[d]^-f \\
B\ar[r]_-N\ar@{}[r]|-{\scriptstyle{\bullet}}\rrtwocell<\omit>{<3>m}
\ar@{=}[d] &
B\ar[r]_-N\ar@{}[r]|-{\scriptstyle{\bullet}}
& B \ar@{=}[d] \\
B\ar[rr]_-N\ar@{}[rr]|-{\scriptstyle{\bullet}} && B}
\xymatrix@R=.2in{\hole \\ =}
\xymatrix@C=.3in @R=.2in
{A\ar[r]^-M\ar@{}[r]|-{\scriptstyle{\bullet}}
\rrtwocell<\omit>{<3>m}\ar@{=}[d] & 
A\ar[r]^-M\ar@{}[r]|-{\scriptstyle{\bullet}} 
& A\ar@{=}[d] \\
A\ar[rr]_-M\ar@{}[rr]|-{\scriptstyle{\bullet}}\ar[d]_-f
\rrtwocell<\omit>{<4>\alpha} && A\ar[d]^-f \\
B\ar[rr]_-N\ar@{}[rr]|-{\scriptstyle{\bullet}}
 && B,}
\qquad
\xymatrix@C=.3in @R=.2in
{A\ar[r]^-{1_A}\ar@{}[r]|-{\scriptstyle{\bullet}}\ar@{=}[d]
\rtwocell<\omit>{<3>\eta} & A\ar@{=}[d] \\
A\ar[r]_-M\ar@{}[r]|-{\scriptstyle{\bullet}}\ar[d]_-f
\rtwocell<\omit>{<3.5>\alpha} &
A\ar[d]^-f \\
B\ar[r]_-N\ar@{}[r]|-{\scriptstyle{\bullet}} & B}
\xymatrix@R=.2in{\hole \\ =}
\xymatrix@C=.3in @R=.2in
{A\ar[r]^-{1_A}\ar@{}[r]|-{\scriptstyle{\bullet}}\ar[d]_-f
\rtwocell<\omit>{<3>1_f} & A\ar[d]^-f \\
B\ar[r]_-{1_B}\ar@{}[r]|-{\scriptstyle{\bullet}}\ar@{=}[d]
\rtwocell<\omit>{<3.5>\eta} &
B\ar@{=}[d] \\
B\ar[r]_-N\ar@{}[r]|-{\scriptstyle{\bullet}} & B.}
\end{equation}
\end{defi}
We obtain a non-full subcategory of $\caa{D}_1$, the category $\Mnd(\caa{D})$. These definitions can be found in \cite{Framedbicats}
under the names of \emph{monoids} and \emph{monoid homomorphisms} for fibrant double categories, as well as in \cite{Monadsindoublecats}
as monads and \emph{vertical} monad maps in a double category $\caa{D}$. In the latter work, $\Mnd(\caa{D})$ is the vertical 
category of $\caa{M}\B{nd}(\caa{D})$, a double category of monads, horizontal and vertical monad maps and monad squares.

Dually, we have the following definition.

\begin{defi}\label{Comonadindoublecat}
There is a category $\Cmd(\caa{D})$ with objects \emph{comonads} in $\caa{D}$, \emph{i.e.} horizontal 
endo-1-cells$\SelectTips{eu}{10}\xymatrix@C=.2in{C:A\ar[r]\ar@{}[r]|-{\scriptstyle{\bullet}} & A}$equipped with globular 2-morphisms
\begin{displaymath}
\xymatrix @C=.4in @R=.4in
{A\ar[rr]^-C\ar@{}[rr]|-{\scriptstyle{\bullet}} \ar[d]_-{\mathrm{id_A}} 
&& A\ar[d]^-{\mathrm{id}_A} \\
A\ar[r]_-C\ar@{}[r]|-{\scriptstyle{\bullet}}  
\rrtwocell<\omit>{<-4>\Delta}  
& A\ar[r]_-C\ar@{}[r]|-{\scriptstyle{\bullet}} & A,}\qquad
\xymatrix @C=.4in @R=.4in
{A\ar[r]^-C\ar@{}[r]|-{\scriptstyle{\bullet}}\ar[d]_-{\mathrm{id_A}}   
& A\ar[d]^-{\mathrm{id}_A} \\
A\ar[r]_-C\ar@{}[r]|-{\scriptstyle{\bullet}}  
\rtwocell<\omit>{<-4>\epsilon} 
& A}
\end{displaymath}
satisfying the usual coassociativity and counit axioms for a comonad in the horizontal bicategory $\ca{H}(\caa{D})$
(\cref{comonadbicat}). Arrows are \emph{comonad morphisms}, \emph{i.e.} $\alpha_f:C_A\to D_B$ in $\caa{D}_1^\bullet$
satisfying dual axioms to \cref{monadhom}.
\end{defi}

Observe that $\Mnd(\caa{D}^\op)=\Cmd(\caa{D})^\op$, and that the forgetful functors
$\Mnd(\caa{D}),\Cmd(\caa{D})\to\caa{D}_1^\bullet\to\caa{D}_1$ reflect isomorphisms.
Moreover, there are also forgetful functors to $\caa{D}_0$, mapping a horizontal endo-1-cell$\SelectTips{eu}{10}\xymatrix@C=.2in
{M:A\ar[r]\ar@{}[r]|-{\scriptstyle{\bullet}} & A}$to its source and target $A$, and a 2-morphism $\alpha_f\colon M_A\to N_B$
to its source and target $f\colon A\to B$; these are studied in detail below. 

In \cite[\S 8]{PhDChristina}, the above structures were called (co)monoids;
the current terminology is preferred due to the fact that it doesn't require any monoidal structure on the
double category. A monoid in a monoidal double category should correspond to a lax double functor
$\B{1}\to\caa{D}$, which comes down to a monoid in the vertical category $\caa{D}_0$ as the source and target of a monoid
in the horizontal category $\caa{D}_1$.

We now consider how different notions of double functors relate to the 
categories of endomorphisms, monads and comonads. The following can be deduced from the definition of a double functor \cref{F1mapping}
in a straightforward way.

\begin{cor}\label{F1bullet}
Suppose that $F\colon\caa{D}\to\caa{E}$ is a lax/colax/pseudo double functor.
Then $F_1$ naturally induces an ordinary functor $\Fendo\colon\Dendo\to\Eendo$.
\end{cor}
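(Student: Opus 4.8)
The plan is to define $\Fendo$ simply as the restriction of the arrow-category functor $F_1$ to the endomorphism subcategory, and then to verify that this restriction is well defined---that is, that $F_1$ carries objects and morphisms of $\Dendo$ into objects and morphisms of $\Eendo$---after which functoriality comes for free, since $F_1$ is already a functor.

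First I would check objects. An object of $\Dendo$ is a horizontal endo-1-cell $M\colon A\to A$, meaning $\Gr{s}(M)=\Gr{t}(M)=A$. By the strict source/target compatibility built into every lax, colax or pseudo double functor, namely $\Gr{s}\circ F_1=F_0\circ\Gr{s}$ and $\Gr{t}\circ F_1=F_0\circ\Gr{t}$, we obtain $\Gr{s}(F_1 M)=F_0(\Gr{s}M)=F_0 A$ and likewise $\Gr{t}(F_1 M)=F_0 A$. Hence $F_1 M$ is a horizontal endo-1-cell on $F_0 A$, an object of $\Eendo$. Next the morphisms: a morphism $\alpha_f\colon M_A\to N_B$ of $\Dendo$ is a 2-morphism of $\caa{D}_1$ whose source and target vertical 1-cells coincide, $\Gr{s}(\alpha)=\Gr{t}(\alpha)=f$, as in \cref{endo2morphism}. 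The same strictness gives $\Gr{s}(F_1\alpha)=F_0(\Gr{s}\alpha)=F_0 f=F_0(\Gr{t}\alpha)=\Gr{t}(F_1\alpha)$, so $F_1\alpha$ again has equal source and target $F_0 f$ and is therefore a legitimate morphism $(F_1\alpha)_{F_0 f}\colon (F_1 M)_{F_0 A}\to(F_1 N)_{F_0 B}$ of $\Eendo$.

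Finally, functoriality of $\Fendo$ is inherited verbatim from that of $F_1$: the identity $\id_M$ and the composite $\beta\cdot\alpha$ of morphisms in $\Dendo$ are computed exactly as in $\caa{D}_1$ (the subcategory is closed under these, since the source and target of $\id_M$ are identities and those of $\beta\cdot\alpha$ compose), and $F_1$ preserves them. I expect no genuine obstacle here; the only point worth flagging is that the argument is uniform across the lax, colax and pseudo cases precisely because it uses $F_1$ \emph{purely as a functor} between the arrow categories and never invokes the comparison cells $F_\odot$, $F_U$---whose direction is the sole difference between the three flavours---so the horizontal-composition structure plays no role whatsoever in establishing $\Fendo$.
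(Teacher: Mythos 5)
Your proof is correct and follows exactly the route the paper intends: the paper states this corollary as following "in a straightforward way" from the mapping \cref{F1mapping}, i.e.\ from the strict compatibilities $\Gr{s}\circ F_1=F_0\circ\Gr{s}$ and $\Gr{t}\circ F_1=F_0\circ\Gr{t}$, which is precisely your restriction argument. Your closing observation---that the comparison cells $F_\odot$, $F_U$ are never used, making the argument uniform across the lax, colax and pseudo cases---is a nice explicit justification of why the paper can state all three flavours at once.
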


For monads and comonads, the following resembles to standard properties of monoidal functors,
see \cref{Monoidalcats}, which is also part of \cite[11.11]{Framedbicats}
\begin{prop}\label{MonFdouble}
Any lax double functor $F=(F_0,F_1):\caa{D}\to\caa{E}$ induces an ordinary functor
\begin{displaymath}
 \Mon F:\Mnd(\caa{D})\to\Mnd(\caa{E})
\end{displaymath}
between their categories of monads, by restricting 
$F_1$ to $\Mnd(\caa{D})$. Dually, any colax double functor induces
a functor between the categories of comonoids,
\begin{displaymath}
 \Comon F:\Cmd(\caa{D})\to\Cmd(\caa{E}).
\end{displaymath}
\end{prop}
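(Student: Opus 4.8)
The plan is to show that $F_1$, which by \cref{F1bullet} already restricts to a functor $\Fendo\colon\Dendo\to\Eendo$ on endomorphisms, restricts further to the non-full subcategories of monads. Three things must be checked: that the image of a monad is a monad, that the image of a monad morphism is a monad morphism, and that the restriction remains functorial. For the object level I would invoke the induced lax functor $\ca{H}F\colon\ca{H}(\caa{D})\to\ca{H}(\caa{E})$ on horizontal bicategories together with \cref{laxfunctorspreservemonads}: since a monad in $\caa{D}$ is precisely a monad in $\ca{H}(\caa{D})$ and lax functors of bicategories preserve monads, $F_1M$ acquires a canonical monad structure. Writing $F_\odot$ and $F_U$ for the (lax-direction) structure 2-morphisms of $F$, the multiplication and unit of $F_1M$ are the globular composites
\[
F_1M\odot F_1M\xrightarrow{F_\odot}F_1(M\odot M)\xrightarrow{F_1m}F_1M,\qquad 1_{F_0A}\xrightarrow{F_U}F_1(1_A)\xrightarrow{F_1\eta}F_1M,
\]
and the monad axioms follow from the coherence conditions \cref{laxcond1,laxcond2}, exactly as in the one-object case \cref{monf}.

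The real content is the morphism level. Given a monad morphism $\alpha_f\colon M_A\to N_B$, its image $F_1\alpha_f\colon (F_1M)_{F_0A}\to(F_1N)_{F_0B}$ lies in $\Eendo$, and I must verify it satisfies the two conditions of \cref{monadhom} for the induced monad structures on $F_1M$ and $F_1N$. I would establish multiplication compatibility by the following chain: starting from $F_1m_N\cdot F_\odot\cdot(F_1\alpha\odot F_1\alpha)$, apply the naturality of the transformation $F_\odot$ at the composable pair $(\alpha_f,\alpha_f)$ to rewrite $F_\odot\cdot(F_1\alpha\odot F_1\alpha)=F_1(\alpha\odot\alpha)\cdot F_\odot$; then use functoriality of $F_1$ on vertical composition together with the $F_1$-image of the first equation of \cref{monadhom}, namely $m_N\cdot(\alpha\odot\alpha)=\alpha\cdot m_M$, to reach $F_1\alpha\cdot F_1m_M\cdot F_\odot$. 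This is precisely the multiplication-compatibility of $F_1\alpha_f$. The unit condition is obtained in the same manner, replacing $F_\odot$ by $F_U$ and invoking the unit square of \cref{monadhom}.

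Functoriality is then immediate: identities and composites of monad morphisms are computed in $\Dendo$, where $\Fendo$ is already functorial, so no further work is needed. The dual statement is entirely analogous: for a colax double functor the structure cells $F_\odot$ and $F_U$ point in the opposite direction, equipping $F_1C$ with a comultiplication $F_1C\to F_1C\odot F_1C$ and a counit $F_1C\to 1_{F_0A}$, and one checks comonad-morphism compatibility by the same naturality argument; alternatively it follows formally from the identity $\Cmd(\caa{D})=\Mnd(\caa{D}^\op)^\op$ noted after \cref{Comonadindoublecat}. The main obstacle is the morphism-level verification: although conceptually routine, it requires threading the naturality of the globular cells $F_\odot$ and $F_U$ through the non-globular pasting diagrams of \cref{monadhom}, which is where care is needed.
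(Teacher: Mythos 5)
Your proof is correct and takes essentially the same approach as the paper: equip $F_1M$ with multiplication $F_1m\cdot F_{\odot}$ and unit $F_1\eta\cdot F_U$, check that $F_1\alpha$ respects them by naturality of $F_{\odot}$ and $F_U$ together with functoriality of $F_1$, and dualize for comonads. Your morphism-level verification is in fact more detailed than the paper's, which simply cites naturality of $F_{\odot}$ and $F_U$ at that step.
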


\begin{proof}
A monad$\SelectTips{eu}{10}\xymatrix@C=.2in{M:A\ar[r]\ar@{}[r]|-{\scriptstyle{\bullet}} & A}$with $m:M\odot M\to M$
and $\eta:1_M\to M$ is mapped to$\SelectTips{eu}{10}\xymatrix@C=.2in{F_1M:F_0A\ar[r]\ar@{}[r]|-{\scriptstyle{\bullet}} & F_0A}$with 
multiplication and unit
\begin{displaymath}
 \xymatrix
{F_0A\ar[r]^-{F_1M}\ar@{}[r]|-{\scriptstyle{\bullet}}\ar@{=}[d]\rrtwocell<\omit>{<5>\;F_\odot}
& F_0A\ar[r]^-{F_1M}\ar@{}[r]|-{\scriptstyle{\bullet}} & F_0A\ar@{=}[d] \\
F_0A\ar[rr]_-{F_1(M\odot M)}\ar@{}[rr]|-{\scriptstyle{\bullet}}\ar@{=}[d]\rrtwocell<\omit>{<5>\quad F_1m}
&& F_0A\ar@{=}[d] \\
F_0A\ar[rr]_-{F_1 M}\ar@{}[rr]|-{\scriptstyle{\bullet}} && F_0A}\quad
\xymatrix
{\hole \\ \mathrm{and}}
\quad
\xymatrix
{F_0A\ar[r]^-{F_1(1_A)}\ar@{}[r]|-{\scriptstyle{\bullet}}\ar@{=}[d]\rtwocell<\omit>{<5>\;F_U} &
F_0A\ar@{=}[d] \\
F_0A\ar[r]_-{1_{F_0A}}\ar@{}[r]|-{\scriptstyle{\bullet}}\ar@{=}[d]\rtwocell<\omit>{<5>\quad F_1\eta} &
F_0A\ar@{=}[d] \\
F_0A\ar[r]_-{F_1M}\ar@{}[r]|-{\scriptstyle{\bullet}} & F_0A}
\end{displaymath}
and the axioms follow from the axioms for $F_{\odot}$ and 
$F_U$. A monad map $\alpha_f:M_A\to N_B$ is mapped to $F_1^\bullet\alpha$,
\begin{displaymath}
 \xymatrix
{F_0A\ar[r]^-{F_1 M}\ar@{}[r]|-{\scriptstyle{\bullet}}\ar[d]_-{F_0f}\rtwocell<\omit>{<4>\;\; F_1\alpha} &
F_0A\ar[d]^-{F_0f} \\
F_0B\ar[r]_-{F_1 N}\ar@{}[r]|-{\scriptstyle{\bullet}} & F_0B}
\end{displaymath}
which respects multiplications and units by naturality 
of $F_{\odot}$ 
and $F_U$.
Similarly for the induced functor between comonads.
\end{proof}

\begin{rmk}\label{doublemonadsaremonoids}
Recall by \cref{monadsaremonoids} that monads in a bicategory are mo\-no\-ids
in a monoidal endo-hom-category with horizontal composition. This point of view will be useful later,
hence we should equivalently view double categorical monads as $M_A\in\Mon(\HD(A,A),\odot,1_A)$
and comonads as $C_A\in\Comon(\HD(A,A),\odot,1_A)$. (Co)monad morphisms cannot be expressed
as arrows therein, since the respective 2-morphisms are more general than just globular ones;
this is exactly why categories of double (co)monads capture further desired structure.

Since (co)lax double functors induce (co)lax functors between the horizontal bicategories,
on the level of objects \cref{MonFdouble} coincides with \cref{laxfunctorspreservemonads}. 
\end{rmk}

As an application of the above, consider a monoidal double
category $(\caa{D},\otimes,\B{I})$ as in \cref{monoidaldoublecategory}.
The pseudo double functor $\otimes\colon\caa{D}\times\caa{D}\to\caa{D}$
induces, by \cref{F1bullet} and \cref{MonFdouble}, ordinary functors 
\begin{gather*}
\otimes_1^\bullet\colon\Dendo\times\Dendo\to\Dendo \\
\Mon\otimes\colon\Mnd(\caa{D})\times\Mnd(\caa{D})\to\Mnd(\caa{D}) \\
\Comon\otimes\colon\Cmd(\caa{D})\times\Cmd(\caa{D})\to\Cmd(\caa{D})
\end{gather*}
given by $\otimes_1$ \cref{D1monoidal} restricted to the specific subcategories of $\Dar$.
Along with the monoidal unit $\SelectTips{eu}{10}\xymatrix@C=.2in
{1_I:I\ar[r]\ar@{}[r]|-{\scriptstyle{\bullet}} & I}$, and since the forgetful functors to the monoidal $\Dar$
are conservative, we obtain the following.
\begin{prop}\label{DendoMonDComonDmonoidal}
If $\caa{D}$ is a monoidal double category, then the categories $\caa{D}_1^\bullet$,
$\Mnd(\caa{D})$ and $\Cmd(\caa{D})$ inherit a monoidal structure from $\caa{D}_1$.
When $\caa{D}$ is braided or symmetric, then so are the categories of endomorphisms, monads and comonads.
\end{prop}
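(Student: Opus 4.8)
The plan is to promote the monoidal structure of $(\caa{D}_1,\otimes_1,1_I)$, which is part of the data of a monoidal double category (\cref{monoidaldoublecategory}), to each of the three (non-full) subcategories along the conservative forgetful functors into $\caa{D}_1$. The tensor functors and the unit object are already in place: by the preceding discussion $\otimes_1$ restricts to $\otimes_1^\bullet$, $\Mon\otimes$ and $\Comon\otimes$ by \cref{F1bullet,MonFdouble}, and the horizontal unit $1_I$ is simultaneously an endo-$1$-cell, a monad and a comonad, via its canonical (co)multiplication given by the unitors and identity (co)unit. What remains is to check that the associativity and unit constraints $a,\lambda,\rho$ of $\caa{D}_1$ restrict to morphisms of each subcategory, that the coherence axioms survive, and that these constraints are invertible there.

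First I would handle $\Dendo$. Since $\Gr{s}$ and $\Gr{t}$ are strict monoidal and preserve the associativity and unit constraints, the source and target vertical $1$-cells of $a_{M,N,P}$ are both the $\caa{D}_0$-associator $a^{\caa{D}_0}_{A,B,C}$ whenever $M,N,P$ are endo-$1$-cells on $A,B,C$; hence $a_{M,N,P}$ is a $2$-morphism with equal source and target vertical boundary, i.e. an arrow of $\Dendo$, and similarly for $\lambda,\rho$. The pentagon and triangle identities then hold in $\Dendo$ because they hold in $\caa{D}_1$ and the inclusion is faithful, so $\Dendo$ is monoidal.

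For $\Mnd(\caa{D})$ and $\Cmd(\caa{D})$ the extra content is that each constraint is a (co)monad morphism. This is where the pseudo-double-functor structure of $\otimes$ enters: the multiplication on a tensor of monads $M\otimes_1 N$ is built from the interchange isomorphism $(M\otimes_1 N)\odot(M\otimes_1 N)\cong(M\odot M)\otimes_1(N\odot N)$ of \cref{monoidaldoubleiso} followed by $m_M\otimes_1 m_N$, and the coherence of these globular isomorphisms with $a,\lambda,\rho$ forces $a_{M,N,P}$, $\lambda_M$ and $\rho_M$ to commute with the induced multiplications and units. Granting this, each constraint is an arrow of $\Mnd(\caa{D})$ (resp. $\Cmd(\caa{D})$); it is invertible there because it is invertible in $\caa{D}_1$ and the forgetful functor is conservative, and the coherence axioms again transfer along the faithful forgetful functor.

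The main obstacle is precisely the verification in the previous paragraph that the ambient constraints are (co)monad morphisms; conceptually this is the statement that the associativity and unit transformations of the monoidal double category are themselves compatible with the lax, resp. colax, structure of $\otimes$, so that applying $\Mon(-)$ and $\Comon(-)$ of \cref{MonFdouble} to them yields natural isomorphisms of the iterated tensor functors. For the braided or symmetric case, $\caa{D}_1$ inherits the braiding or symmetry of $\caa{D}$, and the same three points apply to the braiding $\sigma_{M,N}$: its vertical boundary is the $\caa{D}_0$-braiding, so it lands in $\Dendo$; it is a (co)monad morphism by the braided or symmetric double-category axioms together with the interchange isomorphisms; and it is invertible by conservativity. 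The hexagon axioms transfer by faithfulness, giving the braided, resp. symmetric, monoidal structure on all three categories.
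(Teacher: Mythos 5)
Your proposal is correct and takes essentially the same route as the paper, which obtains the tensor functors by restricting $\otimes_1$ via \cref{F1bullet,MonFdouble}, takes $1_I$ as the unit, and appeals to conservativity of the forgetful functors into the monoidal $\caa{D}_1$ for the invertibility of the lifted constraints. You additionally make explicit two points the paper leaves implicit: that strict monoidality of $\Gr{s},\Gr{t}$ places the constraints (and the braiding) inside $\Dendo$, and that compatibility of the constraint transformations with the interchange isomorphisms of \cref{monoidaldoubleiso} makes them (co)monad morphisms.
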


We now further study these categories in the fibrant setting, \cref{fibrantdoublecat}. The following
is proved in detail, to serve as reference for future constructions.

\begin{prop}\label{D_1^.bifibred}
 If $\caa{D}$ is a fibrant double category, $\caa{D}_1^\bullet$ is bifibred over $\caa{D}_0$.
\end{prop}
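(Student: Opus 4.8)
The plan is to realize the forgetful functor $\caa{D}_1^\bullet\to\caa{D}_0$ as a base change of the fibration $(\Gr{s},\Gr{t})\colon\caa{D}_1\to\caa{D}_0\times\caa{D}_0$ along the diagonal $\Delta\colon\caa{D}_0\to\caa{D}_0\times\caa{D}_0$, $A\mapsto(A,A)$, $f\mapsto(f,f)$, and then to transport the (op)fibration structure guaranteed by fibrancy. First I would verify directly that $\caa{D}_1^\bullet$ is exactly the pullback $\caa{D}_0\times_{\caa{D}_0\times\caa{D}_0}\caa{D}_1$: an object amounts to a horizontal 1-cell $M$ with $(\Gr{s}M,\Gr{t}M)=(A,A)$, i.e. an endo-1-cell $M_A$, and a morphism over $f\colon A\to B$ to a 2-morphism $\alpha$ with $(\Gr{s}\alpha,\Gr{t}\alpha)=(f,f)$, i.e. precisely an arrow $\alpha_f\colon M_A\to N_B$ as in \cref{endo2morphism}. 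Since $\caa{D}$ is fibrant, \cref{Grothfibrant} makes $(\Gr{s},\Gr{t})$ simultaneously a fibration and an opfibration, and base change along any functor preserves both properties; this immediately yields that $\caa{D}_1^\bullet\to\caa{D}_0$ is a bifibration.

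For later reference I would record the resulting liftings explicitly, obtained by specializing \cref{cartliftframdebicat} to the diagonal pair $(f,f)$. The cartesian lifting of an endo-1-cell $N\colon B\to B$ along $f\colon A\to B$ is
\begin{displaymath}
\Cart(f,N)\colon f^*N=\check{f}\odot N\odot\hat{f}\longrightarrow N,
\end{displaymath}
whose underlying 2-morphism has source and target both equal to $f$, so it genuinely lands in $\caa{D}_1^\bullet$; hence the reindexing functor is $f^*(-)=\check{f}\odot(-)\odot\hat{f}$. Dually, reading $(\Gr{s},\Gr{t})$ as an opfibration, the cocartesian lifting of $M\colon A\to A$ along $f$ is
\begin{displaymath}
\Cocart(f,M)\colon M\longrightarrow f_!M=\hat{f}\odot M\odot\check{f},
\end{displaymath}
with reindexing $f_!(-)=\hat{f}\odot(-)\odot\check{f}$. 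The fibrewise adjunction $f_!\dashv f^*$ predicted by \cref{rmkforadjointintexingbifr} is then nothing but the composite of the mate bijections \cref{matesforcompconj}, equivalently a consequence of the horizontal adjunction $\hat{f}\dashv\check{f}$ of \cref{compconjprops}.

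The one point demanding care---and the actual content of the base-change argument---is that the universal factorizations remain inside $\caa{D}_1^\bullet$. Given any $\theta\colon M_{A'}\to N_B$ in $\caa{D}_1^\bullet$ whose base arrow factors as $f\circ g$ through $f$, cartesianness of $\Cart(f,N)$ in $\caa{D}_1$ supplies a unique factoring 2-morphism $\psi$ in $\caa{D}_1$; because $\theta$ sits over the diagonal pair $(fg,fg)$ and $\Cart(f,N)$ over $(f,f)$, the universal property forces $\psi$ over the pair $(g,g)$, so that $\psi=\psi_g$ is again an arrow of $\caa{D}_1^\bullet$, and its uniqueness there is inherited from uniqueness in $\caa{D}_1$. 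The dual argument, using cocartesianness of $\Cocart(f,M)$, settles the opfibration half and completes the proof.
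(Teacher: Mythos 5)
Your proof is correct, but it takes a genuinely different route from the paper's. You identify $\caa{D}_1^\bullet\to\caa{D}_0$ as the pullback of the fibration--opfibration $(\Gr{s},\Gr{t})\colon\caa{D}_1\to\caa{D}_0\times\caa{D}_0$ along the diagonal and invoke stability of (op)fibrations under base change, checking that the universal factorizations stay over diagonal pairs --- which is exactly the point that needs care, and you handle it correctly. The paper instead constructs explicit pseudofunctors $\ps{M}\colon\caa{D}_0^\op\to\B{Cat}$ and $\ps{F}\colon\caa{D}_0\to\B{Cat}$ sending $A$ to the endo-hom category $\ca{H}(\caa{D})(A,A)$ and $f$ to $\check{f}\odot\text{-}\odot\hat{f}$ (resp.\ $\hat{f}\odot\text{-}\odot\check{f}$), verifies pseudofunctoriality via \cref{compconjprops}, and exhibits an isomorphism between the Grothendieck construction and $\caa{D}_1^\bullet$, concluding via \cref{Grothendieckcorrespondence}; the liftings \cref{cartliftframdebicat} that you take as primitive appear there only through the morphism bijection \cref{Grothiso}. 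Your argument is shorter and more elementary (no coherence verifications, no Grothendieck isomorphism), and in fact coincides in substance with the restriction-of-liftings proof of the monad fibration in the cited work on monads in double categories, which the paper explicitly acknowledges as an alternative right after its proof --- together with the observation you also make, that the bifibration structure follows from the fibration half plus \cref{rmkforadjointintexingbifr} and the adjunction $(\hat{f}\odot\text{-}\odot\check{f})\dashv(\check{f}\odot\text{-}\odot\hat{f})$. What the paper's heavier formulation buys is that the fibres and reindexing functors are packaged as pseudofunctors valued in the monoidal endo-hom categories, which is precisely what gets restricted to monoids and comonoids in \cref{MonComonfibred} and drives the later explicit characterizations of $\ca{V}$-graphs, $\ca{V}$-categories and $\ca{V}$-cocategories; if you wanted your proof to support those applications you would still need to extract the pseudofunctor description from your pullback afterwards.
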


\begin{proof}
Due to the correspondence of \cref{Grothendieckcorrespondence}, it is enough to define
pseudofunctors from $\caa{D}_0^{(\op)}$ which give rise to a fibration and opfibration
with total categories isomorphic to $\caa{D}_1^\bullet$ via the Grothendieck
construction; define
\begin{equation}\label{pseudofunctorsdoublebifibration}
 \ps{M}:
\xymatrix @R=.02in
{\caa{D}_0^\op\ar[r] & \B{Cat}, \\
A\ar @{|.>}[r]
\ar [dd]_-f & \ca{H}(\caa{D})(A,A) \\
\hole \\
B\ar @{|.>}[r] &
\ca{H}(\caa{D})(B,B)\ar[uu]_-{(\check{f}\odot\text{-}\odot\hat{f})}}
\qquad
\ps{F}:
\xymatrix @R=.02in
{\caa{D}_0\ar[r] & \B{Cat} \\
A\ar @{|.>}[r]
\ar [dd]_-f & \ca{H}(\caa{D})(A,A)
\ar[dd]^-{(\hat{f}\odot\text{-}\odot\check{f})} \\
\hole \\
B\ar @{|.>}[r] &
\ca{H}(\caa{D})(B,B).}
\end{equation}
In more detail, the first one is given by the mapping
on objects and arrows
\begin{displaymath}
\SelectTips{eu}{10}
\xymatrix{(B \ar @/^2ex/[r]|-{\scriptstyle{\bullet}}^-H
\ar@/_2ex/[r]|-{\scriptstyle{\bullet}}_-{H'}
\rtwocell<\omit>{\sigma} & B)
\ar @{|.>}[r] & (A\ar[r]|-{\scriptstyle{\bullet}}
^-{\hat{f}} & B \ar @/^2ex/[r]|-{\scriptstyle{\bullet}}^-H
\ar@/_2ex/[r]|-{\scriptstyle{\bullet}}_-{H'}
\rtwocell<\omit>{\sigma} & B
\ar[r]|-{\scriptstyle{\bullet}}^-{\check{f}} & A)}
\end{displaymath}
pre-composing with the companion and post-composing with the conjoint
of the given vertical 1-cell.
For these mappings to constitute a pseu\-do\-fu\-nctor $\ps{M}$,
we need certain natural isomorphisms
satisfying coherence conditions as in \cref{laxfunctor}.
For every triple of 0-cells $A,B,C$, there is a natural isomorphism
$\delta$ with components
\begin{displaymath}
\xymatrix @R=.04in
{& \ca{H}(\caa{D})(B,B)\ar@/^/[dr]^-{\ps{M}f} & \\
\ca{H}(\caa{D})(C,C)\ar@/^/[ur]^-{\ps{M}g}
\ar@/_3ex/[rr]_-{\ps{M}(g\circ f)}
\rrtwocell<\omit>{\quad\delta^{g,f}} && 
\ca{H}(\caa{D})(A,A)}
\end{displaymath}
for any $f:A\to B$ and $g:B\to C$, satisfying the commutativity
of \cref{laxcond1}. Explicitely, each $\delta^{g,f}$
has components, for each horizontal 1-cell$\SelectTips{eu}{10}
\xymatrix@C=.2in{J:C\ar[r]|-{\scriptstyle\bullet} & C,}$
\begin{equation}\label{Mdelta}
\delta^{g,f}_J:(\ps{M}f\circ\ps{M}g)J
\xrightarrow{\;\sim\;}\ps{M}(g\circ f)J=
\wc{f}\odot\wc{g}\odot J\odot\wh{g}\odot\wh{f}\xrightarrow{\;\sim\;}
\wc{gf}\odot J\odot\wh{gf}
\end{equation}
in $\ca{H}(\caa{D})(A,A)$, due to \cref{compconjprops}.
Moreover, for any 0-cell $A$ there is a natural isomorphism
$\gamma$ with components
\begin{displaymath}
\xymatrix @C=.5in{\ca{H}(\caa{D})(A,A)
\rrtwocell<5>^{\B{1}_{\ca{H}(\caa{D})(A,A)}}
_{\ps{M}(\mathrm{id}_A)}
{\quad\gamma^A} && \ca{H}(\caa{D})(A,A)}
\end{displaymath}
with components invertible arrows in $\ca{H}(\caa{D})(A,A)$ 
\begin{equation}\label{Mgamma}
\gamma^X_G:G\xrightarrow{\;\sim\;}\ps{M}(\mathrm{id}_A)G=\wc{\id_A}\odot\wh{\id_A}
\end{equation}
again by \cref{compconjprops}; it can be verified that the axioms \cref{laxcond2} are satisfied.

The Grothendieck category $\Gr{G}\ps{M}$ has as objects pairs $(G,A)$ where $A$ is a 0-cell and $G$ is in $\ca{H}(\caa{D})(A,A)$, and as arrows
$(\phi,f):(G,A)\to(H,B)$ pairs
\begin{displaymath}
\begin{cases} 
G\xrightarrow{\phi}\check{f}\odot H\odot \hat{f} 
&\text{in }\ca{H}(\caa{D})(A,A)\\
A\xrightarrow{f}B &\text{in }\caa{D}_0.
\end{cases}
\end{displaymath}
It is not hard to verify its isomorphism with $\caa{D}_1^\bullet$: objects are the same (horizontal endo-1-cells),
and there is a bijective correspondence between the morphisms, essentially given by \cref{cartliftframdebicat}.
Given an arrow $\alpha_f$ in $\caa{D}_1^\bullet$, we obtain a composite 2-cell
\begin{equation}\label{Grothiso}
\xymatrix @R=.25in
{A\ar[r]^-M\ar@{}[r]|-{\scriptstyle{\bullet}}
\rtwocell<\omit>{<3.5>\alpha} \ar[d]_-f & A\ar[d]^-f \\
B\ar[r]_-N\ar@{}[r]|-{\scriptstyle{\bullet}} & B}
\quad\xymatrix@R=.1in
{\hole \\ \mapsto }\quad
 \xymatrix @C=.5in @R=.25in
{A\ar[d]_-{\mathrm{id}_A} \ar[r]^-{1_A}\ar@{}[r]|-{\scriptstyle{\bullet}}
\rtwocell<\omit>{<3.5>\;p_2} &
 A\ar[r]^-M\ar@{}[r]|-{\scriptstyle{\bullet}} \ar[d]^-f 
\rtwocell<\omit>{<3.5>\alpha} & 
A\ar[d]_-f\ar[r]^-{1_A}\ar@{}[r]|-{\scriptstyle{\bullet}}
\rtwocell<\omit>{<3.5>\;q_2} & 
A\ar[d]^{\mathrm{id}_A} \\
A\ar[r]_-{\hat{f}}\ar@{}[r]|-{\scriptstyle{\bullet}} & 
B\ar[r]_-N\ar@{}[r]|-{\scriptstyle{\bullet}} & 
B\ar[r]_-{\check{f}}\ar@{}[r]|-{\scriptstyle{\bullet}} & B}
\end{equation}
which is a morphism in $\Gr{G}\ps{M}$, where $p_2$ and 
$q_2$ come with the companion and conjoint as in \cref{deficompconj}.
This assignment is an isomorphism, with inverse mapping $\beta\mapsto
(q_1\odot 1_N\odot p_1)\beta$ for
some $\beta:M\Rightarrow\check{f}\circ N\circ\hat{f}$
in $\ca{H}(\caa{D})(A,A)$. Thus $\ps{M}$ gives rise to a fibration
$\Gr{G}\ps{M}\to\caa{D}_0$ which is isomorphic to $\caa{D}_1^\bullet\to\caa{D}_0$ 
mapping $G_X$ to $X$ and $\alpha_f$ to $f$:
\begin{displaymath}
\xymatrix @C=.4in @R=.3in
{\Gr{G}\ps{M}\ar[rr]^-{\cong}
\ar[dr] && 
\caa{D}_1^\bullet\ar[dl] \\
& \caa{D}_0 &}
\end{displaymath}

In a very similar way, it can be checked that $\ps{F}$ from \cref{pseudofunctorsdoublebifibration}
is a pseudofunctor, using again standard properties of companions and conjoints.
Therefore, $\Gr{G}\ps{F}\cong\caa{D}_1^\bullet\to\caa{D}_0$ is an opfibration. 
\end{proof}

Notice that $\caa{D}_1^\bullet$ being a bifibration over $\caa{D}_0$ could be deduced from the fibration part combined
with \cref{rmkforadjointintexingbifr}, since we have an  adjunction $(\check{f}\odot\text{-}\odot\hat{f})\vdash
(\hat{f}\odot\text{-}\odot\check{f})$ for all $f$ (\cref{compconjprops}).

Athough the above result was independently established as a generalization of our case study of (co)categories, as will be clear later,
the fibration was also shown in \cite[Proposition 3.3]{Monadsindoublecats}, by
restricting the cartesian liftings \cref{cartliftframdebicat} of the fibration $(\Gr{s},\Gr{t})\colon\caa{D}_1\to\caa{D}_0$
to the category of endomorphisms, i.e.
\begin{gather}\label{cocartliftComonD}
\Cart(f,N)=p_1\odot 1_N\odot q_1\colon\check{f}\odot N\odot\hat{f}\Rightarrow N \\
\Cocart(f,N)=p_2\odot 1_N\odot q_2:N\Rightarrow\hat{f}\odot N\odot \check{f}\nonumber
\end{gather}
Still, the pseudofunctor formulation of the above proof gives clearer perspectives for the objects involved in the applications.

We can now adjust the above constructions to obtain similar results for categories of monads and comonads in fibrant double categories.
The following lemma ensures that is feasible. 

\begin{lem}\label{pseudofunctorsrestrict}
For any vertical 1-cell $f\colon A\to B$ in a fibrant double category
$\caa{D}$, the functors
\begin{gather*}
 \check{f}\odot-\odot\hat{f}\colon\HD(B,B)\longrightarrow\HD(A,A) \\
 \hat{f}\odot-\odot\check{f}\colon\HD(A,A)\longrightarrow\HD(B,B)
\end{gather*}
are lax and colax monoidal respectively, for the monoidal endo-hom-categories of the bicategory $\HD$ with horizontal composition.
\end{lem}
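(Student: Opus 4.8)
The plan is to produce both structures directly from the adjunction $\hat{f}\dashv\check{f}$ in $\ca{H}(\caa{D})$ supplied by \cref{compconjprops}, whose unit and counit are globular $2$-cells $\eta\colon 1_A\Rightarrow\check{f}\odot\hat{f}$ and $\epsilon\colon\hat{f}\odot\check{f}\Rightarrow 1_B$. Recall (\cref{monadsaremonoids}) that the monoidal structure on each endo-hom-category $\HD(X,X)$ is horizontal composition $\odot$ with unit the identity proarrow $1_X$, so that exhibiting a lax or colax structure amounts to producing comparison $2$-cells for $\odot$ and for the units and checking the axioms of \cref{laxcond1,laxcond2}.

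First I would treat $G=\check{f}\odot-\odot\hat{f}$. For the multiplication comparison I take the $2$-cell that contracts the interior adjacency $\hat{f}\odot\check{f}$ by the counit,
\begin{equation*}
(\check{f}\odot H\odot\hat{f})\odot(\check{f}\odot H'\odot\hat{f})\xrightarrow{\ 1\odot\epsilon\odot 1\ }\check{f}\odot(H\odot H')\odot\hat{f},
\end{equation*}
where the flanking $1$'s abbreviate whiskering and the unit proarrow $1_B$ is absorbed by $\lambda,\rho$; naturality in $H,H'$ is immediate since $\epsilon$ is whiskered on both sides. For the unit comparison I take $1_A\xrightarrow{\ \eta\ }\check{f}\odot\hat{f}\cong\check{f}\odot 1_B\odot\hat{f}=G(1_B)$. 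Dually, for $G'=\hat{f}\odot-\odot\check{f}$ I reverse the roles: the comultiplication comparison inserts an interior $\check{f}\odot\hat{f}$ using the unit,
\begin{equation*}
\hat{f}\odot(P\odot Q)\odot\check{f}\xrightarrow{\ 1\odot\eta\odot 1\ }(\hat{f}\odot P\odot\check{f})\odot(\hat{f}\odot Q\odot\check{f}),
\end{equation*}
and the counit comparison is $G'(1_A)\cong\hat{f}\odot\check{f}\xrightarrow{\ \epsilon\ }1_B$. Thus $G$ acquires lax and $G'$ colax comparison maps, each built from exactly one of $\eta,\epsilon$.

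The remaining, genuinely non-formal work is the three coherence diagrams. For the associativity axiom \cref{laxcond1} of $G$, the two paths on $H,H',H''$ contract two \emph{disjoint} copies of $\hat{f}\odot\check{f}$ in the horizontal string in opposite orders; since the contractions occur at non-overlapping positions, they commute by the interchange law together with naturality of the associators. The two unit axioms \cref{laxcond2} unwind to the statement that $\eta$ introduced at one end and $\epsilon$ contracted at the adjacent position cancel to an identity---that is, they are \emph{precisely} the two triangle identities for $\hat{f}\dashv\check{f}$. The colax axioms for $G'$ are obtained by the formally dual computation, swapping $\eta\leftrightarrow\epsilon$ and $\hat{f}\leftrightarrow\check{f}$ (equivalently, by reading the same diagrams in the $2$-cell-reversed bicategory $\ca{H}(\caa{D})^{\mathrm{co}}$).

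I expect the main obstacle to be purely bookkeeping: tracking associators and unitors across strings that involve both $\HD(A,A)$ and $\HD(B,B)$, and keeping the whiskerings straight. However, coherence for bicategories---invoked throughout \cref{sec:bicategories}---lets me suppress every bracketing and reduce each coherence diagram either to the interchange law or to a triangle identity, so no essentially new calculation is required beyond the adjunction data already in hand.
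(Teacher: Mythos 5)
Your proposal is correct and follows essentially the same route as the paper: both define the lax structure maps by contracting the interior $\hat{f}\odot\check{f}$ with the counit and taking the unit $\feta\colon 1_A\Rightarrow\check{f}\odot\hat{f}$ as the unit comparison, with the colax structure on $\hat{f}\odot-\odot\check{f}$ obtained dually. Your verification of the coherence axioms (disjoint contractions commuting by interchange, unit axioms reducing to the triangle identities of $\hat{f}\dashv\check{f}$) is exactly the content the paper leaves implicit behind ``these structure maps satisfy the usual conditions.''
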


\begin{proof}
For horizontal endo-1-cells$\SelectTips{eu}{10}\xymatrix @C=.2in{M,N:B\ar[r]|-{\sbul} & B,}$the lax
monoidal structure map 
\begin{displaymath}
\phi_{M,N}: \check{f}\odot M\odot\hat{f}\odot\check{f}\odot N\odot\hat{f}
\Rightarrow\check{f}\odot M\odot N\odot\hat{f}
\end{displaymath}
is a natural transformation with components the composite 2-cells
\begin{displaymath}
 \xymatrix @R=.1in 
{&&& A\ar@/^/[dr]|-{\scriptstyle\bullet}^-{\hat{f}} &&& \\
A\ar[r]|-{\sbul}^-{\hat{f}} & B\ar[r]|-{\sbul}^-{N}
& B\ar@/^/[ur]|-{\sbul}^-{\check{f}}\ar[rr]|-{\sbul}_-{1_B} 
\rrtwocell<\omit>{<-2>\fepsilon}\ar@/_5ex/[rrr]|-{\sbul}_-M 
& \rtwocell<\omit>{<2>\cong} & B\ar[r]|-{\sbul}^-M &
B\ar[r]|-{\sbul}^-{\check{f}} &A}
\end{displaymath}
where $\fepsilon$ is the counit of the adjunction $\check{f}\dashv\hat{f}$.
Similarly, $\phi_0$ is given by
\begin{displaymath}
\xymatrix @R=.1in 
{A\ar@/^5ex/[rrr]|-{\scriptstyle\bullet}^-{1_A}\ar[r]|-{\sbul}_-{\hat{f}} &
B\ar[r]|-{\sbul}_-{1_B}\rtwocell<\omit>{<-3>\feta} & B\ar[r]|-{\sbul}_-{\check{f}} & A}
\end{displaymath}
where $\feta$ is the unit of $\check{f}\dashv\hat{f}$. These structure maps satisfy the usual conditions,
hence $\check{f}\odot-\odot\hat{f}$ is a lax monoidal functor; dually, the colax monoidal structure of
$\hat{f}\odot-\odot\check{f}\colon\HD(A,A)\to\HD(B,B)$ can be identified. 
\end{proof}

Due to this lax and colax monoidal structure, the induced monoid structure of $(\check{f}\odot N\odot\hat{f})$
for a monoid $(\proar{N}{B}{B},\mu,\eta)$ and the induced comonoid structure of $(\hat{f}\odot C\odot\check{f})$
for $(\proar{C}{A}{A},\Delta,\epsilon)$ are
\begin{equation}\label{compositemonoid}
\xymatrix @C=.25in @R=.05in
{&&& A\ar @/^/[dr]|-{\sbul}^-{\hat{f}} &&&\\
&& B\ar @/^/[ur]|-{\sbul}^-{\check{f}} 
\ar @/_3ex/[rr]|-{\sbul}_-{1_B}
\rrtwocell<\omit>{\fepsilon} 
&& B\ar @/^/[dr]|-{\sbul}^-N && \\
A\ar[r]|-{\sbul}^-{\hat{f}}
&B\ar @/^/[ur]|-{\sbul}^-N
\ar @/_6ex/[rrrr]|-{\sbul}_-N
& \rrtwocell<\omit>{<3>\;M} &&&
B\ar[r]|-{\sbul}^-{\check{f}} & A}
\xymatrix @C=.3in @R=.05in
{\hole \\
A\ar @/^3ex/[rrr]|-{\sbul}^-{1_A} 
\ar[dr]|-{\sbul}_-{\hat{f}} &&& A \\
& B\ar @/^2ex/[r]|-{\sbul}^-{1_B}
\ar @/_2ex/[r]|-{\sbul}_-N 
\rtwocell<\omit>{\eta} 
\rtwocell<\omit>{<-5>\feta} &
B,\ar[ur]|-{\sbul}_-{\check{f}} &}
\end{equation}

\begin{equation}\label{compositecomonoid}
\xymatrix @C=.25in @R=.05in
{B \ar[r]|-{\sbul}^-{\check{f}}
& A\ar @/^6ex/[rrrr]|-{\sbul}^-C
\ar @/_/[dr]|-{\sbul}^-C
& \rrtwocell<\omit>{<-3>\Delta}
&&&
A\ar[r]|-{\sbul}^-{\hat{f}} & B \\
&& A\ar @/_/[dr]|-{\sbul}_-{\hat{f}}
\ar @/^3ex/[rr]|-{\sbul}^-{1_A}
\rrtwocell<\omit>{\feta} &&
A\ar @/_/[ur]|-{\sbul}^-C && \\
&&&
B\ar @/_/[ur]|-{\sbul}_-{\check{f}} &&&}
\xymatrix @R=.05in @C=.3in
{& A\ar @/_2ex/[r]|-{\sbul}_-{1_A}
\ar @/^2ex/[r]|-{\sbul}^-C 
\rtwocell<\omit>{\epsilon} 
\rtwocell<\omit>{<5.3>\fepsilon} &
A\ar[dr]|-{\sbul}^-{\hat{f}} & \\
B\ar @/_3ex/[rrr]|-{\sbul}_-{1_B} 
\ar[ur]|-{\sbul}^-{\check{f}} &&& B}
\end{equation}
where $\feta,\fepsilon$ are the unit and counit of $\check{f}\dashv\hat{f}$.

The above lemma provides a different, again independent, approach
to the monad fibration proof of \cite[Proposition 3.3]{Monadsindoublecats}.

\begin{prop}\label{MonComonfibred}
If $\caa{D}$ is a fibrant double category, $\Mnd(\caa{D})$ is fibred over $\caa{D}_0$ and $\Cmd(\caa{D})$ is opfibred over $\caa{D}_0$.
\end{prop}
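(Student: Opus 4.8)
The plan is to upgrade the pseudofunctors $\ps{M}$ and $\ps{F}$ of \cref{pseudofunctorsdoublebifibration} so that they land in categories of monoids and comonoids, and then reapply the Grothendieck construction exactly as in \cref{D_1^.bifibred}. Recall from \cref{doublemonadsaremonoids} that a monad on a $0$-cell $A$ is precisely a monoid in the monoidal endo-hom-category $(\HD(A,A),\odot,1_A)$, and a comonad a comonoid therein. By \cref{pseudofunctorsrestrict}, each reindexing functor $\wc{f}\odot-\odot\wh{f}\colon\HD(B,B)\to\HD(A,A)$ is lax monoidal and each $\wh{f}\odot-\odot\wc{f}\colon\HD(A,A)\to\HD(B,B)$ is colax monoidal; hence \cref{monf} produces induced functors $\Mon(\wc{f}\odot-\odot\wh{f})$ on monoids and $\Comon(\wh{f}\odot-\odot\wc{f})$ on comonoids, with the lifted (co)monoid structures given explicitly by \cref{compositemonoid,compositecomonoid}.

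First I would define $\ps{M}^{\Mon}\colon\caa{D}_0^\op\to\B{Cat}$ by $A\mapsto\Mon(\HD(A,A))$, $f\mapsto\Mon(\wc{f}\odot-\odot\wh{f})$, and dually $\ps{F}^{\Comon}\colon\caa{D}_0\to\B{Cat}$ by $A\mapsto\Comon(\HD(A,A))$, $f\mapsto\Comon(\wh{f}\odot-\odot\wc{f})$. Since $\Mon(-)$ is $2$-functorial on lax monoidal functors and monoidal natural transformations (dually $\Comon(-)$ on the colax side), it suffices to verify that the coherence isomorphisms $\delta^{g,f}$ and $\gamma^A$ of \cref{Mdelta,Mgamma} are \emph{monoidal} (resp.\ comonoidal) natural isomorphisms with respect to the lax (resp.\ colax) structure maps of \cref{pseudofunctorsrestrict}; the pseudofunctor axioms then transfer verbatim from $\ps{M}$ and $\ps{F}$. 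This is the step I expect to be the main obstacle, though it should go through because $\delta$ and $\gamma$ are assembled entirely from the associators and unitors of $\odot$ together with the companion and conjoint data of \cref{compconjprops}, which are compatible with the structure maps $\phi,\phi_0$ by coherence.

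Finally I would invoke \cref{Grothendieckcorrespondence}: the pseudofunctor $\ps{M}^{\Mon}$ yields a fibration over $\caa{D}_0$ whose fibre over $A$ is the category of monads on $A$ and whose morphisms $(\phi,f)\colon(M,A)\to(N,B)$ have $\phi\colon M\to\wc{f}\odot N\odot\wh{f}$ a monoid morphism. Through the bijection \cref{Grothiso} of \cref{D_1^.bifibred}, such a $\phi$ corresponds to a $2$-morphism $\alpha_f\colon M_A\to N_B$ in $\Dendo$, and a direct comparison with \cref{monadhom} shows $\alpha_f$ is a monad morphism exactly when $\phi$ respects multiplication and unit; this identifies the total category with $\Mnd(\caa{D})$ over $\caa{D}_0$, the cartesian liftings being those of \cref{cocartliftComonD}, so $\Mnd(\caa{D})$ is fibred. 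The dual argument applied to $\ps{F}^{\Comon}$, using the cocartesian liftings of \cref{cocartliftComonD}, gives $\Cmd(\caa{D})$ opfibred. I would close by remarking that, unlike $\Dendo$, these are only fibred and only opfibred respectively: the opposite reindexing is merely colax (resp.\ lax) and so fails to send monoids to monoids (resp.\ comonoids to comonoids), which is precisely why the bifibration of \cref{D_1^.bifibred} does not restrict in both directions.
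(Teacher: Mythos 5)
Your proposal is correct and takes essentially the same approach as the paper: the paper likewise restricts the pseudofunctors of \cref{pseudofunctorsdoublebifibration} to the categories $\Mon(\HD(A,A))$ and $\Comon(\HD(A,A))$ using the lax/colax monoidal structures of \cref{pseudofunctorsrestrict}, then identifies the resulting Grothendieck categories with $\Mnd(\caa{D})$ and $\Cmd(\caa{D})$ via the bijection dual to \cref{Grothiso} (the paper writes out the comonad/opfibration case and dualizes, whereas you write out the monad case). The step you single out as the main obstacle --- checking that $\delta^{g,f}$ and $\gamma^A$ are monoidal (resp.\ comonoidal) natural isomorphisms so that the pseudofunctor axioms restrict --- is exactly what the paper compresses into the remark that the pseudofunctor property ``follows in a straightforward way'' from $\ps{F}$ being a pseudofunctor.
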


\begin{proof}
We will address the opfibration part; the fibration is established similarly.
Once again, we will construct a pseudofunctor $\ps{K}\colon\caa{D}_0\to\Cat$
for which the Grothendieck construction gives a total category
isomorphic to $\Cmd(\caa{D})$, along with the evident forgetful functor to
$\caa{D}_0$. It resembles $\ps{F}$ from \cref{pseudofunctorsdoublebifibration},
but with fibre categories capturing the desired comonad structure.

An object $A$ is mapped to the category $\Comon(\HD(A,A),\odot,1_A)$ where double categorical monads with source and target $A$ live
(\cref{doublemonadsaremonoids}). A vertical $1$-cell $f\colon A\to B$ is mapped to the functor
\begin{displaymath}
 \ps{K}f:=\hat{f}\odot-\odot\check{f}\colon\Comon(\HD(A,A))\to\Comon(\HD(B,B))
\end{displaymath}
which is precisely the induced $\Comon(\ps{F}f)$ by \cref{pseudofunctorsrestrict}.
The fact that these data form a pseudofunctor follows in a straightforward way from $\ps{F}$ being a pseudofunctor; the natural isomorphisms
$\delta$ and $\gamma$ are given in a dual way to \cref{Mdelta,Mgamma}
using \cref{compconjprops}.

The induced Grothendieck category $\Gr{G}\ps{K}$ has as objects pairs
$(C,A)$ where $C\in\Comon(\HD(A,A))$ for a $0$-cell $A$, and as arrows $(C,A)\to(D,B)$ pairs
\begin{displaymath}
 \begin{cases} 
\hat{f}\odot C\odot \check{f}\xrightarrow{\psi}D 
&\text{in }\Comon(\HD(B,B))\\
A\xrightarrow{f}B &\text{in }\caa{D}_0.
\end{cases}
\end{displaymath}
This category is isomorphic to $\Cmd(\caa{D})$ since they have the same
objects, and there is a bijection between morphisms dually to \cref{Grothiso}
\begin{displaymath}
\xymatrix @R=.25in
{A\ar[r]^-C\ar@{}[r]|-{\scriptstyle{\bullet}}
\rtwocell<\omit>{<3.5>\alpha} \ar[d]_-f & A\ar[d]^-f \\
B\ar[r]_-D\ar@{}[r]|-{\scriptstyle{\bullet}} & B}
\quad\xymatrix@R=.1in
{\hole \\ \mapsto }\quad
 \xymatrix @C=.5in @R=.25in
{B\ar@{=}[d] \ar[r]^-{\check{f}}|-{\scriptstyle{\bullet}} \rtwocell<\omit>{<3.5>\;q_1} &
 A\ar[r]^-C|-{\scriptstyle{\bullet}} \ar[d]^-f \rtwocell<\omit>{<3.5>\alpha} & 
A\ar[d]_-f\ar[r]^-{\hat{f}}|-{\scriptstyle{\bullet}}\rtwocell<\omit>{<3.5>\;p_1} & B\ar@{=}[d] \\
B\ar[r]_-{1_B}|-{\scriptstyle{\bullet}} & B\ar[r]_-D|-{\scriptstyle{\bullet}} & 
B\ar[r]_-{1_B}|-{\scriptstyle{\bullet}} & B} 
\end{displaymath}
where $q_1,p_2$ are as in \cref{deficompconj}; checking that $p_1\odot\alpha\odot q_1$
is a comonoid morphism follows from their properties. 
\end{proof}

Finally, as the following result shows,
these fibrations and opfibrations have a monoidal structure in the sense of \cref{monoidalfibration},
when $\caa{D}$ is moreover monoidal.

\begin{prop}\label{monadscomonadsmonoidalfibr}
Suppose that $\caa{D}$ is a fibrant monoidal double category. The bifibration $T\colon\caa{D}_1^\bullet\to\caa{D}_0$
as well as the fibration $S\colon\Mnd(\caa{D})\to\caa{D}_0$ and opfibration $W\colon\Cmd(\caa{D})\to\caa{D}_0$ are monoidal.
\end{prop}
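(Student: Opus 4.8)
The plan is to check the three conditions of \cref{monoidalfibration} for each of $T$, $S$ and $W$. The total categories $\caa{D}_1^\bullet$, $\Mnd(\caa{D})$ and $\Cmd(\caa{D})$ are monoidal by \cref{DendoMonDComonDmonoidal}, and the base $\caa{D}_0$ is monoidal by \cref{monoidaldoublecategory}; the braided and symmetric cases are identical, since those structures are inherited in the same way. Each of $T$, $S$, $W$ is the restriction of the source functor $\Gr{s}\colon\caa{D}_1\to\caa{D}_0$ (which agrees with $\Gr{t}$ on endo-$1$-cells) to the relevant subcategory. Since $\Gr{s}$ is strict monoidal by \cref{monoidaldoublecategory}, the inherited tensor products are restrictions of $\otimes_1$, and the monoidal units satisfy $\Gr{s}(1_I)=I$, all three functors are strict monoidal. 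The only substantive point is therefore that the tensor product on each total category preserves (co)cartesian arrows, and I would first settle this for the bifibration $T$ and then transfer it to $S$ and $W$.

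First I would treat $T\colon\caa{D}_1^\bullet\to\caa{D}_0$. By \cref{cartliftframdebicat,cocartliftComonD}, the cartesian lifting of an endo-$1$-cell $N_B$ along $f\colon A\to B$ is $\Cart(f,N)=p_1^f\odot 1_N\odot q_1^f\colon\check f\odot N\odot\hat f\Rightarrow N$. Given a second cartesian lifting $\Cart(h,N')$ over $h\colon A'\to B'$, I would show that $\Cart(f,N)\otimes_1\Cart(h,N')$ is again cartesian, over $f\otimes_0 h$. Iterating the interchange isomorphism \cref{monoidaldoubleiso} yields a globular isomorphism
\[
(\check f\odot N\odot\hat f)\otimes_1(\check h\odot N'\odot\hat h)\;\cong\;(\check f\otimes_1\check h)\odot(N\otimes_1 N')\odot(\hat f\otimes_1\hat h),
\]
and by \cref{compconjprops} the factors $\check f\otimes_1\check h$ and $\hat f\otimes_1\hat h$ are exactly the conjoint and companion of $f\otimes_0 h$. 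Hence the domain of the tensored morphism is canonically isomorphic to $\widecheck{f\otimes_0 h}\odot(N\otimes_1 N')\odot\widehat{f\otimes_0 h}$, the domain of the chosen cartesian lifting of $N\otimes_1 N'$ along $f\otimes_0 h$. It then remains to verify that, under this isomorphism, the tensor of the comparison $2$-cells agrees up to a vertical isomorphism with $p_1^{f\otimes_0 h}\odot 1_{N\otimes_1 N'}\odot q_1^{f\otimes_0 h}$; this I would deduce from the essential uniqueness of companions and conjoints (\cref{compconjprops}), since both are cartesian candidates over $f\otimes_0 h$ with target $N\otimes_1 N'$ whose domains agree up to canonical globular isomorphism, so the universal property forces them to coincide up to a vertical isomorphism and the tensor is cartesian. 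Restricting to $2$-cells with equal source and target vertical $1$-cell gives the claim for $T$, and the opfibration direction is entirely dual via the cocartesian liftings $\Cocart(f,N)=p_2^f\odot 1_N\odot q_2^f$ of \cref{cocartliftComonD}.

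Finally I would transfer the result to $S$ and $W$. By \cref{MonComonfibred}, their (co)cartesian liftings are those of $T$ equipped with the induced (co)monoid structures of \cref{compositemonoid,compositecomonoid}; in particular the forgetful functors $\Mnd(\caa{D}),\Cmd(\caa{D})\to\caa{D}_1^\bullet$ are strong morphisms of (op)fibrations, which are moreover strict monoidal and reflect (co)cartesian arrows. Since the tensor of two (co)cartesian morphisms in $\Mnd(\caa{D})$ or $\Cmd(\caa{D})$ has as underlying morphism the tensor of the underlying (co)cartesian morphisms of $\caa{D}_1^\bullet$, which is (co)cartesian by the previous step, it is itself (co)cartesian. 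I expect the main obstacle to be the coherence bookkeeping established above: checking that the globular interchange isomorphisms are compatible with the companion and conjoint comparison cells $p_i,q_i$, so that the tensor of cartesian liftings is genuinely the cartesian lifting of the tensor. All remaining steps—strict monoidality of the projections and the passage from endomorphisms to (co)monads—are formal consequences of the structures already in place.
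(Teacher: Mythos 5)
Your proposal is correct and takes essentially the same route as the paper's proof: strict monoidality of the projections, preservation of (co)cartesian liftings via the interchange isomorphism \cref{monoidaldoubleiso} combined with the fact that tensors of companions/conjoints are companions/conjoints of tensors (\cref{compconjprops}), and transfer of the result to $\Mnd(\caa{D})$ and $\Cmd(\caa{D})$ through the forgetful functors. One small caution: the commutativity of the comparison triangle should rest on the essential uniqueness of companions and conjoints compatibly with the cells $p_i,q_i$ (as you first indicate), not on ``both being cartesian candidates,'' since cartesianness of the tensored morphism is precisely what is being established --- this is the same verification the paper leaves implicit with ``can be shown to make the triangle commute.''
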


\begin{proof}
By \cref{DendoMonDComonDmonoidal} and \cref{monoidaldoublecategory} of a monoidal double category, all categories
involved are monoidal. Moreover, for horizontal endo-1-cells$\proar{M}{A}{A}$and$\proar{N}{B}{B,}$
\begin{displaymath}
T(M\otimes_1 N)=A\otimes_0 B=T(M)\otimes_1T(N)
\end{displaymath}
and similarly for $P$ and $W$,
by the mapping of $\otimes_1$ \cref{D1monoidal} whose restriction on $\caa{D}_1^\bullet,\Mnd(\caa{D}),\Cmd(\caa{D})$ is their tensor product.

Finally, $\otimes_1^\bullet\colon\caa{D}_1^\bullet\times\caa{D}_1^\bullet\to\caa{D}_1^\bullet$ preserves cartesian arrows:
a pair of cartesian liftings $\Cart(f,M)$ and $\Cart(g,N)$ as in \cref{cocartliftComonD} is mapped to the top arrow
\begin{displaymath}
\xymatrix @C=.6in @R=.3in
{(\hat{f}\odot M\odot\check{f})\otimes_1(\hat{g}\odot N\odot\check{g})\ar[rr]^{\qquad\quad\Cart(f,M)\otimes_1\Cart(g,N)}
\ar @{-->}[d]_-{\cong} && M\otimes_1N\ar @{.>}[dd] &\\
\wh{(f\otimes_0 g)}\odot(M\otimes_1 N)\odot\wc{(f\otimes_0 g)}\ar[urr]_-{\quad\Cart(f\otimes_0g,M\otimes_1 N)}\ar @{.>}[d]
&&& \textrm{in }\caa{D}_1^\bullet \\
A\otimes_0C\ar[rr]_-{f\otimes_0g} && B\otimes_0C & \textrm{in }\caa{D}_0}
\end{displaymath}
where the left side isomorphism is obtained by \cref{monoidaldoubleiso} and \cref{compconjprops},
\begin{gather*}
 \left((\hat{f}\odot M)\odot\check{f}\right)\otimes_1\Big((\hat{g}\odot N)\odot\check{g}\Big)\cong
 \left((\hat{f}\odot M)\otimes_1(\hat{g}\odot N)\right)\odot(\check{f}\otimes_1\check{g}) \\
 \cong (\hat{f}\otimes_1\hat{g})\odot(M\otimes_1 N)\odot(\check{f}\otimes_1\check{g})
\end{gather*}
This vertical isomorphism can be shown to make the triangle commute. 
The proof is thus complete, since this vertical isomorphism is reflected to monads and a dual cocartesian lifting
triangle to comonads.
\end{proof}

\subsection{Locally closed monoidal double categories}\label{locallyclosedmonoidaldoublecats}

When $\caa{D}$ is a monoidal double category as in \cref{monoidaldoublecategory},
both vertical and horizontal categories are endowed with a monoidal structure, $(\caa{D}_0,\otimes_0,I)$ and $(\caa{D}_1,\otimes_1,1_I)$.
Naturally, one could expect that the appropriate notion of a \emph{monoidal closed}
double category would result in a similar `local' closed structure for the two categories $\caa{D}_0$ and $\caa{D}_1$.
For the following existing definition though, this does not seem to be the case.

\begin{defi}\cite[\S 5]{Adjointfordoublecats}
A \emph{(weakly) monoidal closed} pseudo double category $\caa{D}$ is a monoidal
double category such that each pseudo double functor $(\text{-}\otimes D):\caa{D}\to\caa{D}$
has a lax right adjoint.
\end{defi}

This definition uses \emph{lax adjunctions} between pseudo double categories, as described in \cite[3.2]{Adjointfordoublecats};
double adjunctions are also studied in \cite{Doubleadjunctionsandfreemonads}. Since these do not end up being relevant
to the current work, we omit their details and simply discuss what they mean in this particular context.

The lax double functor $\text{-}\otimes D$ consists of the ordinary functors
$(\text{-}\otimes_0D\colon\caa{D}_0\to\caa{D}_0,\text{-}\otimes_11_D\colon\caa{D}_1\to\caa{D}_1)$.
The existence of a lax right double adjoint, call it $\Hom^\caa{D}(D,-)$,
amounts in particular to two ordinary adjunctions
\begin{displaymath}
\xymatrix @C=.7in
{\caa{D}_0\ar@<+.8ex>[r]^-{-\otimes_0D}
\ar@{}[r]|-{\bot} &
\caa{D}_0\ar@<+.8ex>[l]^-{\Hom^\caa{D}_0(D,-)}}, \qquad
\xymatrix @C=.6in
{\caa{D}_1\ar@<+.8ex>[r]^-{-\otimes_11_D}
\ar@{}[r]|-{\bot} &
\caa{D}_1\ar@<+.8ex>[l]^-{\Hom^\caa{D}_1(1_D,-)}}
\end{displaymath}
for any 0-cell $D$ in $\caa{D}$, such that conditions expressing compatibility with the horizontal composition 
and identities are satisfied. It immediately follows that $\caa{D}_0$ is a monoidal closed
category; however this cannot be deduced for $\caa{D}_1$ as well, since
$1_D$ is not an arbitrary horizontal 1-cell.

Due to the application of these notions to our context of interest later,
we proceed to the definition of a different closed-like structure
which arises naturally in what follows.

\begin{defi}\label{loccloseddoublecat}
A monoidal (pseudo) double category $\caa{D}$ is called \emph{locally closed monoidal}
if it comes equipped with a lax double functor
\begin{displaymath}
 H=(H_0,H_1)\colon\caa{D}^\op\times\caa{D}\longrightarrow\caa{D}
\end{displaymath}
such that $\otimes_0\dashv H_0$ and $\otimes_1\dashv H_1$ are parametrized adjunctions.
\end{defi}
By definition \cref{F1mapping}, the functor $H_1$ in particular is the mapping
\begin{equation}\label{H1mapping}
H_1:\xymatrix @C=1.5in{\caa{D}^\op_1\times\caa{D}_1\ar[r] & \caa{D}_1\phantom{ABC}}
\end{equation}\vspace{-0.2in}
\begin{displaymath}
\xymatrix @C=.08in
{(X\ar[rrr]|-\sbul^M\ar[d]_-f &\rtwocell<\omit>{<4>{\alpha}}&& Y,\ar[d]^-g & Z\ar[rrr]^-N|-\sbul\ar[d]_-h
&\rtwocell<\omit>{<4>\beta}&& W)\ar[d]^-k
\ar@{|.>}[rrrr] &&&& H_0(X,Z)\ar[rrr]^-{H_1(M,N)}|-\sbul
\ar[d]_-{H_0(f,h)} &\rtwocell<\omit>{<4>\qquad H_1(\alpha,\beta)} && H_0(Y,W)\ar[d]^-{H_0(g,k)} \\
(X'\ar[rrr]_-{M'}|-\sbul &&& Y', & Z'\ar[rrr]_-{N'}|-\sbul &&& W')
\ar@{|.>}[rrrr] &&&& H_0(X',Z')\ar[rrr]_-{H_1(M',N')}|-\sbul &&& H_0(Y',W')} 
\end{displaymath}
Call $H$ the \emph{internal hom} of $\caa{D}$.
Clearly $H_0$ gives a monoidal closed structure on the vertical monoidal category
$(\caa{D}_0,\otimes_0,I)$ and $H_1$ on the horizontal category
$(\caa{D}_1,\otimes_1,1_I)$. The above arguments justify that a monoidal closed structure
on a double category does not imply a locally closed monoidal structure.

We will now explore the relations of this lax double functor $H$ on a locally closed monoidal double category $\caa{D}$
with the categories of endomorphisms, monads and comonads discussed in \cref{moncomondouble}. Recall that by \cref{DendoMonDComonDmonoidal},
all these categories inherit a monoidal structure from $\Dar$.

\begin{prop}\label{Dendoclosed}
Suppose $\caa{D}$ is a locally closed monoidal double category, with internal
hom $H=(H_0,H_1)$. Then $H_1^\bullet$ endows the category
of endomorphisms $\caa{D}_1^\bullet$ with a monoidal closed structure.
\end{prop}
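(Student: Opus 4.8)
The plan is to obtain the internal hom on $\caa{D}_1^\bullet$ simply by restricting $H_1$ to horizontal endo-$1$-cells, and then to produce the closed structure by restricting the parametrized adjunction $\otimes_1\dashv H_1$ from $\caa{D}_1$ to the endomorphism category. First I would check that $H_1$ does restrict: since $H=(H_0,H_1)$ is a lax double functor, its components satisfy $\Gr{s}(H_1(M,N))=H_0(\Gr{s}M,\Gr{s}N)$ and $\Gr{t}(H_1(M,N))=H_0(\Gr{t}M,\Gr{t}N)$, as is visible from the mapping \cref{H1mapping}. Hence for endo-$1$-cells $\proar{M}{A}{A}$ and $\proar{N}{B}{B}$ the $1$-cell $H_1(M,N)$ has source and target both equal to $H_0(A,B)$, so it is again an endomorphism. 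Applying \cref{F1bullet} to $H$, together with the identifications $(\caa{D}^\op)_1^\bullet\cong(\caa{D}_1^\bullet)^\op$ and $(\caa{C}\times\caa{C}')_1^\bullet\cong\caa{C}_1^\bullet\times\caa{C}_1'^\bullet$ read off directly from the definitions of the opposite and product double categories, yields the functor
\[
H_1^\bullet\colon(\caa{D}_1^\bullet)^\op\times\caa{D}_1^\bullet\longrightarrow\caa{D}_1^\bullet ,
\]
which is the restriction of $H_1$ to endomorphisms.

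By \cref{DendoMonDComonDmonoidal} the category $\caa{D}_1^\bullet$ is already monoidal under $\otimes_1^\bullet$, so it remains to exhibit $H_1^\bullet$ as a right internal hom, i.e. to establish a parametrized adjunction $\otimes_1^\bullet\dashv H_1^\bullet$. I would do this by restricting the given parametrized adjunction $\otimes_1\dashv H_1$ along the (non-full, faithful) inclusion $\caa{D}_1^\bullet\hookrightarrow\caa{D}_1$. Fixing an endo-$1$-cell $\proar{D}{B}{B}$, both $-\otimes_1 D$ and $H_1(D,-)$ preserve endomorphisms by the source/target computation above, so the only thing to verify is that the adjunction transposition restricts to the subcategory: a $2$-morphism $\alpha\colon M\otimes_1 D\Rightarrow N$ with equal source and target must correspond under $\otimes_1\dashv H_1$ to a $2$-morphism $M\Rightarrow H_1(D,N)$ again with equal source and target, and conversely.

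This is where the main work lies. Writing the transpose as $\alpha^\flat=H_1(D,\alpha)\circ\eta_M$ with $\eta$ the unit of $-\otimes_1 D\dashv H_1(D,-)$, applying $\Gr{s}$ and $\Gr{t}$, and using $\Gr{s}H_1(D,\alpha)=H_0(\id,\Gr{s}\alpha)$ and $\Gr{t}H_1(D,\alpha)=H_0(\id,\Gr{t}\alpha)$, one reduces the claim to showing that the unit $\eta_M$ (and dually the counit $\varepsilon_N$) is itself an endomorphism $2$-morphism, i.e. $\Gr{s}\eta_M=\Gr{t}\eta_M$. The hard part is therefore the compatibility of the two parametrized adjunctions with source and target: because $\Gr{s},\Gr{t}\colon\caa{D}_1\to\caa{D}_0$ are strict monoidal (\cref{monoidaldoublecategory}) and strictly intertwine $H_1$ with $H_0$, they carry $\otimes_1\dashv H_1$ onto $\otimes_0\dashv H_0$, so that $\Gr{s}\eta_M=\eta^0_{\Gr{s}M}$ and $\Gr{t}\eta_M=\eta^0_{\Gr{t}M}$ for the unit $\eta^0$ of the vertical adjunction; for an endomorphism $M$ one has $\Gr{s}M=\Gr{t}M$, whence $\Gr{s}\eta_M=\Gr{t}\eta_M$ as required. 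I expect the delicate point to be precisely the verification that $\Gr{s}$ and $\Gr{t}$ genuinely transport the level-$1$ unit and counit to the level-$0$ ones — the double-categorical coherence built into \cref{loccloseddoublecat} — since strict preservation of the functors $\otimes$ and $H$ alone does not in general force preservation of the adjunction data.

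Finally, with the transposition shown to preserve the endomorphism condition, the restricted bijections $\caa{D}_1^\bullet(M\otimes_1^\bullet D,N)\cong\caa{D}_1^\bullet(M,H_1^\bullet(D,N))$ inherit naturality in $M$ and $N$ and parametrization in $D$ from the corresponding data on $\caa{D}_1$, as the inclusion is faithful and the unit and counit lie in the subcategory. This exhibits $\otimes_1^\bullet\dashv H_1^\bullet$ as a parametrized adjunction, equipping $(\caa{D}_1^\bullet,\otimes_1^\bullet,1_I)$ with a monoidal closed structure.
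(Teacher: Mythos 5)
Your proposal takes the same route as the paper's proof, which consists of exactly two steps: $H_1^\bullet$ is induced by \cref{F1bullet}, and then the adjunction isomorphism $\caa{D}_1(M\otimes_1 N,P)\cong\caa{D}_1(M,H_1(N,P))$, ``considered only on endo-1-cells and endo-2-morphisms'', is declared to give $\otimes_1^\bullet\dashv H_1^\bullet$. Your first paragraph reproduces the first step (with the same source/target computation showing that $H_1$ restricts), and the rest of your proposal attempts to justify the second.

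The difference is that you refuse to treat the restriction of the adjunction as automatic, and you are right to do so: $\Dendo$ is a non-full subcategory of $\caa{D}_1$, namely the equalizer of $\Gr{s},\Gr{t}\colon\caa{D}_1\to\caa{D}_0$, and the fact that $-\otimes_1 D$ and $H_1(D,-)$ restrict to it does not by itself make the restricted functors adjoint (restrict $-\times 2\dashv(-)^2$ on $\Set$ to the wide subcategory of bijections: both functors restrict, but the adjunction does not survive). Your reduction of the problem to the unit and counit lying in $\Dendo$, equivalently to $(\Gr{s},\Gr{s})$ and $(\Gr{t},\Gr{t})$ being maps of adjunctions from $\otimes_1\dashv H_1$ to $\otimes_0\dashv H_0$, is the correct formulation of what is needed; with that in hand the restricted unit and counit satisfy the triangle identities and the closed structure follows. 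Your closing caveat is also accurate: \cref{loccloseddoublecat} posits the two levelwise parametrized adjunctions separately, so this compatibility does not follow formally---adjunction data for fixed functors can be twisted by a natural automorphism, so strict commutation of the functors with $\Gr{s}$ and $\Gr{t}$ does not force commutation of the units and counits. Thus your proof has a gap exactly where you flag it; but this is the same gap the paper's proof passes over in silence, so your attempt is no less complete than the paper's, and it pinpoints what a fully rigorous treatment needs: either the compatibility of the adjunction isomorphisms with $\Gr{s},\Gr{t}$ should be required in \cref{loccloseddoublecat}, or it must be checked in each example. In the paper's application this check is immediate: the explicit bijection of \cref{VMMat1closed} sends a 2-morphism lying over $(f,g)$ to one lying over their exponential transposes, so it visibly matches endo-2-morphisms with endo-2-morphisms.
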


\begin{proof}
The lax double functor $H$ induces $H_1^\bullet\colon{\Dendo}^\op\times\Dendo\to\Dendo$ by \cref{F1bullet}.
The natural isomorphism $\caa{D}_1(M\otimes_1 N,P)\cong\caa{D}_1(M,H_1(N,P))$
which defines the adjunction $(-\otimes_1 N)\dashv H_1(N,-)$ for the monoidal closed
category $\Dar$, considered only on endo-1-cells and endo-2-morphisms,
implies that $\caa{D}_1^\bullet$ is also a monoidal closed category via
$\otimes_1^\bullet\dashv H_1^\bullet$.
\end{proof}

By \cref{MonFdouble}, the lax double functor $H\colon\caa{D}^\op\times\caa{D}\to\caa{D}$  induces
an ordinary functor between the categories of (co)monads
\begin{equation}\label{MonHdouble}
\Mon H:\Cmd(\caa{D})^\op\times\Mnd(\caa{D})\to\Mnd(\caa{D})
\end{equation}
which is $H_1^\bullet$ restricted on $\Mnd(\caa{D}^\op\times\caa{D})\cong\Mnd(\caa{D}^\op)\times\Mnd(\caa{D})$.
This functor is fundamental in order to study enrichment relations between the category of monads and comonads
in a braided or symmetric locally closed monoidal (fibrant) double category, by applying results from \cref{sec:actionenrich,fibrations}.
For example, it is an action as explained below.

\begin{lem}\label{doubleMonHaction}
The functor $\Mon H\colon\Cmd(\caa{D})^\op\times\Mnd(\caa{D})\to\Mnd(\caa{D})$
in a braided locally closed monoidal double category, as well as $(\Mon H)^\op$, is an action.
\end{lem}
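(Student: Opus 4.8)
The plan is to reduce the statement to the one-object situation already recorded in \cref{inthomaction}, applied to the braided monoidal closed category $\caa{D}_1$, and then to transport the resulting action structure to double-categorical monads. First I would observe that, since $\caa{D}$ is braided locally closed monoidal, the horizontal category $(\caa{D}_1,\otimes_1,1_I)$ is itself a braided monoidal closed category, the internal hom being $H_1$ via the parametrized adjunction $\otimes_1\dashv H_1$ of \cref{loccloseddoublecat}. Hence the action part of \cref{inthomaction} (with $\ca{V}=\caa{D}_1$) applies and gives that $H_1\colon\caa{D}_1^\op\times\caa{D}_1\to\caa{D}_1$ is an action of $\caa{D}_1^\op$ on $\caa{D}_1$, with invertible structure constraints
\[
\chi_{M,N,P}\colon H_1(M\otimes_1 N,P)\xrightarrow{\ \sim\ }H_1(M,H_1(N,P)),\qquad \nu_P\colon H_1(1_I,P)\xrightarrow{\ \sim\ }P
\]
coming from the tensor--hom and unit isomorphisms of the closed structure.

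Next, recall that $\Mon H$ is $H_1^\bullet$ restricted along $\Cmd(\caa{D})^\op\times\Mnd(\caa{D})\cong\Mnd(\caa{D}^\op)\times\Mnd(\caa{D})$, and that it lands in $\Mnd(\caa{D})$ by \cref{MonFdouble} because $H$ is a lax double functor; the relevant monoidal structures on $\Cmd(\caa{D})^\op$ and $\Mnd(\caa{D})$ are the $\otimes_1$-induced ones of \cref{DendoMonDComonDmonoidal}. All the objects occurring in $\chi$ and $\nu$---namely $H_1(C\otimes_1 D,M)$, $H_1(C,H_1(D,M))$, $H_1(1_I,M)$ and $M$---are monads whenever $C,D$ are comonads and $M$ is a monad, again by \cref{DendoMonDComonDmonoidal} and \cref{MonFdouble}. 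So the only thing to check is that the components of $\chi$ and $\nu$ are \emph{monad morphisms}, i.e. that they lie in $\Mnd(\caa{D})$ and not merely in $\caa{D}_1$; note that I invoke only the action part of \cref{inthomaction}, since the relevant (co)monads are $\odot$-(co)monoids (\cref{doublemonadsaremonoids}), not the $\otimes_1$-monoids of $\Mon(\caa{D}_1)$.

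This compatibility is the main obstacle. The multiplications and units on these $H_1$-images are the $\odot$-operations induced from the lax structure maps $H_\odot$, $H_U$ of $H$ as in the proof of \cref{MonFdouble}, whereas $\chi$ and $\nu$ are built from the $\otimes_1$-monoidal closed structure; thus the verification is exactly a compatibility between horizontal composition $\odot$ and the tensor $\otimes_1$. The essential ingredient is the interchange isomorphism \cref{monoidaldoubleiso} of the monoidal double category, together with naturality of $\chi,\nu$ and the coherence of $H$: these are what make the convolution-type multiplication on $H_1(M\otimes_1 N,P)$ correspond, under $\chi$, to the one on $H_1(M,H_1(N,P))$, and likewise for the unit under $\nu$. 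This is the many-object analogue of the fact that $[X\otimes Y,A]\cong[X,[Y,A]]$ is an isomorphism of convolution monoids in the one-object case.

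Once $\chi$ and $\nu$ are seen to be monad morphisms, the pentagon and triangle axioms for $\Mon H$ follow immediately from those of the $H_1$-action: the forgetful functor $\Mnd(\caa{D})\to\caa{D}_1$ is faithful and reflects commuting diagrams, so the action identities, already valid in $\caa{D}_1$, hold in $\Mnd(\caa{D})$. Finally, that $(\Mon H)^\op$ is again an action is immediate from the general principle recorded after \cref{actionmaps}, that the opposite $*^\op$ of any action $*$ is an action, exactly as in the closing sentence of \cref{inthomaction}.
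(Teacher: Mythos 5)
Your proposal follows essentially the same route as the paper: both obtain the structure isomorphisms $\chi,\nu$ from \cref{inthomaction} applied to the underlying braided monoidal closed category (you use $(\caa{D}_1,\otimes_1,H_1)$, the paper uses $\Dendo$ with $H_1^\bullet$ --- equivalent, since by \cref{Dendoclosed} the closed structure of $\Dendo$ is the restriction of that of $\caa{D}_1$), and then transfer them to $\Mnd(\caa{D})$ along the conservative forgetful functors to conclude the action axioms. The step you rightly flag as the main obstacle --- that the components of $\chi$ and $\nu$ at (co)monads are monad morphisms for the convolution-type $\odot$-structures --- is exactly the point the paper compresses into its remark that the forgetful functors reflect isomorphisms, so your more explicit identification of the needed ingredients (the lax structure maps of $H$, the interchange isomorphism, naturality and coherence) is, if anything, more careful than the published argument.
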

\begin{proof}
The induced monoidal closed structure $H_1^\bullet$ on the braided monoidal $\Dendo$, as well as its opposite functor,
are both actions of ${\Dendo}^\op$ and $\Dendo$ respectively, as is the case in any braided monoidal closed category by \cref{inthomaction}.
Therefore there are structure isomorphisms as in \cref{actionmaps},
\begin{displaymath}
H_1^\bullet(M\otimes N,P)\cong H_1^\bullet(M,H_1^\bullet(N,P)),\quad H_1^\bullet(I,P)\cong P 
\end{displaymath}
for any endo-1-cells $M,N,P$ satisfying compatibility conditions.
Since the forgetful functors from $\Cmd(\caa{D})$, $\Mnd(\caa{D})$ to $\Dendo$ reflect isomorphisms,
$\Mon H$ and its opposite come equipped with these isomorphisms applied to
$M,N\in\Cmd(\caa{D})$, $P\in\Mnd(\caa{D})$ thus are actions.
\end{proof}

\begin{thm}\label{MndenrichedCmnd}
Suppose that $(\caa{D},\otimes,\B{I})$ is a braided locally closed monoidal double category,
with internal hom $H$. If the induced functor $\Mon H$ has a parametrized right adjoint, then the category of monads $\Mnd(\caa{D})$ is
enriched in the category of comonads $\Cmd(\caa{D})$.

Moreover, if $\Cmd(\caa{D})$ is monoidal closed this enrichment is cotensored, and if each $\Mon H(M,-)$ also has a right adjoint,
the enrichment is tensored.
\end{thm}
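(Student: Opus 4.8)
The plan is to realise this enrichment as a direct application of \cref{actionenrich}, in exact parallel with the one-object statement \cref{monoidenrichment}. By \cref{DendoMonDComonDmonoidal} the category $\Cmd(\caa{D})$ is braided monoidal (since $\caa{D}$ is braided), and by \cref{doubleMonHaction} the opposite functor $(\Mon H)^\op\colon\Cmd(\caa{D})\times\Mnd(\caa{D})^\op\to\Mnd(\caa{D})^\op$ is an action of $\Cmd(\caa{D})$ on $\Mnd(\caa{D})^\op$. I therefore take $\ca{V}=\Cmd(\caa{D})$, $\ca{D}=\Mnd(\caa{D})^\op$ and $*=(\Mon H)^\op$ in \cref{actionenrich}, so that $C*A=\Mon H(C,A)$ on objects. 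Note that no fibrancy of $\caa{D}$ is needed here: the enrichment is purely the action-theoretic content, independent of the (op)fibrational results.

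The key step is to recognise that the hypothesised parametrized right adjoint of $\Mon H$ is exactly the adjoint that \cref{actionenrich} demands. Denoting it $T\colon\Mnd(\caa{D})^\op\times\Mnd(\caa{D})\to\Cmd(\caa{D})$ (the \emph{generalized Sweedler hom}), the adjunction $\Mon H(-,B)^\op\dashv T(-,B)$ yields for each monad $B$ a natural isomorphism
\[
\Mnd(\caa{D})(A,\Mon H(C,B))\cong\Cmd(\caa{D})(C,T(A,B)),
\]
the many-object analogue of \cref{measuringcomonoidprop}. Rewritten in $\Mnd(\caa{D})^\op$ this reads $\Mnd(\caa{D})^\op(C*B,A)\cong\Cmd(\caa{D})(C,T(A,B))$, exhibiting $T(-,B)$ as a right adjoint to $-*B$ in the precise sense of \cref{actionenrich}. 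Hence $\Mnd(\caa{D})^\op$ is enriched in $\Cmd(\caa{D})$ with hom-objects $T(B,A)$; and because $\Cmd(\caa{D})$ is braided, the final clause of \cref{actionenrich} simultaneously enriches the opposite category $\Mnd(\caa{D})$ in $\Cmd(\caa{D})$, now with hom-objects $\underline{\Mnd(\caa{D})}(A,B)=T(A,B)$, as claimed.

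For the tensoring I transport the structure on $\Mnd(\caa{D})^\op$ across the passage to the opposite, under which tensors and cotensors interchange. As $\ca{V}=\Cmd(\caa{D})$ is assumed monoidal closed, \cref{actionenrich} makes the enrichment on $\Mnd(\caa{D})^\op$ tensored with tensor $C*A=\Mon H(C,A)$; dualising, this becomes the \emph{cotensor} $\Mon H(C,B)$ on $\Mnd(\caa{D})$. The remaining hypothesis, that each $\Mon H(M,-)$ admits a right adjoint, furnishes a right adjoint to the endofunctor $M*-$ of $\Mnd(\caa{D})^\op$; the braided cotensor clause of \cref{actionenrich} then makes that enrichment cotensored, and dualising produces the \emph{tensor} on $\Mnd(\caa{D})$, namely the \emph{generalized Sweedler product} $M\triangleright B$ given by this adjoint.

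Since the individual verifications are mechanical once the action and its adjoint are in hand, the one genuinely delicate point is the bookkeeping of variances: the opposite must be carried on the monad side (and on the action) so that the hom-objects emerge as $T$ rather than its transpose, and so that, after the final dualisation, the adjoint of $\Mon H(M,-)$ lands on the tensor rather than the cotensor side. I expect this matching to be the only place where an error is easy to make, and it is dictated entirely by the pattern already fixed in \cref{monoidenrichment}, which serves as the template throughout.
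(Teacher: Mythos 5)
Your proposal is correct and takes essentially the same route as the paper's own proof, which likewise consists of citing \cref{DendoMonDComonDmonoidal} and \cref{doubleMonHaction} and then applying \cref{actionenrich} to the action $(\Mon H)^\op$ of $\Cmd(\caa{D})$ on $\Mnd(\caa{D})^\op$, dismissing the (co)tensor clauses with ``the rest follows by assumption'' (your explicit dualisation fills in exactly that gap). One caveat on the variance point you yourself flag: for the tensored clause to go through, the hypothesised right adjoint must be to the endofunctor $\Mon H(M,-)^\op$ of $\Mnd(\caa{D})^\op$ (equivalently, a \emph{left} adjoint to $\Mon H(M,-)$ on $\Mnd(\caa{D})$), since a right adjoint on $\Mnd(\caa{D})$ would dualise to a left adjoint of $M*-$; this reading is the one consistent with the paper's application, where the Sweedler product arises from $K(C_X,-)^\op\dashv(C_X\triangleright-)^\op$.
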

\begin{proof}
The category of comonads is braided monoidal by \cref{DendoMonDComonDmonoidal}.
Since $\Mon H$ and $(\Mon H)^\op\colon\Cmd(\caa{D})\times\Mnd(\caa{D})^\op\to\Mnd(\caa{D})^\op$ are actions,
the existence of a right adjoint for the latter
\begin{displaymath}
S\colon\Mnd(\caa{D})^\op\times\Mnd(\caa{D})\longrightarrow\Cmd(\caa{D}) 
\end{displaymath}
induces the desired enrichment of $\Mnd(\caa{D})^\op$ and thus of $\Mnd(\caa{D})$, by \cref{actionenrich}.
The rest of the clauses follow by assumption.
\end{proof}

Notice that monoidal closedness of $\Cmd(\caa{D})$ does not seem to follow in a straightforward way from that of $\Dendo$,
\cref{Dendoclosed}. Even in the application that follows, this result is obtained after establishing (co)completeness
and (co)monadicity properties for the specific structure, which are heavily related to the double category we choose to work in;
see \cref{VCocatclosed}. 

Moving to the fibrant case, we would now want to combine the above enrichment with the (op)fibration structure of monads
and comonads over $\caa{D}_0$ to exhibit an enriched fibration, \cref{enrichedfibration}.
Towards that end, notice that
\begin{equation}\label{hopefulaction}
\xymatrix @C=.7in @R=.5in
{\Cmd(\caa{D})\times\Mnd(\caa{D})^\op\ar[r]^-{(\Mon H)^\op} \ar[d]_-{W\times S^\op} & \Mnd(\caa{D})^\op\ar[d]^{S^\op} \\
\caa{D}_0\times(\caa{D}_0)^\op\ar[r]_-{H_0^\op} & \caa{D}_0^\op}
\end{equation}
commutes by definition of the involved functors, and moreover $W$, $S^\op$ are monoidal opfibrations by \cref{monadscomonadsmonoidalfibr}.
Finally, the actions $(\Mon H)^\op$ and $(H_0)^\op$ are above each other in the sense of \cref{actionsabove}:
by the mapping \cref{H1mapping} that restricts as $\Mon H$ to (co)monads, the source and target
of the action isomorphisms in $\Mnd(\caa{D})$ is precisely the action isomorphism in $\caa{D}_0$, e.g.
\begin{displaymath}
 \xymatrix @C=.8in
{H_0(X\otimes_0Y,Z)\ar[d]_-{\cong}\ar[r]|-\sbul^-{H_1(C\otimes_1D,M)}\rtwocell<\omit>{<4>\cong} & H_0(X\otimes_0Y,Z)\ar[d]^-{\cong} \\
H_0(X,H_0(Y,Z))\ar[r]_-{H_1(C,H_1(D,M))}|-\sbul & H_0(X,H_0(Y,Z))}
\end{displaymath}
As a result, when $\Mon H$ preserves cartesian liftings, then $W$ acts on $S^\op$ in the sense of \cref{Trepresentation}.
We can now apply the dual of \cref{thmactionenrichedfibration}.

\begin{thm}\label{MndfibredenrichedCmnd}
Suppose $\caa{D}$ is a braided locally closed monoidal fibrant double category.
If $(\Mon H)^\op$ is cocartesian, and \cref{hopefulaction} has an opfibred parametri\-zed
adjoint as in \cref{generaloplaxparametrized},
then the fibration $S\colon\Mnd(\caa{D})\to\caa{D}_0$ is enriched in the symmetric monoidal opfibration
$W\colon\Cmd(\caa{D})\to\caa{D}_0$.
\end{thm}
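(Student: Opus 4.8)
The plan is to obtain this enrichment as a direct instance of the dual of \cref{thmactionenrichedfibration}, in which an action of a monoidal opfibration induces an enrichment through an opfibred parametrized adjoint. The monoidal opfibration at issue is $W\colon\Cmd(\caa{D})\to\caa{D}_0$, which carries the requisite monoidal structure (braided, as inherited from $\caa{D}$) by \cref{monadscomonadsmonoidalfibr}, while the structure to be enriched is the opfibration $S^\op\colon\Mnd(\caa{D})^\op\to\caa{D}_0^\op$, available because $S$ is a fibration by \cref{MonComonfibred}. By the convention recorded in \cref{enrichedfibration}, saying that the fibration $S$ is enriched in the monoidal opfibration $W$ is precisely saying that the opfibration $S^\op$ is $W$-enriched, so it suffices to establish the latter.

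First I would check that $W$ \emph{acts} on $S^\op$ in the dual sense of \cref{Trepresentation}, taking \cref{hopefulaction} as the candidate action $1$-cell. The square commutes by the definitions of the functors, its left leg being $W\times S^\op$ and its right leg the opfibration $S^\op$; that the two rows $(\Mon H)^\op$ and $H_0^\op$ are genuine monoidal actions is exactly \cref{doubleMonHaction}, together with the base-level statement that $H_0^\op$ is an action, which follows from \cref{inthomaction} applied to the braided monoidal closed $\caa{D}_0$. The compatibility of the two families of action constraints, i.e.\ that the associativity and unit isomorphisms upstairs lie above those downstairs as in \cref{actionsabove}, is precisely the verification sketched in the paragraph preceding the statement: under the mapping \cref{H1mapping}, which restricts to $\Mon H$ on (co)monads, the source and target of each structure isomorphism in $\Mnd(\caa{D})^\op$ project via $S^\op$ onto the matching isomorphism in $\caa{D}_0^\op$. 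The last ingredient promoting \cref{hopefulaction} from a merely lax opfibred $1$-cell to an honest opfibred $1$-cell is that $(\Mon H)^\op$ preserves cocartesian morphisms, which is the first standing hypothesis.

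Granting this action, the second hypothesis --- that \cref{hopefulaction} admits an opfibred parametrized adjoint in the sense of \cref{generaloplaxparametrized} --- supplies exactly the datum demanded by the dual of \cref{thmactionenrichedfibration}. Applying that theorem then produces an enrichment of the opfibration $S^\op$ in the monoidal opfibration $W$, which by the convention above is the claimed enrichment of the fibration $S$ in $W$.

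I expect the main obstacle to be the single genuine verification hidden among the bookkeeping, namely confirming that the data of \cref{hopefulaction} really do satisfy the axioms of a (dual) representation in \cref{Trepresentation} --- in particular that the monoidal-action structure of $(\Mon H)^\op$ sits above that of $H_0^\op$ compatibly with the opfibred reindexing. Since this is essentially the content of the discussion immediately preceding the theorem, the remaining work is organisational: assembling \cref{monadscomonadsmonoidalfibr}, \cref{doubleMonHaction}, and the two hypotheses, and then invoking the dual enrichment theorem.
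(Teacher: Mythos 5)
Your proposal is correct and follows essentially the same route as the paper: the paper's own argument is exactly the discussion preceding the theorem (the commutativity of \cref{hopefulaction}, the monoidal opfibration structure from \cref{monadscomonadsmonoidalfibr}, the compatibility of action constraints as in \cref{actionsabove}, and the cocartesianness hypothesis promoting the square to an action in the sense of \cref{Trepresentation}), followed by an application of the dual of \cref{thmactionenrichedfibration}. Your additional bookkeeping --- citing \cref{MonComonfibred} for the fibration $S$, \cref{doubleMonHaction} and \cref{inthomaction} for the two actions, and the convention in \cref{enrichedfibration} reducing enrichment of $S$ to enrichment of $S^\op$ --- makes explicit precisely what the paper leaves implicit.
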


\section{Enriched matrices and (co)categories}\label{sec:Enrichedmatrices}

In this section, we initially study the double category of $\ca{V}$-matrices. 
After establishing its monoidal fibrant double categorical structure,
special focus will be given to its well-known horizontal bicategory of enriched matrices. The main references
for that are \cite{VarThrEnr} and \cite{KellyLack}; in the former, the more general bicategory
$\ca{W}$-$\Mat$ of matrices enriched in a bicategory $\ca{W}$ was studied, leading to the theory of bicategory-enriched categories.

Furthermore, we investigate the categories of monads and comonads in the double category
$\VMMat$. These are specifically $\VCat$ of $\ca{V}$-enriched categories
and functors \cite{Kelly}, and $\VCocat$ of $\ca{V}$-enriched \emph{cocategories} and \emph{cofunctors}.
Applying earlier results, passing from the double categorical to the bicategorical view according to our needs,
the goal is to establish an enrichment of $\ca{V}$-categories in $\ca{V}$-cocategories.

As an intermediate step, $\ca{V}$-enriched graphs are given special attention; they provide a natural
common framework for (co)categories, since both are graphs with extra structure.
In \cite{MacLane}, the notion of a small (directed) graph
is employed to describe the free category construction (in analogy with the free monoid construction on a set)
and also $O$-graphs with a fixed set of objects $O$ inspires the fibrational view of these categories.
For $\ca{V}$-$\B{Grph}$ and $\ca{V}$-$\B{Cat}$ from a more traditional point 
of view, rather than the matrices approach followed here, Wolff's \cite{Wolff} is a classical reference for a symmetric monoidal closed base.

There is a 2-dimensional aspect for all the categories studied in this chapter,
e.g. $\ca{V}$-natural transformations. We choose to omit its description in this treatment, since it is not relevant to
our main objectives.

\subsection{\texorpdfstring{$\ca{V}$}{V}-matrices}\label{bicatVMat}

Suppose that $\ca{V}$ is a monoidal category with coproducts which are preserved by the tensor product functor
$-\otimes-$ in each variable; for example, this is certainly the case when $\ca{V}$ is monoidal closed.

For sets $X$ and $Y$, a $\ca{V}$\emph{-matrix}$\SelectTips{eu}{10}\xymatrix @C=.2in
{S:X\ar[r]|-{\object@{|}} & Y}$from $X$ to $Y$ is defined to be a functor $S:X\times Y\to\ca{V}$,
where the set $X\times Y$ is viewed as a discrete category. This can equivalently given by a family of objects in $\ca{V}$
$$\{S(x,y)\}_{(x,y)\in X\times Y}$$
sometimes also denoted as $\{S_{x,y}\}_{X\times Y}$.
For example, each set $X$ gives rise to a $\ca{V}$-matrix$\SelectTips{eu}{10}
\xymatrix @C=.2in{1_X:X\ar[r]|-{\object@{|}} & X}$called the \emph{identity matrix} given by
\begin{displaymath}
1_X(x,x')=\begin{cases}
I,\quad \mathrm{if  }\;x=x'\\
0,\quad \mathrm{ otherwise}
\end{cases}
\end{displaymath}
where $I$ is the unit object in $\ca{V}$ and $0$ is the initial object.

There is a double category $\ca{V}$-$\MMat$ of $\ca{V}$-matrices as in \cref{def:doublecats}, with vertical category $\ca{V}$-$\MMat_0$
the usual category of sets and functions $\Set$. Its horizontal category $\ca{V}$-$\MMat_1$
consists of $\ca{V}$-matrices$\SelectTips{eu}{10}\xymatrix@C=.2in
{S:X\ar[r]|-{\object@{|}} & Y}$as horizontal 1-cells, and 2-morphisms $^f\alpha^g:S\Rightarrow T$
are natural transformations
\begin{equation}\label{VMat2morphism}
\xymatrix@R=.1in
{X\ar[r]^-S\ar@{}[r]|-{\tick}\rtwocell<\omit>{<5>\alpha} \ar[dd]_-f & Y\ar[dd]^-g  &&&\\
&& \textrm{=} & X\times Y \ar@/^5ex/[rr]^-S\ar@/_/[dr]_{f\times g}\rrtwocell<\omit>{\alpha} && \ca{V} \\
Z\ar[r]_-T\ar@{}[r]|-{\tick} & W &&& Z\times W\ar@/_/[ur]_-T &}
\end{equation}
given by families of arrows $\alpha_{x,y}:S(x,y)\to T(fx,gy)$ in $\ca{V}$, for all $x\in X$ and $y\in Y$.
There is a functor $\B{1}\colon\VMMat_0\to\VMMat_1$ which gives the identity $\ca{V}$-matrix$\SelectTips{eu}{10}
\xymatrix@C=.2in{1_X:X\ar[r]|-{\object@{|}} & X}$for each $X$, and the unit 2-morphism $1_f$ with components
\begin{displaymath}
 (1_f)_{x,x'}:1_X(x,x')\to1_X(x,x')\equiv
\begin{cases}
 I\xrightarrow{1_I}I, &\textrm{ if}\;x=x' \\
0\to 0, &\textrm{ if}\;x\neq x'.
\end{cases}
\end{displaymath}
The source and target functors give the evident sets and functions, and the horizontal composition functor
\begin{displaymath}
 \odot:\ca{V}\text{-}\MMat_1{\times_{\ca{V}\text{-}\MMat_0}}
\ca{V}\text{-}\MMat_1\to\ca{V}\text{-}\MMat_1
\end{displaymath}
maps two composable $\ca{V}$-matrices$\SelectTips{eu}{10}\xymatrix @C=.2in{T:Y\ar[r]|-{\object@{|}} & Z}$and$\SelectTips{eu}{10}
\xymatrix @C=.2in{S:X\ar[r]|-{\object@{|}} & Y}$to the matrix
$\SelectTips{eu}{10}\xymatrix @C=.2in{T\circ S:X\ar[r]|-{\object@{|}} & Z,}$given
by the family of objects in $\ca{V}$
\begin{equation}\label{horizontalcompositionVmatrices}
(T\circ S)(x,z)=\sum_{y\in Y} S(x,y)\otimes T(y,z)
\end{equation}
for all $z\in Z$ and $x\in X$, reminiscent of the usual matrix multiplication.
The horizontal composite of 2-morphisms $^f(\beta\odot\alpha)^g:T\circ S\Rightarrow T'\circ S'$ as in
\cref{2cellscomp} is given by the composite arrows
\begin{equation}\label{horizontalcompositionVmatricearrows}
\xymatrix @C=1in @R=.25in
{\sum_{y} S(x,y)\otimes T(y,z)\ar[r]^-{\sum\alpha_{x,y}\otimes\beta_{y,z}} \ar@/_3ex/@{-->}[dr] &
\sum_{y}S'(fx,gy)\otimes T'(gy,hz)\ar@{_(->}[d] \\
& \sum_{y'}S'(fx,y')\otimes T'(y',hz)}
\end{equation}
in $\ca{V}$, for all $x\in X$ and $z\in Z$. Compatibility conditions of source and target 
functors with composition can be easily checked.

For composable $\ca{V}$-matrices$\SelectTips{eu}{10}\xymatrix @C=.2in{X\ar[r]|-{\object@{|}}^-S & 
Y\ar[r]|-{\object@{|}}^-T & Z\ar[r]|-{\object@{|}}^-R & W,}$
the associator $\alpha$ has components globular isomorphisms $\alpha^{R,T,S}:
(R\odot T)\odot S\stackrel{\sim}{\longrightarrow}R\odot(T\odot S)$
given by the family $\{\alpha_{x,w}\}$ of composite isomorphisms
\begin{displaymath}
\xymatrix @R=.53in
{\sum_z\left(\sum_y S_{x,y}\otimes T_{y,z}\right)\otimes R_{z,w}\ar@{-->}[r] \ar[d]_-{\cong} &
\sum_y S_{x,y}\otimes\left(\sum_z T_{y,z}\otimes R_{z,w}\right) \\
\sum_{y,z}\left((S_{x,y}\otimes T_{y,z})\otimes R_{z,w}\right)\ar[r]_-{\sum{a}} & 
\sum_{y,z}\left(S_{x,y}\otimes(T_{y,z}\otimes R_{z,w})\right)\ar[u]_-{\cong}}
\end{displaymath}
where $a$ is the associativity constraint of $\ca{V}$ and the invertible arrows express the fact that $\otimes$ commutes with colimits.
Finally, for each $\ca{V}$-matrix$\SelectTips{eu}{10}\xymatrix @C=.2in{S:X\ar[r]|-{\object@{|}} & Y,}$ 
the unitors $\lambda, \rho$ have components globular $\lambda^S:1_Y\odot S\simrightarrow S,
\rho^S:S\odot 1_X\simrightarrow S$ given by families
\begin{align*}
\lambda^S_{x,y}&:\sum_{y'\in Y}{S(x,y')\otimes1_Y(y',y) }\equiv S(x,y)\otimes I\xrightarrow{\;r_{S(x,y)}\;} S(x,y) \\
\rho^S_{x,y}&:\sum_{x'\in X}{1_X(x,x')\otimes S(x',y)}\equiv I\otimes S(x,y)\xrightarrow{\;l_{S(x,y)}\;} S(x,y).
\end{align*}
of morphisms in $\ca{V}$. The respective coherence conditions are satisfied, thus these data indeed define a double category.

Moreover, $\VMMat$ is fibrant as in \cref{fibrantdoublecat}. Any function $f:X\to Y$ determines two $\ca{V}$-matrices, $\SelectTips{eu}{10}
\xymatrix @C=.2in{f_*:X\ar[r]|-{\object@{|}} & Y}$and$\SelectTips{eu}{10}\xymatrix @C=.2in
{f^*:Y\ar[r]|-{\object@{|}} & X,}$ given by
\begin{equation}\label{f*}
f_*(x,y)=f^*(y,x)=\begin{cases}
I,\quad \mathrm{if  }\;f(x)=y\\
0,\quad \mathrm{ otherwise}
\end{cases}
\end{equation}
These are precisely the companion and the conjoint of the vertical 1-cell $f$, since
they come equipped with appropriate 2-cells as in \cref{deficompconj}:
\begin{displaymath}
\xymatrix{X\ar[r]^-{f_*}\ar@{}[r]|-{\tick} \rtwocell<\omit>{<4>\;p_1} \ar[d]_-f & Y\ar@{=}[d] \\
Y\ar[r]_-{1_Y}\ar@{}[r]|-{\tick} & Y}\qquad
\xymatrix{X\ar[r]^-{1_X}|-{\tick}\rtwocell<\omit>{<4>\;p_2} \ar@{=}[d] &  X\ar[d]^-{f} \\
X\ar[r]_-{f_*}\ar@{}[r]|-{\tick} & Y}\quad
\xymatrix{\hole \\
\textrm{are given by}}
\end{displaymath}
\begin{gather*}
f_*(x,y)\xrightarrow{(p_1)_{x,y}}1_Y(fx,y)=
\begin{cases}
I\xrightarrow{\id}I, & \text{if }y=fx\\
0\xrightarrow{\id}0, & \text{otherwise}
\end{cases} \\
1_X(x,x')\xrightarrow{(p_2)_{x,x'}}f_*(x,fx')=
\begin{cases}
I\xrightarrow{\id} I, & \text{if }x=x' \\
0\xrightarrow{!}\begin{cases}I, & fx=fx' \\
0, &\text{else}
\end{cases} & \text{if }x\neq x'
\end{cases}
\end{gather*}
satisfying the required relations, and similarly for $f^*$.

When $\ca{V}$ is braided monoidal, the double category $\VMMat$ has a monoidal structure as in \cref{monoidaldoublecategory}.
The required double functors $\otimes=(\otimes_0,\otimes_1)$ and $\B{I}=(\B{I}_0,\B{I}_1)$ consist of the cartesian monoidal structure
on the vertical category $(\B{Set},\times,\{*\})$ and
\begin{equation}\label{VMMat1monoidal}
\otimes_1:\xymatrix @C=1.2in
{\ca{V}\text{-}\MMat_1\times\ca{V}\text{-}\MMat_1
\ar[r] & \ca{V}\text{-}\MMat_1}\phantom{ABC}
\end{equation}\vspace{-0.2in}
\begin{displaymath}
 \xymatrix @C=.025in
{(X\ar[rrr]|-{\object@{|}}^S\ar[d]_-f
&\rtwocell<\omit>{<4>{\alpha}}&& Y\ar[d]^-g
& , & Z\ar[rrr]^-T|-{\object@{|}}\ar[d]_-h
&\rtwocell<\omit>{<4>\beta}&& W)\ar[d]^-k
\ar@{|.>}[rrrr] &&&& X\times Z\ar[rrr]^-{S\otimes T}|-{\object@{|}}
\ar[d]_-{f\times h} &\rtwocell<\omit>{<4>\quad\alpha\otimes\beta}
&& Y\times W\ar[d]^-{g\times k} \\
(X'\ar[rrr]_-{S'}|-{\object@{|}} &&& Y' & , &
Z'\ar[rrr]_-{T'}|-{\object@{|}} &&& W')
\ar@{|.>}[rrrr] &&&& X'\times Z'\ar[rrr]_-{S'\otimes T'}|-{\object@{|}} 
&&& Y'\times W'} 
\end{displaymath}
which is defined on objects and morphisms by the families in $\ca{V}$
\begin{gather}\label{monoidalVMat}
(S\otimes T)\left((x,z),(y,w)\right):=S(x,y)\otimes T(z,w) \\
(\alpha\otimes\beta)_{(x,z),(y,w)}:= S(x,y)\otimes T(z,w)\xrightarrow{\alpha_{x,y}\otimes\beta_{z,w}}S'(fx,gy)\otimes T'(hz,kw).
\nonumber
\end{gather}
Along with the $\ca{V}$-matrix $\SelectTips{eu}{10}\xymatrix@C=.2in{\ca{I}:\{*\}\ar[r]|-{\object@{|}} & \{*\}}$given by
$\ca{I}(*,*)=I_\ca{V}$, this defines a monoidal structure of $\VMMat_1$. The conditions for $\Gr{s}$ and $\Gr{t}$
are satisfied, and the globular isomorphisms \cref{monoidaldoubleiso} come down to the tensor product in
$\ca{V}$ commuting with coproducts. More specifically,
for matrices$\matr{S}{X}{Y,}\matr{T}{Z}{W,}$ $\matr{S'}{Y}{U,}\matr{T'}{W}{V}$we can compute the isomorphic families in $\ca{V}$
\begin{gather*}
\left((S\otimes_1 T)\odot(S'\otimes_1 T')\right)_{(x,z),(u,v)}=
\sum_{(y,w)}S_{x,y}\otimes T_{z,w}\otimes S'_{y,u}\otimes T'_{w,v} \\
\left((S\odot S')\otimes_1(T\odot T')\right)_{(x,z),(u,v)}=\sum_{y}\left(S_{x,y}\otimes S'_{y,u}\right)\otimes
\sum_{w}\left(T_{z,w}\otimes T'_{w,v}\right)
\end{gather*}
via the braiding, and also $1_{X\times Y}\cong 1_X\otimes_1 1_Y$ in a straightforward way.
This monoidal structure on $\VMMat$ is symmetric, when
the base monoidal category $\ca{V}$ is symmetric. Then $\Set$ and $\VMMat_1$ are both symmetric monoidal categories,
and the rest of the axioms follow.

\begin{prop}\label{VMatmonoidaldoublefibrant}
If $\ca{V}$ is a monoidal category with coproducts, such that the tensor product preserves them in both variables,
the double category $\VMMat$ is fibrant. Moreover, $\VMMat$ is monoidal if $\ca{V}$ is braided monoidal, and
inherits the braided or symmetric structure from $\ca{V}$.
\end{prop}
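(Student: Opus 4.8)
The plan is to observe that every piece of structure needed has already been exhibited in the constructions immediately preceding the statement, so the proof amounts to verifying the defining axioms in each of the three cases. For fibrancy, by \cref{fibrantdoublecat} it suffices to produce a companion and a conjoint for every vertical 1-cell, i.e. for every function $f\colon X\to Y$. These are the $\ca{V}$-matrices $f_*$ and $f^*$ of \cref{f*}, together with the globular 2-morphisms $p_1,p_2$ (and dually $q_1,q_2$) already written down. The remaining task is to check the companion identities of \cref{deficompconj}: the strict equality $p_1p_2=1_f$ is verified componentwise using the case distinction in \cref{f*}, where in each of the cases $y=fx$ or $y\neq fx$ the vertical composite is the relevant identity of $I$ or of the initial object $0$; the globular isomorphism $p_1\odot p_2\cong 1_{f_*}$ follows because the horizontal composite \cref{horizontalcompositionVmatrices} involving the identity matrix collapses the indexing coproduct to its single nonzero summand. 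The conjoint axioms for $f^*$ are checked symmetrically.

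For the monoidal structure I would verify the requirements of \cref{monoidaldoublecategory} in turn. The vertical category is $(\Set,\times,\{*\})$, which is cartesian monoidal, while $(\VMMat_1,\otimes_1,\ca{I})$ is monoidal with associativity and unit constraints obtained entrywise from those of $\ca{V}$ via \cref{monoidalVMat}; that $\Gr{s}$ and $\Gr{t}$ are strict monoidal is immediate, since $\otimes_1$ acts as $\times$ on the underlying sets of objects. The heart of the verification is the pair of globular isomorphisms \cref{monoidaldoubleiso}. The unit isomorphism $1_{X\times Y}\cong 1_X\otimes_1 1_Y$ is routine. For the interchange isomorphism $(M\otimes_1 N)\odot(M'\otimes_1 N')\cong(M\odot M')\otimes_1(N\odot N')$ I would compare the two entrywise coproduct expressions computed in the text: both are colimits of fourfold tensor products, and the required isomorphism is assembled from the associativity of $\otimes$ in $\ca{V}$, the commutation of $\otimes$ with coproducts, and crucially the braiding of $\ca{V}$ used to transpose the two middle tensor factors $T_{z,w}$ and $S'_{y,u}$.

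The braided or symmetric structure is then inherited entrywise: the braiding on $S\otimes_1 T$ combines the set-level transposition $X\times Z\cong Z\times X$ with the braiding $c_{S_{x,y},T_{z,w}}$ of $\ca{V}$, and this is a symmetry precisely when that of $\ca{V}$ is. The functors $\Gr{s},\Gr{t}$ are consequently strict braided monoidal, and the two additional axioms required of a braided monoidal double category reduce to the corresponding hexagon-type identities holding already in $\ca{V}$.

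The main obstacle is the coherence of the interchange isomorphism \cref{monoidaldoubleiso}: one must check that the isomorphism assembled from the $\ca{V}$-braiding is natural and simultaneously compatible with the associator and unitors of the horizontal composition $\odot$ and with those of $\otimes_1$, so that the axioms of \cref{monoidaldoublecategory} genuinely hold rather than merely holding objectwise. This is exactly the point at which the braiding hypothesis on $\ca{V}$ becomes indispensable—without it there is no coherent way to reorder the middle factors—and it explains why the monoidal (as opposed to merely fibrant) conclusion requires $\ca{V}$ to be braided.
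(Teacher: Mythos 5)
Your proposal is correct and follows essentially the same route as the paper, whose own justification is exactly the construction preceding the statement: the companions $f_*$ and conjoints $f^*$ with the 2-cells $p_1,p_2$ (resp.\ $q_1,q_2$) for fibrancy, the tensor $\otimes_1$ of \cref{monoidalVMat} with unit matrix $\ca{I}$ on top of the cartesian structure of $\Set$, and the interchange isomorphism of \cref{monoidaldoubleiso} assembled from distributivity of $\otimes$ over coproducts together with the braiding of $\ca{V}$ transposing the middle factors. Your emphasis that the braiding is precisely what makes the interchange isomorphism possible, and that the braided/symmetric structure is inherited entrywise with $\Gr{s},\Gr{t}$ strict braided monoidal, matches the paper's argument exactly.
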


\begin{rmk}
It is evident that there is a strong relation between $\VMMat$ and $\ca{V}$-$\caa{P}\B{rof}$, the double category of $\ca{V}$-profunctors.
On a first level, we can see that $\ca{V}$-matrices are special cases of $\ca{V}$-profunctors when the latter
are considered only on discrete categories. In \cite[11.8]{Framedbicats}, a more elaborate relation between these double categories
is established: $\caa{M}\B{od}(\VMMat)=\ca{V}$-$\caa{P}\B{rof}$ for a construction $\caa{M}\B{od}$ building new fibrant double categories
from old, with vertical category that of monads.
\end{rmk}

When $\ca{V}$ is moreover monoidal closed with products, we can determine a locally closed monoidal structure on $\VMMat$.
Following \cref{loccloseddoublecat}, we are after a lax double functor
\begin{equation}\label{HforVMMat}
 H=(H_0,H_1)\colon\VMMat^\op\times\VMMat\to\VMMat
\end{equation}
which endows $\VMMat_0=\Set$ and $\VMMat_1$ with a monoidal closed structure.
For the vertical category, we clearly have the exponentiation functor
\begin{displaymath}
 H_0:\B{Set}^\op\times\B{Set}\xrightarrow{\;(-)^{(-)}\;}\B{Set}
\end{displaymath}
as the internal hom. For the horizontal category, we can define
\begin{equation}\label{H1functor}
 H_1:\xymatrix @C=1.3in
{\ca{V}\text{-}\MMat_1^\op\times\ca{V}\text{-}\MMat_1
\ar[r] & \ca{V}\text{-}\MMat_1}\phantom{ABC}
\end{equation}\vspace{-0.2in}
\begin{displaymath}
 \xymatrix @C=.025in @R=.25in
{(X\ar[rrr]|-{\object@{|}}^S\ar[d]_-f
&\rtwocell<\omit>{<4>{\alpha}}&& Y\ar[d]^-g
& , & Z\ar[rrr]^-T|-{\object@{|}}\ar[d]_-h
&\rtwocell<\omit>{<4>\beta}&& W)\ar[d]^-k
\ar@{|.>}[rrr] &&& Z^X\ar[rrrr]^-{H_1(S,T)}|-{\object@{|}}
\ar[d]_-{h^f} &\rtwocell<\omit>{<4>\qquad H_1(\alpha,\beta)}
&&& W^Y\ar[d]^-{k^g} \\
(X'\ar[rrr]_-{S'}|-{\object@{|}} &&& Y' & , &
Z'\ar[rrr]_-{T'}|-{\object@{|}} &&& W')
\ar@{|.>}[rrr] &&& Z'^{X'}\ar[rrrr]_-{H_1(S',T')}|-{\object@{|}} 
&&&& {W'}^{Y'}}
\end{displaymath}
on horizontal 1-cells given by families of objects in $\ca{V}$, for $n\in Z^X, m\in W^Y$, 
\begin{equation}\label{H1onobjects}
H_1(S,T)(n,m)=\prod_{x,y}[S(x,y),T(n(x),m(y))] 
\end{equation}
and on 2-morphisms $H_1(\alpha,\beta):H_1(S,T)(n,m)\to H_1(S',T')(h^f(n),k^g(m))$ by families of arrows in $\ca{V}$
\begin{equation}\label{H1onarrows}
\prod_{x,y} [S(x,y),T(n(x),m(y))]\to \prod_{x',y'}[S'(x',y'),T'(hnf(x'),kmg(y'))]
\end{equation}
which correspond under $(\text{-}\otimes X)\dashv [X,-]$ in $\ca{V}$ for fixed $x',y'$ to the composite
\begin{displaymath}
\xymatrix @C=.8in
{\prod\limits_{x,y}[S(x,y),T(nx,my)]\otimes S'(x',y')\ar @{-->}[r]
\ar[d]_-{1\otimes \alpha_{x',y'}} & T'(hnfx',kmgy') \\
\prod\limits_{x,y}[S(x,y),T(nx,my)]\otimes S(fx',gy')\ar[d]_-{\pi_{fx',gy'}\otimes 1} & \\
[S(fx',gy'),T(nfx',mgy')]\otimes S(fx',gy')\ar[r]^-{\textrm{ev}}& T(nfx',mgy').\ar[uu]_-{\beta_{nfx',mgy'}}}
\end{displaymath}
The globular transformations from \cref{defi:doublefunctor} for a lax double functor
\begin{displaymath}
 \xymatrix @R=.02in @C=.8in
{& W^Z \ar @/^/[dr]|-{\object@{|}}^-{H_1(R,O)} & \\
Y^X \ar @/^/[ur]|-{\object@{|}}^-{H_1(S,T)}
\rrtwocell<\omit>{\qquad\qquad\delta_{(S,T),(R,O)}}
\ar @/_3ex/[rr]|-{\object@{|}}_-{H_1(R\circ S,O\circ T)} && V^U}
\quad
\xymatrix @C=1in @R=.02in
{\hole \\
Y^X\ar @/^3ex/[r]|-{\object@{|}}^-{1_{Y^X}}
\ar@/_3ex/[r]|-{\object@{|}}_-{H_1(1_X,1_Y)}
\rtwocell<\omit>{\qquad\;\;\gamma_{(X,Y)}} & Y^X}
\end{displaymath}
for each$\SelectTips{eu}{10}\xymatrix @C=.2in{(R:Z\ar[r]|-{\object@{|}} & U,}\SelectTips{eu}{10}\xymatrix @C=.2in
{O:W\ar[r]|-{\object@{|}} & V)}$and$\SelectTips{eu}{10}\xymatrix @C=.2in{(S:X\ar[r]|-{\object@{|}} & Z,}\SelectTips{eu}{10}
\xymatrix @C=.2in{T:Y\ar[r]|-{\object@{|}} & W)}$are given by families of arrows in $\ca{V}$
\begin{gather*}
\sum_{q\in W^Z}{H_1(S,T)(k,q)\otimes H_1(R,O)(q,t)}\xrightarrow{\delta_{k,t}}
\prod_{(x,u)}{[(R\circ S)(x,u),(O\circ T)(kx,tu)]} \\
\gamma_{k,k}:I\xrightarrow{\;\sim\;}[1_X(x,x),1_Y(kx,kx)]=[I,I]
\end{gather*}
for all $k\in Y^X, t\in V^U$, $x=x'\in X$. These again can be understood via their transposes under the tensor-hom adjunction, \emph{i.e.}  
composites of projections, inclusions, braidings and evaluations, using the fact that the tensor product preserves sums.
The coherence axioms of \cref{laxfunctor} as for a lax functor of bicategories are satisfied, therefore $H=(H_0,H_1)$
is a lax double functor.

\begin{prop}\label{VMMat1closed}
Under the above assumptions, the functor $H_1$ \cref{H1functor} constitutes a monoidal closed structure
for $(\ca{V}\text{-}\MMat_1,\otimes_1,1_I)$.
\end{prop}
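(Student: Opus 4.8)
The monoidal structure $(\otimes_1,1_I)$ on $\VMMat_1$ is already in place by \cref{VMatmonoidaldoublefibrant}, so the only content to establish is that, for each fixed matrix $T$, the functor $-\otimes_1 T\colon\VMMat_1\to\VMMat_1$ admits a right adjoint, and that the resulting family of adjunctions is natural in $T$ (so that $H_1$ is genuinely a parametrized right adjoint). The plan is therefore to produce a natural isomorphism
\begin{displaymath}
\VMMat_1(S\otimes_1 T,R)\;\cong\;\VMMat_1(S,H_1(T,R))
\end{displaymath}
for matrices $S\colon X\to Y$, $T\colon Z\to W$ and $R\colon U\to V$, where $S\otimes_1 T\colon X\times Z\to Y\times W$ and, by \eqref{H1onobjects}, $H_1(T,R)\colon U^Z\to V^W$ with $H_1(T,R)(n,m)=\prod_{z,w}[T(z,w),R(n(z),m(w))]$ (this is where the hypotheses that $\ca{V}$ have products and an internal hom are used). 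Unwinding the definitions, a morphism on the left is a $2$-morphism with underlying functions $p\colon X\times Z\to U$, $q\colon Y\times W\to V$ together with a family
\begin{displaymath}
\sigma_{(x,z),(y,w)}\colon S(x,y)\otimes T(z,w)\longrightarrow R(p(x,z),q(y,w)),
\end{displaymath}
while a morphism on the right consists of functions $p'\colon X\to U^Z$, $q'\colon Y\to V^W$ and a family $\tau_{x,y}\colon S(x,y)\to\prod_{z,w}[T(z,w),R(p'(x)(z),q'(y)(w))]$.

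First I would build the bijection in two independent stages. The underlying functions correspond under the cartesian closed structure of $\Set$---which is precisely the vertical internal hom $H_0=(-)^{(-)}$---via $p'(x)(z)=p(x,z)$ and $q'(y)(w)=q(y,w)$. With these identifications fixed, the families correspond componentwise: by the universal property of the product $\prod_{z,w}$ the datum $\tau_{x,y}$ is equivalent to a family of arrows $S(x,y)\to[T(z,w),R(p'(x)(z),q'(y)(w))]$ indexed by $(z,w)$, and each of these transposes, under the tensor--hom adjunction $(-\otimes T(z,w))\dashv[T(z,w),-]$ of the monoidal closed $\ca{V}$, to an arrow $S(x,y)\otimes T(z,w)\to R(p(x,z),q(y,w))$, i.e.\ to the component $\sigma_{(x,z),(y,w)}$. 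Since matrices have discrete source and target, a $2$-morphism carries no further naturality constraint, so the hom-sets are literally sets of pairs (functions, family), and the assignment above is manifestly invertible.

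It then remains to verify naturality of this bijection in $S$, in $R$, and in the parameter $T$. Each separate ingredient is natural on its own---set-currying is natural, the product $\prod_{z,w}$ is binatural, and the $\ca{V}$-transpose is natural---so naturality in $T$ follows routinely and yields the parametrized adjunction required by \cref{loccloseddoublecat}. The one point demanding care, and the \emph{main obstacle}, is that a general (non-globular) $2$-morphism of $\VMMat_1$ simultaneously reindexes the source/target sets and the product indices, so one must check that the $\ca{V}$-transpose commutes with this reindexing. This compatibility of the set-level currying $H_0$ with the pointwise $\ca{V}$-adjunction is exactly what is built into the definition \eqref{H1onarrows} of $H_1$ on $2$-morphisms; granting it, the displayed bijection is natural in all variables. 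Hence $-\otimes_1 T\dashv H_1(T,-)$ for every $T$, these adjunctions assemble into a parametrized adjunction $\otimes_1\dashv H_1$, and $(\VMMat_1,\otimes_1,1_I)$ is monoidal closed with internal hom $H_1$.
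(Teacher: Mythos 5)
Your proof is correct and follows essentially the same route as the paper's: both unwind a 2-morphism $S\otimes_1 T\Rightarrow P$ into its underlying functions plus its family of $\ca{V}$-components, curry the functions using the cartesian closedness of $\Set$, and transpose the components under the tensor--hom adjunction of $\ca{V}$ together with the universal property of the product $\prod_{z,w}$. Your extra attention to naturality in $S$, $R$ and the parameter $T$ simply makes explicit what the paper treats as a routine verification.
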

\begin{proof}
We need to show that $-\otimes_1 T\dashv H_1(T,-)\colon\VMMat_1\to\VMMat_1$ for any $\ca{V}$-matrix $\matr{T}{Z}{W,}$
i.e. there is a natural bijection between 2-morphisms
\begin{displaymath}
\xymatrix @C=.6in{X\times Z\ar[r]^-{S\otimes_1 T}|-{\tick}\rtwocell<\omit>{<4>\alpha} \ar[d]_-f & Y\times W\ar[d]^-g \\
U\ar[r]_-P|-{\tick} & V}\quad\textrm{ and }\quad
\xymatrix @C=.6in{X\ar[r]^-{S}|-{\tick}\rtwocell<\omit>{<4>\beta} \ar[d]_-k & Y\ar[d]^-l \\
U^Z\ar[r]_-{H_1(T,P)}|-{\tick} & V^W}
\end{displaymath}
Taking components of the left side 2-morphism \cref{VMat2morphism} and using the monoidal closed structure of $\ca{V}$,
we deduce that any $\alpha_{(x,z),(y,x)}\colon S(x,y)\otimes T(z,w)\to P(f(x,z),g(y,w))$ bijectively corresponds,
for all $x\in X,y\in Y,z\in Z,w\in W$, to some $\ca{V}$-morphism $S(x,y)\to[T(z,w),P(f(x,z),g(y,w))]$. Since $\ca{V}$ has products
and $\Set$ is closed, this uniquely corresponds to some
$$\beta_{x,y}\colon S(x,y)\to\prod_{z,w}[T(z,w),P(f_x(z),g_y(w))]$$
for the transpose functions $f_x\in U^X$, $g_y\in V^W$ and the proof is complete.
\end{proof}

Hence we built a lax double functor $H$ which satisfies the conditions of \cref{loccloseddoublecat}; the desired result follows.

\begin{cor}\label{VMatlocallymoidalclosed}
If $\ca{V}$ is a braided monoidal closed category with products and coproducts, 
the monoidal double category $\VMMat$ is locally closed monoidal.
\end{cor}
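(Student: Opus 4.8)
The plan is to read off the three conditions of \cref{loccloseddoublecat} from the constructions already in place: we must check that $\VMMat$ is monoidal, that $H=(H_0,H_1)$ is a lax double functor, and that $\otimes_0\dashv H_0$ and $\otimes_1\dashv H_1$ are parametrized adjunctions on the vertical and horizontal levels. Since $\ca{V}$ is braided monoidal closed, its tensor product preserves coproducts on either side, so \cref{VMatmonoidaldoublefibrant} immediately gives that $\VMMat$ is a monoidal (in fact symmetric, when $\ca{V}$ is) double category, disposing of the first condition.

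For the second condition I would take $H_0$ to be the exponentiation $(-)^{(-)}\colon\Set^\op\times\Set\to\Set$ and $H_1$ as defined in \cref{H1functor}, together with the globular comparison maps $\delta,\gamma$ exhibited above. The source--target compatibility needed for a double functor holds by inspection, since $H_1(S,T)$ runs from $Z^X=H_0(X,Z)$ to $W^Y=H_0(Y,W)$; and the lax coherence axioms \cref{laxcond1,laxcond2} were verified when $H$ was constructed. Hence $H$ is a lax double functor $\VMMat^\op\times\VMMat\to\VMMat$.

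It then remains to supply the two parametrized adjunctions. On the vertical side $\otimes_0$ is cartesian product on $\Set$ and $H_0$ is exponentiation, so $-\times Z\dashv(-)^Z$ is precisely the cartesian closed structure of $\Set$. On the horizontal side $\otimes_1\dashv H_1$ is exactly \cref{VMMat1closed}. With all three conditions met, \cref{loccloseddoublecat} yields that $\VMMat$ is locally closed monoidal. The genuine difficulty lay entirely in the preparatory steps---constructing $H_1$ and checking it is lax, and proving \cref{VMMat1closed}---so at the level of this corollary I expect only the formal bookkeeping of matching variances in $\VMMat^\op\times\VMMat$ against the data of \cref{loccloseddoublecat}, which presents no real obstacle.
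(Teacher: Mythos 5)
Your proposal is correct and matches the paper's own argument: the paper likewise treats this corollary as pure bookkeeping, assembling the previously constructed lax double functor $H=(H_0,H_1)$, the cartesian closed structure of $\Set$ for $\otimes_0\dashv H_0$, and \cref{VMMat1closed} for $\otimes_1\dashv H_1$, and then invoking \cref{loccloseddoublecat}. Nothing is missing.
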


The horizontal bicategory $\ca{H}(\ca{V}\text{-}\MMat)$ of the double category of $\ca{V}$-matrices is the well-known bicategory $\ca{V}$-$\Mat$.
In more detail, sets and $\ca{V}$-matrices are the 0- and 1-cells, and 2-cells between $\ca{V}$-matrices $S$ and $S'$ are
globular 2-morphisms, i.e. natural transformations
\begin{displaymath}
\xymatrix{X\ar@/^2ex/[rr]|-{\object@{|}}^-S \ar@/_2ex/[rr]|-{\object@{|}}_-{S'}
\rrtwocell<\omit>{\sigma} && Y}:=
\xymatrix{X\times Y\rrtwocell^{S}_{S'}{\;\sigma} && \ca{V}}
\end{displaymath}
given by families $\sigma_{x,y}:S(x,y)\to S'(x,y)$ of arrows in $\ca{V}$.
The horizontal composition $\circ:\ca{V}\textrm{-}\Mat(Y,Z)\times\ca{V}\textrm{-}\Mat(X,Y)
\to\ca{V}\textrm{-}\Mat(X,Z)$ is given by matrix multiplication \cref{horizontalcompositionVmatrices}
on objects, and by the special case of \cref{horizontalcompositionVmatricearrows} mapping
$(\sigma,\tau)$ to $\{(\tau*\sigma)_{x,z}\}=\{\sum{\sigma_{x,y}\otimes\tau_{y,z}}\}$ on morphisms.

Many useful properties of the companion and conjoint \cref{f*} can be deduced in the bicategorical context from \cref{compconjprops}.
For example, for any $f\colon X\to Y$ we have an adjunction $f_*\dashv f^*$ in the bicategory $\ca{V}$-$\B{Mat}$, with unit and counit
\begin{equation}\label{fetafepsilon}
\xymatrix @C=.6in
{X \ar @/^2ex/[r]|-{\object@{|}}^-{1_X}
\ar@/_2ex/[r]|-{\object@{|}}_-{f^*\circ f_*}
\rtwocell<\omit>{\;\feta} & X}
\qquad\mathrm{and}\qquad
\xymatrix @=.6in
{Y \ar @/^2ex/[r]|-{\object@{|}}^-{f_*\circ f^*}
\ar@/_2ex/[r]|-{\object@{|}}_-{1_Y}
\rtwocell<\omit>{\;\fepsilon} & Y}
\end{equation}
with components arrows in $\ca{V}$
\begin{gather*}
\fepsilon_{y,y'}:(f_*\circ f^*)(y,y')\to 1_Y(y,y')\equiv
\begin{cases}
\sum\limits_{x\in f^{-1}(y)}{I\otimes I}\xrightarrow{r_I}I, & \text{if }y=y' \\
\phantom{\sum\limits_{x\in f^{-1}(y)}}0\xrightarrow{!}0, & \text{if }y\neq y'
\end{cases} \\
\feta_{x,x'}:1_X(x,x')\to(f^*\circ f_*)(x',x)\equiv
\begin{cases}
I\xrightarrow{(r_I)^{-1}}I\otimes I, & \text{if }x'=x \\
0\xrightarrow{!}{\begin{cases} I\otimes I, & fx=fx'\\
0, & \text{else}
\end{cases}} & \text{if }x'\neq x
\end{cases}
\end{gather*}
Notice that $\feta$ and $\fepsilon$ are isomorphisms if and only if $f$ is a bijection.

For explicit calculations in the context of $\ca{V}$-matrices, it will be useful 
to compute that for any $F\colon X\to Y,$ $\SelectTips{eu}{10}\xymatrix @C=.2in{S:Y\ar[r]|-{\object@{|}} & Z}$and$\SelectTips{eu}{10}
\xymatrix @C=.2in{T:Z\ar[r]|-{\object@{|}} & Y,}$
\begin{equation}\label{matrixmachine}
(S\circ f_*)(x,z)=\sum_{y\in Y}{f_*(x,y)\otimes S(y,z)}=I\otimes S(fx,z)\stackrel{r}{\cong}S(fx,z)
\end{equation}
\begin{displaymath}
(f^*\circ T)(z,x)=\sum_{y\in Y}{T(z,y)\otimes f^*(y,x)}=T(z,fx)\otimes I\stackrel{l}{\cong}T(z,fx)
\end{displaymath}
are the families in $\ca{V}$ that define the composite matrices $S\circ f_*$ and $f^*\circ T$.
Using such machinery, we re-obtain the following results for composites of companions and conjoints, \cref{compositecompconj}.

\begin{lem}\label{isosofstars}
Let $f:X\to Y$ and $g:Y\to Z$ be functions. There exist isomorphisms
\begin{gather*}
\zeta^{g,f}:g_*\circ f_*\cong(gf)_*:
\SelectTips{eu}{10}\xymatrix
{X\ar[r]|-{\object@{|}} & Z} \\
\xi^{g,f}:f^*\circ g^*\cong(gf)^*:
\SelectTips{eu}{10}\xymatrix
{Z\ar[r]|-{\object@{|}} & X}
\end{gather*}
which are families of invertible arrows
\begin{equation}\label{zeta}
\zeta^{g,f}_{x,z}=\xi^{g,f}_{z,x}:\begin{cases}
I\otimes I\xrightarrow{r_I=l_I}I, &\textrm{if }g(f(x))=z \\
\quad\quad 0\xrightarrow{\;!\;}0, & \textrm{otherwise}
\end{cases}
\end{equation}
\end{lem}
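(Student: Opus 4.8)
The plan is to read off both isomorphisms directly from the composition formula \cref{matrixmachine}, specialised to companions and conjoints, and then to identify the resulting families with $(gf)_*$ and $(gf)^*$. Since $\VMMat$ is fibrant by \cref{VMatmonoidaldoublefibrant} with companion $f_*$ and conjoint $f^*$, the abstract existence of such isomorphisms is already guaranteed by \cref{compositecompconj}; the only work here is to pin down their concrete form.

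For $\zeta^{g,f}$ I would apply the first identity of \cref{matrixmachine} with $S=g_*$, obtaining
\[
(g_*\circ f_*)(x,z)=\sum_{y\in Y} f_*(x,y)\otimes g_*(y,z)\;\cong\; I\otimes g_*(f(x),z)\;\cong\; g_*(f(x),z),
\]
where every summand with $y\neq f(x)$ is isomorphic to $0$ because $f_*(x,y)=0$ there and $\otimes$ preserves the initial object (being a coproduct-preserving functor in each variable). Now $g_*(f(x),z)$ equals $I$ precisely when $g(f(x))=z$ and $0$ otherwise, which is by definition $(gf)_*(x,z)$. Unwinding the composite isomorphism then yields the unit map $I\otimes I\to I$ on the diagonal entries $g(f(x))=z$ and the identity $0\to 0$ elsewhere, exactly the family displayed in \cref{zeta}.

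For $\xi^{g,f}$ I would dually invoke the second identity of \cref{matrixmachine} with $T=g^*$, so that
\[
(f^*\circ g^*)(z,x)=\sum_{y\in Y} g^*(z,y)\otimes f^*(y,x)\;\cong\; g^*(z,f(x))\otimes I\;\cong\; g^*(z,f(x)),
\]
the surviving summand being the one with $y=f(x)$, since $f^*(y,x)=0$ unless $f(x)=y$. As $g^*(z,f(x))=g_*(f(x),z)$, this again equals $I$ iff $g(f(x))=z$, matching $(gf)^*(z,x)$; the component is once more a unitor out of $I\otimes I$.

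Finally I would observe that the two families coincide under transposition of indices, $\zeta^{g,f}_{x,z}=\xi^{g,f}_{z,x}$, because on the monoidal unit the two unitors agree, $r_I=l_I$. Each entry being an isomorphism in $\ca{V}$, both families are automatically invertible globular $2$-cells, so no separate naturality verification is needed. There is no genuine obstacle in this argument; the only points requiring care are the collapse of the sums via preservation of the initial object and the bookkeeping of the index reversal in the conjoint composite $f^*\circ g^*$.
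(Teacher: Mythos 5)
Your proposal is correct and follows exactly the route the paper intends: the paper offers no separate argument for this lemma beyond the remark that it is "re-obtained" from the computational machinery of \cref{matrixmachine} together with the abstract statement \cref{compositecompconj}, and your unwinding of the sums (collapsing the summands with $y\neq f(x)$ via preservation of the initial object, then identifying the surviving entry with $(gf)_*$ resp.\ $(gf)^*$ and the unitor $I\otimes I\to I$) is precisely that computation made explicit. The observation that naturality is vacuous because the indexing categories are discrete is also accurate, so nothing is missing.
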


Under the assumptions of \cref{VMatmonoidaldoublefibrant}, $\VMMat$ is a fibrant monoidal double category
therefore \cref{monoidalhorizontalbicategory} applies.

\begin{prop}\label{bicatVMatmonoidal}
If $\ca{V}$ is a braided monoidal category with coproducts such that $\otimes$ preserves them in both enries,
the bicategory $\ca{V}$-$\Mat$ is a monoidal bicategory; if $\ca{V}$ is symmetric then so is $\VMat$.
\end{prop}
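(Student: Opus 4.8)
The plan is to obtain this as an immediate specialization of the general transfer theorem \cref{monoidalhorizontalbicategory}, so that none of the monoidal bicategory coherence diagrams need be checked by hand.

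First, I would verify that the stated hypotheses on $\ca{V}$ are precisely those of \cref{VMatmonoidaldoublefibrant}: $\ca{V}$ is braided monoidal and its coproducts are preserved by $\otimes$ in each variable. That proposition then applies, yielding that $\VMMat$ is a fibrant monoidal double category --- fibrancy coming from the companions and conjoints $f_*,f^*$ of \cref{f*}, and the monoidal structure from the pseudo double functor $\otimes$ whose action on horizontal $1$-cells and $2$-morphisms is $\otimes_1$ of \cref{VMMat1monoidal,monoidalVMat}, with unit the matrix $\ca{I}$ on the one-point set. The braiding of $\ca{V}$ lifts to make $\VMMat$ a braided monoidal double category, symmetric whenever $\ca{V}$ is.

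Second, I would recall that by construction $\VMat = \ca{H}(\VMMat)$, its horizontal bicategory. Applying \cref{monoidalhorizontalbicategory} to the fibrant monoidal double category $\VMMat$ then gives at once that $\VMat$ is a monoidal bicategory, whose tensor is the induced pseudofunctor $\ca{H}(\otimes)\colon\VMat\times\VMat\to\VMat$ and whose unit is $\ca{I}$; and since $\VMMat$ is braided (resp. symmetric) precisely when $\ca{V}$ is, the same theorem endows $\VMat$ with a braiding (resp. symmetry).

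There is no genuine obstacle here: the potentially laborious part, namely establishing the coherence axioms of a (braided or symmetric) monoidal bicategory, is exactly what \cref{monoidalhorizontalbicategory} absorbs, reducing them to the far simpler data of the two ordinary monoidal categories $\VMMat_0 = \Set$ and $\VMMat_1$ already assembled in the discussion preceding \cref{VMatmonoidaldoublefibrant}. The result is therefore a direct corollary.
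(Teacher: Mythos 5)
Your proposal is correct and follows exactly the paper's own argument: the paper proves this proposition precisely by invoking \cref{VMatmonoidaldoublefibrant} to get that $\VMMat$ is a fibrant (braided/symmetric) monoidal double category and then applying \cref{monoidalhorizontalbicategory} to its horizontal bicategory $\VMat=\ca{H}(\VMMat)$, with the induced tensor $\ca{H}(\otimes)$ and unit $\ca{I}$ as you describe. There is nothing to add.
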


The monoidal unit is the unit  $\ca{V}$-matrix $\ca{I}$ and the induced tensor product pseudofunctor
$\otimes:\ca{V}\text{-}\Mat\times\ca{V}\text{-}\Mat\to\ca{V}\text{-}\Mat$
maps two sets $X,Y$ to their cartesian product
$X\times Y$, and the functor
\begin{displaymath}
\otimes_{(X,Y),(Z,W)}:\ca{V}\text{-}\Mat(X,Z)\times
\ca{V}\text{-}\Mat(Y,W)\to\ca{V}\text{-}\Mat(X\times Y, Z\times W),
\end{displaymath}
is defined as in \cref{VMMat1monoidal} for globular 2-morphisms.

When $\ca{V}$ is moreover monoidal closed with products, its locally closed monoidal structure $H=(H_0,H_1)$ \cref{HforVMMat}
induces a lax functor of bicategories
\begin{displaymath}
\Hom:(\ca{V}\textrm{-}\Mat)^{\textrm{co}}\times\ca{V}\textrm{-}\Mat\longrightarrow\ca{V}\textrm{-}\Mat
\end{displaymath}
where $\ca{V}$-$\Mat^\textrm{co}$ is the bicategory of $\ca{V}$-matrices with reversed 2-cells,
since $\ca{H}(\caa{D}^\op)=\left(\ca{H}(\caa{D})\right)^\textrm{co}$. On objects is given by exponentiation, 
and the functor on hom-categories, for all $(X,Y),(Z,W)$, is
\begin{equation}\label{Hom_}
\Hom_{(X,Y),(Z,W)}\colon\ca{V}\textrm{-}\Mat(X,Z)^\op\times\ca{V}\textrm{-}\Mat(Y,W)\to\ca{V}\textrm{-}\Mat(Y^X,W^Z)
\end{equation}
is given by $\Hom(S,T)=H_1(S,T)$ as in \cref{H1onobjects} on objects and 
$\Hom(\sigma,\tau)=H_1(\sigma,\tau)$ as in \cref{H1onarrows} on globular 2-morphisms, i.e. 2-cells.

The hom-categories of the bicategory $\VMat$ are the functor categories $\ca{V}$-$\B{Mat}(X,Y)=\ca{V}^{X\times Y}$.
The endo-hom-categories for a fixed set $X$ will play an important role; the following proposition exhibits
some useful properties.

\begin{prop}\label{propVMat}
Let $\ca{V}$ be a monoidal category with all colimits such that $\otimes$ preserves them on both entries. For any 0-cell $X$,
the hom-category $\ca{V}$-$\B{Mat}(X,X)=[X\times X,\ca{V}]$ is
\begin{enumerate}[(i)]
\item cocomplete and has all limits that exist in $\ca{V}$;
\item a monoidal category, and $\otimes=\circ$ preserves any colimit on both entries;
\item locally presentable when $\ca{V}$ is; 
\item monoidal closed when $\ca{V}$ is monoidal closed with products.
\end{enumerate}
\end{prop}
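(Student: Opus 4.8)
The plan is to exploit the fact that, since $X\times X$ is a discrete category, the endo-hom-category $\VMat(X,X)=[X\times X,\ca{V}]$ is nothing but the set-indexed product $\prod_{(x,x')\in X\times X}\ca{V}$ of copies of $\ca{V}$, so that limits and colimits are computed entrywise. Parts (i) and (iii) then reduce to standard facts: entrywise colimits exist because $\ca{V}$ is cocomplete, and a diagram has a limit in $[X\times X,\ca{V}]$ precisely when each of its entries has one in $\ca{V}$, which gives (i); for (iii), a functor category from a small category into a locally presentable category is again locally presentable \cite{LocallyPresentable}, and $X\times X$ is small, so $[X\times X,\ca{V}]$ is locally presentable whenever $\ca{V}$ is.

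For (ii), I would recall from \cref{monadsaremonoids} that every endo-hom-category of a bicategory carries the monoidal structure $S\otimes T=T\circ S$ with unit the horizontal identity $1_X$, where here $\circ$ is the matrix multiplication \cref{horizontalcompositionVmatrices}. To see that $\circ$ preserves colimits in each variable, I would compute entrywise: for a colimit in the first slot,
\begin{displaymath}
(T\circ\colim_i S_i)(x,z)=\sum_{y}\Big(\colim_i S_i(x,y)\Big)\otimes T(y,z)\cong\colim_i\sum_y S_i(x,y)\otimes T(y,z),
\end{displaymath}
using first that colimits in $[X\times X,\ca{V}]$ are pointwise, then that $-\otimes T(y,z)$ preserves colimits by hypothesis, and finally that the coproduct $\sum_y$ (itself a colimit) commutes with the colimit over $i$. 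The symmetric computation handles the second variable, so $\otimes=\circ$ is cocontinuous in both entries.

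Part (iv) is the crux, and here I would take care to distinguish this monoidal structure (matrix composition) from the external product $\otimes_1$ whose closedness was established in \cref{VMMat1closed}: the internal hom produced there lands in $\VMat(X^X,X^X)$ and so does not serve here. Instead I would exhibit an internal hom for $\circ$ directly, setting $[T,P](x,y)=\prod_{z}[T(y,z),P(x,z)]$ for endo-matrices $T,P$, which lies in $\VMat(X,X)$ since $\ca{V}$ has products. The adjunction $\VMat(X,X)(T\circ S,P)\cong\VMat(X,X)(S,[T,P])$ then unwinds entrywise into $\prod_{x,y,z}\ca{V}(S(x,y)\otimes T(y,z),P(x,z))$, to which the closedness $-\otimes A\dashv[A,-]$ of $\ca{V}$ and the universal property of products apply termwise. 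Equivalently---and more in keeping with the paper's methods---I could simply invoke \cref{Kellyadj}: by (iii) the category is locally presentable, hence cocomplete with a small dense subcategory, and by (ii) each functor $-\otimes T=T\circ(-)$ is cocontinuous, so it has a right adjoint, which is precisely the required internal hom.

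The main obstacle I anticipate is exactly in (iv): one must resist reusing \cref{VMMat1closed} (the wrong monoidal structure) and must keep track of the non-symmetry of matrix multiplication, so that the internal hom is genuinely the right adjoint of $T\circ(-)$ rather than of $(-)\circ T$. Once the correct variance and the explicit formula (or the adjoint-functor-theorem shortcut) are in place, the remaining verification is a routine entrywise manipulation using only monoidal closedness and products in $\ca{V}$.
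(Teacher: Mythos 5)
Your proposal is correct and matches the paper's proof essentially step for step: (i) pointwise (co)limits, (ii) the same interchange of $\sum_y$, $-\otimes T(y,z)$ and $\colim_j$, (iii) the same appeal to local presentability of functor categories \cite[1.54]{LocallyPresentable}, and (iv) the identical internal hom $[T,P](x,y)=\prod_{z}[T(y,z),P(x,z)]$ established entrywise by the tensor--hom adjunction and products in $\ca{V}$. Your extra care about variance in (iv) is well placed---your correspondence $\VMat(X,X)(T\circ S,P)\cong\VMat(X,X)(S,[T,P])$ is the index-consistent form of the paper's displayed bijection---and the alternative shortcut via \cref{Kellyadj}, using (ii) and (iii), is equally valid, though the paper does not take it.
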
 
\begin{proof}\hfill

$(i)$ They are formed pointwise from those in $\ca{V}$.

$(ii)$ $(\ca{V}\text{-}\Mat(X,X),\circ,1_X)$ is monoidal for any bicategory, \cref{monadsaremonoids}.

If $(G_j\to G\,|\,j\in\ca{J})$ is a colimiting cocone of shape $\ca{J}$ in $\ca{V}$-$\B{Mat}(X,X)$, for any $x,y\in X$
the arrows $G_j(x,y)\to G(x,y)$ form colimiting cocones in $\ca{V}$. If we apply $S\circ -$,
we obtain a collection of 2-cells $(S\circ G_j\to S\circ G\,|\,j\in\ca{J})$ in $\ca{V}$-$\Mat$.
For this to be a colimit, for any $x,z\in X$ the arrows
\begin{displaymath}
\sum_{y\in X}{G_j(x,y)\otimes S(y,z)}\longrightarrow
\sum_{y\in X}{{\mathrm{colim}_j}G_j(x,y)\otimes S(y,z)}
\end{displaymath}
must be colimiting in $\ca{V}$, which is the case since $(-\otimes A)$ preserves colimits:
\begin{align*}
\sum_{y\in X}{({\mathrm{colim}_j}G_j(x,y))\otimes S(y,z)}
&\cong\sum_{y\in X}{{\mathrm{colim}_j}(G_j(x,y)\otimes S(y,z))} \\
&\cong{\mathrm{colim}_j}(\sum_{y\in X}{G_j(x,y)\otimes S(y,z)}).
\end{align*}

$(iii)$ This follows from \cite[1.54]{LocallyPresentable}: any functor category $[\ca{A},\ca{V}]$ over a presentable category
$\ca{V}$ is also presentable.

$(iv)$ This is obtained by a restriction of \cref{H1onobjects} on globular 2-morphisms. It is not hard to establish a bijective
correspondence
\begin{displaymath}
\xymatrix @R=.02in{\qquad\quad S\circ T \ar[rr] && R\phantom{ABCDE} 
&\mathrm{in}\;\ca{V}\textrm{-}\B{Mat}(X,X)\\ 
\ar@{-}[rr] &&& \\  
\qquad\qquad S \ar[rr] && F(T,R)\phantom{ABC} 
& \mathrm{in}\;\ca{V}\textrm{-}\B{Mat}(X,X)} 
\end{displaymath}
for $G(T,R)(x,y):=\prod_{z\in X}{[T(y,z),R(x,z)]}$.
\end{proof}


\begin{rmk}
The endo-hom-categories of $\VMat$ have in fact a \emph{duoidal} structure, since they are equipped with a second, pointwise monoidal product
\begin{displaymath}
(S\bullet T)(x,y):=S(x,y)\otimes T(x,y),\quad J(x,y)=I,
\end{displaymath}
This point of view is discussed in \cite[\S 7]{Hopfcats}, without mentioning $\ca{V}$-matrices.
This fact is crucial for expressing the so-called \emph{semi-Hopf categories} (categories enriched in comonoids)
as bimonoids in $(\VMat(X,X),\circ,\bullet,1_X,J)$, and subsequently \emph{Hopf categories} as Hopf monoids
in that duoidal category. This approach is relevant to work in progress \cite{HopfcatsasHopfmonads}
regarding the expression of Hopf categories as \emph{Hopf monads}
in a double categorical context. Work in similar direction, establishing
an abstract context for various Hopf-structure generalized notions, can be found in \cite{BohmLack,Gabipolyads};
in fact, the above duoidal structure can be seen as a consequence of all sets being \emph{opmap monoidales} in the monoidal bicategory $\VMat$.
\end{rmk}


Due to the first three part of the above proposition, we obtain the following corollary to \cref{moncomonadm}.
\begin{cor}\label{cofreecomonVMat}
If $\ca{V}$ is a locally presentable monoidal category where $\otimes$ preserves colimits in both entries, the forgetful functors
\begin{gather*}
S:\Mon(\ca{V}\textrm{-}\Mat(X,X))\to\ca{V}\textrm{-}\Mat(X,X) \\
U:\Comon(\ca{V}\textrm{-}\Mat(X,X))\to\ca{V}\textrm{-}\Mat(X,X)
\end{gather*}
are monadic and comonadic respectively, and all categories are locally presentable.
\end{cor}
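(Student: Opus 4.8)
The plan is to apply Proposition~\ref{moncomonadm} directly to the monoidal category $\ca{V}\textrm{-}\Mat(X,X)$ in place of $\ca{V}$. First I would verify that $\ca{V}\textrm{-}\Mat(X,X)$ satisfies the hypotheses of that proposition, namely that it is locally presentable and that its monoidal product preserves filtered colimits in both variables. Both are immediate consequences of Proposition~\ref{propVMat}: clause $(iii)$ gives local presentability whenever $\ca{V}$ is locally presentable, while clause $(ii)$ states that the monoidal product $\otimes=\circ$ preserves all colimits in each entry, hence in particular the filtered ones required here.

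Once the hypotheses are in place, Proposition~\ref{moncomonadm} yields precisely the desired conclusions with $\ca{V}\textrm{-}\Mat(X,X)$ as base. Part $(1)$ gives that $\Mon(\ca{V}\textrm{-}\Mat(X,X))$ is finitary monadic---and in particular monadic---over $\ca{V}\textrm{-}\Mat(X,X)$ via the forgetful functor $S$, and is itself locally presentable; dually, part $(2)$ gives that $\Comon(\ca{V}\textrm{-}\Mat(X,X))$ is comonadic over $\ca{V}\textrm{-}\Mat(X,X)$ via $U$ and is locally presentable. Together with the local presentability of the base $\ca{V}\textrm{-}\Mat(X,X)$ itself from Proposition~\ref{propVMat}$(iii)$, this establishes that all three categories in question are locally presentable.

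The only point requiring a moment's care is matching conventions: the monoidal structure on $\ca{V}\textrm{-}\Mat(X,X)$ is the one induced by horizontal composition as in Remark~\ref{monadsaremonoids}, with unit the identity matrix $1_X$, so that monoids and comonoids therein are exactly the monads and comonads of the bicategory $\ca{V}\textrm{-}\Mat$. Since the hypothesis we need concerns only preservation of filtered colimits by the product, and $\circ$ preserves all colimits in both variables irrespective of the ordering convention $S\otimes T=T\circ S$, this causes no difficulty. There is no genuine obstacle here; the entire content lies in Propositions~\ref{moncomonadm} and~\ref{propVMat}, and the argument is a direct instantiation of the former at the monoidal category furnished by the latter.
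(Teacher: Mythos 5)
Your proof is correct and is exactly the paper's own argument: the corollary is stated there as an immediate consequence of applying Proposition~\ref{moncomonadm} to the monoidal category $\ca{V}\textrm{-}\Mat(X,X)$, whose hypotheses (local presentability and colimit-preserving tensor $\circ$) are supplied by Proposition~\ref{propVMat}. Your remark on the ordering convention $S\otimes T=T\circ S$ is a sensible sanity check, and as you note it is immaterial since preservation of (filtered) colimits in both variables is symmetric in the two entries.
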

Notice that $(\ca{V}\text{-}\Mat(X,X),\circ,1_X)$ is non-braided monoidal, therefore
the categories of monoids and comonoids cannot inherit a monoidal structure.

\subsection{\texorpdfstring{$\ca{V}$}{V}-graphs and \texorpdfstring{$\ca{V}$}{V}-categories}\label{Vgraphs}

In this section, we will describe $\ca{V}$-graphs and $\ca{V}$-categories within the context of $\ca{V}$-matrices.
This allows us, by dualizing certain arguments, to later construct the category of $\ca{V}$-cocategories in a natural way. This is motivated
by realizing enriched categories and cocategories as `many-object' generalizations of monoids and comonoids in a monoidal category:
a one-object $\ca{V}$-category is an object in $\Mon(\ca{V})$.
For enriched graphs or categories, usually there are no required assumptions on the monoidal base $\ca{V}$
as it is clear from their definition. In this matrices context though, we ask that $\ca{V}$ has coproducts
preserved by the tensor product.

A (small) $\ca{V}$-\emph{graph} $\ca{G}$ consists of a set of objects $\ob\ca{G}$, and for every pair of objects $x,y\in\ob\ca{G}$
an object $\ca{G}(x,y)\in\ca{V}$. If $\ca{G}$ and $\ca{H}$ are $\ca{V}$-graphs, a $\ca{V}$-\emph{graph morphism}
$F:\ca{G}\to\ca{H}$ consists of a function $f:\ob\ca{G}\to\ob\ca{H}$ between their sets of objects, together with
arrows $F_{x,y}:\ca{G}(x,y)\to\ca{H}(fx,fy)$ in $\ca{V}$, for each pair of objects $x,y$ in $\ca{G}$.
These data, with appropriate compositions and identities, form a category $\ca{V}$-$\B{Grph}$.
There is an evident forgetful functor $Q\colon\ca{V}\textrm{-}\B{Grph}\to\Set$ which maps a graph
to its set of objects and a graph morphism to its underlying function.

If $\VMMat$ is the double category of $\ca{V}$-matrices, it follows that the category of graphs is precisely that of its endomorphisms
as described in \cref{moncomondouble}, i.e. $\ca{V}\text{-}\MMat_1^\bullet=\ca{V}\text{-}\B{Grph}$.
Indeed, objects are endo-$\ca{V}$-matrices$\SelectTips{eu}{10}\xymatrix@C=.2in
{G:X\ar[r]|-{\object@{|}} & X}$given by families of objects $\{G(x,x')\}_X$ in $\ca{V}$, and morphisms between them are $\alpha_f:G_X\to H_Y$
as in \cref{endo2morphism}, given by arrows $\alpha_{x,x'}:G(x,x')\to H(fx,fx')$ in $\ca{V}$ by \cref{VMat2morphism}.

\begin{rmk}
This viewpoint is very similar to that of \cite[Remark 2.5]{Monadsindoublecats}, where it is observed that the category $\B{Grph}_\ca{E}$ of graphs
and graph morphisms internal to a finitely complete $\ca{E}$ is identified with the category of endomorphisms
and vertical endomorphism maps in the double category $\caa{S}\B{pan}_\ca{E}$, i.e. in our notation
$\caa{S}\B{pan}_\ca{E}^\bullet=\B{Grph}_\ca{E}$.
\end{rmk}

In fact, $\ca{V}$-graph morphisms can equivalently be seen as functions $f:X\to Y$ between the sets of objects, equipped with a 2-cell
\begin{displaymath}
\xymatrix @C=.6in
{X\ar @/^2ex/[r]|-{\object@{|}}^-{G}
\ar@/_2ex/[r]|-{\object@{|}}_-{f^*\circ H\circ f_*}
\rtwocell<\omit>{\;\phi} & X}
\end{displaymath}
in $\ca{V}$-$\Mat$, where $f_*$ and $f^*$ are as in \cref{f*}. This is clear by the following
corollary to \cref{D_1^.bifibred}, since $\VMMat$ is a fibrant double category. 

\begin{prop}\label{VGrphbifibr}
The category $\ca{V}$-$\B{Grph}$ is a bifibration over $\B{Set}$.
\end{prop}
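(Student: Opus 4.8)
The plan is to read off this statement as an immediate corollary of \cref{D_1^.bifibred}. Just above we identified the category of endomorphisms of the double category $\VMMat$ with the category of $\ca{V}$-graphs, $\ca{V}\text{-}\MMat_1^\bullet=\ca{V}\text{-}\B{Grph}$, and we have $\VMMat_0=\B{Set}$; under these identifications the forgetful functor $Q\colon\ca{V}\text{-}\B{Grph}\to\B{Set}$ is precisely the functor $\caa{D}_1^\bullet\to\caa{D}_0$ sending an endo-$1$-cell to its common source--target object and a $2$-morphism $\alpha_f$ to $f$. Since $\ca{V}$ has coproducts preserved by $\otimes$ on either side, \cref{VMatmonoidaldoublefibrant} guarantees that $\VMMat$ is fibrant, so every hypothesis of \cref{D_1^.bifibred} is met and $Q$ is at once a fibration and an opfibration.

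First I would invoke \cref{D_1^.bifibred} verbatim to conclude. It is nonetheless worth transcribing what the two reindexing pseudofunctors of \cref{pseudofunctorsdoublebifibration} become here, both as a sanity check on the abstract machinery and to record formulas used in the sequel. For a function $f\colon X\to Y$, the fibration pseudofunctor $\ps{M}$ sends $f$ to $f^*\circ-\circ f_*$; applying the matrix identities \cref{matrixmachine} twice shows that the reindexed graph on $X$ of a graph $H$ on $Y$ has hom-objects $(f^*\circ H\circ f_*)(x,x')\cong H(fx,fx')$, i.e.\ the expected \emph{pullback graph} along $f$. Dually, the opfibration pseudofunctor $\ps{F}$ sends $f$ to $f_*\circ-\circ f^*$, and a direct computation from \cref{horizontalcompositionVmatrices} and \cref{f*} gives the \emph{pushforward graph} on $Y$ of a graph $G$ on $X$ as the fibrewise coproduct $(f_*\circ G\circ f^*)(y,y')\cong\sum G(x,x')$ taken over $x\in f^{-1}(y)$, $x'\in f^{-1}(y')$.

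I do not expect any genuine obstacle. The one thing to verify is that the cartesian and cocartesian liftings \cref{cartliftframdebicat} of $(\Gr{s},\Gr{t})\colon\VMMat_1\to\B{Set}\times\B{Set}$ restrict to $\ca{V}\text{-}\MMat_1^\bullet$, but this is automatic because the source and target of an endo-$1$-cell coincide, so the lifting of a graph is again a graph; their identification with the concrete graph morphisms of $\ca{V}\text{-}\B{Grph}$ is exactly the Grothendieck isomorphism \cref{Grothiso} specialized to $\VMMat$. Alternatively, one may bypass the opfibration construction altogether: by \cref{rmkforadjointintexingbifr} it suffices to produce a left adjoint to each reindexing functor $f^*\circ-\circ f_*$, and \cref{compconjprops} supplies it as $f_*\circ-\circ f^*$, arising from the adjunction $f_*\dashv f^*$ in $\VMat$.
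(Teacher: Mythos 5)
Your proof is correct and takes essentially the same route as the paper: there the proposition is presented precisely as a corollary of \cref{D_1^.bifibred}, via the identification $\ca{V}\text{-}\MMat_1^\bullet=\ca{V}\text{-}\B{Grph}$ and the fibrancy of $\VMMat$ (\cref{VMatmonoidaldoublefibrant}), with the paper then recording the same two pseudofunctors \cref{Grphpseudofunctors} and the reindexing formulas \cref{machine1,machine2} that appear in your sanity checks. Your alternative closing argument via \cref{rmkforadjointintexingbifr} and the adjunction $f_*\dashv f^*$ also matches the remark the paper makes immediately after \cref{D_1^.bifibred}.
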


The pseudofunctors giving rise to the fibred and opfibred structure are precisely given by \cref{pseudofunctorsdoublebifibration}
in this case,
\begin{equation}\label{Grphpseudofunctors}
 \ps{M}:\xymatrix @R=.02in{\Set^\op\ar[r] & \B{Cat}, \\
X\ar @{|.>}[r]\ar[dd]_-f & \VMat(X,X) \\ \hole \\
Y\ar @{|.>}[r] & \VMat(Y,Y)\ar[uu]_-{f^*\circ\text{-}\circ f_*}}
\qquad \ps{F}:\xymatrix @R=.02in
{\Set\ar[r] & \B{Cat} \\
X\ar @{|.>}[r] \ar[dd]_-f & \VMat(X,X)\ar[dd]^-{f_*\circ\text{-}\circ f^*} \\\hole \\
Y\ar @{|.>}[r] & \VMat(Y,Y)}
\end{equation}
and the Grothendieck categories give the following isomorphic characterization of the category of $\ca{V}$-graphs.

\begin{lem}\label{charactVGrph}
The category $\ca{V}$-$\B{Grph}$ has objects $(G,X)\in\ca{V}\textrm{-}\B{Mat}(X,X)\times\B{Set}$
and arrows $(\phi,f):(G,X)\to(H,Y)$ or equivalently $(\psi,f)$ given by
\begin{displaymath}
\begin{cases}
\phi:G\to f^*Hf_* &\textrm{in }\ca{V}\textrm{-}\Mat(X,X)\\
f:X\to Y & \textrm{in }\B{Set}
\end{cases} \;\textrm{or}\;
\begin{cases}
\psi:f_*Gf^*\to H &\textrm{in }\ca{V}\textrm{-}\Mat(Y,Y)\\
f:X\to Y & \textrm{in }\B{Set}.
\end{cases}
\end{displaymath}
\end{lem}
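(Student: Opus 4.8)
The plan is to obtain this characterization as the specialization of \cref{D_1^.bifibred} to the fibrant double category $\VMMat$, under the identification $\ca{V}\text{-}\MMat_1^\bullet=\ca{V}\text{-}\B{Grph}$ recorded above. Since \cref{VGrphbifibr} already supplies the pseudofunctors $\ps{M}$ and $\ps{F}$ of \cref{Grphpseudofunctors} whose Grothendieck constructions are isomorphic to $\ca{V}\text{-}\B{Grph}$, the two asserted descriptions of objects and arrows are simply the explicit unpacking of $\Gr{G}\ps{M}$ and $\Gr{G}\ps{F}$, following the general recipe for the Grothendieck category recalled before \cref{Grothendieckcorrespondence}.

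First I would read off the first description from $\ps{M}\colon\Set^\op\to\B{Cat}$: its Grothendieck category has objects the pairs $(G,X)$ with $G\in\ps{M}X=\VMat(X,X)$, and morphisms $(G,X)\to(H,Y)$ the pairs $(\phi,f)$ consisting of $f\colon X\to Y$ in $\Set$ together with $\phi\colon G\to\ps{M}f(H)=f^*\circ H\circ f_*$ in $\VMat(X,X)$, which is precisely the first clause. To confirm that this recovers the elementary notion of a $\ca{V}$-graph morphism, I would invoke the matrix machinery \cref{matrixmachine} together with the description \cref{f*} of $f_*$ and $f^*$ to compute $(f^*\circ H\circ f_*)(x,x')\cong H(fx,fx')$, naturally in $x,x'$; a globular $\phi$ then amounts exactly to a family $\phi_{x,x'}\colon G(x,x')\to H(fx,fx')$ of arrows in $\ca{V}$, i.e. the classical data $F_{x,x'}$.

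Dually, the second description is the unpacking of $\Gr{G}\ps{F}$ for $\ps{F}\colon\Set\to\B{Cat}$, whose morphisms are the pairs $(\psi,f)$ with $\psi\colon\ps{F}f(G)=f_*\circ G\circ f^*\to H$ in $\VMat(Y,Y)$. The passage between the two forms $(\phi,f)\leftrightarrow(\psi,f)$ is then the reindexing adjunction between the opfibred and fibred reindexing functors $(f_*\circ\text{-}\circ f^*)\dashv(f^*\circ\text{-}\circ f_*)$ noted right after \cref{D_1^.bifibred}, itself an instance of \cref{rmkforadjointintexingbifr} combined with the companion--conjoint adjunction $f_*\dashv f^*$ of \cref{compconjprops}; concretely this is the mates correspondence \cref{matesforcompconj} with both source and target taken equal to $f$.

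The argument is essentially bookkeeping, so the only point that warrants attention is that the two presentations coincide not merely on hom-sets but as categories, that is, that the bijection $\phi\leftrightarrow\psi$ respects composition and identities. This is automatic, however, being exactly the assertion that the fibration $\Gr{G}\ps{M}$ and the opfibration $\Gr{G}\ps{F}$ have the same total category $\ca{V}\text{-}\B{Grph}$, already guaranteed by \cref{VGrphbifibr}. I therefore expect no genuine obstacle beyond transcribing the generic bifibration data into $\ca{V}$-matrix notation.
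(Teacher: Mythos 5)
Your proposal is correct and follows essentially the same route as the paper: the lemma is obtained there as the specialization of \cref{D_1^.bifibred} to $\VMMat$ via \cref{VGrphbifibr}, with the Grothendieck categories of $\ps{M}$ and $\ps{F}$ from \cref{Grphpseudofunctors} unpacked explicitly, and the component computations \cref{machine1} and \cref{machine2} playing exactly the role of your $(f^*\circ H\circ f_*)(x,x')\cong H(fx,fx')$ verification, with $\phi\leftrightarrow\psi$ related by mates under $f_*\dashv f^*$.
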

To see how this works, using \cref{matrixmachine} we can explicitly compute the composite
\begin{equation}\label{machine1}
(f^*Hf_*)_{x,x'} = 
I\otimes(Hf_*)_{x,fx'}
= I\otimes H_{fx,fx'}\otimes I
\end{equation}
hence $\phi$ has components $\phi_{x,x'}:G_{x,x'}\to I\otimes H_{fx,fx'}\otimes I\cong H_{fx,fx'}$.
Similarly,
\begin{equation}\label{machine2}
(f_*Gf^*)_{y,y'}=\sum_{fx=y,fx'=y'}{I\otimes G_{x,x'}\otimes I}
\cong\sum_{fx=y,fx'=y'}{G_{x,x'}}
\end{equation}
so $\psi_{y,y'}:\sum_{\scriptscriptstyle{\stackrel{fx=y}{fx'=y'}}}{I\otimes G_{x,x'}\otimes I}\to H_{y,y'}$
which, for fixed $x\in f^{-1}(y)$ and $x'\in f^{-1}(y')$ corresponds uniquely to $\phi_{x,x'}$.

Notice that the above lemma is completely in terms of the bicategory $\ca{V}$-$\Mat$; moving between this and
the double $\VMMat_1^\bullet$ perspective will be efficient in the proofs that follow.

When $\ca{V}$ is braided, $\VMMat$ is monoidal double by \cref{VMatmonoidaldoublefibrant} thus
its category of endomorphisms is also monoidal by \cref{DendoMonDComonDmonoidal}: the tensor product is given
like in \cref{monoidalVMat} for endo-1-cells and the monoidal unit is the unit $\ca{V}$-matrix$\matr{I}{\{*\}}{\{*\}}$.
Braiding or symmetry is also inherited from $\ca{V}$.

Recall from \cref{VMatlocallymoidalclosed} that when $\ca{V}$ is furthermore closed with products,
the double category of $\ca{V}$-matrices is locally closed monoidal. Hence by \cref{Dendoclosed}, we deduce
that $\VMMat^\bullet_1=\ca{V}\textrm{-}\B{Grph}$ is also monoidal closed.

\begin{prop}\label{VGrphclosed}
Suppose $\ca{V}$ is a braided monoidal closed category with products and coproducts. The restriction of
\cref{H1functor} on the endomorphism category
\begin{displaymath}
H_1^\bullet\colon\ca{V}\textrm{-}\B{Grph}^\mathrm{op}\times
\ca{V}\textrm{-}\B{Grph}\to\ca{V}\textrm{-}\B{Grph}
\end{displaymath}
mapping $(G_X$, $H_Y)$ to the graph $H_1^\bullet(G,H)_{Y^X}$ given by
$H_1^\bullet(G,H)(s,k):=\prod_{x,x'}[G(x,x'),H(sx,kx')]$
for $s,k\in Y^X$ is the internal hom of $\ca{V}$-$\B{Grph}$. 
\end{prop}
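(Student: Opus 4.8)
The plan is to obtain the statement as an instance of the general machinery of \cref{Dendoclosed} and then to read off the explicit formula by specializing \cref{H1onobjects}. First I would recall two facts already established: that $\ca{V}$-$\B{Grph}=\VMMat_1^\bullet$, the category of endomorphisms of the double category of $\ca{V}$-matrices, and that under the present hypotheses the double category $\VMMat$ is locally closed monoidal with internal hom $H=(H_0,H_1)$, by \cref{VMatlocallymoidalclosed}. Since $\ca{V}$ is braided, $\VMMat_1^\bullet$ also inherits the monoidal structure $\otimes_1^\bullet$ from $\VMMat_1$ by \cref{DendoMonDComonDmonoidal}. With this in hand, \cref{Dendoclosed} applies directly to $\caa{D}=\VMMat$: the lax double functor $H$ induces, through \cref{F1bullet}, a functor $H_1^\bullet\colon(\VMMat_1^\bullet)^\op\times\VMMat_1^\bullet\to\VMMat_1^\bullet$, and the parametrized adjunction $-\otimes_1 T\dashv H_1(T,-)$ of \cref{VMMat1closed} descends to an adjunction $\otimes_1^\bullet\dashv H_1^\bullet$ on endomorphisms. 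This already exhibits $H_1^\bullet$ as the internal hom of $\ca{V}$-$\B{Grph}$, so the remaining task is purely to identify its action on objects.

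For that I would specialize \cref{H1onobjects}. In general that recipe sends a $\ca{V}$-matrix $S$ from $X$ to $Y$ and a $\ca{V}$-matrix $T$ from $Z$ to $W$ to a $\ca{V}$-matrix $H_1(S,T)$ from $Z^X$ to $W^Y$; taking $S=G$ an endo-$\ca{V}$-matrix on $X$ and $T=H$ an endo-$\ca{V}$-matrix on $Y$ forces its source and target sets to coincide, so $H_1^\bullet(G,H)$ is the endo-$\ca{V}$-matrix on $Y^X$ whose value at $s,k\in Y^X$ is
\begin{displaymath}
H_1^\bullet(G,H)(s,k)=\prod_{x,x'}[G(x,x'),H(s(x),k(x'))],
\end{displaymath}
where $(x,x')$ now ranges over the source--target pairs of $G$. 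This is precisely the claimed formula, and the inherited tensor $\otimes_1^\bullet$ is the pointwise tensor of $\ca{V}$ threaded through the cartesian products of object sets, so the displayed adjunction is the familiar tensor--hom adjunction of $\ca{V}$ assembled over $\B{Set}$.

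The one point that deserves care, and which is the real content behind the appeal to \cref{Dendoclosed}, is that the defining bijection of \cref{VMMat1closed} genuinely restricts to the \emph{non-full} subcategory of endomorphisms; that is, it must carry a $2$-morphism whose source and target vertical $1$-cells coincide to another of the same kind. I would verify this directly from the transpose description in the proof of \cref{VMMat1closed}: a $2$-morphism $\alpha\colon S\otimes_1 T\Rightarrow P$ with source $f$ and target $g$ corresponds to $\beta\colon S\Rightarrow H_1(T,P)$ whose source and target are the transposes of $f$ and $g$ across the $\Set$-exponential, and since transposition is a bijection one has $f=g$ if and only if these transposes agree. Hence $\alpha$ lies in $\VMMat_1^\bullet$ exactly when $\beta$ does, which is what lets the monoidal closed structure of $\VMMat_1$ descend to $\ca{V}$-$\B{Grph}$. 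This is mild bookkeeping rather than a genuine obstacle, so the proposition is in effect a corollary of the framework of \cref{locallyclosedmonoidaldoublecats}.
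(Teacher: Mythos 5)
Your proposal is correct and follows essentially the same route as the paper: the paper also obtains this proposition as a direct corollary of \cref{Dendoclosed} applied to $\caa{D}=\VMMat$ (using \cref{VMatlocallymoidalclosed} and \cref{DendoMonDComonDmonoidal}), with the explicit formula read off from \cref{H1onobjects}. Your closing check that the adjunction bijection of \cref{VMMat1closed} restricts along the non-full inclusion of endomorphisms—because transposition across the $\Set$-exponential is a bijection, so source equals target on one side iff it does on the other—is exactly the point the paper compresses into the phrase ``considered only on endo-1-cells and endo-2-morphisms'' in the proof of \cref{Dendoclosed}.
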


Following \cite{KellyLack,Wolff} or the more general case of bicategory enrichment \cite{VarThrEnr}, we gather some of
the main categorical properties of $\ca{V}$-$\B{Grph}$.

\begin{prop}\label{Vgraphprops}\hfill
\begin{enumerate}
\item $\ca{V}$-$\B{Grph}$ is complete when $\ca{V}$ is;
\item $\ca{V}$-$\B{Grph}$ is cocomplete when $\ca{V}$ is;
\item \cite[4.4]{KellyLack} $\ca{V}$-$\B{Grph}$ is locally presentable when $\ca{V}$ is.
\end{enumerate}
\end{prop}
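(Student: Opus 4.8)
The plan is to deduce all three clauses from the bifibration $Q\colon\VGrph\to\Set$ of \cref{VGrphbifibr}, transferring the (co)completeness and local presentability of the fibres $\VMat(X,X)$ recorded in \cref{propVMat} along the standard fibred limit/colimit theorems of \cref{fibrations}. The crucial structural input is that the two reindexing functors of \cref{Grphpseudofunctors}, namely $\ps{M}f=f^*\circ\text{-}\circ f_*$ and $\ps{F}f=f_*\circ\text{-}\circ f^*$, are adjoint to one another: since $f_*\dashv f^*$ in $\VMat$ (see \cref{fetafepsilon}), composing the induced pre- and post-composition adjunctions yields $\ps{F}f\dashv\ps{M}f$, so that $\ps{M}f$ preserves all limits and $\ps{F}f$ preserves all colimits. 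This is precisely the hypothesis demanded below.

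\emph{Completeness.} I would invoke the classical result (e.g. \cite{Handbook2,Jacobs}) that the total category of a fibration is complete whenever the base is complete, every fibre is complete, and every reindexing functor preserves limits. Here $\Set$ is complete; each fibre $\VMat(X,X)=[X\times X,\ca{V}]$ has all limits that exist in $\ca{V}$ by \cref{propVMat}(i), hence is complete when $\ca{V}$ is; and the fibration reindexing $\ps{M}f$ preserves limits as a right adjoint. Concretely this reproduces the expected description of a limit of $\ca{V}$-graphs as having object-set the limit of the underlying object-sets and hom-objects assembled fibrewise after reindexing, but I would not spell this out.

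\emph{Cocompleteness.} Dually, I would use the opfibration $Q$ with its pseudofunctor $\ps{F}\colon\Set\to\B{Cat}$ and the dual transfer theorem: the total category of an opfibration is cocomplete when the base is cocomplete, the fibres are cocomplete, and reindexing preserves colimits. The base $\Set$ is cocomplete; each fibre is cocomplete when $\ca{V}$ is, colimits in $[X\times X,\ca{V}]$ being computed pointwise; and $\ps{F}f$ preserves colimits as a left adjoint. Thus both (1) and (2) reduce the statement entirely to facts already proved about $\VMat(X,X)$.

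\emph{Local presentability.} For (3) I would cite \cite[4.4]{KellyLack} directly. Alternatively, one can aim to recover it from the same bifibrational picture: $\Set$ is locally presentable, each fibre $\VMat(X,X)$ is locally presentable by \cref{propVMat}(iii), and $\VGrph$ is already cocomplete by (2), so only accessibility remains, which would follow from a Grothendieck-construction theorem for accessible pseudofunctors once one verifies that the reindexing functors $\ps{M}f$ (equivalently $\ps{F}f$) are accessible. The main obstacle sits exactly here: whereas (1) and (2) are essentially immediate given the adjunction between reindexings, securing the accessibility of these functors uniformly across $\Set$ — i.e. a common rank at which filtered colimits are preserved — is the delicate point, which is why appealing to \cite{KellyLack} is the cleaner route.
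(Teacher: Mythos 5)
Your proposal is correct, but it reaches the result by a genuinely different route than the paper. The paper proves part (1) with a one-line remark that limits of graphs are built pointwise from limits in $\Set$ and $\ca{V}$, and proves part (2) by an explicit construction: form the colimit $(\tau_j\colon X_j\to X)$ of the object sets in $\Set$, transport each graph into the common fibre as $(\tau_j)_*G_j(\tau_j)^*$ (the diagram $K$ of \cref{defKdiagram}, with arrows \cref{Konarrows}), take the pointwise colimit there, and verify by hand that the assembled cocone is colimiting --- which is exactly the computational content of the opfibred transfer theorem you cite, unwound explicitly. Your version packages both parts into the standard fibred (co)completeness theorems applied to the bifibration of \cref{VGrphbifibr}, with continuity/cocontinuity of the reindexing functors of \cref{Grphpseudofunctors} secured by the adjunction $\ps{F}f\dashv\ps{M}f$; the paper records this adjunction only in passing (right after \cref{D_1^.bifibred}) and never exploits it here. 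What your approach buys is brevity and generality: the same argument applies verbatim to $\caa{D}_1^\bullet\to\caa{D}_0$ for any fibrant double category with suitably (co)complete fibres, in the spirit of the rest of the paper. What the paper's hands-on construction buys is reusability downstream: the proofs of \cref{VCocatcocomplete} and \cref{VCocatlocpresent} lean directly on the concrete cocone and the diagram \cref{defKdiagram} built in this proof, data that a purely abstract transfer argument does not hand you. For part (3) both you and the paper ultimately cite \cite[4.4]{KellyLack} (the paper adds a sketch of the strong-generator argument), and you correctly identify why the alternative bifibrational route is not cleaner --- uniform accessibility of the reindexing functors over all of $\Set$ is the delicate point --- so deferring to the citation is the right call.
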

\begin{proof}
(1) The constructions of limits of enriched graphs are built up from limits in $\Set$ and $\ca{V}$ in a straightforward way.

(2) Suppose $F$ is a diagram of shape $\ca{J}$ in $\ca{V}$-$\B{Grph}$
\begin{equation}\label{diagraminVgraph}
 F:\xymatrix @R=.01in @C=.5in
{\ca{J}\ar[r] & \ca{V}\textrm{-}\B{Grph} \\
j\ar@{.>}[r]
\ar[dd]_-\theta & 
(G_j,X_j)\ar[dd]^-{(\psi_\theta,f_\theta)} \\
\hole \\
k\ar@{.>}[r] & (G_k,X_k).}
\end{equation}
By \cref{charactVGrph}, $f_\theta$ is a function between the sets and
$(f_\theta)_*G_j(f_\theta)^*\stackrel{\psi_\theta}{\Rightarrow}G_k$ is a 2-cell in $\ca{V}$-$\Mat$.
The composite $\ca{J}\to\ca{V}\text{-}\B{Grph}\to\B{Set}$ has a colimiting cocone $(\tau_j:X_j\to X\,|\,j\in\ca{J})$ in $\Set$;
since $\tau_j=f_\theta\tau_k$ for any $f_\theta:X_j\to X_k$, we have isomorphisms of $\ca{V}$-matrices 
\begin{displaymath}
 \xymatrix
{X_j\ar[rr]|-{\object@{|}}^-{(\tau_j)_*}
_-{\stackrel{\stackrel{\phantom{A}}{\zeta}}{\cong}}
\ar[dr]|-{\object@{|}}_-{(f_\theta)_*} && X, \\
& X_k\ar[ur]|-{\object@{|}}_-{(\tau_k)_*} &}\qquad
\xymatrix
{X\ar[rr]|-{\object@{|}}^-{(\tau_j)^*}
_-{\stackrel{\stackrel{\phantom{A}}{\xi}}{\cong}}
\ar[dr]|-{\object@{|}}_-{(\tau_k)^*} && X_j \\
& X_k\ar[ur]|-{\object@{|}}_-{(f_\theta)^*} &}
\end{displaymath}
where $\zeta$ and $\xi$ are defined as in \cref{zeta}. Now consider the functor 
\begin{equation}\label{defKdiagram}
K:\xymatrix @C=.6in @R=.02in
{\ca{J}\ar[r]
& \ca{V}\textrm{-}
\B{Mat}(X,X)\qquad\qquad\qquad\qquad\quad \\
j\ar @{|.>}[r] \ar[dd]_-{\theta}& 
(\tau_j)_*G_j(\tau_j)^*
{\scriptstyle{\cong}}(\tau_k)_*(f_\theta)_*G_j
(f_\theta)^*(\tau_k)^*
\ar@<-14ex>
[dd]^-{(\tau_k)_*\psi_\theta(\tau_k)^*}\\
\hole \\
k\ar@{.>}[r] & 
(\tau_k)_*G_k(\tau_k)^*
\phantom{\;\cong(\tau_k)_*(f_\theta)_*G_j
(\tau_k)^*(f_\theta)^*}}
\end{equation}
which explicitly maps an arrow $\theta:j\to k$ in $\ca{J}$ to the composite 2-cell
\begin{equation}\label{Konarrows}
\xymatrix @R=.4in @C=.8in
{X\ar@/_3.5ex/[dr]|-{\object@{|}}_-{(\tau_k)^*}
\drtwocell<\omit>{'\stackrel{\xi}{\cong}}
\ar[r]|-{\object@{|}}^-{(\tau_j)^*} &
 X_j\ar[r]|-{\object@{|}}
^-{G_j} & X_j \ar[d]|-{\object@{|}}
^-{(f_\theta)_*}\ar[r]|-{\object@{|}}^-
{(\tau_j)_*} & X.
\dltwocell<\omit>{'\stackrel{\zeta}{\cong}}\\
 & X_k\ar[u]|-{\object@{|}}
^-{(f_\theta)^*}\ar[r]|-{\object@{|}}_-{G_k}
\rtwocell<\omit>{<-4>\;\psi_\theta} & 
X_k\ar@/_3.5ex/[ur]|-{\object@{|}}_-{(\tau_k)_*} &}
\end{equation}
The colimit of $K$ is formed pointwise in $[X\times X,\ca{V}]$, so there is a colimiting cocone 
$(\lambda_j:(\tau_j)_*G_j(\tau_j)^*\to G\,|\,j\in\ca{J})$. These data allow us to form a new cocone 
\begin{displaymath}
\big((G_j,X_j)\xrightarrow{(\lambda_j,\tau_j)}
(G,X)\,|\,j\in\ca{J}\big)
\end{displaymath}
for the initial diagram $F$ in $\ca{V}$-$\B{Grph}$, since the pairs $(\lambda_j,\tau_j)$ commute accordingly with the $(\psi_\theta,f_\theta)$'s; 
this cocone can be checked to be colimiting, since $\tau_j$ and $\lambda_j$ are.

(3) Briefly, if $\ca{V}$ is a locally $\lambda$-presentable category and the set $\ps{G}$ of objects constitutes
a strong generator of $\ca{V}$, it can be shown that the set
\begin{displaymath}
 \{(\bar{G},2)\;/\;G\in\ps{G}\;\textrm{or}\;G=0\}
\end{displaymath}
constitutes a strong generator of $\ca{V}$-$\B{Grph}$, where the graph $(\bar{G},2)$ has as set of objects
$2=\{0,1\}$ and is given by $\{\bar{G}(0,0)=G,\bar{G}(0,1)=\bar{G}(1,0)=\bar{G}(1,1)=0\}$ in $\ca{V}$. Also, this set is $\lambda$-presentable
in that the functors $\ca{V}\textrm{-}\B{Grph}((\bar{G},2),-):\ca{V}\textrm{-}\B{Grph}\to\B{Set}$ preserve $\lambda$-filtered colimits.
\end{proof}

Passing on to $\VCat$, again following the more general \cite{VarThrEnr}, a $\ca{V}$-category is defined to be a monad in
the bicategory $\ca{V}$-$\B{Mat}$. Unravelling \cref{monadbicat}, it consists 
of a set $X$ together with an endoarrow$\SelectTips{eu}{10}\xymatrix @C=.2in{A:X\ar[r]|-{\object@{|}} & X}$ (i.e. a $\ca{V}$-graph $A_X$)
equipped with two 2-cells, the multiplication and the unit
\begin{displaymath}
\xymatrix @R=.1in @C=.4in
{& X \ar[dr]|-{\object@{|}}
^-{A} &\\
X\ar[ru]|-{\object@{|}}^-A
\ar @/_/[rr]|-{\object@{|}}_-A
\rrtwocell<\omit>{<-1.3>\;M} && X}
\quad
\xymatrix @C=.3in @R=.1in
{\hole \\ \textrm{and} }
\quad
\xymatrix @C=.3in @R=.1in
{\hole \\
X \rrtwocell<\omit>{\eta}
\ar @/^2.2ex/ [rr]|-{\object@{|}}^-{1_X}
\ar @/_2.2ex/ [rr]|-{\object@{|}}_-A && X}
\end{displaymath}
satisfying the following axioms:
\begin{displaymath}
\xymatrix @R=.2in
{& X\ar[r]|-{\object@{|}}^-A &
X\ar @/^/[dr]|-{\object@{|}}^-A & \\
X\ar @/^/[ur]|-{\object@{|}}^-A 
\ar @/_2ex/[urr]|-{\object@{|}}_-A
\ar @/_3ex/[rrr]|-{\object@{|}}_-A
\urrtwocell<\omit>{<-.3>\;M}
&\urrtwocell<\omit>{<1.3>\;M} && X}
\quad\xymatrix @R=.2in
{\hole\\
=}\quad
\xymatrix @R=.2in
{\drrtwocell<\omit>{<1.3>\;M} & 
X\ar[r]|-{\object@{|}}^-A 
\ar @/_2ex/[drr]|-{\object@{|}}_-A 
\drrtwocell<\omit>{<-.3>\;M} &
X\ar @/^/[dr]|-{\object@{|}}^-A & \\
X\ar @/^/[ur]|-{\object@{|}}^-A 
\ar @/_3ex/[rrr]|-{\object@{|}}_-A
&&& X,}
\end{displaymath}
\begin{displaymath}
\xymatrix @C=.5in @R=.2in
{& X \ar @/^/[dr]|-{\object@{|}}^-A &\\
X\urrtwocell<\omit>{<1.3>\;M}
\urtwocell<\omit>{\eta}
\ar @/^2ex/[ur]|-{\object@{|}}^-{1_X}
\ar @/_2ex/[ur]|-{\object@{|}}_-A 
\ar @/_2ex/[rr]|-{\object@{|}}_-A
&& X}
\xymatrix @C=.5in @R=.2in
{\hole \\
=}
\xymatrix @C=.5in @R=.2in
{\hole\\
X\rtwocell<\omit>{\;1_A}
\ar @/^2.3ex/[r]|-{\object@{|}}^-A
\ar @/_2.3ex/[r]|-{\object@{|}}_A 
& X}
\xymatrix @C=.5in @R=.2in
{\hole \\
=}
\xymatrix @C=.4in @R=.2in
{\drrtwocell<\omit>{<1.3>\;M}
& X \ar @/^2ex/[dr]|-{\object@{|}}^-{1_X} 
\ar @/_2ex/[dr]|-{\object@{|}}_-A 
\drtwocell<\omit>{\eta} &\\
X \ar @/^/[ur]|-{\object@{|}}^-A
\ar @/_2ex/[rr]|-{\object@{|}}_-A
&& X.}
\end{displaymath}
In terms of components, they are given by
\begin{displaymath}
M_{x,z}\colon\sum_{y\in X}{A(x,y)\otimes A(y,z)}\to A(x,z)\quad\mathrm{and}\quad\eta_x\colon I\to A(x,x)
\end{displaymath}
which are the usual composition law and identity elements.
The relations that $M$ and $\eta$ have to satisfy give the usual associativity and unit axioms. By \cref{monadsaremonoids}, a monad
in a bicategory is the same as a monoid in the appropriate endoarrow hom-category, \emph{i.e.} a $\ca{V}$-category
$A$ with set of objects $X$ is a monoid in the monoidal category ($\ca{V}$-$\Mat(X,X)$,$\circ$,$1_X$).

A $\ca{V}$-functor $F:\ca{A}\to\ca{B}$ between two $\ca{V}$-categories $A_X$ and $B_Y$ is as usual defined as a morphism
of graphs $\alpha_f:A_X\to B_Y$ which respects the composition law and the identities.
Naturally, one could ask whether this corresponds to the notion of a monad morphism; as will be clear by what follows, this is not the case,
see \cref{Vfunct=monadopfunct}.

In fact, the category $\ca{V}$-$\B{Cat}$ is fully encompassed as the category of monads $\Mnd(\VMMat)$ of \cref{Monadindoublecat}
for the double category of $\ca{V}$-matrices. Indeed, objects are monads$\SelectTips{eu}{10}\xymatrix@C=.2in
{A:X\ar[r]|-{\object@{|}} & X}$in its horizontal
bicategory, and morphisms are arrows between the underlying graphs
that respect the structure: writing down what diagrams \cref{monadhom} give in components for $\caa{D}=\VMMat$, we end up to the usual axioms
\begin{equation}\label{Vfunctaxioms}
\xymatrix @R=.5in @C=.5in
{A(x,y)\otimes A(y,z)\ar[r]^-{M^A_{x,y,z}}\ar[d]_-{\alpha_{x,y}\otimes\alpha_{y,z}} & A(x,z)\ar[d]^-{\alpha_{x,z}}\\
B(fx,fy)\otimes B(fy,fz)\ar[r]_-{M^B_{fx,fy,fz}} & B(fx,fz),}\qquad
\xymatrix @R=.5in @C=.5in
{I\ar[r]^-{\eta_x}\ar[dr]_-{\eta_{fx}} & A(x,x)\ar[d]^-{\alpha_{xx}}\\
& B(fx,fx)}
\end{equation}

Due to the fibrant structure of $\VMMat$, we obtain the following as a corollary to
\cref{MonComonfibred}.

\begin{prop}\label{VCatfibred}
The category $\ca{V}$-$\B{Cat}$ is a fibration over $\B{Set}$.
\end{prop}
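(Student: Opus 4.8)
The plan is to read off the statement as a direct corollary of the general result \cref{MonComonfibred}, so the main task is simply to check that its hypotheses are already in force. First I would recall that \cref{VMatmonoidaldoublefibrant} establishes $\VMMat$ as a fibrant double category, and that the discussion preceding the proposition identifies $\ca{V}$-$\B{Cat}$ with the category of monads $\Mnd(\VMMat)$: its objects are the monads in the horizontal bicategory $\ca{V}\text{-}\Mat$, and its morphisms are the $\ca{V}$-functors cut out by the axioms \cref{Vfunctaxioms}. Since the vertical category is $\VMMat_0=\B{Set}$, applying \cref{MonComonfibred} with $\caa{D}=\VMMat$ yields at once that $\Mnd(\VMMat)=\ca{V}$-$\B{Cat}$ is fibred over $\B{Set}$.

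Next I would spell out the resulting fibration concretely, since the abstract description specializes very cleanly here. The relevant pseudofunctor $\B{Set}^\op\to\B{Cat}$ sends a set $X$ to the category $\Mon(\ca{V}\text{-}\Mat(X,X),\circ,1_X)$ of $\ca{V}$-categories on $X$ (using the monoid viewpoint of \cref{doublemonadsaremonoids}) and a function $f\colon X\to Y$ to the reindexing functor $f^*\circ\text{-}\circ f_*$, which is lax monoidal by \cref{pseudofunctorsrestrict}. Using the computation \cref{machine1}, the cartesian lifting of a $\ca{V}$-category $B_Y$ along $f$ is the $\ca{V}$-category $f^*B$ on $X$ with hom-objects $(f^*B)(x,x')\cong B(fx,fx')$, equipped with the composition and units inherited from $B$; the accompanying cartesian $\ca{V}$-functor is $f$ on objects together with these structure isomorphisms. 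This is precisely the classical restriction (change-of-object-set) construction for enriched categories, now produced automatically by the double-categorical machinery.

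I expect essentially no obstacle here: the substantive work has already been carried out in establishing \cref{MonComonfibred} and the fibrancy of $\VMMat$. The only verification worth isolating, and it is routine, is that the abstract cartesian lifting of \cref{cocartliftComonD}, obtained by restricting the endomorphism bifibration to monads, matches the explicit $\ca{V}$-matrix formulas above; this follows immediately from the identification of companions and conjoints with $f_*$ and $f^*$ in \cref{f*} together with the matrix calculations \cref{matrixmachine}.
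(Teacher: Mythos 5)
Your proposal is correct and takes essentially the same route as the paper: the paper also obtains the statement as an immediate corollary of \cref{MonComonfibred} applied to the fibrant double category $\VMMat$, using the identification $\VCat=\Mnd(\VMMat)$ established just before the proposition. Your concrete unpacking — the pseudofunctor $X\mapsto\Mon(\ca{V}\textrm{-}\Mat(X,X))$, $f\mapsto f^*\circ\textrm{-}\circ f_*$, well-defined by \cref{pseudofunctorsrestrict}, with cartesian liftings given by $f^*Bf_*$ and \cref{machine1} — is exactly the paper's follow-up discussion in \cref{B*monoid} and \cref{charactVCat}.
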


The pseudofunctor that gives rise to this fibration is $\ps{M}$ from \cref{Grphpseudofunctors},
restricted between the categories of monoids of the endo-hom-categories. For this to be well-defined, we note
the following corollary to \cref{pseudofunctorsrestrict} for $\caa{D}=\VMMat$.
\begin{cor}\label{B*monoid}
Let $B_Y$ be a $\ca{V}$-category. For any function $f:X\to Y$,
\begin{displaymath}
\SelectTips{eu}{10}\xymatrix @C=.3in
{X\ar[r]|-{\object@{|}}^-{f_*} & Y\ar[r]|-{\object@{|}}^-B & Y\ar[r]|-{\object@{|}}^-{f^*} & X}
\end{displaymath}
is a monoid in $\ca{V}$-$\B{Mat}(X,X)$, \emph{i.e.} $(f^*Bf_*)_X$ is a $\ca{V}$-category.
\end{cor}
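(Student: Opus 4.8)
The plan is to exhibit $(f^*Bf_*)_X$ as the image of the monoid $B$ under a lax monoidal functor, whence it is automatically a monoid and therefore a $\ca{V}$-category by the identifications already set up. Recall first that, as established just above, a $\ca{V}$-category with object set $Y$ is precisely a monad in $\ca{V}$-$\B{Mat}$ on the $0$-cell $Y$, equivalently (by \cref{monadsaremonoids}) a monoid in the monoidal endo-hom-category $(\ca{V}\text{-}\B{Mat}(Y,Y),\circ,1_Y)$. Thus the hypothesis that $B_Y$ is a $\ca{V}$-category says exactly that $B\in\Mon(\ca{V}\text{-}\B{Mat}(Y,Y))$, with multiplication and unit its composition law and identity elements.

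Next I would identify the companion and conjoint at play. By the discussion around \cref{f*}, the companion and conjoint of the vertical $1$-cell $f\colon X\to Y$ in the fibrant double category $\VMMat$ are $\hat{f}=f_*$ and $\check{f}=f^*$. Applying \cref{pseudofunctorsrestrict} to $\caa{D}=\VMMat$ then yields that the functor
\[
\check{f}\odot-\odot\hat{f}\;=\;f^*\circ-\circ f_*\colon\ca{V}\text{-}\B{Mat}(Y,Y)\longrightarrow\ca{V}\text{-}\B{Mat}(X,X)
\]
is lax monoidal with respect to horizontal composition. (Here the right-to-left reading of $\circ$ in \cref{horizontalcompositionVmatrices} makes $f^*\circ B\circ f_*$ the endo-$1$-cell $X\xrightarrow{f_*}Y\xrightarrow{B}Y\xrightarrow{f^*}X$, so the composite indeed lands in $\ca{V}\text{-}\B{Mat}(X,X)$.)

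Finally I would invoke \cref{monf}: a lax monoidal functor carries monoids to monoids. Applied to $B\in\Mon(\ca{V}\text{-}\B{Mat}(Y,Y))$ it produces a monoid $f^*\circ B\circ f_*\in\Mon(\ca{V}\text{-}\B{Mat}(X,X))$, whose induced multiplication and unit are exactly those displayed in \cref{compositemonoid} for $N=B$. By the identification of the first paragraph, this monoid is a $\ca{V}$-category with object set $X$, which is the assertion. I do not expect any genuine obstacle: the content is entirely carried by \cref{pseudofunctorsrestrict} and \cref{monf}, and the only care required is bookkeeping — matching the bicategorical composition $\circ$ with the double-categorical $\odot$ and keeping the order of $f^*$ and $f_*$ correct — all of which is settled by the cited results, so the corollary is essentially immediate.
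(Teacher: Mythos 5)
Your proposal is correct and follows exactly the paper's route: the corollary is stated there as an immediate consequence of \cref{pseudofunctorsrestrict} applied to $\caa{D}=\VMMat$ (with $\hat{f}=f_*$, $\check{f}=f^*$), combined with the fact that lax monoidal functors preserve monoids, the induced multiplication and unit being those of \cref{compositemonoid}. Nothing is missing; you have merely spelled out the bookkeeping that the paper leaves implicit.
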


The new composition and unit are given by \cref{compositemonoid} in this case, i.e.
$M'=f^*\left(M\cdot(B\fepsilon B)\right)f_*$ and $\eta'=(f^*\eta f_*)\cdot\feta$ using pasting operations,
where $\feta,\fepsilon$ are the unit and counit of $f_*\dashv f^*$ as in \cref{fetafepsilon}.

Once again, the Grothendieck construction that gives rise to the above fibration provide
the following isomorphic characterization of the category of enriched categories and functors;
compare with \cref{charactVGrph}.

\begin{lem}\label{charactVCat}
The objects of $\ca{V}$-$\B{Cat}$ are $(A,X)\in\Mon(\ca{V}\textrm{-}\B{Mat}(X,X))\times\B{Set}$
and morphisms are $(\phi,f):(A,X)\to(B,Y)$ where
\begin{displaymath}
\begin{cases}
\phi:A\to f^*Bf_* &\textrm{in }\Mon(\ca{V}\textrm{-}\B{Mat}(X,X))\\
f:X\to Y & \textrm{in }\B{Set}.
\end{cases}
\end{displaymath}
\end{lem}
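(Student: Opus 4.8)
The plan is to deduce this as the specialization to $\caa{D}=\VMMat$ of the monad-fibration half of \cref{MonComonfibred}, already invoked in \cref{VCatfibred}, together with the identification $\ca{V}$-$\B{Cat}=\Mnd(\VMMat)$ and the explicit description of the associated Grothendieck category. On objects there is essentially nothing to do beyond unwinding definitions: a $\ca{V}$-category with object-set $X$ is by definition a monad on $X$ in $\ca{V}$-$\Mat$, equivalently (\cref{monadsaremonoids}) a monoid in $(\ca{V}\textrm{-}\B{Mat}(X,X),\circ,1_X)$. Hence the objects are exactly the pairs $(A,X)$ with $A\in\Mon(\ca{V}\textrm{-}\B{Mat}(X,X))$, matching the objects of the Grothendieck category built from the pseudofunctor $\ps{M}$ of \cref{Grphpseudofunctors} restricted to the fibres of monoids, whose value at $X$ is $\Mon(\ca{V}\textrm{-}\B{Mat}(X,X))$ with reindexing $f^*\circ-\circ f_*$.

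For morphisms, I would first invoke the underlying-graph bijection of \cref{charactVGrph}. Since $\VMMat$ is fibrant, a $2$-morphism $\alpha_f\colon A_X\to B_Y$ corresponds, by pasting with the companion and conjoint structure $2$-cells (the $p_2,q_2$ in the notation of \cref{Grothiso}, specialized through \cref{f*}), to a globular $2$-cell $\phi\colon A\to f^*Bf_*$ in $\ca{V}\textrm{-}\B{Mat}(X,X)$, and this assignment is a bijection on the level of graph morphisms. It therefore remains only to check that, under this bijection, the defining axioms of a $\ca{V}$-functor, i.e.\ a morphism of $\Mnd(\VMMat)$, correspond precisely to $\phi$ being a \emph{morphism of monoids}, where $f^*Bf_*$ carries the monoid structure supplied by \cref{B*monoid}.

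This last compatibility is exactly what the abstract statement of \cref{MonComonfibred} guarantees: the reindexing $f^*\circ-\circ f_*$ is lax monoidal by \cref{pseudofunctorsrestrict}, hence preserves monoids, so its restriction lands in $\Mon(\ca{V}\textrm{-}\B{Mat}(X,X))$ and the Grothendieck morphisms acquire the monoid-morphism constraint automatically. I expect the only genuine verification — and thus the main, if modest, obstacle — to be reconciling the abstractly defined multiplication $M'=f^*\!\left(M\cdot(B\fepsilon B)\right)f_*$ of \cref{compositemonoid} with the concrete composition law on $B$. Here the computations \cref{matrixmachine,machine1} collapse the counit insertion, yielding $(f^*Bf_*)_{x,x'}\cong B_{fx,fx'}$, so that transposing the multiplication and unit squares for $\phi$ through this isomorphism reproduces exactly the associativity and unit squares \cref{Vfunctaxioms} that characterize a $\ca{V}$-functor. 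Functoriality of the correspondence (preservation of composites and identities) then follows from that of the Grothendieck construction precisely as in \cref{MonComonfibred}, completing the identification and, dually, justifying the equivalent $(\psi,f)$-description via the opfibration.
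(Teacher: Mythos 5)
Your proposal is correct and takes essentially the same route as the paper: the lemma is obtained there precisely by specializing the Grothendieck-construction argument of \cref{MonComonfibred} to $\caa{D}=\VMMat$, with fibres $\Mon(\ca{V}\textrm{-}\B{Mat}(X,X))$ and reindexing $f^*\circ-\circ f_*$ made legitimate by \cref{B*monoid} (i.e.\ \cref{pseudofunctorsrestrict}), and the object/morphism identifications coming from \cref{monadsaremonoids} and the graph-level bijection of \cref{charactVGrph}. Your closing reconciliation of the abstract monoid structure on $f^*Bf_*$ with the componentwise axioms \cref{Vfunctaxioms} is exactly the mates-and-components verification the paper carries out immediately after the lemma statement.
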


Using the mates correspondence for the appropriate 2-cells as in \cref{matesforcompconj}
so that $\phi$ corresponds to $\bar{\phi}:f_*A\Rightarrow Bf_*$, we can the $\ca{V}$-functors axioms in the form
\begin{align*}
\xymatrix @C=.5in @R=.5in
{X\ar[r]|-{\object@{|}}^-A\ar[d]|-{\object@{|}}_-{f_*}\drtwocell<\omit>{\bar{\phi}}
& X\ar[r]|-{\object@{|}}^-A\ar[d]|-{\object@{|}}^-{f_*}\drtwocell<\omit>{\bar{\phi}}
& X \ar[d]|-{\object@{|}}^-{f_*}\\
Y \ar[r]|-{\object@{|}}_-B\ar @/_6ex/[rr]|-{\object@{|}}_-B\rrtwocell<\omit>{<3>\;M}
& Y \ar[r]|-{\object@{|}}_-B & Y} 
&\xymatrix @R=.2in{\hole \\ = \\ \hole}
\xymatrix @C=.5in @R=.3in
{& X \ar @/^/[dr]|-{\object@{|}}^-{A} &\\
X\ar[d]|-{\object@{|}}_-{f_*}\ar @/^/[ru]|-{\object@{|}}^-A\ar[rr]|-{\object@{|}}_-A\rrtwocell<\omit>{<-3>\;M}
\drrtwocell<\omit>{\bar{\phi}} && X\ar[d]|-{\object@{|}}^-{f_*}\\
Y \ar[rr]|-{\object@{|}}_-B && Y,} \\
\xymatrix @C=1.2in @R=.5in
{X\ar @/^5ex/[r]|-{\object@{|}}^-{1_X} \rtwocell<\omit>{<-2>\eta}\ar[r]|-{\object@{|}}_-{A}\ar[d]|-{\object@{|}}_-{f_*}
& X \ar[d]|-{\object@{|}}^-{f_*} \\
Y\ar[r]|-{\object@{|}}_-{B}\rtwocell<\omit>{<-4>\bar{\phi}} & Y}
&\xymatrix{ = \\ \hole}
\xymatrix @C=1.2in  @R=.5in
{X\ar[r]|-{\object@{|}}^-{1_X}\ar[d]|-{\object@{|}}_-{f_*}^{\phantom{ab}\cong}\ar @{.>}[dr]|-{\object@{|}}^-{f_*}
& X\ar[d]|-{\object@{|}}^-{f_*}_{\cong\phantom{ab}} \\
Y\ar[r]|-{\object@{|}}^-{1_Y}\ar @/_5ex/ [r]|-{\object@{|}}_-B\rtwocell<\omit>{<2>\eta} & Y.}
\end{align*}
The mate's components are $\bar{\phi}_{x,y}:I\otimes A(x,x')\to B(fx,fx')\otimes I$ for $x'\in f^{\text{-1}}y$,
and these diagrams in components agree with \cref{Vfunctaxioms} up to tensoring with $I$'s and composing
with the left and right unit constraints of $\ca{V}$.

\begin{rmk}\label{Vfunct=monadopfunct}
Notice that $(f_*,\bar{\phi})$ constitutes a colax monad functor (\cref{monadfunctor}) between the monads $A_X$ and $B_Y$ in 
the bicategory $\ca{V}$-$\B{Mat}$. Evidently, however, it is not true that any colax monad functor given by the data
\begin{displaymath}
\xymatrix @C=.45in @R=.3in
{X\ar[r]|-{\object@{|}}^-A \ar[d]|-{\object@{|}}_-S \rtwocell<\omit>{<4>\chi} & X\ar[d]|-{\object@{|}}^-S\\
Y\ar[r]|-{\object@{|}}_-B & Y}
\end{displaymath}
is a $\ca{V}$-functor, since not every$\SelectTips{eu}{10}\xymatrix @C=.2in
{S:X\ar[r]|-{\object@{|}} & Y}$is of the form $f_*$ for some function $f:X\to Y$.
This explains why the category $\ca{V}$-$\B{Cat}$ cannot be characterized as $\B{Mnd}(\ca{V}$-$\B{Mat})$ for the bicategory,
even if they have the same objects. Similar issues were discussed in a bigger depth in \cite{GarnerShulman}. 

This provides a distinct advantage when cosidering categories of monads in double categories rather than in the horizontal bicategory;
the notion of a $\ca{V}$-functor properly matches the motion of a double monad map in $\VMMat$.
\end{rmk}

As it is well-known, but also deduced from \cref{DendoMonDComonDmonoidal}, when $\ca{V}$ is a braided monoidal category,
$\ca{V}$-$\B{Cat}$ is monoidal via the pointwise tensor product \cref{monoidalVMat} like graphs.
We now move on to further properties of this category.

Similarly to the free monoid construction on an object in a monoidal $\ca{V}$, there is an endofunctor on $\ca{V}$-$\B{Grph}$ inducing the 
`free $\ca{V}$-category' monad; for the proof below, see also \cite{VarThrEnr,KellyLack}.
We spell it out in detail in our context, in order to later dualize for $\ca{V}$-cocategories.
\begin{prop}\label{freeVcatfunctor}
Let $\ca{V}$ be a monoidal category with coproducts, such that $\otimes$ preserves them on both sides. The forgetful functor 
$\tilde{S}:\ca{V}\textrm{-}\B{Cat}\to\ca{V}\textrm{-}\B{Grph}$
has a left adjoint $\tilde{L}$, which maps a $\ca{V}$-graph$\SelectTips{eu}{10}\xymatrix @C=.2in{G:X\ar[r]|-{\object@{|}} & X}$to
the geometric series
\begin{displaymath}
\SelectTips{eu}{10}\xymatrix @C=.3in
{\sum\limits_{n\in\caa{N}}{G^{\otimes n}}:X\ar[r]
|-{\object@{|}} & X.}
\end{displaymath}
\end{prop}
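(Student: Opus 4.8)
The plan is to reduce the claim to the one-object free-monoid construction of \cref{freemonoidprop} applied fibrewise over $\B{Set}$, and then to assemble the resulting fibrewise adjunctions into a genuine adjunction of the total categories by means of the characterizations in \cref{charactVGrph,charactVCat}.

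First I would fix a set $X$ and work inside the monoidal category $(\VMat(X,X),\circ,1_X)$. By \cref{monadsaremonoids}, a $\ca{V}$-category with object set $X$ is precisely a monoid therein, whereas a $\ca{V}$-graph with object set $X$ is merely an object; thus the restriction of $\tilde S$ to the fibre over $X$ is the forgetful functor $S_X\colon\Mon(\VMat(X,X))\to\VMat(X,X)$. Since $\ca{V}$ has coproducts preserved by $\otimes$, these are computed pointwise in $\VMat(X,X)=[X\times X,\ca{V}]$ and are preserved by $\circ$ in each variable, by the same computation as in the proof of \cref{propVMat}(ii). Hence \cref{freemonoidprop} applies and produces a left adjoint $L_X$ with $L_XG=\sum_{n\in\caa{N}}G^{\circ n}$, the $n$-fold horizontal composite with $G^{\circ 0}=1_X$; in components this is the familiar ``sum over paths''
\begin{displaymath}
(L_XG)(x,y)=\sum_{n\in\caa{N}}\ \sum_{x=x_0,\dots,x_n=y}G(x_0,x_1)\otimes\cdots\otimes G(x_{n-1},x_n),
\end{displaymath}
which is exactly the geometric series of the statement. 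I would define $\tilde LG$ to be this monoid, carried by the same object set $X$.

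Since the construction leaves the object set unchanged, the unit $G\to\tilde S\tilde LG$ lies over $\id_X$. To obtain the global adjunction I would decompose the hom-sets over $\B{Set}$. For $B$ a $\ca{V}$-category with object set $Y$, \cref{charactVCat} presents a $\ca{V}$-functor $\tilde LG\to B$ as a pair consisting of a function $f\colon X\to Y$ together with a monoid morphism $L_XG\to f^*Bf_*$ in $\VMat(X,X)$, noting that $f^*Bf_*$ is again a monoid by \cref{B*monoid}; dually, \cref{charactVGrph} presents a $\ca{V}$-graph morphism $G\to\tilde SB$ as such an $f$ together with a matrix morphism $G\to f^*Bf_*$. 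Applying the fibrewise adjunction $L_X\dashv S_X$ to the target $f^*Bf_*$ and summing over $f$ yields
\begin{align*}
\VCat(\tilde LG,B)
&\cong\coprod_{f\colon X\to Y}\Mon(\VMat(X,X))(L_XG,f^*Bf_*)\\
&\cong\coprod_{f\colon X\to Y}\VMat(X,X)(G,f^*Bf_*)\\
&\cong\VGrph(G,\tilde SB),
\end{align*}
the desired natural bijection.

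The only remaining task is to verify naturality of this bijection in $G$ and $B$; for morphisms that change the object set this reduces to the pseudofunctoriality of the reindexings $f^*(-)f_*$ and the coherence isomorphisms $\zeta,\xi$ of \cref{isosofstars}, together with naturality of the fibrewise unit. I expect this bookkeeping to be the main obstacle: the free-category functor is genuinely a \emph{fibred} left adjoint over $\B{Set}$, so one must track how a structure map factors through the reindexed $\ca{V}$-category $f^*Bf_*$ rather than through $B$ itself. Once the identifications of \cref{charactVGrph,charactVCat} are in place, however, no Beck--Chevalley condition is required, since the fibrewise adjunction is applied directly to the reindexed target.
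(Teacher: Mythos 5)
Your proposal is correct and follows essentially the same route as the paper: the fibrewise free-monoid construction in $(\VMat(X,X),\circ,1_X)$ via \cref{freemonoidprop} and \cref{propVMat}, combined with the characterizations \cref{charactVGrph,charactVCat} and \cref{B*monoid} so that the fibrewise adjunction is applied to the reindexed target $f^*Bf_*$. The only cosmetic difference is that the paper phrases the adjunction through the universal property of the unit $\tilde{\eta}\colon G\to\tilde{S}\tilde{L}G$ (so naturality comes for free from the universal-arrow characterization of adjunctions), whereas you decompose hom-sets as coproducts over functions $f\colon X\to Y$ and must then verify naturality of the resulting bijection by hand.
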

\begin{proof}
Recall that by \cref{propVMat}, $\ca{V}$-$\B{Mat}(X,X)$ admits the same class of colimits as $\ca{V}$, 
and also $\otimes=\circ$ preserves them. Hence, the forgetful $S$ from its category of monoids has a left adjoint
as in \cref{freemonoidprop}
\begin{displaymath}
L:\xymatrix @R=.02in
{\ca{V}\textrm{-}\Mat(X,X)\ar[r] & 
\Mon(\ca{V}\textrm{-}\Mat(X,X))\\
G\ar@{|->}[r] & \sum_{n\in\caa{N}}{G^n}}
\end{displaymath}
which by \cref{charactVCat} means that this geometric series is a $\ca{V}$-category with set of objects $X$. 
The claim is that the induced functor
\begin{equation}\label{deftildeL}
\tilde{L}:\xymatrix @R=.02in @C=.4in
{\ca{V}\textrm{-}\B{Grph}\ar[r] & \ca{V}\textrm{-}\B{Cat}\\
(G,X)\ar@{|->}[r] & (\sum_nG^n,X)}
\end{equation}
is a left adjoint of $\tilde{S}$, i.e. for any $\ca{V}$-category $B_Y$ and $\phi_f\colon G_X\to\tilde{S}(B_Y)$
a $\ca{V}$-graph morphism, there exists a unique $\ca{V}$-functor $H:(\sum_nG^n)_X\to B_Y$
such that
\begin{equation}\label{univprop}
\xymatrix @R=.2in
{(G,X)\ar[rr]^{\tilde{\eta}} \ar[dr]_-F && 
\tilde{S}(\sum\limits_{n\in\caa{N}}{G^n},X)
\ar @{.>}[dl]^-{\tilde{S}H}\\
& \tilde{S}(B,Y) &}
\end{equation}
commutes, for $\tilde{\eta}:(G,X)\to\tilde{S}\tilde{L}(G,X)$ the identity-on-objects inclusion of the summand $G$ into the series.

By \cref{charactVGrph}, a $\ca{V}$-graph functor $F$ is a pair $(\phi,f)$ with $\phi:G\to f^*Bf_*$ an arrow in
$\ca{V}$-$\B{Mat}(X,X)$, and furthermore \cref{B*monoid} ensures that $f^*Bf_*$ obtains a monoid structure. 
Since $L(G)$ is the free monoid on $G\in\ca{V}$-$\B{Mat}(X,X)$, $\phi$ extends uniquely to a monoid morphism
$\chi:LG\to f^*Bf_*$ such that
\begin{displaymath}
\xymatrix @R=.15in
{G\ar[rr]^{\eta} \ar[dr]_-{\phi} && 
\sum\limits_{n\in\caa{N}}{G^n}\ar @{.>}[dl]^-{S\chi}\\
& f^*Bf_* &}
\end{displaymath}
commutes in $\ca{V}$-$\B{Mat}(X,X)$, where $\eta$ and $S$ are respectively the unit and forgetful functor of the `free monoid'
adjunction $L\dashv S$.
By \cref{charactVCat}, this 2-cell $\chi:\sum_{n}{G^n}\Rightarrow f^*Bf_*$ in $\ca{V}$-$\Mat$ determines
a $\ca{V}$-functor $H=(\chi,f):(LG,X)\to(B,Y)$ satisfying the universal property \cref{univprop}. These data suffice
to define an adjoint functor $\tilde{L}$ \cref{deftildeL}, thus the `free $\ca{V}$-category' adjunction
$\tilde{L}\dashv\tilde{S}:\VCat\to\VGrph$ is established.
\end{proof}

Also, as proved in detail in \cite{Wolff} and later generalized in \cite{VarThrEnr},
$\ca{V}$-$\B{Cat}$ has and the forgetful functor $\tilde{S}$ reflects split coequalizers when $\ca{V}$ is 
cocomplete. By Beck's monadicity theorem, since $\tilde{S}$ also reflects isomorphisms, we have the following well-known result. 

\begin{prop}\label{VCatmonadic}
If $\ca{V}$ is a cocomplete monoidal category such that $\otimes$ preserves colimits on both variables,
$\tilde{S}:\ca{V}$-$\B{Cat}\to\ca{V}$-$\B{Grph}$ is monadic.
\end{prop}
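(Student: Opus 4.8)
The plan is to apply Beck's monadicity theorem in its precise form, whose hypotheses are threefold: that $\tilde{S}$ admits a left adjoint, that $\tilde{S}$ reflects isomorphisms, and that $\VCat$ possesses coequalizers of $\tilde{S}$-split pairs which $\tilde{S}$ both preserves and reflects. The first hypothesis is already secured: \cref{freeVcatfunctor} furnishes the free $\ca{V}$-category functor $\tilde{L}\dashv\tilde{S}$ under exactly the standing assumption that $\ca{V}$ be cocomplete with $\otimes$ preserving colimits on both sides. So the work reduces to the remaining two conditions.

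For reflection of isomorphisms, I would argue via the characterization of \cref{charactVCat}. A $\ca{V}$-functor is a pair $(\phi,f)$ with $f$ a function of object-sets and $\phi\colon A\to f^*Bf_*$ a monoid morphism in $\VMat(X,X)$. If the underlying graph morphism $\tilde{S}(\phi,f)$ is invertible in $\VGrph$, then $f$ is a bijection and $\phi$ is an isomorphism in the relevant hom-category of matrices; since the inverse of a monoid morphism that happens to be invertible automatically respects multiplication and unit, it assembles into a two-sided inverse $\ca{V}$-functor. Hence $\tilde{S}$ reflects isomorphisms, and this step is routine.

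The crux is the coequalizer condition. Given a parallel pair in $\VCat$ whose image under $\tilde{S}$ admits a split coequalizer $q\colon\tilde{S}\ca{B}\to\ca{C}$ in $\VGrph$ (the codomain exists because $\VGrph$ is cocomplete by \cref{Vgraphprops}, part (2)), I must endow the graph $\ca{C}$ with a canonical $\ca{V}$-category structure and show the resulting object is the coequalizer in $\VCat$. The decisive observation is that a split coequalizer is an \emph{absolute} colimit, hence preserved by every functor; in particular it is preserved by $-\otimes-$ and by horizontal composition $\circ$ of $\ca{V}$-matrices, the latter preserving colimits in each variable by \cref{propVMat}, part (ii). Consequently the composites $\ca{C}\circ\ca{C}$ and the tensor powers remain split coequalizers, and one may transport the composition law and units of $\ca{B}$ along the splitting maps to produce a composition morphism $\ca{C}(c,c')\otimes\ca{C}(c',c'')\to\ca{C}(c,c'')$ and identities on $\ca{C}$, making $q$ a $\ca{V}$-functor. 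The associativity and unit axioms for $\ca{C}$ then follow formally from those of $\ca{B}$ together with the splitting equations, precisely because all the relevant diagrams live in absolute (hence preserved) coequalizer cones.

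I expect this transport of monoid structure to be the genuine obstacle: constructing the composition on $\ca{C}$ and checking the monad axioms is where the cocompleteness of $\ca{V}$ and the colimit-preservation of $\otimes$ are indispensable, via \cref{propVMat}. Once the structure is in place, verifying that $(\ca{C},q)$ is the coequalizer in $\VCat$ and that $\tilde{S}$ preserves and reflects it is a formal consequence of the universal property of the underlying split coequalizer in $\VGrph$ and of \cref{charactVCat}. With all three Beck hypotheses established, monadicity of $\tilde{S}$ follows.
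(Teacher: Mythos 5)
Your proposal is correct and takes essentially the same route as the paper: both verify the hypotheses of Beck's precise monadicity theorem, with the left adjoint supplied by \cref{freeVcatfunctor} and reflection of isomorphisms being routine. The only difference is that the paper delegates the crux---that $\ca{V}$-$\B{Cat}$ has and $\tilde{S}$ reflects coequalizers of $\tilde{S}$-split pairs---to the citations \cite{Wolff,VarThrEnr}, whereas you sketch the standard transport-of-structure argument (via absoluteness of split coequalizers) that those references carry out in detail.
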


Since $\ca{V}$-$\B{Grph}$ is complete when $\ca{V}$ is, we obtain the following corollary.

\begin{cor}\label{VCatcomplete}
The category $\ca{V}$-$\B{Cat}$ is complete when $\ca{V}$ is.
\end{cor}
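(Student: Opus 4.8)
The plan is to read off completeness of $\VCat$ from the monadicity of \cref{VCatmonadic} together with the standard fact that a monadic functor creates all limits that exist in its codomain. Write $T$ for the ``free $\ca{V}$-category'' monad on $\VGrph$ induced by the adjunction $\tilde{L}\dashv\tilde{S}$ of \cref{freeVcatfunctor}; by \cref{VCatmonadic} the comparison functor exhibits $\VCat$ as the category of $T$-algebras, with $\tilde{S}$ the forgetful functor. Since forgetful functors of Eilenberg--Moore categories create limits, $\tilde{S}$ creates every limit that exists in $\VGrph$.

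First I would invoke \cref{Vgraphprops}, which gives that $\VGrph$ is complete whenever $\ca{V}$ is. Then, for any small diagram $D\colon\ca{J}\to\VCat$, the diagram $\tilde{S}D$ of underlying $\ca{V}$-graphs has a limit in $\VGrph$; because $\tilde{S}$ creates it, this limit lifts uniquely to a limiting cone for $D$ in $\VCat$. Hence $\VCat$ is complete. The monadicity of \cref{VCatmonadic} is stated under the hypotheses that $\ca{V}$ be cocomplete with $\otimes$ cocontinuous; these are harmless here, since in all cases of interest $\ca{V}$ is locally presentable, hence both complete and cocomplete.

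If one wants the construction made explicit rather than appealing to creation of limits, the characterisation of \cref{charactVCat} describes it directly: let $X$ be the limit in $\B{Set}$ of the object-sets of $D$, form the limit $A$ of the underlying $\ca{V}$-matrices in $\VMat(X,X)$ (which exists by \cref{propVMat}, since $\ca{V}$ is complete), and equip $A$ with the unique monoid structure whose composition and unit are the maps induced componentwise by the universal property of the limit from those of the $D(j)$. The monoid axioms transfer from the diagram since they hold objectwise, yielding $A\in\Mon(\VMat(X,X))$, i.e. a $\ca{V}$-category $A_X$; its projections are $\ca{V}$-functors and the cone is limiting by construction. In either presentation there is no genuine obstacle: limits of $\ca{V}$-categories are computed on underlying graphs and inherit their algebraic structure uniquely, the only point of care being that the monadic input becomes available precisely once the cocompleteness hypotheses of \cref{VCatmonadic} are in force.
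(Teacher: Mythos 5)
Your proposal matches the paper's own proof: the corollary is deduced exactly from \cref{VCatmonadic} (monadic functors create the limits existing in their codomain) together with the completeness of $\ca{V}$-$\B{Grph}$ from \cref{Vgraphprops}, under the standing cocompleteness hypotheses needed for monadicity. The only imprecision is in your optional explicit construction, which is not needed for the argument: the underlying matrices of the $D(j)$ live in $\ca{V}$-$\B{Mat}(X_j,X_j)$, so before taking a fibrewise limit they must first be reindexed into $\ca{V}$-$\B{Mat}(X,X)$ along the limit projections (via companions and conjoints, in the spirit of \cref{charactVCat}).
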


Moreover, by \cref{Vgraphprops} $\ca{V}$-$\B{Grph}$ admits all colimits if $\ca{V}$ does;
since any category of Eilenberg-Moore algebras has colimits if it has coequalizers of reflexive pairs and
its base has colimits by a standard result in \cite{Linton}, the following is also true.

\begin{cor}\label{VCatcocomplete}
The category $\ca{V}$-$\B{Cat}$ is cocomplete when $\ca{V}$ is.
\end{cor}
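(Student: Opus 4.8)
The plan is to present $\VCat$ as a category of Eilenberg--Moore algebras over the cocomplete category $\VGrph$ and then apply Linton's criterion, reducing the whole problem to the existence of reflexive coequalizers. First I would record the two inputs already available: by \cref{Vgraphprops}(2) the category $\VGrph$ is cocomplete whenever $\ca{V}$ is, and by \cref{VCatmonadic} the forgetful functor $\tilde{S}\colon\VCat\to\VGrph$ is monadic. Writing $T$ for the induced \emph{free $\ca{V}$-category} monad on $\VGrph$, whose underlying endofunctor sends a $\ca{V}$-graph $G_X$ to the geometric series $\sum_{n\in\caa{N}}G^{\otimes n}$ by \cref{freeVcatfunctor}, this exhibits $\VCat$ as the category of $T$-algebras.

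The heart of the argument is to show that $T$ preserves coequalizers of reflexive pairs, since then $\tilde{S}$ creates such coequalizers and $\VCat$ inherits them from $\VGrph$. The key step is that each tensor-power functor $(-)^{\otimes n}\colon\VGrph\to\VGrph$ preserves reflexive coequalizers. By \cref{propVMat}(ii) the monoidal product $\otimes=\circ$ on the relevant endo-hom-categories preserves all colimits \emph{separately} in each variable; the passage from variablewise to joint preservation is exactly the classical observation that reflexive coequalizers are sifted colimits and hence commute with finite products, so a multivariable functor preserving them in each argument preserves them jointly. As coproducts commute with all colimits, it follows that $T=\coprod_{n\in\caa{N}}(-)^{\otimes n}$ also preserves reflexive coequalizers.

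Granting this, $\tilde{S}$ creates coequalizers of reflexive pairs, so these exist in $\VCat$ and are computed as in the cocomplete $\VGrph$. Finally I would invoke the standard result of \cite{Linton}: a category monadic over a cocomplete category is itself cocomplete as soon as it possesses coequalizers of reflexive pairs (the coproducts and general coequalizers being then reconstructed from those of free algebras via their canonical presentations). This yields the cocompleteness of $\VCat$. The main obstacle is the variablewise-to-joint preservation step for the tensor powers; everything else is bookkeeping built on results already established.
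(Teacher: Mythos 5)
Your overall skeleton---monadicity of $\tilde{S}$ (\cref{VCatmonadic}), cocompleteness of $\VGrph$ (\cref{Vgraphprops}), and Linton's reduction of cocompleteness to reflexive coequalizers---is exactly the route the paper takes. However, the step you insert to supply Linton's hypothesis is false: the free $\ca{V}$-category monad $T=\tilde{S}\tilde{L}$ does \emph{not} preserve coequalizers of reflexive pairs in $\VGrph$, so $\tilde{S}$ does not create them, and reflexive coequalizers in $\VCat$ are \emph{not} ``computed as in $\VGrph$''. Your siftedness argument (variablewise preservation implies joint preservation) is sound, but it only applies \emph{fiberwise}: together with \cref{propVMat}(ii) it shows that $\sum_n(-)^{\circ n}$ preserves reflexive coequalizers as an endofunctor of each $\VMat(X,X)$, with the object set $X$ held fixed. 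A reflexive coequalizer in $\VGrph$ also coequalizes the object sets, and the fiberwise monad does not commute with the resulting pushforwards: for a non-injective $\tau\colon X\to Z$ the canonical comparison $\tau_*(G\circ G)\tau^*\Rightarrow(\tau_*G\tau^*)\circ(\tau_*G\tau^*)$, obtained by inserting the unit $\feta\colon 1_X\Rightarrow\tau^*\circ\tau_*$, is merely the inclusion of the ``genuine paths'' into the ``broken paths'' (pairs of edges whose endpoints agree only after applying $\tau$) and is not invertible. In other words, identifying objects creates new composable pairs, hence new composites, which the graph-level coequalizer of the path-graphs never sees.

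Concretely, take $\ca{V}=\Set$. Let $H$ have vertices $\{a,b\}$ and a single edge $e\colon a\to b$, let $G$ have vertices $\{a,b,c\}$ and a single edge $\tilde{e}\colon a\to b$, and let $(\psi_i,f_i)\colon G\rightrightarrows H$ be given by $f_1(c)=a$, $f_2(c)=b$, both identities on $a,b$ and both sending $\tilde{e}\mapsto e$; the evident inclusion of $H$ into $G$ is a common section, so the pair is reflexive. Its coequalizer $Q$ in $\VGrph$ is a single loop on one vertex, so $TQ$ has infinitely many edges ($e^n$ for all $n\in\mathbb{N}$), whereas the coequalizer in $\VGrph$ of $TG\rightrightarrows TH$ has exactly two edges, namely $[\id]$ and $[e]$. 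Equivalently, since the left adjoint $\tilde{L}$ preserves coequalizers, the coequalizer in $\Cat$ of the reflexive pair $\tilde{L}G\rightrightarrows\tilde{L}H$ is the additive monoid $\mathbb{N}$ viewed as a one-object category, whose underlying graph is infinite; so $\tilde{S}$ fails to preserve this reflexive coequalizer. Thus the existence of reflexive coequalizers in $\VCat$---the real content hidden in this corollary---cannot be obtained by creation along $\tilde{S}$. The paper does not attempt this: it invokes Linton's theorem, taking the needed coequalizers in $\ca{V}$-$\B{Cat}$ from the cited work of Wolff \cite{Wolff} (generalized in \cite{VarThrEnr}), where they are constructed by a genuinely different and more involved argument.
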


Finally, as shown in \cite{KellyLack} the monad $\tilde{S}\tilde{L}$ is finitary. Thus by \cite[Satz 10.3]{GabrielUlmer}
which states that the category of algebras for a finitary monad over a locally presentable category retains that
structure, we obtain the following.

\begin{thm*}~\cite[4.5]{KellyLack}
If $\ca{V}$ is a monoidal closed category whose underlying ordinary category is locally $\lambda$-presentable,
then $\ca{V}$-$\B{Cat}$ is also $\lambda$-presentable.
\end{thm*}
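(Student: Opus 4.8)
The plan is to exhibit $\VCat$ as the category of algebras for a \emph{finitary} monad on the locally $\lambda$-presentable category $\VGrph$, and then to invoke the transfer principle of Gabriel--Ulmer. First I would reassemble the pieces already established: by \cref{Vgraphprops}(3) the category $\VGrph$ is locally $\lambda$-presentable whenever $\ca{V}$ is (the proof there in fact tracks the degree $\lambda$ via a $\lambda$-presentable strong generator); by \cref{freeVcatfunctor} the forgetful $\tilde{S}\colon\VCat\to\VGrph$ has a left adjoint $\tilde{L}$ with $\tilde{L}(G,X)=(\sum_{n\in\caa{N}}G^{\otimes n},X)$; and by \cref{VCatmonadic} this adjunction is monadic, the cocompleteness hypotheses being automatic since a monoidal closed $\ca{V}$ has all colimits and $\otimes$ preserves them on each side. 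Writing $\mathbb{T}=\tilde{S}\tilde{L}$ for the induced monad, monadicity yields an equivalence $\VCat\simeq(\VGrph)^{\mathbb{T}}$.

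The crux is to verify that $\mathbb{T}$ is finitary, i.e.\ preserves filtered colimits. On objects it is $(G,X)\mapsto(\sum_{n\in\caa{N}}G^{\otimes n},X)$, leaving the object-set untouched. The potential obstruction is that filtered colimits in $\VGrph$ do \emph{not} live in a single fibre: as the construction in the proof of \cref{Vgraphprops}(2) shows, one first forms the colimit $(\tau_j\colon X_j\to X)$ of object-sets in $\Set$ and then a colimit of the transported hom-matrices $(\tau_j)_*G_j(\tau_j)^*$ inside $\VMat(X,X)$. I would therefore argue in two stages. Fibrewise, each finite tensor power $G\mapsto G^{\otimes n}$ preserves filtered colimits, because $\ca{V}$ being monoidal closed makes $-\otimes A$ and $A\otimes-$ left adjoints, hence cocontinuous, and by \cref{propVMat} the same holds for $\otimes=\circ$ on each $\VMat(X,X)$; the countable coproduct $\sum_n$ then commutes with the filtered colimit simply because colimits commute with colimits. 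It remains to see that this fibrewise computation is compatible with the object-set reindexing: since the functors $(\tau_j)_*(-)(\tau_j)^*$ are themselves cocontinuous on the matrix hom-categories (again \cref{propVMat}) and commute with the formation of $\sum_n(-)^{\otimes n}$ up to the coherent isomorphisms of \cref{isosofstars}, the colimit describing $\mathbb{T}$ of the colimit agrees with the colimit of $\mathbb{T}$ applied termwise. Hence $\mathbb{T}$ preserves filtered colimits.

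Finally I would apply \cite[Satz 10.3]{GabrielUlmer}: the category of algebras for a finitary monad over a locally presentable category is again locally presentable, with the degree of presentability preserved. Since a finitary functor preserves $\lambda$-filtered colimits for every regular $\lambda\geq\aleph_0$, and $\VGrph$ is locally $\lambda$-presentable, the conclusion is that $(\VGrph)^{\mathbb{T}}\simeq\VCat$ is locally $\lambda$-presentable. The main obstacle is the middle step---establishing finitariness in the presence of the varying object-sets---where the interplay between the $\Set$-level colimit of objects and the fibrewise cocontinuity of matrix composition (and of the reindexing functors $f_*,f^*$) must be handled with care; this is precisely the analysis carried out in \cite{KellyLack}.
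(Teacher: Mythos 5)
Your proposal follows essentially the same route as the paper: exhibit $\VCat$ as monadic over the locally $\lambda$-presentable $\VGrph$ via the free $\ca{V}$-category adjunction $\tilde{L}\dashv\tilde{S}$, observe (deferring the careful analysis to \cite{KellyLack}) that the induced monad $\tilde{S}\tilde{L}$ is finitary, and conclude by the Gabriel--Ulmer transfer theorem \cite[Satz 10.3]{GabrielUlmer}. The paper states these steps even more tersely, so your sketch of why finitariness survives the interplay between the $\Set$-level colimit of object-sets and the fibrewise matrix colimits is an elaboration (incomplete, as you acknowledge) of precisely the point the paper delegates to \cite{KellyLack}.
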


\subsection{\texorpdfstring{$\ca{V}$}{A}-cocategories}\label{VcatsandVcocats}

We now proceed to the dualization of the concept of a $\ca{V}$-category in the context of $\ca{V}$-matrices,
following \cref{comonadbicat}. Henceforth $\ca{V}$ is again a monoidal category with
coproducts, such that the tensor product $\otimes$ preserves them on both entries.

\begin{defi}\label{cocategory}
A $\ca{V}$-\emph{cocategory}  $C$ is a comonad in the bicategory $\ca{V}$-$\B{Mat}$. It consists of a set $X$ with 
an endoarrow$\SelectTips{eu}{10}\xymatrix @C=.2in{C:X\ar[r]|-{\object@{|}} & X}$(\emph{i.e.} a $\ca{V}$-graph $C_X$)
equipped with two 2-cells, the comultiplication and the counit
\begin{displaymath}
\xymatrix @R=.1in @C=.4in
{X \ar[dr]|-{\object@{|}}_-{C} \ar @/^3ex/[rr]|-{\object@{|}}^-C\rrtwocell<\omit>{<.5>\Delta} && X \\
 & X\ar[ur]|-{\object@{|}}_-C  &}
\quad\textrm{and}\quad
\xymatrix @C=.3in
{X \rrtwocell<\omit>{<0>\epsilon}\ar @/^2.2ex/ [rr]|-{\object@{|}}^-{C}\ar @/_2.2ex/ [rr]|-{\object@{|}}_-{1_X} && X}
\end{displaymath}
satisfying the following axioms:
\begin{gather}\label{Vfunctaxioms2cells}
\xymatrix @R=.2in
{X\ar @/_/[dr]|-{\object@{|}}_-C\ar @/^3ex/[drr]|-{\object@{|}}^-C\ar @/^4ex/[rrr]|-{\object@{|}}^-C\drrtwocell<\omit>{<+.3>\Delta}
&\drrtwocell<\omit>{<-1.3>\Delta} && X \\
& X\ar[r]|-{\object@{|}}_-C & X\ar @/_/[ur]|-{\object@{|}}_-C &} 
\quad = \quad
\xymatrix @R=.2in
{X\ar @/_/[dr]|-{\object@{|}}_-C\ar @/^4ex/[rrr]|-{\object@{|}}^-C&&& X \\
\urrtwocell<\omit>{<-1.3>\Delta} & X\ar[r]|-{\object@{|}}_-C\ar @/^3ex/[urr]|-{\object@{|}}^-C\urrtwocell<\omit>{<+.3>\Delta} &
X,\ar @/_/[ur]|-{\object@{|}}_-C &} \\
\xymatrix @C=.5in @R=.2in
{X\drrtwocell<\omit>{<-2.3>\Delta}\drtwocell<\omit>{<-0.4>\epsilon}\ar @/_2ex/[dr]|-{\object@{|}}_-{1_X}\ar @/^3ex/[dr]|-{\object@{|}}^-C 
\ar @/^4ex/[rr]|-{\object@{|}}^-C && X \\
& X \ar @/_/[ur]|-{\object@{|}}_-C &}
\xymatrix @C=.5in @R=.2in{=\\\hole}
\xymatrix @C=.5in @R=.2in
{X\rtwocell<\omit>{\;1_C}\ar @/_2.3ex/[r]|-{\object@{|}}_-C\ar @/^2.3ex/[r]|-{\object@{|}}^-C 
& X \\ \hole}
\xymatrix @C=.5in @R=.2in{= \\\hole}
\xymatrix @C=.5in @R=.2in
{X \ar @/_/[dr]|-{\object@{|}}_-C\ar @/^4ex/[rr]|-{\object@{|}}^-C && X \\
\urrtwocell<\omit>{<-2.3>\Delta} & X. \ar @/_2ex/[ur]|-{\object@{|}}_-{1_X}\ar @/^3ex/[ur]|-{\object@{|}}^-C 
\urtwocell<\omit>{<-0.4>\epsilon} &}\nonumber
\end{gather}
\end{defi}
In terms of components, the \emph{cocomposition} of a $\ca{V}$-cocategory $ C $ is given by
\begin{displaymath}
\Delta_{x,z}:C(x,z)\to\sum_{y\in X}{C(x,y)\otimes C(y,z)}
\end{displaymath}
for any two objects $x,y\in X$, and the \emph{coidentity elements} are given by
\begin{displaymath}
 \epsilon_{x,y}:C(x,y)\to 1_X(x,y)\equiv
\begin{cases}
C(x,x)\xrightarrow{\epsilon_{x,x}}I,\quad \mathrm{if }\;x=y\\
C(x,y)\xrightarrow{\epsilon_{x,y}}0,\quad \mathrm{if }\;x\neq y
\end{cases}
\end{displaymath}
for all objects $x\in X$. The coassociativity and counity axioms are
\begin{displaymath}
\xymatrix @C=.3in @R=.1in
{& C_{x,w}\ar[dl]_-{\Delta}\ar[dr]^-{\Delta} &\\
\sum\limits_{z}{C_{x,z}\otimes C_{z,w}}\ar[dd]_-{\sum\limits_{z}{\Delta\otimes 1}} &&
\sum\limits_{y}{C_{x,y}\otimes C_{y,w}}\ar[dd]^-{\sum\limits_{y}{1\otimes\Delta}} \\
\hole \\
\sum\limits_{z}(\sum\limits_{y}{C_{x,y}\otimes C_{y,z}})\otimes C_{z,w}\ar[rr]_-{\alpha}^-\sim &&
\sum\limits_{y}{C_{x,y}\otimes(\sum\limits_{z}{C_{y,z}\otimes C_{y,w}})}}
\end{displaymath}
\begin{displaymath}
\xymatrix @C=.9in @R=.4in
{\sum\limits_{z}{C_{x,z}\otimes C_{z,y}}\ar[d]_-{\sum\limits_{z}{\epsilon\otimes 1}} &
C_{x,y}\ar[dr]_-{\rho^{\text{-}1}}\ar[dl]^-{\lambda^{\text{-}1}}\ar[l]_-{\Delta}\ar[r]^-{\Delta} &
\sum\limits_{z}{C_{x,z}\otimes C_{z,y}}\ar[d]^-{\sum\limits_{z}{1\otimes\epsilon}} \\
I\otimes C_{x,y} && C_{x,y}\otimes I}
\end{displaymath}
where $\alpha$ is the associator and $\lambda$, $\rho$ are the unitors of $\ca{V}$-$\Mat$.

As for any comonad in a bicategory, a $\ca{V}$-cocategory $ C $ with $\ob C =X$ is the same as a comonoid
in the monoidal category $(\ca{V}$-$\B{Mat}(X,X),\circ,1_X)$. Thus a one-object $\ca{V}$-cocategory
is the same as a comonoid in $\ca{V}$.

A $\ca{V}$-cofunctor between two $\ca{V}$-cocategories $C_X$, $D_Y$ should be a $\ca{V}$-graph morphism
$F_f\colon C_X\to D_Y$ that respects cocomposition and coidentities. As a result, we obtain the following definition.

\begin{defi}\label{cofunctor}
A $\ca{V}$-\emph{cofunctor} $F_f:C_X\to D_Y$ between two $\ca{V}$-cocategories consists of a function $f:X\to Y$ between their
sets of objects and arrows $F_{x,z}:C(x,z)\to D(fx,fz)$ in $\ca{V}$ for any $x,z\in\ob C$,
satisfying the commutativity of
\begin{equation}\label{cofuncts}
\xymatrix @C=.5in @R=.27in
{C(x,z)\ar[r]^-{\Delta_{x,z}}\ar[dd]_-{F_{x,z}} &
\sum\limits_{y}{C(x,y)\otimes C(y,z)}\ar[d]^-{\sum\limits_{y}{F_{x,y}\otimes F_{y,z}}} \\
& \sum\limits_{fy}{D(fx,fy)\otimes D(fy,fz)}\ar @{>->}[d] \\
D(fx,fz)\ar[r]_-{\Delta_{fx,fz}} & \sum\limits_{w}{D(fx,w)\otimes D(w,fz)}}
\qquad
\xymatrix @R=.43in
{C(x,x)\ar[rd]^-{\epsilon_{x,x}} \ar[dd]_-{F_{x,x}} & \\ & I\\
D(fx,fx)\ar[ur]_-{\epsilon_{fx,fx}} &}
\end{equation}
\end{defi}

Along with compositions and identities of cofunctors that follow those of graphs,
we obtain a category $\VCocat$. As was the case for $\VCat$, this category is fully encapsulated as
the category of comonads in the double category of $\ca{V}$-matrices, i.e. $\Comon(\VMMat)=\VCocat$
as in \cref{Comonadindoublecat}. Indeed, an object is just a comonad in the horizontal bicategory,
and the comonad morphism axioms in components give \cref{cofuncts}.

\begin{rmk}
At \cite[\S 9]{Monoidalbicatshopfalgebroids}, a $\ca{V}$-\emph{opcategory} $\ca{A}$ is defined, as a category
enriched in $\ca{V}^\op$. In particular, the cocomposition and counit arrows are simply
\begin{displaymath}
\Delta\colon\ca{A}(x,z)\to\ca{A}(y,z)\otimes\ca{A}(x,y),\quad \varepsilon\colon\ca{A}(x,x)\to I.
\end{displaymath}
Also, a $\ca{V}$-\emph{opfunctor} $F\colon\ca{A}\to\ca{B}$ is a $\ca{V}^\op$-functor, mapping $x\mapsto Fx$ and
with arrows $\ca{B}(Fx,Fy)\to\ca{A}(x,y)$ in $\ca{V}$ satisfying respective axioms.
The category $\ca{V}$-$\B{opCat}$ is in a sense broader than
$\VCocat^{(\op)}$, since for example for $\ca{V}=\Mod_R$ any cocategory is a special case of an opcategory: due to
\begin{displaymath}
\xymatrix@C=.5in
{A(x,y)\ar[r]^-{\Delta_{x,y}}\ar@{-->}[dr]_-{\Delta_{x,z,y}} & \sum\limits_{z\in X}{A(x,z)\otimes A(z,y)}\ar@{>->}[r]
& \prod\limits_{z\in X}A(x,z)\otimes A(z,y)\ar[dl]^-{\pi_z} \\
& A(x,z)\otimes A(z,y) &}
\end{displaymath}
a $\Mod_R$-cocategory is a $\Mod_R$-opcategory for which the cocomposition $\Delta_{x,z,y}$ vanishes for all
but finitely many objects $z$.
However, for the current development it is crucial that a $\ca{V}$-cocategory can be expressed as a comonoid in the monoidal endo-hom-categories
of $\VMat$. A $\ca{V}$-opcategory on the other hand does not seem to be expressed as a comonoid, since for example 
the tensor product of such a monoidal category would have to commute with the products rather than the sums. 

Moreover, the cocategories point of view seems to be useful in other settings too;
in a series of papers related to $A_\infty$-categories, see \cite{Ainfinitycategories,Ainfinityalgebras,Equalizerscocompletecocategories},
the authors study and employ \emph{cocategories} and \emph{cocategory homorphisms} which are precisely our $\Mod_R$-enriched case,
in order to express $A_\infty$-functor categories as internal hom-objects, building models of a closed structure of the homotopy category
of differential graded categories.
\end{rmk}

Due to the fibrant structure of $\VMMat$, we again the following as a corollary to \cref{MonComonfibred}.

\begin{prop}\label{VCocatopfibred}
The category $\ca{V}$-$\B{Cocat}$ is an opfibration over $\B{Set}$.
\end{prop}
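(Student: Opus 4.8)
The plan is to obtain this statement as an immediate corollary of \cref{MonComonfibred}, applied to the double category $\caa{D}=\VMMat$. Two inputs make this possible. First, by \cref{VMatmonoidaldoublefibrant}, $\VMMat$ is fibrant under the standing hypotheses on $\ca{V}$, and its vertical category is $\VMMat_0=\Set$. Second, by the identification established immediately above, $\Cmd(\VMMat)=\Comon(\VMMat)=\VCocat$: objects of both sides are comonads in the horizontal bicategory $\VMat$, and comonad morphisms unwind in components precisely to the cofunctor axioms \cref{cofuncts}. Granting these, the general opfibred structure of $\Cmd(\caa{D})$ over $\caa{D}_0$ from \cref{MonComonfibred} specializes directly to an opfibration $\VCocat\to\Set$, with forgetful functor sending a cocategory $C_X$ to its object set $X$ and a cofunctor $F_f$ to $f$.

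To make the construction explicit in this setting, I would spell out the pseudofunctor underlying the opfibration, following the proof of \cref{MonComonfibred}. It is
\[
\ps{K}\colon\Set\to\Cat,\qquad X\mapsto\Comon(\VMat(X,X)),
\]
the fibre over $X$ being the category in which $\ca{V}$-cocategories with object set $X$ live (a comonad being a comonoid in the monoidal endo-hom-category). A function $f\colon X\to Y$ is sent to the pushforward $f_*\odot-\odot f^*$, using the companion $f_*$ and conjoint $f^*$ of \cref{f*}. This functor is well-defined on comonoids because, by \cref{pseudofunctorsrestrict} applied to $\VMMat$, the functor $\hat f\odot-\odot\check f=f_*\odot-\odot f^*$ is colax monoidal, and hence carries the comonoid structure of $C_X$ to the induced structure on $(f_* C f^*)_Y$ described by \cref{compositecomonoid}. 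In matrix terms this pushforward reads $(f_* C f^*)(y,y')\cong\sum_{fx=y,\,fx'=y'}C(x,x')$, as in \cref{machine2}. The cocartesian lifting of $C_X$ along $f$ is then the comonad morphism $p_2\odot 1_C\odot q_2\colon C\Rightarrow f_*\odot C\odot f^*$ of \cref{cocartliftComonD}, now read as a morphism of cocategories.

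Since the entire content of this proposition is subsumed by \cref{MonComonfibred}, there is no genuine obstacle to overcome: the only points requiring care are the two inputs invoked above, both already in place. If one preferred to verify the cocartesian universal property by hand rather than citing the general result, the routine (and dual to the $\VCat$ analysis) step would be to check that every comonad morphism out of $C_X$ lying over a function that factors through $f$ descends uniquely through the lifting $p_2\odot 1_C\odot q_2$; I would not grind through this computation, as it is exactly the dualization of the Grothendieck-category isomorphism already carried out for monads.
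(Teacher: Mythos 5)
Your proposal is correct and follows exactly the paper's route: the paper also obtains this proposition as a corollary of \cref{MonComonfibred} applied to the fibrant $\VMMat$, with the underlying pseudofunctor being $\ps{F}$ of \cref{Grphpseudofunctors} restricted to the categories $\Comon(\VMat(X,X))$, its well-definedness on vertical arrows coming from the colax monoidality of $f_*\odot-\odot f^*$ (\cref{pseudofunctorsrestrict}, recorded in the paper as \cref{C*comonoid} with induced structure \cref{compositecomonoid}). The explicit cocartesian lifting $p_2\odot 1_C\odot q_2$ you identify agrees with \cref{cocartliftComonD} specialized to $\VMMat$, so nothing is missing.
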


The pseudofunctor giving rise to this opfibration is $\ps{F}$ from \cref{Grphpseudofunctors}, now restricted to the categories
of comonoids of the endo-hom-categories. Again for this to be well-defined, dually to \cref{B*monoid}
we have the following consequence of $f_*\circ\textrm{-}\circ f^*$ being colax monoidal, \cref{pseudofunctorsrestrict}.

\begin{lem}\label{C*comonoid}
Let $C_X$ be a $\ca{V}$-cocategory. If $f:X\to Y$ is a function, then
\begin{displaymath}
\SelectTips{eu}{10}\xymatrix
{Y\ar[r]|-{\object@{|}}^-{f^*} & X\ar[r]|-{\object@{|}}^-C & X\ar[r]|-{\object@{|}}^-{f_*} & Y}
\end{displaymath}
is a comonoid in $\ca{V}$-$\B{Mat}(Y,Y)$, i.e. $(f_*Cf^*)_Y$ is a $\ca{V}$-cocategory.
\end{lem}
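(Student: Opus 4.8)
The plan is to obtain this statement as the exact dual of \cref{B*monoid}, by invoking the colax monoidal structure recorded in \cref{pseudofunctorsrestrict}. Recall from the discussion following \cref{cocategory} that a $\ca{V}$-cocategory $C_X$ is by definition precisely a comonoid in the monoidal endo-hom-category $(\ca{V}\textrm{-}\B{Mat}(X,X),\circ,1_X)$. Moreover, by \cref{VMatmonoidaldoublefibrant} the double category $\VMMat$ is fibrant, and the companion and conjoint of a function $f\colon X\to Y$ are the matrices $\hat{f}=f_*$ and $\check{f}=f^*$ of \cref{f*}. Consequently the composite $f_*\circ C\circ f^*$ is nothing but $\hat{f}\odot C\odot\check{f}$, the value at $C$ of the functor $\hat{f}\odot-\odot\check{f}\colon\ca{V}\textrm{-}\B{Mat}(X,X)\to\ca{V}\textrm{-}\B{Mat}(Y,Y)$, which is the case $\caa{D}=\VMMat$ of \cref{pseudofunctorsrestrict}.

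The decisive step is then immediate. By the second clause of \cref{pseudofunctorsrestrict}, this functor is colax monoidal for the horizontal-composition monoidal structures on the endo-hom-categories. Since colax monoidal functors carry comonoids to comonoids (the dual clause of \cref{monf}, equivalently \cref{MonFdouble} read for comonads), the comonoid $C$ in $\ca{V}\textrm{-}\B{Mat}(X,X)$ is sent to a comonoid $f_*\circ C\circ f^*$ in $\ca{V}\textrm{-}\B{Mat}(Y,Y)$; that is, $(f_*Cf^*)_Y$ is a $\ca{V}$-cocategory, as claimed.

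To make the output explicit I would record that the induced cocomultiplication and counit are those displayed in \cref{compositecomonoid}: the cocomultiplication is built from $\Delta$ together with the unit $\feta$ inserted between the two copies of $C$, and the counit from $\epsilon$ pasted with $\fepsilon$, where $\feta,\fepsilon$ are the unit and counit of the adjunction $f_*\dashv f^*$ of \cref{fetafepsilon}; unwinding these through the matrix identities of \cref{matrixmachine} recovers the component formulas on $Y$. There is essentially no obstacle here, the genuine work having already been done in \cref{pseudofunctorsrestrict}; the only point demanding a little care is the bookkeeping of the $\odot$-versus-$\circ$ composition order and the orientation of companion and conjoint, so that $f_*\circ C\circ f^*$ is correctly identified with $\hat{f}\odot C\odot\check{f}$ and not its formal opposite, which is settled by the source and target conventions of \cref{deficompconj} and \cref{f*}.
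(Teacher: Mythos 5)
Your proposal is correct and follows essentially the same route as the paper: the lemma is stated there precisely as a consequence of $f_*\odot-\odot f^*=\hat{f}\odot-\odot\check{f}$ being colax monoidal (\cref{pseudofunctorsrestrict}, instantiated at $\caa{D}=\VMMat$), hence carrying the comonoid $C$ in $\ca{V}\textrm{-}\B{Mat}(X,X)$ to a comonoid in $\ca{V}\textrm{-}\B{Mat}(Y,Y)$, with the induced comultiplication and counit exactly those of \cref{compositecomonoid} built from $\Delta,\epsilon$ and the unit and counit of $f_*\dashv f^*$.
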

The new comultiplication and counit come from \cref{compositecomonoid}; using pasting operations,
we can express them as $\Delta'=f_*\left((C\feta C)\cdot\Delta\right)f^*$, $\epsilon'=\fepsilon\cdot(f_*\epsilon f^*)$.

Once more, the Grothendieck category gives an isomorphic characterization of the category of
$\ca{V}$-cocategories and cofunctors, completely in terms of $\VMat$.

\begin{lem}\label{charactVCocat}
Objects in $\ca{V}$-$\B{Cocat}$ are $(C,X)\in\Comon(\ca{V}\textrm{-}\B{Mat}(X,X))\times\B{Set}$ and morphisms are $(\psi,f):(C,X)\to(D,Y)$ where 
\begin{displaymath}
\begin{cases}
\psi:f_*Cf^*\to D &\textrm{in }\Comon(\ca{V}\textrm{-}\B{Mat}(Y,Y))\\
f:X\to Y & \textrm{in }\B{Set}.
\end{cases}
\end{displaymath}
\end{lem}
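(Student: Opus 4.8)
The plan is to read this description directly off the opfibration structure of \cref{VCocatopfibred}, which is nothing but the specialization to $\caa{D}=\VMMat$ of the general comonad opfibration of \cref{MonComonfibred}. Recall that $\ca{V}$-$\B{Cocat}=\Comon(\VMMat)$, and that by \cref{VCocatopfibred} this category is opfibred over $\B{Set}$ through the pseudofunctor $\ps{K}\colon\B{Set}\to\B{Cat}$ obtained by restricting $\ps{F}$ of \cref{Grphpseudofunctors} to the comonoid fibres (the $\ps{K}$ of \cref{MonComonfibred}); explicitly $\ps{K}X=\Comon(\ca{V}\textrm{-}\B{Mat}(X,X))$ and $\ps{K}f=f_*\odot-\odot f^*$, the latter landing in comonoids by \cref{C*comonoid}. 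By the Grothendieck correspondence \cref{Grothendieckcorrespondence} it then suffices to identify the total category $\Gr{G}\ps{K}$ with $\ca{V}$-$\B{Cocat}$.

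For objects this is immediate: an object of $\Gr{G}\ps{K}$ is a pair $(C,X)$ with $X\in\B{Set}$ and $C\in\ps{K}X=\Comon(\ca{V}\textrm{-}\B{Mat}(X,X))$, which is exactly a $\ca{V}$-cocategory on the object set $X$. For morphisms, the covariant (opfibred) Grothendieck convention presents an arrow $(C,X)\to(D,Y)$ as a pair $(\psi,f)$ consisting of $f\colon X\to Y$ in $\B{Set}$ together with $\psi\colon(\ps{K}f)(C)=f_*Cf^*\to D$ in $\Comon(\ca{V}\textrm{-}\B{Mat}(Y,Y))$, where the companion and conjoint are the concrete $\ca{V}$-matrices $f_*,f^*$ of \cref{f*}. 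This is precisely the claimed data, so the characterization follows once the identity-on-objects functor $\Gr{G}\ps{K}\to\ca{V}$-$\B{Cocat}$ is shown to be bijective on hom-sets (this being the exact dual of the situation recorded for $\ca{V}$-categories in \cref{charactVCat}).

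The required bijection is the exact dual of the one in \cref{Grothiso}: a $\ca{V}$-cofunctor $\alpha_f\colon C_X\to D_Y$ is sent to the composite comonoid morphism $\psi=p_1\odot\alpha\odot q_1$, pasting $\alpha$ with the structure $2$-cells $p_1,q_1$ of the companion $f_*$ and conjoint $f^*$ from \cref{deficompconj}, while $\psi$ is sent back by pasting with the dual cells $p_2,q_2$, just as $\beta\mapsto(q_1\odot1_N\odot p_1)\beta$ inverts the monad map of \cref{Grothiso}. That these assignments are mutually inverse and functorial follows at once from the companion and conjoint identities $p_1p_2=q_1q_2=1_f$ together with $p_1\odot p_2\cong 1_{f_*}$ and $q_2\odot q_1\cong1_{f^*}$. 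The only substantive point is that $p_1\odot\alpha\odot q_1$ respects the induced comultiplication and counit of $f_*Cf^*$ recorded in \cref{C*comonoid}; but this compatibility is exactly what was verified, in full generality, in the proof of \cref{MonComonfibred}, so here it is enough to substitute the explicit $f_*,f^*$ and no further calculation is needed. Thus the only potential obstacle has already been dealt with upstream, and the lemma is a genuine specialization rather than a fresh argument.
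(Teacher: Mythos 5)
Your proof is correct and is essentially the paper's own argument: the lemma is obtained there exactly as you do it, by specializing the Grothendieck construction of \cref{MonComonfibred} (the pseudofunctor $\ps{F}$ of \cref{Grphpseudofunctors} restricted to comonoid fibres via \cref{C*comonoid}) to $\caa{D}=\VMMat$, with the morphism bijection given dually to \cref{Grothiso} by pasting with $p_1,q_1$. Nothing is missing; your identification of the comonoid-compatibility of $p_1\odot\alpha\odot q_1$ as the only substantive point, already handled upstream, matches the paper's treatment.
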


Comparing with the two equivalent formulations for $\ca{V}$-graph morphisms of \cref{charactVGrph}, notice that $\ca{V}$-functors 
are expressed as pairs $(\phi,f)$ and $\ca{V}$-cofunctors are expressed as pairs $(\psi,f)$, where the 2-cells
$\phi:G\Rightarrow f^*Hf_*$ and $\psi:f_*Gf^*\Rightarrow H$ are mates under $f_*\dashv f^*$.

Writing explicitly what it means for $\psi$ to be a comonoid morphism, which is clearer in terms of its mate 
$\hat{\phi}:f_*C\Rightarrow Df_*$, we obtain
\begin{align}\label{cofunctaxioms}
\xymatrix @C=.5in @R=.5in
{X \ar[r]|-{\object@{|}}^-C\ar[d]|-{\object@{|}}_-{f_*}\rrtwocell<\omit>{<-3>\Delta}\drtwocell<\omit>{\hat{\phi}}\ar @/^6ex/[rr]|-{\object@{|}}^-C
& X\ar[r]|-{\object@{|}}^-C\ar[d]|-{\object@{|}}^-{f_*}\drtwocell<\omit>{\hat{\phi}} & X \ar[d]|-{\object@{|}}^-{f_*}\\
Y \ar[r]|-{\object@{|}}_-D & Y \ar[r]|-{\object@{|}}_-D & Y} 
&\xymatrix @R=.2in{\hole \\ = \\ \hole}
\xymatrix @C=.5in @R=.3in
{X\ar[d]|-{\object@{|}}_-{f_*}\ar[rr]|-{\object@{|}}^-C\drrtwocell<\omit>{\hat{\phi}} && X \ar[d]|-{\object@{|}}^-{f_*}\\
Y \rrtwocell<\omit>{<4>\Delta} \ar @/_/[dr]|-{\object@{|}}_-D \ar[rr]|-{\object@{|}}_-D && Y\\
& Y \ar @/_/[ur]|-{\object@{|}}_-D &} \\
\xymatrix @C=1in  @R=.4in
{X\rtwocell<\omit>{<-2>\epsilon}\ar @/^5ex/[r]|-{\object@{|}}^-C\ar[r]|-{\object@{|}}_-{1_X}\ar[d]|-{\object@{|}}_-{f_*}^{\phantom{ab}\cong}
\ar @{.>}[dr]|-{\object@{|}}_-{f_*} & X \ar[d]|-{\object@{|}}^-{f_*}_{\cong\phantom{ab}} \\
Y\ar[r]|-{\object@{|}}_-{1_Y} & Y}
&\xymatrix{ = \\ \hole}
\xymatrix @C=1in @R=.4in
{X\ar[r]|-{\object@{|}}^-C \ar[d]|-{\object@{|}}_-{f_*} & X \ar[d]|-{\object@{|}}^-{f_*} \\
Y\ar @/_5ex/[r]|-{\object@{|}}_-{1_Y} \rtwocell<\omit>{<2.5>\epsilon}\ar[r]|-{\object@{|}}^-D\rtwocell<\omit>{<-4>\hat{\phi}} & Y}\nonumber
\end{align}
The components of $\hat{\phi}$ are given by $\sum_{x'\in f^{\text{-}1}y}I\otimes C(x,x')\to D(fx,fx')\otimes I$
and the above relations componentwise give \cref{cofuncts} up to isomorphism.
Dually to \cref{Vfunct=monadopfunct}, cofunctors correspond to specific types of lax comonad functors
in $\VMat$; viewing them as comonad homomorphisms in $\VMMat$ is conceptually simpler and less technical,
but both expressions will be of use.

When $\ca{V}$ is braided monoidal, by \cref{DendoMonDComonDmonoidal} $\ca{V}$-$\B{Cocat}$ obtains a mo\-noi\-dal 
structure, as for $\ca{V}$-graphs and categories: for two $\ca{V}$-cocategories $C_X$ and $D_Y$,
$(C\otimes D)_{X\times Y}$ is given by $(C\otimes D)\left((x,y),(z,w)\right)=C(x,z)\otimes D(y,w)$
in $\ca{V}$. Writing down in components what the dual of \cref{MonFdouble} gives for the pseudo double functor
$\otimes$ on $\VMMat$, we deduce 
that the cocomposition is
\begin{displaymath}
 \xymatrix @C=.7in @R=.2in
{C(x,z)\otimes D(y,w)\ar@{-->}[r] \ar@/_8ex/[ddr]_-{\Delta^C_{x,z}\otimes\Delta^D_{y,w}} &
\sum\limits_{(x',y')}{C(x,x')\otimes D(y,y')\otimes C(x',z)\otimes D(y',w)} \\
& \sum\limits_{(x',y')}{C(x,x')\otimes C(x',z)\otimes D(y,y')\otimes D(y',w)}\ar[u]^-s_-{\cong} \\
& \sum\limits_{x'}{C(x,x')\otimes C(x',z)}\otimes\sum\limits_{y'}{D(y,y')\otimes D(y',w)}
\ar[u]_-{\cong}}
\end{displaymath}
and the coidentity element is $C(x,x)\otimes D(y,y)\xrightarrow{\;\epsilon^C_{x,x}\otimes\epsilon^D_{y,y}\;}I\otimes I\cong I$.
The monoidal $\VCocat$ also inherits the braiding or symmetry from $\ca{V}$.

Dually to \cref{freeVcatfunctor}, we now construct the cofree $\ca{V}$-cocategory functor using the cofree comonoid construction.
As discussed in \cref{Monoidalcats}, the existence of the cofree comonoid usually requires more assumptions on $\ca{V}$
than the free monoid, and the following construction is no exception.

\begin{prop}\label{cofreeVcocatfunctor}
Suppose $\ca{V}$ is a locally presentable monoidal category, such that $\otimes$ preserves colimits in both variables. 
Then, the evident forgetful functor
\begin{displaymath}
\tilde{U}:\ca{V}\textrm{-}\B{Cocat}\longrightarrow\ca{V}\textrm{-}\B{Grph}
\end{displaymath}
has a right adjoint $\tilde{R}$, which maps a $\ca{V}$-graph $G_Y$ to the cofree comonoid $(RG,Y)$ on $G\in\ca{V}$-$\B{Mat}(Y,Y)$.
\end{prop}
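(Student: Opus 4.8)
The plan is to dualize the proof of \cref{freeVcatfunctor}, replacing the free monoid construction in each endo-hom-category by the cofree comonoid, whose existence is now the one genuinely nontrivial ingredient and is supplied by the local presentability hypotheses. First I would fix a set $Y$ and work inside the monoidal category $(\ca{V}\textrm{-}\B{Mat}(Y,Y),\circ,1_Y)$. By \cref{cofreecomonVMat} (itself a consequence of \cref{propVMat} and \cref{moncomonadm}), this category is locally presentable with $\circ$ preserving colimits, and the forgetful functor $U\colon\Comon(\ca{V}\textrm{-}\B{Mat}(Y,Y))\to\ca{V}\textrm{-}\B{Mat}(Y,Y)$ is comonadic, hence admits a right adjoint $R$, the cofree comonoid functor, with counit $\varepsilon_G\colon U(RG)\to G$. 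For a $\ca{V}$-graph $G_Y$, viewed as an object $G\in\ca{V}\textrm{-}\B{Mat}(Y,Y)$, the comonoid $RG$ is by \cref{charactVCocat} a $\ca{V}$-cocategory $(RG,Y)$, and I define $\tilde{R}(G_Y):=(RG,Y)$ on objects.

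Next I would establish the universal property directly, leaving functoriality of $\tilde{R}$ and naturality to follow formally from it. Let $C_X$ be a $\ca{V}$-cocategory and $F\colon\tilde{U}(C_X)\to G_Y$ a $\ca{V}$-graph morphism. By the second characterization in \cref{charactVGrph}, $F$ is a pair $(\psi,f)$ with $f\colon X\to Y$ a function and $\psi\colon f_*Cf^*\to G$ an arrow in $\ca{V}\textrm{-}\B{Mat}(Y,Y)$. By \cref{C*comonoid} the object $f_*Cf^*$ carries a comonoid structure, so, since $RG$ is cofree on $G$, the arrow $\psi$ factors uniquely through $\varepsilon_G$ as $\psi=\varepsilon_G\circ U\chi$ for a unique comonoid morphism $\chi\colon f_*Cf^*\to RG$ in $\Comon(\ca{V}\textrm{-}\B{Mat}(Y,Y))$. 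Invoking \cref{charactVCocat}, the pair $(\chi,f)$ is precisely a $\ca{V}$-cofunctor $H\colon C_X\to(RG,Y)$, and this is the desired unique factorization.

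Finally I would record that the counit of $\tilde{U}\dashv\tilde{R}$ at $G_Y$ is the identity-on-objects $\ca{V}$-graph morphism $(\varepsilon_G,\id_Y)$, dual to the summand inclusion $\tilde{\eta}$ of \cref{freeVcatfunctor} and using $(\id_Y)_*=(\id_Y)^*=1_Y$. Verifying $\tilde{\varepsilon}\circ\tilde{U}(H)=F$ together with the uniqueness of $H$ reduces, under the two characterization lemmas, exactly to the equation $\psi=\varepsilon_G\circ U\chi$ and the uniqueness of $\chi$ in the fibre; since the object-set function $f$ is shared and unconstrained on both sides, the global universal property collapses to the fibrewise cofree one.

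The hard part will not be any single calculation but the conceptual bookkeeping just described: one must be sure that taking the cofree comonoid \emph{separately} in each fibre $\ca{V}\textrm{-}\B{Mat}(Y,Y)$ genuinely assembles into a right adjoint on the total categories, which change object sets along morphisms. This is precisely what \cref{charactVCocat,charactVGrph} secure, by exhibiting both cofunctors into $(RG,Y)$ and graph morphisms into $G_Y$ as a common object-set map $f$ together with a morphism in the single fibre $\ca{V}\textrm{-}\B{Mat}(Y,Y)$ obtained by transporting $C$ along $f_*(\text{-})f^*$; the cofree universal property in that fibre then furnishes the bijection on morphisms, while \cref{cofreecomonVMat} guarantees the existence of $R$ there.
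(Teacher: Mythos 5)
Your proposal is correct and follows essentially the same route as the paper's own proof: existence of the fibrewise cofree comonoid functor via \cref{cofreecomonVMat}, identification of $(RG,Y)$ as a $\ca{V}$-cocategory and of morphisms via \cref{charactVCocat} and \cref{charactVGrph}, the comonoid structure on $f_*Cf^*$ from \cref{C*comonoid}, and the universal property of $\tilde{\varepsilon}=(\varepsilon,\id_Y)$ reduced to the cofree property of $RG$ in the fibre $\ca{V}\textrm{-}\B{Mat}(Y,Y)$. No gaps to report.
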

\begin{proof}
The forgetful $\tilde{U}$ maps any $\ca{V}$-cocategory $C_X$ to its underlying $\ca{V}$-graph $(UC)_X$, where $U$ is the forgetful from
the category of comonoids of the monoidal ($\ca{V}$-$\B{Mat}(Y,Y),\circ,1_Y)$. By \cref{cofreecomonVMat}, $U$ has a right adjoint
\begin{displaymath}
R:\ca{V}\textrm{-}\Mat(Y,Y)\longrightarrow\Comon(\ca{V}\textrm{-}\Mat(Y,Y)),
\end{displaymath}
the cofree comonoid functor. By \cref{charactVCocat}, $(RG)_Y$ for any$\SelectTips{eu}{10}\xymatrix @C=.2in{G:Y\ar[r]|-{\object@{|}} & Y}$is
a $\ca{V}$-cocategory; we claim that
\begin{displaymath}
\tilde{R}:\xymatrix @R=.02in
{\ca{V}\textrm{-}\B{Grph}\ar[r] & \ca{V}\textrm{-}\B{Cocat}\\
G_Y\ar@{|->}[r] & (RG)_Y}
\end{displaymath}
is a right adjoint of $\tilde{U}$. It suffices to show that for $\varepsilon$ the counit of $U\dashv R$,
the $\ca{V}$-graph arrow $\tilde{\varepsilon}=(\varepsilon,\mathrm{id}_Y):\tilde{U}\tilde{R}(G_Y)\to G_Y$ is universal. This means that for 
any $\ca{V}$-cocategory $C_X$ and any $\ca{V}$-graph morphism $F$ from its underlying $\tilde{U}(C_X)$ to $G_Y$, there exists 
a unique $\ca{V}$-cofunctor $H:C_X\to (RG)_Y$ such that
\begin{equation}\label{thisuniversality}
\xymatrix @R=.35in
{\tilde{U}(RG_Y)\ar[rr]^-{\tilde{\varepsilon}} && G_Y\\ & \tilde{U}(C_X)\ar @{.>}[ul]^-{\tilde{U}H}\ar[ur]_-F &}
\end{equation} 
commutes. Indeed, suppose $F$ is $(\psi,f)$ with $f:X\to Y$ and $\psi:f_*Cf^*\to G$ in $\ca{V}$-$\B{Mat}(Y,Y)$. Since
$f_*Cf^*$ is in $\Comon(\ca{V}$-$\B{Mat}(Y,Y))$ due to \cref{C*comonoid} and $RG$ is the cofree comonoid on $G$, $\psi$ extends
uniquely to a comonoid arrow $\chi:f_*Cf^*\to RG$ such that
\begin{displaymath}
\xymatrix @R=.35in
{RG \ar[rr]^-{\varepsilon} && G\\
& f_*Cf^* \ar[ur]_-{\psi}
\ar @{.>}[ul]^-{U\chi} &}
\end{displaymath}
commutes in $\ca{V}$-$\B{Mat}(Y,Y)$. By \cref{charactVCocat}, this $\chi$ along with the function $f:X\to Y$ determines
a $\ca{V}$-cofunctor $H:(C,X)\to(RG,Y)$ which makes \cref{thisuniversality} commute.
Therefore $\tilde{R}$ extends to a functor establishing the `cofree $\ca{V}$-cocategory'
adjunction $\tilde{U}\dashv\tilde{R}:\ca{V}\text{-}\B{Grph}\to\ca{V}\text{-}\B{Cocat}$.
\end{proof}

At this point, properties of $\ca{V}$-$\B{Cocat}$ cease to be straightforward dualizations of the $\ca{V}$-$\B{Cat}$
ones. The results that follow more or less employ similar techniques as for the one-object case,
that of comonoids in a monoidal category, generalized to this context.
 
The construction of colimits in $\ca{V}$-$\B{Cocat}$ follows from that in $\ca{V}$-$\B{Grph}$ of \cref{Vgraphprops}, with 
an induced extra structure on the colimiting cocone.

\begin{prop}\label{VCocatcocomplete}
If $\ca{V}$ is a locally presentable monoidal category such that $\otimes$ preserves colimits in both terms,
the category $\ca{V}$-$\B{Cocat}$ has all colimits.
\end{prop}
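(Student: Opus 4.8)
The plan is to transport the $\VGrph$ colimit of \cref{Vgraphprops}(2) along the opfibration $\VCocat\to\B{Set}$ of \cref{VCocatopfibred}, supplying the comonad structure fibrewise. I work throughout with the presentation of \cref{charactVCocat}, so a diagram $F\colon\ca{J}\to\VCocat$ is given on objects by $F(j)=(C_j,X_j)$ with $C_j\in\Comon(\VMat(X_j,X_j))$, and on an arrow $\theta\colon j\to k$ by a pair $(\psi_\theta,f_\theta)$ whose comonoid component is $\psi_\theta\colon(f_\theta)_*C_j(f_\theta)^*\to C_k$.

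First I would form the colimit $(\tau_j\colon X_j\to X\mid j\in\ca{J})$ of the underlying object-sets in the cocomplete $\B{Set}$. Exactly as in the proof of \cref{Vgraphprops}(2), the relations $\tau_j=\tau_k f_\theta$ and the coherence isomorphisms $\zeta,\xi$ of \cref{isosofstars} assemble a single diagram in the fibre over $X$,
\[
K\colon\ca{J}\to\Comon(\VMat(X,X)),\qquad j\mapsto(\tau_j)_*C_j(\tau_j)^*,
\]
each vertex of which is a comonoid by \cref{C*comonoid}, with $K(\theta)$ the pasting of $\psi_\theta$ with $\zeta,\xi$ read as a comonoid morphism. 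By \cref{cofreecomonVMat} the fibre $\Comon(\VMat(X,X))$ is locally presentable, hence cocomplete, so I may take $C=\colim K$ with colimiting cocone $(\lambda_j\colon(\tau_j)_*C_j(\tau_j)^*\to C)$. Since this colimit is created by the forgetful functor to $\VMat(X,X)$ by \cref{two}, the underlying $\ca{V}$-graph of $(C,X)$ is precisely the graph colimit of \cref{Vgraphprops}(2), now carrying the induced comonad structure; the pairs $(\lambda_j,\tau_j)$ then form a cocone $(C_j,X_j)\to(C,X)$ in $\VCocat$.

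It remains to check this cocone is colimiting. Given a competing cocone with vertex $(D,Y)$ and legs $(\chi_j,g_j)$, the functions $g_j$ factor uniquely as $g_j=h\tau_j$ for a single $h\colon X\to Y$; reindexing each $\chi_j$ along $h$ and applying $\zeta,\xi$ rewrites the legs as a cocone $h_*K(-)h^*\Rightarrow D$ in $\Comon(\VMat(Y,Y))$. The essential point is that the pushforward $h_*\odot-\odot h^*$ is cocontinuous, whence $h_*Ch^*\cong\colim\bigl(h_*K(-)h^*\bigr)$ and the cocone $(\chi_j)$ factors through a unique comonoid morphism $h_*Ch^*\to D$; this is the mediating $\ca{V}$-cofunctor $(C,X)\to(D,Y)$, whose uniqueness follows by combining the two uniqueness statements.

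The cocontinuity of the reindexing functors is the only genuinely new ingredient beyond the graph argument, and it reduces cleanly: fibre colimits are created from $\VMat(X,X)$ by \cref{two}, while on matrices $h_*\odot-\odot h^*$ preserves colimits because $\odot$ does so in each variable (\cref{propVMat}, using that $\otimes$ preserves colimits). The main obstacle is thus bookkeeping rather than conceptual — the \cref{isosofstars} manipulations verifying that pre- and post-composition with $h_*,h^*$ identifies the transported cocone with $(\chi_j)$ compatibly with all coherence isomorphisms, precisely the comonoidal refinement of the calculation already performed for $\ca{V}$-graphs.
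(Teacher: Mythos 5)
Your proof is correct and follows essentially the same route as the paper's: form the colimit of object-sets, transport each $C_j$ into the fibre over $X$ via \cref{C*comonoid} and the coherence isomorphisms of \cref{isosofstars}, and use the cocompleteness/comonadicity of $\Comon(\VMat(X,X))$ from \cref{cofreecomonVMat} to obtain the colimiting comonoid whose underlying graph is the colimit of \cref{Vgraphprops}. The only differences are minor: you spell out the universal-property check against an arbitrary competing cocone (which the paper compresses into ``the colimit lifts''), using cocontinuity of $h_*\odot-\odot h^*$ via \cref{propVMat}, and your citation of \cref{two} for creation of fibre colimits should instead be to the comonadicity in \cref{cofreecomonVMat}, since $\Comon(\VMat(X,X))$ is not literally a category of functor coalgebras.
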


\begin{proof}
Consider a diagram in $\ca{V}$-$\B{Cocat}$ given by
\begin{displaymath}
D:\xymatrix @R=.05in @C=.6in
{\ca{J}\ar[r] & \ca{V}\textrm{-}\B{Cocat} \\
j\ar@{|.>}[r]
\ar[dd]_-\theta & 
(C_j,X_j)\ar[dd]^-{(\psi_\theta,f_\theta)} \\
\hole \\
k\ar@{|.>}[r] & (C_k,X_k)}
\end{displaymath}
for a small category $\ca{J}$. By \cref{charactVCocat}, $f_\theta:X_j\to X_k$ is a function and $\psi_\theta$ is an arrow
$(f_\theta)_*C_j(f_\theta)^*\to C_k$ in $\Comon(\ca{V}$-$\Mat(X_k,X_k))$. First constructing the colimit of the underlying $\ca{V}$-graphs,
we obtain a colimiting cocone
\begin{equation}\label{colimgraph}
 \big((C_j,X_j)\xrightarrow{\;(\lambda_j,\tau_j\;)}
(C,X)\,|\,j\in\ca{J}\big)
\end{equation}
in $\ca{V}$-$\B{Grph}$, where $(\tau_j:X_j\to X\,|\,j\in\ca{J})$ is the colimit of the sets of objects of the $\ca{V}$-cocategories in $\B{Set}$,
and $(\lambda_j:(\tau_j)_*C_j(\tau_j)^*\to C\,|\,j\in\ca{J})$ is the colimiting cocone of the diagram $K$ as in \cref{defKdiagram}
in the cocomplete $\ca{V}$-$\Mat(X,X)$. In fact, $K:\ca{J}\to\ca{V}\textrm{-}\Mat(X,X)$ lands inside $\Comon(\ca{V}$-$\Mat(X,X))$:
\cref{C*comonoid} ensures that $\ca{V}$-matrices of the form $f_*Cf^*$ for any comonoid $C$ inherit a comonoid structure, and the composite
arrows \cref{Konarrows} where the middle 2-cell is now the comonoid arrow $\psi_\theta$ ensure that $K\theta$ are comonoid morphisms.

By \cref{cofreecomonVMat}, the category of comonoids is comonadic over $\ca{V}\textrm{-}\Mat(X,X)$ hence the forgetful functor
creates all colimits and so$\SelectTips{eu}{10}\xymatrix@C=.2in{C:X\ar[r]|-{\object@{|}} & X}$obtains a unique comonoid structure.
Moreover, the legs of the cocone
\begin{displaymath}
\xymatrix
{X_j\ar[r]|-{\object@{|}} ^-{C_j} & X_j \ar[d]|-{\object@{|}} ^-{(\tau_j)_*}\\
X\ar[u]|-{\object@{|}} ^-{(\tau_j)^*}\ar[r]|-{\object@{|}}_-{C} \rtwocell<\omit>{<-4>\lambda_j} & X}
\end{displaymath}
are comonoid arrows, so together with the functions $\tau_j$ they form $\ca{V}$-cofunctors. Therefore the colimit \cref{colimgraph}
lifts in $\ca{V}$-$\B{Cocat}$.
\end{proof}

We will now apply techniques similar to \cref{moncomonadm} regarding the expression of the category $\Comon(\ca{V})$ as an equifier,
to obtain the following.

\begin{prop}\label{VCocatlocpresent}
Suppose that $\ca{V}$ is a locally presentable monoidal category, such that $\otimes$ preserves colimits in both terms.
Then, $\ca{V}$-$\B{Cocat}$ is a locally presentable category.
\end{prop}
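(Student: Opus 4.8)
The plan is to realise $\VCocat$ as an iterated \emph{inserter} and \emph{equifier} of accessible categories along accessible functors, exactly as $\Comon(\ca{V})$ was handled in the proof of \cref{moncomonadm}, and then to combine the resulting accessibility with the cocompleteness already proved in \cref{VCocatcocomplete}; recall that a cocomplete accessible category is locally presentable. The starting point is \cref{Vgraphprops}, by which $\VGrph$ is locally presentable, hence accessible. By the matrix description following \cref{cocategory}, a $\ca{V}$-cocategory is a $\ca{V}$-graph $C_X$ equipped with identity-on-objects maps $\Delta\colon C\to C\odot C$ and $\epsilon\colon C\to 1_X$ subject to the coassociativity and counit axioms of \cref{Vfunctaxioms2cells}, while by \cref{charactVCocat} a cofunctor is exactly a graph morphism commuting with $\Delta$ and $\epsilon$.

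First I would introduce two endofunctors of $\VGrph$ lying over $\Set$: the squaring functor $P\colon G_X\mapsto (G\odot G)_X$, acting on a graph morphism $\alpha_f$ by $\alpha_f\odot\alpha_f$, and the functor $J\colon G_X\mapsto (1_X)_X$, namely $\B{1}\circ Q$ for the object-set functor $Q\colon\VGrph\to\Set$. Using the inserter formalism of \cref{functoralgebrasprops}, the inserter $\B{Ins}(\id,P)=\Coalg P$ records the datum $\Delta$, and a second inserter over it, of the two functors $(G,\Delta)\mapsto G$ and $(G,\Delta)\mapsto 1_{\ob G}$, records the datum $\epsilon$; call the resulting category $\ca{E}$, with objects $(G,\Delta,\epsilon)$ and no axioms imposed. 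The three comonad axioms are then each an equality of two natural transformations built from $\Delta$, $\epsilon$ and the coherence isomorphisms of $\VMat$ (coassociativity, valued in $G^{\odot 3}$, and the two counit equations, valued in $1_{\ob G}\odot G\cong G\cong G\odot 1_{\ob G}$); the successive equifiers $\B{Eq}$ of these pairs carve out the full subcategory of $\ca{E}$ on the objects satisfying \cref{Vfunctaxioms2cells}. Since morphisms in $\ca{E}$ are precisely graph morphisms commuting with $\Delta$ and $\epsilon$, this equifier is isomorphic to $\VCocat$ by \cref{cofuncts,charactVCocat}. Note that, unlike the one-object case, no product endofunctor is needed here, which is why no assumption on products of $\ca{V}$ is required.

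By the Limit Theorem \cite[5.1.6]{MakkaiPare} (cf.\ part \cref{three} of \cref{functoralgebrasprops}), accessible categories are closed under inserters and equifiers along accessible functors, so the crux is to verify that $P$ and $J$ are accessible, which I expect to be the main obstacle. For $J$ this reduces to the object-set functor $Q$ preserving filtered colimits, immediate from the colimit construction of \cref{Vgraphprops} (the object set of a colimit of graphs is the colimit of object sets), together with the unit-inclusion $\B{1}$ preserving filtered colimits, which holds since in a filtered colimit off-diagonal entries stay $0$ while diagonal entries are colimits of $I$ along identities. For $P$ one checks directly, as in the finitariness of the free $\ca{V}$-category monad $\tilde{S}\tilde{L}$ of \cref{freeVcatfunctor} established in \cite{KellyLack}, that $(-)^{\odot 2}$ preserves $\kappa$-filtered colimits: the entry $(G\odot G)(x,z)=\sum_y G(x,y)\otimes G(y,z)$ is assembled from a coproduct (cocontinuous) and from $\otimes$, which preserves filtered colimits in each variable by hypothesis, compatibly with the colimit descriptions of \cref{Vgraphprops,propVMat}. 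Choosing $\kappa$ large enough that $P$, $J$ and the structural functors are all $\kappa$-accessible, the construction above exhibits $\VCocat$ as accessible; being moreover cocomplete by \cref{VCocatcocomplete}, it is locally presentable.
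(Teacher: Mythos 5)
Your overall strategy is the same as the paper's: present $\VCocat$ over $\VGrph$ by inserters/equifiers (the paper packages $\Delta$ and $\epsilon$ into the single endofunctor $F(G,X)=(G\odot G,X)\times(1_X,X)$ and takes $\Coalg F$ followed by three equifiers, which is equivalent bookkeeping to your two iterated inserters), then combine accessibility with the cocompleteness of \cref{VCocatcocomplete}. The genuine gap is exactly at the step you flag as the main obstacle: accessibility of the squaring functor $P\colon G_X\mapsto (G\odot G)_X$. Your justification --- the entries $\sum_y G(x,y)\otimes G(y,z)$ are ``assembled from a coproduct (cocontinuous) and from $\otimes$'' --- makes no use of filteredness anywhere, so if it were valid it would prove that $P$ preserves \emph{all} colimits. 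It does not. Take $G$ on objects $\{a,b\}$ with $G(a,b)=M$ and all other homs $0$, and coequalize the two maps from the one-object zero graph picking out $a$ and $b$: the coequalizer $Q$ is a one-object graph with hom $M$, so $P(Q)$ has hom $M\otimes M$, whereas $P(G)$ is the zero graph (there are no composable pairs in $G$), so the coequalizer of the $P$-image is the zero graph. Thus an entrywise cocontinuity argument cannot establish the claim; filteredness must enter essentially, and your proof never invokes it.

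What is actually missing is the re-indexing problem caused by varying object sets, which is where the paper spends most of its proof. In a filtered colimit $(G,X)=\colim_j(G_j,X_j)$ one has $G=\colim_j(\tau_j)_*G_j(\tau_j)^*$, and the middle index of $G\odot G$ runs over $X=\colim_j X_j$; but the stage-$j$ image $(\tau_j)_*(G_j\odot G_j)(\tau_j)^*$ contains only the ``diagonal'' terms $\sum_{z\in X_j}G_j(u,z)\otimes G_j(z,w)$, whereas $\bigl((\tau_j)_*G_j(\tau_j)^*\bigr)\odot\bigl((\tau_j)_*G_j(\tau_j)^*\bigr)$ contains a term $G_j(u,a)\otimes G_j(b,w)$ for \emph{every} pair $a,b\in X_j$ with $\tau_ja=\tau_jb$. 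Showing that restricting from the full sum to the diagonal sum does not change the colimit is the content of the paper's argument with the discrete opfibrations $\ca{L}$, $\ca{M}$ over $\ca{J}$ and the finality of $(j,z)\mapsto(j,z,z)$; finality holds precisely because, in a filtered colimit of sets, an identification $\tau_ja=\tau_jb$ is witnessed by some $f_\theta$ at a finite stage. Your pointer to the finitariness results of \cite{KellyLack} (cf.\ \cref{freeVcatfunctor}) gestures at the right technique, but finitariness of $\sum_n(-)^{\odot n}$ does not formally yield finitariness of the square, and in any case the inline argument you substitute for it is the invalid one. The same (milder) imprecision affects $J$: the diagonal entries of $\colim_j(\tau_j)_*1_{X_j}(\tau_j)^*$ are colimits of copowers $\tau_j^{-1}(x)\cdot I$, not of $I$ ``along identities,'' and collapsing them to $I$ again requires the filteredness of the fibres' diagram --- this is the cocone-$\fepsilon$ verification in the paper's proof.
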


\begin{proof}
Using \cref{charactVGrph}, we can define an endofunctor on $\ca{V}$-graphs by
\begin{displaymath}
 F:\xymatrix @R=.03in @C=.5in
{\ca{V}\textrm{-}\B{Grph}\ar[r] & \ca{V}\textrm{-}\B{Grph}\quad \\
(G,X)\ar@{|.>}[r]\ar[dd]_-{(\psi,f)} & (G\circ G,X)\times(1_X,X) \ar[dd]^-{F(\psi,f)} \\
\hole \\ (H,Y)\ar@{|.>}[r] & (H\circ H,Y)\times(1_Y,Y)}
\end{displaymath}
with explicit mapping on arrows, for a 2-cell $\psi:f_*Gf^*\Rightarrow H$,
\begin{equation}\label{Fonarrows}
\xymatrix @C=.4in @R=.5in
{X\rrtwocell<\omit>{<6>\psi}\ar[rr]|-{\object@{|}}^-G && X \ar[d]|-{\object@{|}}_-{f_*}\ar[r]|-{\object@{|}}^-{1_X}
\rtwocell<\omit>{<3>\feta}
& X\rtwocell<\omit>{<6>\psi}\ar[r]|-{\object@{|}}^-G & X\ar[d]|-{\object@{|}}^-{f_*} \\
Y\ar[u]|-{\object@{|}}^-{f^*}\ar[rr]|-{\object@{|}}_-H &&Y\ar@/_/[ur]|-{\object@{|}}_-{f^*} \ar[rr]|-{\object@{|}}_-H && Y}
\xymatrix @R=.1in{\hole \\ \times}
\xymatrix @C=.4in @R=.5in
{X\ar[rr]|-{\object@{|}}^-{1_X}\ar[drr]|-{\object@{|}}^-{f_*} & \rtwocell<\omit>{<4>\cong} & X\ar[d]|-{\object@{|}}^-{f_*} \\
Y\ar[u]|-{\object@{|}}^-{f^*} \ar[rr]|-{\object@{|}}_-{1_Y}\rtwocell<\omit>{<-4>\fepsilon} &&  Y.}
\end{equation} 
The category of functor coalgebras $\Coalg F$ has as objects $\ca{V}$-graphs $(C,X)$
with a morphism $\alpha:C\to C\circ C\times 1_X$, \emph{i.e.} two $\ca{V}$-graph arrows 
\begin{displaymath}
 \alpha_1:(C,X)\to (C\circ C,X)\quad\textrm{and}\quad\alpha_2:(C,X)\to
(1_X,X).
\end{displaymath}
A morphism $(C,\alpha)\to(D,\beta)$
is a $\ca{V}$-graph morphism $(\psi,f):(C,X)\to(D,Y)$
which is compatible with $\alpha$ and $\beta$,
\emph{i.e.} satisfy the equalities
\begin{gather*}
 \xymatrix @R=.45in
{X\ar@/^5ex/[rrrr]|-{\object@{|}}^-C
\rrtwocell<\omit>{<5>\psi}
\ar[rr]|-{\object@{|}}^-C &
\rrtwocell<\omit>{<-3>\;\alpha_1} & X
\rtwocell<\omit>{<3>\feta}
\ar[d]|-{\object@{|}}_-{f_*}
\ar[r]|-{\object@{|}}^-{1_X}
& X\rtwocell<\omit>{<5>\psi}
\ar[r]|-{\object@{|}}^-C & X\ar[d]|-{\object@{|}}^-{f_*} \\
Y\ar[u]|-{\object@{|}}^-{f^*}\ar[rr]|-{\object@{|}}_-D &&
Y\ar@/_/[ur]|-{\object@{|}}_-{f^*} \ar[rr]|-{\object@{|}}_-D && Y}
\xymatrix @R=.1in{\hole \\ = \\ \hole}
\xymatrix @R=.15in
{X \ar[rr]|-{\object@{|}}^-C
\rrtwocell<\omit>{<4>\psi}&& X
\ar[dd]|-{\object@{|}}^-{f_*}\\
& \\
Y \ar[uu]|-{\object@{|}}^-{f^*}
\rrtwocell<\omit>{<2.5>\beta_1}
\ar @/_/[dr]|-{\object@{|}}_-D
\ar[rr]|-{\object@{|}}^-D && Y\\
& Y \ar @/_/[ur]|-{\object@{|}}_-D &} \\
\xymatrix @C=.5in @R=.4in
{X\rrtwocell<\omit>{<-2>\;\alpha_2}
\ar @/^4ex/[rr]|-{\object@{|}}^-C
\ar[rr]|-{\object@{|}}_-{1_X}
\ar @{.>}[drr]|-{\object@{|}}_-{f_*}
&& X\ar[d]|-{\object@{|}}^-{f_*}
_{\cong\phantom{ab}} \\
Y\ar[u]|-{\object@{|}}^-{f^*}
\rtwocell<\omit>{<-3.5>\fepsilon}
\ar[rr]|-{\object@{|}}_-{1_Y}
 && Y}
\xymatrix{ = \\ \hole}
\xymatrix @C=1.1in @R=.4in
{X\ar[r]|-{\object@{|}}^-C  & X \ar[d]|-{\object@{|}}^-{f_*} \\
Y\ar[u]|-{\object@{|}}^-{f^*}\ar @/_4ex/[r]|-{\object@{|}}_-{1_Y} \rtwocell<\omit>{<2>\;\beta_2}
\ar[r]|-{\object@{|}}^-D 
\rtwocell<\omit>{<-5>\psi} & Y.}
\end{gather*}
Clearly $\Coalg F$ contains $\ca{V}$-$\B{Cocat}$ as a full subcategory: the morphisms satisfy the same axioms
\cref{cofunctaxioms} for the mate of $\psi$, and objects are $\ca{V}$-graphs equipped with cocomposition and coidentities arrows
that don't necessarily satisfy coassociativity and counity. 

Since $\ca{V}$-$\B{Cocat}$ is cocomplete by \cref{VCocatcocomplete}, we only need to show that it is accessible.
It is enough to express it as an equifier of a family of pairs of natural transformations between
accessible functors. First of all, $F$ preserves all filtered colimits: take a colimiting cocone for a small filtered category $\ca{J}$
\begin{displaymath}
\big((G_j,X_j)\xrightarrow{\;(\lambda_j,\tau_j)\;}(G,X)\,|\,j\in\ca{J}\big)
\end{displaymath}
in $\ca{V}$-$\B{Grph}$ for a diagram like (\ref{diagraminVgraph}) constructed in that proof, \emph{i.e.} $(\tau_j:X_j\to X)$ is colimiting
in $\B{Set}$ and $(\lambda_j:(\tau_j)_*C_j(\tau_j)^*\to C)$ is colimiting in $\ca{V}$-$\Mat(X,X)$. We require its image under $F$
\begin{equation}\label{imageunderF}
 F(\lambda_j,\tau_j):
(G_j\circ G_j,X_j)\times(1_{X_j},X_j)\to
(G\circ G,X)\times(1_X,X)
\end{equation}
to be colimiting in $\ca{V}$-$\B{Grph}$. For its first part \cref{Fonarrows}, we can deduce that 
\begin{displaymath}
(\tau_j)_*\circ G_j\circ(\tau_j)^*\circ(\tau_j)_*\circ G_j\circ(\tau_j)^*
\xrightarrow{\;\lambda_j*\lambda_j\;}G\circ G
\end{displaymath}
is a colimit in $(\ca{V}$-$\Mat(X,X),\circ,1_X)$, as the tensor product (horizontal composite) of two colimiting cocones.
Pre-composing this with the unit 
\begin{displaymath}
1*\feta*1:(\tau_j)_*\circ G_j\circ1_{X_j}\circ G_j\circ(\tau_j)^*
\to(\tau_j)_*\circ G_j\circ(\tau_j)^*\circ(\tau_j)_*\circ G_j\circ(\tau_j)^*
\end{displaymath}
still gives a colimiting cocone: if we take components in $\ca{V}$
of the respective 2-cells in $\ca{V}$-$\Mat$, this comes down to showing that 
the inclusion
\begin{displaymath}
 \sum^{\scriptscriptstyle{\stackrel{\tau_ju=x'}{\tau_jw=x}}}_{z\in X_j}
{G_j(u,z)\otimes G_j(z,w)}\hookrightarrow
\sum^{\scriptscriptstyle{\stackrel{\tau_ju=x'}{\tau_jw=x}}}_{\tau_ja=\tau_jb}
{G_j(u,a)\otimes G_j(b,w)}
\end{displaymath}
for any two fixed $x,x'\in X$, where $u,w,a,b\in X_j$, does not 
alter the colimit. One way of showing this is by considering
the following discrete opfibrations over the filtered shape 
$\ca{J}$:
\begin{align*}
\ca{L}&=\{(j,a,b)\,|\,j\in\ca{J},a,b\in X_j,\tau_ja=\tau_jb\} \\
\ca{M}&=\{(j,z)\,|\,j\in\ca{J},z\in X_j\}
\end{align*}
where for example the arrows $(j,a,b)\to(j',a',b')$ in $\ca{L}$ 
are determined by arrows $\theta:j\to j'$ 
in $\ca{J}$ such that $a'=f_\theta(a)$ and $b'=f_\theta(b)$
(the function $f_\theta:X_j\to X_{j'}$ 
is the image of the diagram (\ref{diagraminVgraph}) in $\B{Set}$).
We can now define diagrams of shape $\ca{L}$ and 
$\ca{M}$ in $\ca{V}$
\begin{displaymath}
 L:\xymatrix@R=.02in{\ca{L}\ar[r] & \ca{V}\qquad\qquad \\
(j,a,b)\ar@{|->}[r] & G_j(u,a)\otimes G_j(b,w)}\qquad
M:\xymatrix@R=.02in{\ca{M}\ar[r] & \ca{V}\qquad\qquad \\
(j,z)\ar@{|->}[r] & G_j(u,z)\otimes G_j(z,w)}
\end{displaymath}
and appropriately on morphisms. The colimits for these diagrams in 
$\ca{V}$, taking into account that the fibres 
are discrete categories, are
\begin{align*}
 \colim L & \cong\colim_j\sum_{\tau_ja=\tau_jb}
{G_j(u,a)\otimes G_j(b,w)} \\
\colim M & \cong\colim_j\sum_{z\in X_j}
{G_j(u,z)\otimes G_j(z,w)}.
\end{align*}
Finally, notice that there exists a functor 
$T:\ca{M}\to\ca{L}$ mapping each $(j,z)$ to $(j,z,z)$
and making the triangle
\begin{displaymath}
 \xymatrix
{\ca{M}\ar[rr]^-T\ar[dr]_-M && \ca{L}\ar[dl]^-L \\
&\ca{V}&}
\end{displaymath}
commute. Due to the construction of filtered colimits in $\B{Set}$,
it is not hard to show that the slice category $\big((j,z,w)\downarrow T\big)$
is non-empty and connected. Hence $T$ is a final 
functor and we can restrict the diagram on $\ca{L}$
to $\ca{M}$ without changing the colimit, as claimed.

For the second part of the diagram \cref{Fonarrows}, it is enough to show that
\begin{displaymath}
\xymatrix @C=.5in @R=.1in
{\rrtwocell<\omit>{<4>\fepsilon}  &
X_j\ar[dr]|-{\object@{|}}^-{(\tau_j)_*} & \\
X\ar@/_2ex/[rr]|-{\object@{|}}_-{1_X} 
\ar[ur]|-{\object@{|}}^-{(\tau_j)^*} && Y}
\end{displaymath}
is a colimiting cocone in $\ca{V}$-$\Mat(X,X)$, for the diagram mapping each $j$ to 
\begin{displaymath}
\xymatrix{X\ar[r]|-{\object@{|}}^-{(\tau_j)^*} & 
X_j\ar[r]|-{\object@{|}}^-{1_{X_j}} & 
X_j\ar[r]|-{\object@{|}}^-{(\tau_j)_*} & X.} 
\end{displaymath}
This can be established by first verifying that $\fepsilon$ is a cocone,  and then that it has the required universal property. 
We have thus shown that the cocone (\ref{imageunderF}) is indeed colimiting, hence $F$ is an accessible functor as required.

Since $\ca{V}$-$\B{Grph}$ is locally presentable and the endofunctor $F$ preserves filtered colimits, $\Coalg F$ is a
locally presentable category by \cref{functoralgebrasprops} and the forgetful functor $\overline{V}:\Coalg F\to\ca{V}$-$\B{Grph}$
creates all colimits. We consider the following pairs of transformations between functors from $\Coalg F$ to $\ca{V}$-$\B{Grph}$:
\begin{displaymath}
\phi^1,\psi^1:\overline{V}\Rightarrow FF\overline{V},\quad
\phi^2,\psi^2:\overline{V}\Rightarrow (-\circ 1_X)\overline{V},\quad
\phi^3,\psi^3:\overline{V}\Rightarrow \overline{V}(-\circ 1_X)
\end{displaymath}
with natural components
\begin{align*}
\phi^1_{(C,X)}:
\xymatrix @R=.1in
{X\ar @/_/[dr]|-{\object@{|}}_-C
\ar @/^3ex/[drr]|-{\object@{|}}^-C
\ar @/^4ex/[rrr]|-{\object@{|}}^-C
\drrtwocell<\omit>{<+.3>\;\alpha_1}
&\drrtwocell<\omit>{<-1.3>\;\alpha_1} && X, \\
& X\ar[r]|-{\object@{|}}_-C &
X\ar @/_/[ur]|-{\object@{|}}_-C &}&\quad
\psi^1_{(C,X)}:
\xymatrix @R=.1in
{X\ar @/_/[dr]|-{\object@{|}}_-C
\ar @/^4ex/[rrr]|-{\object@{|}}^-C
&&& X \\
\urrtwocell<\omit>{<-1.3>\;\alpha_1} & 
X\ar[r]|-{\object@{|}}_-C 
\ar @/^3ex/[urr]|-{\object@{|}}^-C 
\urrtwocell<\omit>{<+.3>\;\alpha_1} &
X\ar @/_/[ur]|-{\object@{|}}_-C &} \\
\phi^2_{(C,X)}:
\xymatrix @C=.6in @R=.05in
{X\drrtwocell<\omit>{<-2.3>\;\alpha_1}
\drtwocell<\omit>{<-0.4>\;\alpha_2}
\ar @/_2ex/[dr]|-{\object@{|}}_-{1_X}
\ar @/^3ex/[dr]|-{\object@{|}}^-C 
\ar @/^4ex/[rr]|-{\object@{|}}^-C
&& X, \\
& X \ar @/_/[ur]|-{\object@{|}}_-C &}&\quad
\psi^2_{(C,X)}:
\xymatrix @R=.05in @C=.6in
{X \ar@/_/[dr]|-{\object@{|}}
_-{1_X} \ar @/^3ex/[rr]|-{\object@{|}}^-C
\rrtwocell<\omit>{'\cong} && X \\
& X\ar@/_/[ur]|-{\object@{|}}_-{C}  &} \\
\phi^3_{(C,X)}:
\xymatrix @R=.05in @C=.6in
{X \ar @/_/[dr]|-{\object@{|}}_-C
\ar @/^4ex/[rr]|-{\object@{|}}^-C
&& X, \\
\urrtwocell<\omit>{<-2.3>\;\alpha_1}
& X \ar @/_2ex/[ur]|-{\object@{|}}_-{1_X} 
\ar @/^3ex/[ur]|-{\object@{|}}^-C 
\urtwocell<\omit>{<-0.4>\;\alpha_2} &}&\quad
\psi^3_{(C,X)}:
\xymatrix @R=.05in @C=.6in
{X \ar@/_/[dr]|-{\object@{|}}
_-{C} \ar @/^3ex/[rr]|-{\object@{|}}^-C
\rrtwocell<\omit>{'\cong} && X. \\
 & X\ar@/_/[ur]|-{\object@{|}}_-{1_X} &}
\end{align*}
The full subcategory of $\Coalg F$ spanned by those objects $(C,X)$ which satisfy $\phi^i_{(C,X)}=\psi^i_{(C,X)}$
is precisely the category of $\ca{V}$-cocategories by \cref{Vfunctaxioms2cells}, thus
\begin{displaymath}
 \B{Eq}((\phi^i,\psi^i)_{i=1,2,3})=\ca{V}\textrm{-}\B{Cocat}.
\end{displaymath}
Since all categories and functors involved are accessible, $\ca{V}$-$\B{Cocat}$ is accessible. 
\end{proof}

The fact that $\ca{V}$-$\B{Cocat}$ is a locally presentable category is very useful for the proof of existence of various adjoints, as seen below.
\begin{prop}\label{VCocatcomonadic}
Suppose $\ca{V}$ is a locally presentable monoidal category such that $\otimes$ preserves colimits in both entries.
The forgetful functor $\tilde{U}:\ca{V}$-$\B{Cocat}\to\ca{V}$-$\B{Grph}$ is comonadic.
\end{prop}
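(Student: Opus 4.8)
The plan is to mirror the comonadicity argument for $\Comon(\ca{V})$ from \cref{moncomonadm}, now one categorical level up: I replace $\ca{V}$ by $\VGrph$, the functor coalgebras $\Coalg T_\times$ by the category $\Coalg F$ constructed in the proof of \cref{VCocatlocpresent}, and $\Comon(\ca{V})$ by $\VCocat$, so as to obtain the analogue of the triangle \cref{diagforComoncomonadicity}. Concretely, I will verify the hypotheses of the dual Precise Monadicity Theorem for $\tilde{U}$: that it has a right adjoint, reflects isomorphisms, and creates equalizers of $\tilde{U}$-split pairs. The right adjoint $\tilde{R}$ is already provided by \cref{cofreeVcocatfunctor}, and reflection of isomorphisms is immediate from $\VCocat=\Comon(\VMMat)$ and $\VGrph=\VMMat_1^\bullet$: as recorded after \cref{Comonadindoublecat}, the forgetful functor from comonads to endomorphisms reflects isomorphisms, since the inverse of an invertible underlying graph morphism automatically respects cocomposition and counit.

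It then remains to analyse the commutative triangle
\begin{equation*}
\xymatrix @C=.5in @R=.25in
{\VCocat\ar@{^(->}[r]\ar@{-->}[dr]_-{\tilde{U}} & \Coalg F\ar[d]^-{\overline{V}} \\ & \VGrph}
\end{equation*}
in which $\overline{V}$ is the forgetful functor and the horizontal arrow is the full inclusion exhibited as an equifier in the proof of \cref{VCocatlocpresent}. I will record three facts about it. First, all three categories are locally presentable: $\VGrph$ by \cref{Vgraphprops}, $\VCocat$ by \cref{VCocatlocpresent}, and $\Coalg F$ because $F$ preserves filtered colimits (shown in the proof of \cref{VCocatlocpresent}) together with \cref{functoralgebrasprops}. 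Second, exactly as for $\Coalg T_\times\to\ca{V}$ in \cref{moncomonadm}, the right leg $\overline{V}$ is comonadic: it is cocontinuous, creating colimits by \cref{functoralgebrasprops}, and has a right adjoint between locally presentable categories. Third, being the equifier $\B{Eq}((\phi^i,\psi^i)_{i=1,2,3})$, the inclusion $\VCocat\hookrightarrow\Coalg F$ is closed under all limits present in $\Coalg F$, and hence creates, and so preserves and reflects, them.

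With these in hand the final check is formal. Given a $\tilde{U}$-split pair in $\VCocat$, its image in $\VGrph$ underlies a split, hence absolute, equalizer; thus $F$ preserves it, and by \cref{functoralgebrasprops} the corresponding $\overline{V}$-split pair has an equalizer in $\Coalg F$ created by $\overline{V}$. Since the inclusion is closed under limits, this equalizer lies in $\VCocat$ and is the equalizer there, and it is preserved by $\tilde{U}=\overline{V}\circ(\hookrightarrow)$ because it is preserved by $\overline{V}$. Therefore $\tilde{U}$ creates equalizers of $\tilde{U}$-split pairs, and the comonadicity theorem applies.

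The one genuinely delicate point is the second fact above, namely establishing comonadicity of $\overline{V}$ and the attendant limit-creation of the equifier inclusion: one must check that the absolute split equalizer in $\VGrph$ lifts uniquely through both legs, and that the lifted graph genuinely satisfies the coassociativity and counit equations of \cref{Vfunctaxioms2cells} that single out $\VCocat$ inside $\Coalg F$. Everything else is a routine transfer of the $U$-split pair across the triangle, relying only on the absoluteness of split equalizers.
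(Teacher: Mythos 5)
Your proposal is correct and follows essentially the same route as the paper's proof: the right adjoint from \cref{cofreeVcocatfunctor}, the factorization of $\tilde{U}$ through the equifier inclusion $\VCocat\hookrightarrow\Coalg F$ and the forgetful $\overline{V}$, creation of equalizers of split pairs via \cref{functoralgebrasprops}, and the Precise Monadicity Theorem. Your closing observation that the argument really only needs the lifted split equalizer (using absoluteness) to land back in the equifier, rather than closure of $\VCocat$ under arbitrary limits in $\Coalg F$, is if anything a slightly more careful rendering of the same step the paper dispatches with ``it is easy to see that $\iota$ preserves and reflects, thus creates, all limits.''
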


\begin{proof}
By \cref{cofreeVcocatfunctor} the forgetful $\tilde{U}$ has a right adjoint, namely the cofree $\ca{V}$-cocategory functor $\tilde{R}$.
By adjusting \cref{diagforComoncomonadicity}, consider the commutative
\begin{displaymath}
\xymatrix @C=.7in @R=.4in
{\ca{V}\textrm{-}\B{Cocat}\ar[dr]_-{\tilde{U}}
\ar@{^(->}[r]^-{\iota} & \Coalg F 
\ar[d]^-{\overline{V}} \\
& \ca{V}\textrm{-}\B{Grph}}
\end{displaymath}
where the top functor is the inclusion in the functor coalgebra category as described above, and the
respective forgetful functors discard the structures maps $\alpha$ of the coalgebras. By \cref{functoralgebrasprops}
$\overline{V}$ creates equalizers of split pairs,
so it is enough to show that the inclusion $\iota$ also creates equalizers of split pairs, since we already have
$\tilde{U}\dashv\tilde{R}$.
Both $\ca{V}$-$\B{Cocat}$ and $\ca{V}$-$\B{Grph}$ are locally presentable categories so in particular complete,
and it is easy to see that $\iota$ preserves and reflects, thus creates, all limits. Hence $\tilde{U}$ satisfy the conditions of 
Precise Monadicity Theorem and the result follows.
\end{proof}

\begin{prop}\label{VCocatclosed}
Suppose that $\ca{V}$ is a locally presentable symmetric monoidal closed category. Then the category of $\ca{V}$-cocategories is symmetric
monoidal closed as well.
\end{prop}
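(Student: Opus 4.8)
The plan is to follow the blueprint of \cref{Comonmonclosed}, where the monoidal closed structure on $\Comon(\ca{V})$ is produced by exhibiting a right adjoint to $-\otimes C$ via \cref{Kellyadj}. The symmetric monoidal structure on $\ca{V}\textrm{-}\B{Cocat}$ is already in hand: a locally presentable category is in particular complete and cocomplete, so $\ca{V}$ has all products and coproducts and $\otimes$ preserves colimits on both sides (being monoidal closed); hence $\VMMat$ is a symmetric monoidal double category and \cref{DendoMonDComonDmonoidal} endows $\Cmd(\VMMat)=\ca{V}\textrm{-}\B{Cocat}$ with an inherited symmetric monoidal structure. It therefore remains to establish that, for each $\ca{V}$-cocategory $C$, the functor $-\otimes C\colon\ca{V}\textrm{-}\B{Cocat}\to\ca{V}\textrm{-}\B{Cocat}$ has a right adjoint, which will then assemble into an internal hom by the usual parametrized adjoint formalism.

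First I would record the structural inputs that feed \cref{Kellyadj}. By \cref{VCocatlocpresent} the category $\ca{V}\textrm{-}\B{Cocat}$ is locally presentable, hence cocomplete and equipped with a small dense subcategory of presentable objects, which is exactly the hypothesis of \cref{Kellyadj}. Thus it suffices to prove that $-\otimes C$ is cocontinuous.

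The cocontinuity is the crux, and I would reduce it to the already-settled graph case. By \cref{VCocatcomonadic} the forgetful functor $\tilde{U}\colon\ca{V}\textrm{-}\B{Cocat}\to\ca{V}\textrm{-}\B{Grph}$ is comonadic, so it creates, and in particular preserves and reflects, all colimits. Moreover $\tilde{U}$ is strict monoidal: the tensor on cocategories is the restriction of $\otimes_1$ on $\VMMat_1$, whose value on underlying graphs is precisely the tensor of the underlying graphs, so $\tilde{U}(C\otimes D)=\tilde{U}C\otimes\tilde{U}D$ and the units match. Now on $\ca{V}\textrm{-}\B{Grph}$ the functor $-\otimes\tilde{U}C$ is a left adjoint, since by \cref{VGrphclosed} the graph category is monoidal closed with internal hom $H_1^\bullet$, and is therefore cocontinuous. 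Chasing a colimit through these facts, for any diagram $D$ in $\ca{V}\textrm{-}\B{Cocat}$ one computes $\tilde{U}\bigl((\colim D)\otimes C\bigr)\cong(\colim \tilde{U}D)\otimes\tilde{U}C\cong\colim\bigl(\tilde{U}D\otimes\tilde{U}C\bigr)\cong\colim \tilde{U}(D\otimes C)$, and creation of colimits by $\tilde{U}$ forces $(\colim D)\otimes C$ to be the colimit of $D\otimes C$ upstairs. This establishes cocontinuity.

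Finally, \cref{Kellyadj} supplies a right adjoint $[C,-]$ to $-\otimes C$, giving the internal hom, and together with the symmetry inherited from $\ca{V}$ this exhibits $\ca{V}\textrm{-}\B{Cocat}$ as symmetric monoidal closed. The main obstacle is precisely the cocontinuity step: one must verify carefully that colimits in $\ca{V}\textrm{-}\B{Cocat}$ are genuinely computed on underlying graphs (through comonadicity) and that the monoidal structure descends along $\tilde{U}$ compatibly, so that the left-adjoint property of tensoring on graphs can be transported back to cocategories.
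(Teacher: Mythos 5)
Your proposal is correct and follows essentially the same route as the paper: both arguments use the commutative square expressing that the comonadic $\tilde{U}\colon\ca{V}\textrm{-}\B{Cocat}\to\ca{V}\textrm{-}\B{Grph}$ intertwines $-\otimes C$ with $-\otimes\tilde{U}C$, deduce cocontinuity of $-\otimes C$ from creation of colimits by $\tilde{U}$ (\cref{VCocatcomonadic}) together with monoidal closedness of $\ca{V}\textrm{-}\B{Grph}$ (\cref{VGrphclosed}), and then apply \cref{Kellyadj} to the locally presentable $\ca{V}\textrm{-}\B{Cocat}$ (\cref{VCocatlocpresent}) to obtain the parametrized right adjoint serving as internal hom. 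Your explicit colimit chase is just a spelled-out version of the paper's one-line argument, so there is no substantive difference.
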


\begin{proof}
If $\otimes\colon\VCocat\times\VCocat\to\VCocat$ is the symmetric monoidal structure of $\ca{V}$-$\B{Cocat}$ described earlier,
we can form the commutative
\begin{displaymath}
\xymatrix @C=.7in @R=.5in
{\ca{V}\textrm{-}\B{Cocat}\ar[r]^-{-\otimes D_Y}\ar[d]_-{\tilde{U}} & \ca{V}\textrm{-}\B{Cocat}\ar[d]^-{\tilde{U}} \\
\ca{V}\textrm{-}\B{Grph}\ar[r]_-{-\otimes\tilde{U}(D_Y)} & \ca{V}\textrm{-}\B{Grph}}
\end{displaymath}
where the comonadic $\tilde{U}$ creates all colimits and the bottom arrow preserves them by the monoidal closed structure of $\VGrph$,
\cref{VGrphclosed}. Therefore $(-\otimes D_Y)$ preserves colimits, and \cref{Kellyadj} ensures it has a right adjoint
since $\ca{V}$-$\B{Cocat}$ is a locally presentable category. If we call it $\HOM(D_Y,-)$, we obtain a parametrized adjunction
\begin{displaymath}
\xymatrix @C=.8in
{\ca{V}\textrm{-}\B{Cocat} \ar @<+.8ex>[r]^-{-\otimes D_Y}\ar@{}[r]|-{\bot}
& \ca{V}\textrm{-}\B{Cocat}\ar @<+.8ex>[l]^-{\HOM(D_Y,-)}}
\end{displaymath}
which exhibits the uniquely induced $\HOM\colon\VCocat^\op\times\VCocat\to\VCocat$ as its internal hom.
\end{proof}

\subsection{Enrichment of \texorpdfstring{$\ca{V}$}{V}-categories in \texorpdfstring{$\ca{V}$}{V}-cocategories}
\label{enrichmentofVcatsinVcocats}

Having described the categories of $\ca{V}$-graphs, $\ca{V}$-categories and $\ca{V}$-cocategories in terms of $\ca{V}$-matrices,
and specified some of their categorical properties relatively to limits and colimits, local presentability and monoidal closed structure,
we are now in position to explore an enrichment relation (\cref{VCatenrichedinVCocat})
as a many-object generalization of \cref{monoidenrichment} for monoids
and comonoids. In particular, viewing $\VCat$ and $\VCocat$ as monads and comonads in the locally symmetric monoidal closed fibrant double
category $\VMMat$, the desired result will follow from the relevant development in \cref{locallyclosedmonoidaldoublecats}.

Supposed that $\ca{V}$ is a braided monoidal closed category with products and coproducts.
Recall that the locally closed monoidal structure of $\VMMat$ 
by \cref{VMatlocallymoidalclosed}
is given by a lax double functor $$H=(H_0,H_1)\colon\VMMat^\op\times\VMMat\to\VMMat$$ where $H_0$ is the exponentiation
in $\Set$ and $H_1$ is defined in \cref{H1functor}. This induces a functor $\Mon H$ \cref{MonHdouble}
as the restriction of $H_1^\bullet$ between the category of monads and comonads, which in this context becomes
\begin{equation}\label{defK}
K\colon\VCocat^\op\times\VCat\longrightarrow\VCat
\end{equation}
mapping a $\ca{V}$-cocategory and a $\ca{V}$-category $(C_X,B_Y)$ to $K(C,B)_{Y^X}$ given by
\begin{displaymath}
K(C,B)(s,k)=\prod_{x,x'}[G(x,x'),H(sx,kx')]\textrm{ for }s,k\in Y^X.
\end{displaymath}
Of course this comes from the induced internal hom in $\VGrph$, \cref{VGrphclosed}, with the extra structure of a category coming from
the more general \cref{MonFdouble}. 
Explicitly, for each triple $s,k,t\in Y^X$, the composition $M:K(C,B)(s,k)\otimes K(C,B)(k,t)\to K(C,B)(s,t)$ for $\ca{K}(C,B)$ is an arrow
\begin{displaymath}
\prod_{a,a}{[C(a,a'),B(sa,ka')]}\otimes\prod_{b,b'}{[C(b,b'),B(kb,tb')]}\to\prod_{c,c'}{[C(c,c'),B(sc,tc')].}
\end{displaymath}
This is defined via its adjunct under the tensor-hom adjunction
\begin{displaymath}
\xymatrix @C=.25in
{\prod\limits_{a,a'}{[C_{a,a'},B_{sa,ka'}]\otimes\prod\limits_{b,b'}[C_{b,b'},B_{kb,tb'}]\otimes 
C(c,c')}\ar[d]_-{1\otimes\Delta_{c,c'}}\ar@{-->}[r] & B_{sc,tc'}\\
\prod\limits_{a,a'}[C_{a,a'},B_{sa,ka'}]\otimes\prod\limits_{b,b'}[C_{b,b'},B_{kb,tb'}]
\otimes\sum\limits_{c''}C_{c,c''}\otimes C_{c'',c'}\ar[d]_-{\cong} &\\
\sum\limits_{c''}\prod\limits_{a,a'}[C_{a,a'},B_{sa,ka'}]\otimes C_{c,c''}\otimes
\prod\limits_{b,b'}[C_{b,b'},B_{kb,tb'}]\otimes C_{c',c''} \ar[d]_-{\pi_{c,c''}\otimes1\otimes\pi_{c'',c'}\otimes1} & \\
\sum\limits_{c''}[C_{c,c''},B_{sc,kc''}]\otimes C_{c,c''}\otimes[C_{c'',c'},B_{kc'',tc'}]\otimes C_{c'',c'}
\ar[r]_-{\mathrm{ev}\otimes\mathrm{ev}} & \sum\limits_{c''}B_{sc,kc''}\otimes B_{kc'',tc'} \ar[uuu]_-{M_{sc,tc'}}}
\end{displaymath}
for fixed $c,c'$.
The identities for each object $s\in Y^X$ are arrows
\begin{displaymath}
\eta_d:I\to K(C,B)(d,d)=\prod_{a,a'\in X}{[C(a,a'),B(sa,sa')]}
\end{displaymath}
which correspond uniquely for fixed $a=a'\in X$ 
to the composite
\begin{displaymath}
\xymatrix @R=.3in
{I\otimes C_{a,a}\ar @{-->}[rrr]\ar @/_/[dr]_-{1\otimes\epsilon_{a,a}} &&& B_{sa,sa}. \\
& I\otimes I\ar[r]_-{r_I} & I\ar @/_/[ur]_-{\eta_{sa,sa}} &}
\end{displaymath} 

In order to apply results from the theory of actions and enrichment as presented in \cref{sec:actionenrich},
we need to realize the functor $K$ as an action of $\VCocat$ on $\VCat$. Due to \cref{doubleMonHaction}, we can deduce this in
a straightforward way.

\begin{prop}\label{Kaction}
If $\ca{V}$ is a symmetric monoidal closed category with products and coproducts, the functor $K$ (\ref{defK}) is an action
of the symmetric monoidal $\VCocat^\op$, and so is its opposite $K^\op\colon\VCocat\times\VCat^\op\to\VCat^\op.$
\end{prop}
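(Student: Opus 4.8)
The plan is to recognize that $K$ is, by its very construction, an instance of the abstract functor $\Mon H$ from \cref{MonHdouble} specialized to the double category $\caa{D}=\VMMat$, and then to invoke \cref{doubleMonHaction} wholesale. Under the identifications $\Mnd(\VMMat)=\VCat$ and $\Cmd(\VMMat)=\VCocat$ from \cref{VcatsandVcocats,Vgraphs}, the functor $K$ of \cref{defK} \emph{is} precisely $\Mon H\colon\Cmd(\VMMat)^\op\times\Mnd(\VMMat)\to\Mnd(\VMMat)$, obtained by restricting $H_1^\bullet$ for the internal hom $H=(H_0,H_1)$ of \cref{HforVMMat} to (co)monads. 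Hence no new manipulation of the explicit formula for $K(C,B)$ is required: it suffices to check that the hypotheses of the lemma hold and to transport its conclusion.

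First I would verify that $\VMMat$ is a braided --- in fact symmetric --- locally closed monoidal double category under the stated hypotheses on $\ca{V}$. The locally closed monoidal structure is exactly \cref{VMatlocallymoidalclosed}, which applies since $\ca{V}$ is symmetric (hence braided) monoidal closed with products and coproducts; its internal hom is the lax double functor $H$ with $H_0$ the exponentiation in $\Set$ and $H_1$ as in \cref{H1functor}. The symmetric monoidal structure of the double category itself is \cref{VMatmonoidaldoublefibrant}, inherited from the symmetry of $\ca{V}$. Consequently the category of comonads $\VCocat=\Cmd(\VMMat)$ is symmetric monoidal by \cref{DendoMonDComonDmonoidal}, and therefore so is $\VCocat^\op$.

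With these in place, \cref{doubleMonHaction} applies directly to $\caa{D}=\VMMat$: it yields that both $\Mon H$ and $(\Mon H)^\op$ are actions, of $\Cmd(\VMMat)^\op$ and $\Cmd(\VMMat)$ respectively. Translating through the identifications, $\Mon H=K$ is an action of the symmetric monoidal $\VCocat^\op$ on $\VCat$, while $(\Mon H)^\op$ becomes $K^\op\colon\VCocat\times\VCat^\op\to\VCat^\op$, which is thereby an action of $\VCocat$ on $\VCat^\op$. The requisite structure isomorphisms $K(C\otimes D,B)\cong K(C,K(D,B))$ and $K(I,B)\cong B$ are, exactly as in the proof of \cref{doubleMonHaction}, inherited from the action isomorphisms of $H_1^\bullet$ on the braided monoidal category $\VGrph=\VMMat_1^\bullet$ of $\ca{V}$-graphs (itself monoidal closed by \cref{VGrphclosed}), using that the forgetful functors $\VCocat\to\VGrph$ and $\VCat\to\VGrph$ reflect isomorphisms.

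The only point demanding genuine care --- rather than any real obstacle --- is the bookkeeping of the several opposite categories: one must ensure that the functor the lemma names $(\Mon H)^\op$ coincides, under the identifications, with the $K^\op$ of the statement, with domain $\VCocat\times\VCat^\op$ and codomain $\VCat^\op$, and that the symmetry turning the lax-monoidal structure maps of $H_1^\bullet$ into coherent action constraints is the one transported from $\ca{V}$. Since \cref{doubleMonHaction} already absorbs the compatibility of these constraints, no coherence diagram needs to be re-checked here.
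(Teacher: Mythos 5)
Your proposal is correct and takes essentially the same route as the paper: the paper states Proposition \ref{Kaction} without a separate proof, justifying it by the remark that ``due to \cref{doubleMonHaction}, we can deduce this in a straightforward way,'' i.e.\ exactly your strategy of identifying $K$ with $\Mon H$ for $\caa{D}=\VMMat$ and invoking that lemma once the symmetric locally closed monoidal structure of $\VMMat$ (\cref{VMatmonoidaldoublefibrant,VMatlocallymoidalclosed}) and the induced symmetric monoidal structure on $\VCocat$ (\cref{DendoMonDComonDmonoidal}) are in place. Your explicit verification of the hypotheses and the bookkeeping of opposites is precisely the content the paper leaves implicit.
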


What is left to show is that this action $K^\op$ has a parametrized adjoint, which will induce the enrichment of the category on
which the monoidal category acts. This can be deduced when $\ca{V}$ is furthermore locally presentable.

\begin{prop}\label{Texistence}
Suppose that $\ca{V}$ is a locally presentable symmetric monoidal closed category.
The functor $K^\op$ has a parametrized adjoint
\begin{equation}\label{defT}
T:\ca{V}\textrm{-}\B{Cat}^\op\times\ca{V}\textrm{-}\B{Cat}\longrightarrow\ca{V}\textrm{-}\B{Cocat},
\end{equation}
given by adjunctions $K(-,B_Y)^\op\dashv T(-,B_Y)$ for every $\ca{V}$-category $B_Y$.
\end{prop}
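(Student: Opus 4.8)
The plan is to produce, for each fixed $\ca{V}$-category $B_Y$, a right adjoint to the functor $K(-,B_Y)^\op\colon\VCocat\to\VCat^\op$ by invoking the adjoint functor theorem \cref{Kellyadj}, and then to assemble these fibrewise adjoints into a single functor $T$ of two variables. Since $\VCocat$ is locally presentable by \cref{VCocatlocpresent}, it is cocomplete (\cref{VCocatcocomplete}) and carries a small dense subcategory, so the only hypothesis of \cref{Kellyadj} that requires genuine work is the cocontinuity of $K(-,B_Y)^\op$. Passing to opposite categories, this is equivalent to showing that $K(-,B_Y)\colon\VCocat^\op\to\VCat$ is continuous, i.e. that the contravariant functor $K(-,B_Y)$ carries colimits of $\ca{V}$-cocategories to limits of $\ca{V}$-categories.

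To establish this I would factor the computation through $\ca{V}$-graphs. Recall that $K=\Mon H$ is the restriction of the graph-level internal hom, so by \cref{MonFdouble} the underlying $\ca{V}$-graph of $K(C,B_Y)$ is $H_1^\bullet(\tilde{U}C,\tilde{S}B_Y)$, where $\tilde{U}\colon\VCocat\to\VGrph$ and $\tilde{S}\colon\VCat\to\VGrph$ are the forgetful functors. Now $\tilde{U}$ is comonadic (\cref{VCocatcomonadic}), hence creates colimits, so a colimit $\colim_j C_j$ in $\VCocat$ has underlying graph $\colim_j\tilde{U}C_j$; the contravariant internal hom $H_1^\bullet(-,\tilde{S}B_Y)\colon\VGrph^\op\to\VGrph$ of the symmetric monoidal closed $\VGrph$ (\cref{VGrphclosed}) sends this colimit to the limit $\lim_j H_1^\bullet(\tilde{U}C_j,\tilde{S}B_Y)$; and $\tilde{S}$ is monadic (\cref{VCatmonadic}), hence creates limits, so that limit is precisely the underlying graph of $\lim_j K(C_j,B_Y)$. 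Chaining these identifications and using that $\tilde{S}$ reflects isomorphisms yields $K(\colim_j C_j,B_Y)\cong\lim_j K(C_j,B_Y)$, which is exactly the required continuity; the $\ca{V}$-category structures agree by naturality of the comparison cells.

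With cocontinuity in hand, \cref{Kellyadj} provides a right adjoint $T(-,B_Y)\colon\VCat^\op\to\VCocat$ of $K(-,B_Y)^\op$ for every $B_Y$. Finally, since $K^\op$ is a functor of two variables all of whose partial functors $K(-,B_Y)^\op$ admit right adjoints, the standard parametrized-adjoint machinery equips the assignment $(A,B_Y)\mapsto T(A,B_Y)$ with a unique functorial dependence on both arguments making $K^\op(-,B_Y)\dashv T(-,B_Y)$ natural in $B_Y$, producing the parametrized right adjoint $T\colon\VCat^\op\times\VCat\to\VCocat$ of \cref{defT}. I expect the continuity verification of the second paragraph to be the main obstacle: it turns on the delicate interplay between the change of object-set occurring in colimits of cocategories, governed by the exponential on object sets, and the hom-wise products and internal homs defining $K$, and it is precisely here that one must exploit the (co)monadicity of $\tilde{U}$ and $\tilde{S}$ over $\VGrph$ rather than attempt a direct pointwise calculation.
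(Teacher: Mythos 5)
Your proposal is correct and follows essentially the same route as the paper: the paper also establishes cocontinuity of $K(-,B_Y)^\op$ via the commutative square with the forgetful functors $\tilde{U}$ and $\tilde{S}$ (using comonadicity of $\tilde{U}$, monadicity of $\tilde{S}$, and the self-adjunction $H_1^\bullet(-,G)^\op\dashv H_1^\bullet(-,G)$ of the internal hom of $\VGrph$), and then applies \cref{Kellyadj} to the locally presentable $\VCocat$ before invoking the standard parametrized-adjoint argument. Your version merely unwinds the paper's square into an explicit colimit-to-limit chase, which is a matter of presentation rather than substance.
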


\begin{proof}
If $H_1^\bullet$ is the internal hom of $\VGrph$ as in \cref{VGrphclosed}, we can form a square which commutes by definition of $K$
\begin{displaymath}
\xymatrix @C=1in @R=.5in
{\VCocat\ar[r]^-{K(-,B_Y)^\op}\ar[d]_-{\tilde{U}} & \VCat^\op \ar[d]^-{\tilde{S}} \\
\VGrph\ar[r]_-{H_1^\bullet(-,\tilde{S}B_Y)^\op} & \VGrph^\op}
\end{displaymath}
where the left and right legs create all colimits by \cref{VCatmonadic} and \ref{VCocatcomonadic} and the bottom arrow preserves all colimits by 
$H_1^\bullet(-,G_Y)^\op\dashv H_1^\bullet(-,G_Y)$ in any monoidal closed category. Therefore $K(-,B_Y)^\op$ is cocontinuous, and since its domain
is locally presentable by \cref{VCocatlocpresent}, \cref{Kellyadj} provides adjunctions
$K(-,B_Y)^\op\dashv T(-,B_Y):\VCat^\op\to\VCocat$ for all $\ca{V}$-categories $B_Y$; this uniquely induces \cref{defT}.
\end{proof}

The functor $T$ which generalizes the universal measuring comonoid functor \cref{Sweedlerhom} is called
the \emph{generalized Sweedler hom}. Morever, the functor $K^\op$ fixed in the second variable has also a right adjoint,
which generalizes the Sweedler product \cref{Sweedlerprod}.

\begin{lem}
There is an adjunction $K(C_X,-)^\op\dashv(C_X\triangleright-)^\op$, for any $\ca{V}$-cocategory $C_X$.
\end{lem}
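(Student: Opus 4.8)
The plan is to produce $(C_X\triangleright-)$ as a genuine left adjoint of $K(C_X,-)\colon\VCat\to\VCat$; passing to opposite categories then yields exactly $K(C_X,-)^\op\dashv(C_X\triangleright-)^\op$, since an adjunction $L\dashv R$ always gives $R^\op\dashv L^\op$. Under our standing hypothesis that $\ca{V}$ is locally presentable symmetric monoidal closed, both the domain and codomain are the \emph{locally presentable} category $\VCat$ (by \cite[4.5]{KellyLack}). I would therefore invoke the adjoint functor theorem for locally presentable categories \cite[1.66]{LocallyPresentable}: a functor between locally presentable categories has a left adjoint precisely when it preserves limits and is accessible. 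So it suffices to verify these two properties for $K(C_X,-)$.

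For limit preservation I would argue pointwise from the defining formula $K(C,B)(s,k)=\prod_{x,x'}[C(x,x'),B(sx,kx')]$. Limits in $\VCat$ are built from those in $\Set$ and $\ca{V}$ (\cref{VCatcomplete}); on underlying sets $K(C_X,-)$ sends $B$ to $(\ob B)^X$, and $(-)^X$ preserves limits in $\Set$, while on hom-objects each internal hom $[C(x,x'),-]$ preserves limits (being a right adjoint in $\ca{V}$) and the product of limits is a limit. Assembling these yields a canonical isomorphism $K(C_X,\lim_i B_i)\cong\lim_i K(C_X,B_i)$; the only care needed is to match the indexing of the object variables $s,k\in(\ob B)^X$ across the diagram, which is routine.

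For accessibility I would exploit the commuting square $\tilde S\circ K(C_X,-)=H_1^\bullet(\underline{C_X},-)\circ\tilde S$ relating $K$ to the internal hom of $\VGrph$ (\cref{VGrphclosed}), where $\underline{C_X}=\tilde U(C_X)$ is the underlying $\ca{V}$-graph. Since $\tilde S\colon\VCat\to\VGrph$ is monadic (\cref{VCatmonadic}) for a finitary monad, it creates $\kappa$-filtered colimits for all sufficiently large $\kappa$, so it is enough to check that $H_1^\bullet(\underline{C_X},-)$ preserves $\kappa$-filtered colimits. Choosing $\kappa$ so that $|X|<\kappa$ and every $C(x,x')$ is $\kappa$-presentable in $\ca{V}$, the object-set assignment $(-)^X$ and the product $\prod_{x,x'}$ are $\kappa$-small limits, hence commute with $\kappa$-filtered colimits, while each $[C(x,x'),-]$ preserves $\kappa$-filtered colimits by the choice of $\kappa$; filtered colimits in $\VGrph$ being computed from $\Set$ and $\ca{V}$ (\cref{Vgraphprops}), this shows $H_1^\bullet(\underline{C_X},-)$, and thus $K(C_X,-)$, is $\kappa$-accessible.

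The main obstacle is this accessibility step: it forces one to control the exponential $(-)^X$ on object sets, the potentially large product $\prod_{x,x'}$, and the internal homs \emph{simultaneously} against a single regular cardinal. The decisive leverage is the standard fact that $\kappa$-filtered colimits commute with $\kappa$-small limits in a locally presentable category, which dictates that $\kappa$ be taken large enough to make $X\times X$ $\kappa$-small and all the hom-objects $C(x,x')$ $\kappa$-presentable. Granting limit preservation and $\kappa$-accessibility, \cite[1.66]{LocallyPresentable} supplies the left adjoint $(C_X\triangleright-)$ of $K(C_X,-)$, and the opposite adjunction $K(C_X,-)^\op\dashv(C_X\triangleright-)^\op$ is precisely the assertion of the lemma.
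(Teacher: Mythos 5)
Your proposal is correct in substance but follows a genuinely different route from the paper. The paper forms the same commuting square $\tilde{S}\circ K(C_X,-)=H_1^\bullet(\tilde{U}C_X,-)\circ\tilde{S}$ and then applies Dubuc's Adjoint Triangle Theorem: $\tilde{S}$ is monadic (\cref{VCatmonadic}), $\VCat$ has coequalizers (\cref{VCatcocomplete}), and the bottom leg has a left adjoint $-\otimes\tilde{U}C_X$ by monoidal closedness of $\VGrph$ (\cref{VGrphclosed}); this immediately produces the left adjoint $(C_X\triangleright-)$ of the top leg, with no cardinality analysis whatsoever. You instead invoke the adjoint functor theorem for locally presentable categories, which obliges you to verify limit preservation and accessibility of $K(C_X,-)$. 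Your limit-preservation argument is fine, since the monadic $\tilde{S}$ creates limits and these are computed from $\Set$ and $\ca{V}$. The accessibility step is where your write-up is thinner than you acknowledge: filtered colimits in $\VGrph$ are \emph{not} computed pointwise on hom-objects --- by the construction in \cref{Vgraphprops}, the colimit graph has homs $\colim_j\sum_{\tau_ju=x,\,\tau_jw=x'}G_j(u,w)$, with sums indexed by fibres of the maps into the colimiting object set --- so comparing $H_1^\bullet(\tilde{U}C_X,-)$ of a colimit with the colimit of its values needs a cofinality argument of the same kind as the discrete-opfibration/final-functor argument in the paper's proof of \cref{VCocatlocpresent}, not merely the commutation of $\kappa$-small limits with $\kappa$-filtered colimits. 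This is a completable detail rather than a fatal flaw, and in fact you can bypass it entirely: $H_1^\bullet(\tilde{U}C_X,-)$ is itself a right adjoint between locally presentable categories, hence accessible by the very theorem you cite, and $\tilde{S}$ (monadic for a finitary monad, so creating filtered colimits) then transfers both continuity and accessibility to $K(C_X,-)$ with no cardinal bookkeeping. With either repair your argument goes through; what the paper's Dubuc-based proof buys is precisely the avoidance of this accessibility discussion, while your approach has the mild advantage of not needing the adjoint triangle theorem.
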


\begin{proof}
Consider the commutative diagram
\begin{displaymath}
\xymatrix @C=.9in @R=.5in
{\VCat\ar[r]^-{K(C_X,-)}\ar[d]_-{\tilde{S}} & \VCat\ar[d]^-{\tilde{S}} \\
\ca{V}\textrm{-}\B{Grph}\ar[r]_-{H_1^\bullet(\tilde{U}C_X,-)} & \ca{V}\textrm{-}\B{Grph}}
\end{displaymath}
where $\tilde{S}$ is the monadic forgetful functor and the locally presentable category $\ca{V}$-$\B{Cat}$
has all coequalizers. Thus by Dubuc's Adjoint Triangle Theorem \cite{AdjointTriangles},
the existence of a left adjoint of the bottom arrow by monoidal closedness of $\VGrph$ implies the existence of a left adjoint 
$(C_X\triangleright-)$ of the top arrow.
\end{proof}

The induced functor of two variables $\triangleright:\VCocat\times\VCat\to\VCat$ is called the \emph{generalized Sweedler product}.
Now \cref{MndenrichedCmnd} applies to provide the desired enrichment,
via the action of the symmetric monoidal closed category $\VCocat$ (\cref{VCocatclosed}).

\begin{thm}\label{VCatenrichedinVCocat}
Suppose $\ca{V}$ is a symmetric monoidal closed category which is locally presentable, and $T$ is the generalized Sweedler hom functor.
\begin{enumerate}
\item $\VCat^\op$ is enriched in $\VCocat$, tensored and cotensored, with hom-objects $\underline{\VCat^\op}(A_X,B_Y)=T(B_Y,A_X).$
\item $\VCat$ is a tensored and cotensored $(\VCocat)$-enriched category, with
\begin{displaymath}
\underline{\VCat}(A_X,B_Y)=T(A_X,B_Y)
\end{displaymath}
cotensor product $K(C,B)_{Y^Z}$ and tensor product $C_Z\triangleright A_X$, for any $\ca{V}$-co\-ca\-te\-go\-ry $C_Z$ and 
any $\ca{V}$-categories $A_X,B_Y$.
\end{enumerate}
\end{thm}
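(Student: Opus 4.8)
The plan is to recognize this theorem as the direct many-object instance of \cref{monoidenrichment}, obtained by specializing the double-categorical machinery of \cref{locallyclosedmonoidaldoublecats} to $\caa{D}=\VMMat$. Concretely, everything reduces to a single application of \cref{MndenrichedCmnd} (equivalently, of \cref{actionenrich}) once the relevant hypotheses have been collected; the substantive analytic content has already been discharged in the preceding results.

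First I would record that the standing assumption---$\ca{V}$ locally presentable symmetric monoidal closed---supplies all the ingredients the earlier constructions require. Local presentability forces $\ca{V}$ to be complete and cocomplete, so in particular it has the products and coproducts demanded by \cref{VMatlocallymoidalclosed}; hence $\VMMat$ is a symmetric locally closed monoidal fibrant double category with internal hom $H=(H_0,H_1)$. Under the identifications $\VCat=\Mnd(\VMMat)$ and $\VCocat=\Comon(\VMMat)$, the functor $\Mon H$ of \cref{MonHdouble} is precisely the functor $K$ of \cref{defK}, and $\VCocat$ is symmetric monoidal closed by \cref{VCocatclosed}.

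Next I would assemble the two adjunctions that feed the enrichment theorem. By \cref{Kaction}---a special case of \cref{doubleMonHaction}---both $K$ and $K^\op$ are actions of the symmetric monoidal $\VCocat^\op$ and $\VCocat$ respectively, and by \cref{Texistence} the action $K^\op$ admits the parametrized right adjoint $T$. This is exactly the hypothesis of \cref{MndenrichedCmnd}. Applying \cref{actionenrich} to the action $K^\op\colon\VCocat\times\VCat^\op\to\VCat^\op$ with parametrized adjoint $T$ then yields that $\VCat^\op$ is a $\VCocat$-enriched category with $\underline{\VCat^\op}(A_X,B_Y)=T(B_Y,A_X)$, which is clause (1). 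Since $\VCocat$ is symmetric monoidal closed, the \emph{tensored} clause of \cref{actionenrich} applies; since it is braided and $K(C,-)^\op$ admits the right adjoint $(C\triangleright-)^\op$ of the preceding lemma, the \emph{cotensored} clause applies as well. The final clause of \cref{actionenrich} enriches the opposite $\VCat$ too, with $\underline{\VCat}(A_X,B_Y)=T(A_X,B_Y)$; transporting the tensor and cotensor along this dualization identifies the cotensor of $C_Z$ and $B_Y$ as $K(C,B)_{Y^Z}$ and the tensor of $C_Z$ and $A_X$ as $C_Z\triangleright A_X$, which is clause (2).

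The genuinely difficult work---existence of the generalized Sweedler hom $T$, the action property of $K$, the monoidal closedness of $\VCocat$, and the Sweedler product adjunction---has already been carried out, so this theorem is essentially an assembly. The one point demanding care is purely a matter of bookkeeping: tracking the several opposites so that $K$, $K^\op$, $T$ and $\triangleright$ are correctly sorted into the roles of enriched hom-object, tensor, and cotensor for $\VCat^\op$ versus $\VCat$. I would resolve this exactly as in the one-object case \cref{monoidenrichment}, where the same pattern appears---the internal hom $K(C,-)$ furnishing the cotensor, the Sweedler product $\triangleright$ the tensor, and the Sweedler hom $T$ the enriched hom---so that the passage from $\VCat^\op$ to $\VCat$ swaps tensor with cotensor while reversing the arguments of $T$.
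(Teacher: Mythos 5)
Your proposal is correct and follows essentially the same route as the paper: the theorem is obtained exactly by feeding \cref{Kaction}, \cref{Texistence}, the Sweedler-product lemma and \cref{VCocatclosed} into \cref{MndenrichedCmnd} (itself an instance of \cref{actionenrich}), which is precisely the assembly the paper performs in the paragraph preceding the statement. Your bookkeeping of the opposites — $\underline{\VCat^\op}(A_X,B_Y)=T(B_Y,A_X)$, and the swap of tensor $K(C,-)$ and cotensor $C\triangleright-$ when passing from $\VCat^\op$ to $\VCat$ — also matches the paper's conventions.
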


The final goal is to combine the above enrichment with the (op)fibrations that these categories form,
and characterize them as enriched fibrations, \cref{enrichedfibration}. First of all, the fibration
$P\colon\VCat\to\Set$ (\cref{VCatfibred}) as well as the opfibration $W\colon\VCocat\to\Set$ (\cref{VCocatopfibred})
are both monoidal by \cref{monadscomonadsmonoidalfibr}, due to their expression as categories of monads and comonads in $\VMMat$;
they inherit their symmetry from $\ca{V}$.
\cref{MndfibredenrichedCmnd} applied to $\VMMat$ will give the required result.

First of all, we need to show that $K^\op$ preserves cocartesian liftings, or equivalently $K$ preserves cartesian
liftings between the fibrations 
\begin{equation}\label{fibredaction}
\xymatrix @C=.8in @R=.5in
{\VCocat^\op\times\VCat\ar[r]^-K\ar[d]_-{W^\op\times P} & \VCat\ar[d]^-P \\
\Set^\op\times\Set\ar[r]_-{(-)^{(-)}} & \Set.}
\end{equation}
Using the canonical (co)cartesian liftings \cref{canonicalcartesianlift} for any Grothendieck fibration,
i.e. $\Cocart(f,C)=(1_{f_*Cf^*},f)\colon B\to f_*Cf^*$ for a cocategory $C_X$ and $f\colon X\to Z$,
$\Cart(g,B)=(1_{g^*Bg_*},g)\colon g^*Bg_*\to B$ for a category $B_Y$ and $g\colon W\to Y$,
we want to deduce that $K$ maps them to the chosen cartesian lifting
\begin{displaymath}
\xymatrix @C=.6in @R=.3in
{K(f_*Cf^*,g^*Bg_*)\ar[rr]^{\qquad K(\Cart(f,C),\Cart(g,B))}
\ar @{-->}[d]_-{\cong} && K(C,B)\ar @{.>}[dd] &\\
(g^f)^*K(C,B)(g^f)_*\ar[urr]_-{\quad\Cart(g^f,K(C,B))}\ar @{.>}[d]
&&& \textrm{in }\VCat \\
W^Z\ar[rr]_-{g^f} && Y^X & \textrm{in }\Set}
\end{displaymath}
Using \cref{machine1,machine2}, we can initially compute
\begin{align*}
K(f_*Cf^*,g^*Bg_*)_{s,k} &=\prod_{z,z'\in Z}[(f_*Cf^*)_{z,z'},(g^*Bg_*)_{sz,kz'}] \\
\phantom{A} &\cong\prod_{z,z'\in Z}[\sum_{fx=z}^{fx'=z'}C_{x,x'},B_{gsz,gkz'}], \\
\left((g^f)^*K(C,B)(g^f)_*\right)_{s,k} &=K(C,B)_{gsf,gsk}=\prod_{x,x'\in X}[C_{x,x'},B_{gsfx,gsfx'}]
\end{align*}
which are isomorphic since the internal hom maps sums to products in the first variable,
and the triangle commutes by also applying $K$ to the maps. 

The only thing left to show is that the opposite of \cref{fibredaction} has a general lax parametrized opfibred adjoint,
i.e. the opfibred 1-cell for any $\ca{V}$-category $B_Y$
\begin{displaymath}
\xymatrix @C=.8in @R=.4in
{\VCocat\ar[r]^-{K(-,B_Y)^\op}\ar[d]_-{W} & \VCat^\op\ar[d]^-{P^\mathrm{op}} \\
\B{Set}\ar[r]_-{{Y^{(-)}}^\op} & \B{Set}^\mathrm{op}}
\end{displaymath}
has a lax opfibred adjoint; this will be deduced from \cref{totaladjointthm}.
Indeed, there is an adjunction between the base categories
\begin{displaymath}
\xymatrix @C=.5in
{\B{Set} \ar @<+.8ex>[r]^-{{Y^{(-)}}^\op}
\ar@{}[r]|-{\bot} & 
\B{Set}^\op\ar @<+.8ex>[l]^-{Y^{(-)}}}
\end{displaymath}
since $\Set$ is cartesian monoidal closed, with counit $\varepsilon$. Moreover, we need to show is that the composite \cref{specialfunctor}
between the fibres 
\begin{displaymath}
\ca{V}\textrm{-}\B{Cocat}_{Y^Z}\xrightarrow{\;K_{Y^Z}(-,B_Y)^\op\;}\ca{V}\textrm{-}\B{Cat}_{Y^{Y^Z}}^\mathrm{op}
\xrightarrow{\quad(\varepsilon_Z)_!\quad}\ca{V}\textrm{-}\B{Cat}_Z^\mathrm{op}
\end{displaymath}
has a right adjoint. We can rewrite it as
\begin{displaymath}
\xymatrix @C=1.2in @R=.5in
{\Comon(\VMat(Y^Z,Y^Z))\ar@{-->}[dr]\ar[r]^-{\Mon(\Hom(-,B_Y))^\op} &
\Mon(\ca{V}\text{-}\Mat({Y^Y}^Z,{Y^Y}^Z))^\op\ar[d]^-{(\varepsilon)^*\circ-\circ(\varepsilon)_*} \\
& \Mon(\ca{V}\text{-}\Mat(Z,Z))^\op}
\end{displaymath}
where the top functor is \cref{Hom_} between the categories of monoids (as a restriction of $\Mon H$ on globular 2-cells)
and the side functor is the reindexing functor for the fibration $P$, \cref{VCatfibred}.
By \cref{cofreecomonVMat}, the domain $\Comon(\ca{V}\text{-}\Mat(Y^Z,Y^Z))$ is a locally presentable category for any set $Y^Z$.
Moreover, the following commutative
\begin{displaymath}
\xymatrix @C=1.3in
{\Comon(\ca{V}\textrm{-}\B{Mat}(Y^Z,Y^Z))
\ar[r]^-{\Mon(\Hom(-,B_Y)^\op} \ar[d]_-U &
\Mon(\ca{V}\textrm{-}\B{Mat}({Y^Y}^Z,{Y^Y}^X))^\op \ar[d]^-{S^\op} \\
\ca{V}\textrm{-}\B{Mat}(Y^Z,Y^Z) 
\ar[r]_-{\Hom(-,B_Y)^\op} &
\ca{V}\textrm{-}\B{Mat}({Y^Y}^Z,{Y^Y}^Z)^\op}
\end{displaymath}
for a fixed $\ca{V}$-category $B_Y$ shows that the top arrow is cocontinuous: $U$ and $S^\op$ are comonadic by the same \cref{cofreecomonVMat}
and the bottom arrow is the cocontinuous internal hom of $\VGrph$ (\cref{VGrphclosed})
restricted between the cocomplete fibres. Finally, composing with the
companion and conjoint of $\varepsilon$ on either side always preserves colimits, since tensoring does (see \cref{propVMat}). Therefore,
\cref{Kellyadj} establishes an adjunction
\begin{displaymath}
\xymatrix @C=1in
{\ca{V}\textrm{-}\B{Cocat}_{Y^Z}\ar @<+.8ex>[r]^-{(\varepsilon_Z)_!\circ K(-,B_Y)^\op}\ar@{}[r]|-\bot
& \ca{V}\textrm{-}\B{Cat}_Z^\op\ar @<+.8ex>[l]^-{T_0(-,B_Y)}}
\end{displaymath}
between the fibre categories, enough by \cref{totaladjointthm}
to induce a right adjoint of $K(-,B_Y)^\op$ between the total categories such that 
\begin{displaymath}
\xymatrix @R=.5in @C=.8in 
{\ca{V}\textrm{-}\B{Cocat} \ar @<+.8ex>[r]^-{K(-,B_Y)^\mathrm{op}}
\ar@{}[r]|-\bot \ar[d]_-{W} &
\ca{V}\textrm{-}\B{Cat}^\mathrm{op}\ar @<+.8ex>[l]^-{T(-,B_Y)}
\ar[d]^-{P^\mathrm{op}} \\ 
\B{Set} \ar@<+.8ex>[r]^-{{Y^{(-)}}^\mathrm{op}} 
\ar@{}[r]|-\bot &
\B{Set}^\mathrm{op} \ar @<+.8ex>[l]^-{Y^{(-)}}}
\end{displaymath}
is a general lax opfibred adjunction. The assumptions of \cref{MndfibredenrichedCmnd} are now satisfied and the result follows.

\begin{thm}
Suppose $\ca{V}$ is a symmetric monoidal closed category, which is locally presentable.
The opfibration $\VCat^\op\to\Set^\op$ as well as the fibration $\VCat\to\Set$ are enriched in the symmetric monoidal opfibration
$\VCocat\to\Set$.
\end{thm}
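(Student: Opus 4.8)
The plan is to obtain this statement as a direct specialisation of \cref{MndfibredenrichedCmnd} to the double category $\caa{D}=\VMMat$. First I would record that the ambient structural hypotheses are already available: by \cref{VMatmonoidaldoublefibrant} and \cref{VMatlocallymoidalclosed}, $\VMMat$ is a symmetric (hence braided) locally closed monoidal fibrant double category, since a locally presentable symmetric monoidal closed $\ca{V}$ has all products and coproducts and $\otimes$ preserves them. Under the identifications $\Mnd(\VMMat)=\VCat$ and $\Cmd(\VMMat)=\VCocat$, the fibration $S$ and opfibration $W$ of the abstract theorem become exactly $P\colon\VCat\to\Set$ of \cref{VCatfibred} and $W\colon\VCocat\to\Set$ of \cref{VCocatopfibred}, which are symmetric monoidal by \cref{monadscomonadsmonoidalfibr}, while $\Mon H$ specialises to the functor $K$ of \cref{defK}. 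It then remains to check the two nontrivial hypotheses of \cref{MndfibredenrichedCmnd}.

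Next I would verify that $(\Mon H)^\op=K^\op$ is cocartesian, equivalently that $K$ preserves the cartesian liftings of the fibred action \cref{fibredaction}. Using the matrix machinery \cref{machine1,machine2} one computes $K(f_*Cf^*,g^*Bg_*)\cong(g^f)^*K(C,B)(g^f)_*$, the isomorphism arising precisely because the internal hom $H_1$ carries coproducts to products in its first variable; this identifies the image under $K$ of the canonical liftings with the canonical lifting over $g^f$, so the comparison vertical map is invertible and $K$ preserves cartesian arrows.

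For the remaining hypothesis I would produce an opfibred parametrized adjoint to the opposite of \cref{fibredaction} by means of \cref{totaladjointthm}. The base adjunction ${Y^{(-)}}^\op\dashv Y^{(-)}$ is available because $\Set$ is cartesian closed, so the content is to supply, for each $\ca{V}$-category $B_Y$, a right adjoint to the fibrewise composite $(\varepsilon_Z)_!\circ K_{Y^Z}(-,B_Y)^\op$. Rewriting this composite as $\Mon(\Hom(-,B_Y))^\op$ followed by reindexing along the counit $\varepsilon$, I would argue it is cocontinuous: the comonadic forgetful functors $U$ and $S^\op$ create colimits, the bottom internal-hom functor of $\VGrph$ is cocontinuous by \cref{VGrphclosed}, and composing with the companions and conjoints of $\varepsilon$ preserves colimits since tensoring does (\cref{propVMat}). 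As the fibre $\Comon(\VMat(Y^Z,Y^Z))$ is locally presentable by \cref{cofreecomonVMat}, \cref{Kellyadj} supplies the fibrewise right adjoint, and \cref{totaladjointthm} then assembles these into the required general lax opfibred adjunction.

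With both conditions in hand, \cref{MndfibredenrichedCmnd} applies and yields the enrichment of the fibration $P\colon\VCat\to\Set$ in the symmetric monoidal opfibration $W\colon\VCocat\to\Set$; the dual formulation for the opfibration $\VCat^\op\to\Set^\op$ is then the convention recorded after \cref{enrichedfibration}. The hard part will be the parametrized-adjoint step: assembling fibrewise cocontinuity from the several (co)monadicity and closedness inputs, and checking that reindexing along $\varepsilon$ is compatible with $\Mon(\Hom(-,B_Y))$, is where the $\ca{V}$-matrix bookkeeping is least automatic, whereas the cocartesian verification reduces to the short computation once \cref{machine1,machine2} are in place.
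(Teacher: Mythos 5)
Your proposal is correct and matches the paper's own proof essentially step for step: both specialise \cref{MndfibredenrichedCmnd} to $\caa{D}=\VMMat$, verify that $K$ preserves cartesian liftings via \cref{machine1,machine2} and the fact that the internal hom turns coproducts into products in its first variable, and obtain the opfibred parametrized adjoint by combining fibrewise cocontinuity (comonadicity of $U$ and $S^\op$, closedness of $\ca{V}$-$\B{Grph}$, colimit-preservation of composition with companions and conjoints of $\varepsilon$), local presentability of $\Comon(\VMat(Y^Z,Y^Z))$ from \cref{cofreecomonVMat}, \cref{Kellyadj}, and \cref{totaladjointthm}. One cosmetic slip: in a locally presentable symmetric monoidal closed $\ca{V}$ the tensor preserves coproducts (indeed all colimits) but not products, yet only coproduct preservation is required by \cref{VMatmonoidaldoublefibrant} and \cref{VMatlocallymoidalclosed}, so this does not affect the argument.
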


Notice that the total parametrized adjoint $T\colon\VCat^\op\times\VCat\to\VCocat$ of $K$ obtained as above is isomorphic
to \cref{defT}, but the fibred approach provided with the extra information that the underlying set of objects of some $T(A_X,B_Y)$
is precisely $Y^X$ in a straightforward way.

\subsection*{Acknowledgements}
I would like to thank Martin Hyland for the insightful ideas that led to this work, as well
as Ignacio L\'opez Franco for helpful discussions that affected this development.
Major parts of this work were accomplished during my PhD \cite{PhDChristina}, for which period I gratefully acknowledge
the financial support by Trinity College and DPMMS, University of Cambridge, as well as Propondis and Leventis Foundations.
This paper was written within the framework the ARC 
Consolidator project ``Hopf algebras and the symmetries of non-commutative space'', sponsored by F\'ed\'eration Wallonie-Bruxelles,
during my postdoc at Universit{\'e} Libre de Bruxelles with Joost Vercruysse.

\bibliographystyle{alpha}
\bibliography{myreferences}

\end{document}